\documentclass[10pt,twoside]{amsart}
\usepackage{mathtools,amsmath,amssymb}
\usepackage{mathrsfs}
\usepackage{eucal}

\usepackage{libertine}       
\usepackage[a4paper,top=1in, bottom=1in, left=1in, right=1in]{geometry} 
\usepackage{graphicx} 
\usepackage[colorlinks=true,linkcolor=blue,citecolor=black]{hyperref} 

\usepackage{cleveref}
\usepackage[backend=biber,backref=true,doi=false,url=false,maxnames=9]{biblatex} 
\renewbibmacro*{pageref}{%
  \iflistundef{pageref}
    {}
    {\addspace
     \mkbibbrackets{\printlist[pageref][-\value{listtotal}]{pageref}}}} 
\usepackage{lipsum}
\usepackage{adjustbox} 
\usepackage{tikz-cd}
\usepackage{mathptmx}
\usepackage{quiver}
\usepackage{float}
\usepackage[english]{babel}
\usepackage{amsthm}
\usepackage{mathtools}
\usepackage{soul}
\usepackage{nicematrix}
\usepackage{array}   
\usepackage{booktabs}
\usepackage{float}
\usepackage{orcidlink}
\usepackage{csquotes}
\usepackage{booktabs}
\usepackage{geometry}
\usepackage{multirow}
\usepackage{nccmath}
\usepackage[normalem]{ulem}

\usepackage{nomencl}

\makenomenclature
\usepackage[toc,page]{appendix}
\usepackage{caption}
\newcommand{\GL}{\mathrm{GL}}
\newcommand{\SL}{\mathrm{SL}}
\newcommand{\tr}{\mathrm{tr}}

\newcommand{\M}{\mathrm{M}}
\newcommand{\im}{\mathrm{Im}}

\newcommand{\R}{\mathcal{O}_2}

\newtheorem{theorem}{Theorem}[section]
\theoremstyle{definition}
\newtheorem{definition}[theorem]{Definition}
\theoremstyle{definition}

\newtheorem{lemma}[theorem]{Lemma}
\newtheorem{corollary}[theorem]{Corollary}
\newtheorem{proposition}[theorem]{Proposition}
\newtheorem{remark}[theorem]{Remark}
\newtheorem{example}[theorem]{Example}
\newtheorem{question}[theorem]{Question}
\newtheorem{conjecture}{Conjecture}
\title[Engel word on $\SL_2(\mathcal{O})$ and $\mathrm{PSL}_2(\mathcal{O}_2)$]{Surjectivity of Engel Words on $\SL_2(\mathcal{O})$ and $\mathrm{PSL}_2(\mathcal{O}_2)$}
\date{\today}
\author[Roy]{Ayon Roy}
\email[(Roy)]{ayonroy1999@gmail.com}
\address{Indian Institute of Science Education and Research Pune, Dr. Homi Bhabha Road, Pashan, Pune 411 008, India}
\author[Singh]{Anupam Singh}
\email[(Singh)]{anupamk18@gmail.com}
\address{Indian Institute of Science Education and Research Pune, Dr. Homi Bhabha Road, Pashan, Pune 411 008, India}

\thanks{}
\date{\today}
\subjclass[2020]{20G40, 20D06, 20G25}
\keywords{Word maps, Special linear group, local ring, Engel word, trace map}
\thanks{Roy is supported by an IISER Pune PhD Fellowship. Singh is funded by an ANRF-MATRICS Grant number ANRF/ARGM/2025/000095/MTR}
\begin{document}
\begin{abstract}
The study of word maps and Waring-like problems has been widely pursued for finite simple groups, algebraic groups, and Lie groups. In this article, we study Engel word maps $e_{m}(x, y) = \left[\cdots\left[[x, y], y \right], \cdots, y \right]$ on certain linear groups over local rings, namely, $\mathrm{SL}_2(\mathcal R)$ and $\mathrm{PSL}_2(\mathcal R)$. We consider the commutative ring $\mathcal {R} $ to be either a complete, local principal ideal ring $\mathcal O$, or a local principal ideal ring of finite length $\mathcal O_\ell$. Suppose the characteristic of the residue field $k\cong \mathbb F_q$ is $\neq 2$. Under some mild conditions on $q$, we show that there exists a constant $q_0(m)$, such that for all $q \geq q_0(m)$, all lifts in $\SL_2(\mathcal{O})$ of non-scalar elements of $\SL_2(k)$, are in the image of the $m$-th Engel word over $\SL_2(\mathcal{O})$. We further show that all Engel word maps are surjective on $\text{PSL}_2(\mathcal{O}_2)$ where $\mathcal{O}_2$ is a local principal ideal ring of length two. This work generalizes similar results about the Engel word map over fields. 
\end{abstract}

\maketitle

\section{Introduction}

In 1951, {\O}ystein Ore conjectured (see~\cite{Ore1951}) that for a non-Abelian finite simple group, every element is a single commutator. It led to several developments in group theory and related areas over the past half-century. Ore himself proved that every element of the alternating group $\mathcal A_n$  can be written as a single commutator. Over the years, through the work of several mathematicians, notably by R. C. Thompson \cite{Thompson1961, Thompson1962};  Gow and O. Bonten \cite{Bonten1993}; J. Neb$\Ddot{\text{u}}$ser, H. Pahlings, and E. Cleuvers \cite{NeubuserPahlingsCleuvers1984};  E. W. Ellers and N. L. Gordeev \cite{EllersGordeev1998} and many more, the conjecture was partly solved. It was completely settled in 2010 by Liebeck, O’Brien, Shalev, and Tiep \cite{LiebeckBrienShalev10}, who relied on the character theory of finite groups of Lie type and probabilistic methods. We refer to Malle's survey article~\cite{Malle14} for more details. 

The Ore's conjecture led to several significant developments in the study of images of \emph{word maps} on groups and other algebraic structures. Given a group $G$ and a word $w$, a member of a free group $\mathcal F_d$ with $d$ many generators, one can define a map $\widetilde{w} \colon G^d\rightarrow G$ by evaluation, which is called \emph{word map} on $G$ induced by the word $w$. One of the fundamental questions is to find the smallest positive integer $m$ such that every element of $G$ can be written as a product of $m$ many elements of $\widetilde{w}(G):=\widetilde{w}(G^d)$. This is also called the Waring-like problem. Over the past few decades, several foundational results have been proved for word maps over various groups, including linear algebraic groups, finite groups of Lie type, classical groups, and Lie groups (see \cite{borelclassfnconjcomm}, \cite{exppowerchatterjee}, \cite{powermappralay}, 
\cite{thomgoto}, \cite{kundusinghpower}, \cite{larsenshalevdistribution}, \cite{wordimagelobotzky},
\cite{panjasinghorthosymp}). For more details, we refer to the survey article \cite{surveygkp}. In 1983, Borel proved that the image of any non-trivial word over a connected semisimple linear algebraic group $\mathcal{G}$ is dominant, i.e., it contains a Zariski dense open subset. As a consequence, when $K$ is algebraically closed field, one gets that $w(G)^2=G$, where $G=\mathcal{G}(K)$; see \cite{Borel1983}. For finite simple groups, Shalev has shown that every element of such groups, when the size is large enough, is a product of two elements from the image when non-trivial (see~\cite{shalevwaring}). Similar to the Waring problem, the result for matrix algebra over a finite field is obtained in~\cite{kishoreSingh}. 

The surjectivity and, more generally, images of several specific words have been extensively studied for various groups, especially algebraic groups defined over a field, and some rings. In this article, we focus on the \emph{Engel words} defined on the special linear group $\mathrm{SL}_n(\mathcal O)$ where $\mathcal O$ is a (commutative) local ring (with identity). Let $\mathcal F_2 = \langle x, y\rangle$ be a free group generated by two elements. The $m$-th Engel word $e_m(x, y)$ in $\mathcal F_2$ is defined recursively as follows: $$e_1(x, y)=[x, y] = xyx^{-1}y^{-1}, \ e_2(x, y)=[[x, y], y]\ \ \text{and}\ \  e_{m}(x, y)=[e_{m-1}(x, y), y]$$ 
for $m\geq 2$. The induced $m$-th Engel word map is $e_{m}^{\mathcal{O}}\colon \mathrm{SL}_n(\mathcal{O}) \times \mathrm{SL}_n(\mathcal{O}) \rightarrow \mathrm{SL}_n(\mathcal{O})$ given by $(X, Y)\mapsto e_{m}^{\mathcal{O}}(X, Y)$, where $e_1^{\mathcal{O}}(X,Y) = XYX^{-1}Y^{-1}$ is simply the commutator map. In~\cite{Thompson1961}, Thompson proved surjectivity of the commutator map on $\mathrm{SL}_n(k)$ where $k$ is a field. In 2007, Shalev conjectured (Conjecture 2.8 and 2.9 \cite{Shalev2007}) that the $n$-th \emph{Engel word} is almost surjective for any non-abelian finite simple group. Bandman, Garion, and Grunewald (see \cite{BandmanGarionGrunewald2012}) showed that the $n$-th \emph{Engel word} is surjective for the groups $\mathrm{PSL}(2, q)$ and almost surjective for $\mathrm{SL}(2, q)$ provided $q$ is large enough. Following that, in~\cite{BaZa2016} and \cite{BNT2024} the surjectivity of \emph{Engel words} for $\mathrm{PSL}(2,\mathbb{C})$ and $\mathrm{PSL}(2, K)$ where $K$ is algebraically closed field, were studied. In~\cite{Gordeev}, Gordeev studied Engel words on algebraic groups of small ranks. In Section~\ref{known-results}, we list some known results in one place.

In this article, we are interested in looking at the group $\SL_2$ and $\rm{PSL}_2$ over certain rings. This problem has been studied for some rings, such as $p$-adic. Shalev in~\cite{Shalevzeta} conjectured (Conjecture 1.3) the following:  
\begin{conjecture}[Shalev]
Every element of $\mathrm{SL}_n(\mathbb{Z}_p)$ where $p$ is a prime and $n\geq 2$ ($p>3$ when $n=2$) is a commutator.
\end{conjecture}
\noindent Further, Shalev proposed a similar problem (Problem 1.4) for $\SL_n(\mathbb Z)$ and claimed that these would be hard problems. In response, Avni, Gelander, Kassabov, and Shalev (see~\cite{AvniGelanderKassabovShalev}) proved that every lift of non-scalars is a commutator in $\mathrm{SL}_n(\mathcal{O})$, for any local ring $\mathcal{O}$ with residue field $k$ that contains more than $n+1$ elements. An approach to solve this problem would be to lift solutions of the equation from the residue field where answers are known. In~\cite{PRS2026}, \cite{PanjaRoySingh2025}, the authors initiated a study to lift elements, such as regular elements and cyclic elements of $\mathrm{GL}_n(k)$ to $\mathrm{GL}_n(\mathcal{O})$ as images of certain maps. With this in mind, let us make our question more precise, which we try to answer in this article.  

\begin{question}
Let $\mathcal{O}$ be a local principal ideal ring with residue field $k\cong \mathbb{F}_q$ of odd characteristic.
\begin{enumerate}
\item Let $A\in \mathrm{SL}_2(\mathcal{O})$ which maps to $\bar A \in \SL_2(q)$. In~\cite[Theorem 5.1]{BandmanGarionGrunewald2012}, almost surjectivity (that is except $-I$) of Engel words over $\mathbb F_q$ is proved for large enough $q$ with an estimate of $q \geq q_0(m) = 2\cdot3^{2^{m+2}}$. Under the same assumption that $\bar A\in \mathrm{Im}(e_{m+1}^k)$ in $\mathrm{SL}_2(k)$, do we have $A\in  \mathrm{Im}(e_{m+1}^{\mathcal{O}}) $ in $\mathrm{SL}_2(\mathcal{O})$?
\item The surjectivity of the Engel word is known for $\rm{PSL}_2(q)$. Is it true that the Engel word is surjective on $\rm{PSL}_2(\mathcal O)$? 
\end{enumerate}
\end{question}
\noindent We answer these questions in this article. When $\mathcal{O}$ is a complete local principal ideal ring, we show that for large enough $q$ (odd), every non-scalar element that is in the image of $m$-th Engel word over $\SL_2(k)$ is in the image of the $m$-th Engel word in $\SL_2(\mathcal O)$. This is our Theorem~\ref{th: Engel second}. Further, when $q$ is large enough, we show (see Theorem~\ref{th: Engel PSL2}) that the Engel word map is surjective on $\text{PSL}_2(\mathcal O_2)$ where $\mathcal O_2$ is a local principal ring of length $2$. This result generalizes the same result proved by Bandman et. al. in~\cite{BandmanGarionGrunewald2012} over $\mathbb F_q$. 

The method for solving the lifting problem relies on dealing with several cases separately for regular semisimple elements and other cyclic elements for $\mathcal{O}_2$ and extending the results to each finite length level. Some of the key ideas are motivated by the methods used in \cite{AvniGelanderKassabovShalev} and the properties of the trace map in \cite{BandmanGarionGrunewald2012}. We hope these ideas will evolve further and help solve Shalev's conjecture mentioned above.

\subsection{Organization of the article}

We start with a survey of existing results on the surjectivity of Engel words on $\SL_2$ and $\text{PSL}_2$ over different fields and integral domains, in Section~\ref{not-conv}. In the same section, brief details about $\R$-conjugacy classes of elements of $\SL_2(\R)$ are described with their splitting into different families. A study of polynomials and lifting of roots of multivariable polynomials (similar to Hensel's Lemma) from $k$ to $\R$ in different characteristics has been introduced in Section~\ref{mult-lift}. In Section~\ref{tr}, we discuss the trace map of the $(m+1)$-th Engel word corresponding to general and special linear groups over local rings. The Section~\ref{commutator-local ring-length two} and \ref{towards Shalev's conj} mainly deal with commutators for special linear groups over $\mathcal{O}_2$ and $\mathcal{O}$.   In Section~\ref{Mag-Uni}, we discuss the idea of Magnus embedding for matrices over $\R$, and apply the same to triangular unipotent elements as elements of $\im(e_{m+1})$ in $\SL_2(\mathcal{O}_2)$. We discuss the coadjoint action of $\SL_2(k)$ on the dual Lie algebra $\mathfrak{sl}^*_2(k)$ in Section~\ref{coadjoint lie th}, which plays a central role along with the trace map for our lifting setup. The Section~\ref{cyclic as Engel} deals with the technical details of lifting non-scalar matrices as elements of $\im(e_{m+1})$ in $\SL_2(\mathcal{O})$. The proof of our main theorems is in Section~\ref{mainth}.

\subsection{Acknowledgement}
The authors would like to thank Prof. Boris Kunyavskii and Prof. Maneesh Thakur for their interest in this work.

\section{Preliminaries}{\label{not-conv}}

We begin with a brief survey of the results known particularly about the Engel word map on $\SL_2$ and $\text{PSL}_2$. 

\subsection{Known results on Engel word for $\SL_2$ and $\rm{PSL}_2$}\label{known-results}

In this section, we focus only on the surjectivity of the Engel words $e_m(x_1, x_2)$ on the linear group, as the problem that Engel words are surjective remains quite open. 

\begin{table}[htbp]\caption{Surjectivity of Engel words \( e_m \) on $\SL_2(q)$ and $\mathrm{PSL_2}(q)$}\label{tab:engel_survey1}
\centering
\small 
\begin{tabular}{@{} l c p{4cm} p{7cm} @{}}
\toprule
\textbf{Group} & \textbf{Condition on \( m \)} & \textbf{Condition on Field} & \textbf{Surjectivity Result} \\
\midrule
\( \text{PSL}(2, q) \) & \( m \leq 4 \) & All \( q \) (any prime power) & Surjective (Bandman et. al., see \cite{BandmanGarionGrunewald2012}) \\
\addlinespace
\( \text{PSL}(2, q) \) & \( m \geq 5 \) & \( q \geq q_0(m) \) (sufficiently large) & Surjective (Shalev conjectured for all \( q \neq 2, 3\); proved by Bandman et. al. see \cite{BandmanGarionGrunewald2012}) \\
\addlinespace
\( \text{SL}(2, q) \) & \( m \geq 1 \) &  \( q \geq q_0(m) \) (sufficiently large) & Almost surjective, i.e., onto \( \text{SL}(2, q) \setminus \{-id\} \) (due to Bandman et. al. see \cite{BandmanGarionGrunewald2012}) \\
\addlinespace
\( \text{SL}(2, q) \) & \( m =1 \) & All \( q \neq 2, 3 \) & Surjective (classical result on Ore's conjecture due to Thompson, see \cite{Thompson1961}). \\
\addlinespace
\bottomrule
\end{tabular}
\end{table}
\noindent Over a finite field, the results for surjectivity are tabulated in Table~\ref{tab:engel_survey1}. When $k$ is an algebraically closed field, the known result is listed in Table~\ref{tab:engel_survey2}, and it seems the problem is largely open for a general field, including $\mathbb R$ for both $\SL_2$ and $\mathrm{PSL_2}$ when $m\geq 2$. We must note down that for $\SL_2(k)$, where $k$ is any field $|k|> 3$ of characteristic $\neq 2$, $-I$ is a commutator if and only if $-1$ is a sum of two squares in $k$; see \cite[theorem 1]{Thompson1961}. This gives a general obstruction to the surjectivity of Engel words over various fields.   

\begin{table}[htbp]\caption{Survey of Surjectivity of Engel Words \( e_m \) on $\SL_2(K)$ and $\mathrm{PSL_2}(K)$}
\centering
\label{tab:engel_survey2}
\small 
\begin{tabular}{@{} l c p{3cm} p{7cm} @{}}
\toprule
\textbf{Group} & \textbf{Condition on \( m \)} & \textbf{Condition on Field} & \textbf{Surjectivity Result} \\
\midrule
\addlinespace
\( \text{SL}(2, k) \) & \( m \geq 2 \) & \(k=\bar k \) & seems unknown; see Klimenko et. al. \cite{kunvkacmoddy} \\
\addlinespace
\( \text{PSL}(2, k) \) &  \( m \geq 1 \) & \( k=\bar k \) & Surjective (see Bandman et al.~\cite{BaZa2016}, and Bien et al.~\cite{BNT2024}) \\
 

\bottomrule
\end{tabular}
\end{table}

Some known results over certain rings are briefly listed in Table~\ref{tab:engel_survey3}. When $R = \mathbb{Z}/p^{\ell}\mathbb{Z}$, commutators are well explored over $\SL_2(R)$ and $\mathrm{PSL}_2(R)$, but results regarding surjectivity of higher Engel words don't seem to be known.   

\begin{table}[htbp]\caption{Survey of Surjectivity of Engel Words \( e_m \) on $\SL_2(R)$ and $\mathrm{PSL_2}(R)$}
\centering
\label{tab:engel_survey3}
\small 
\begin{tabular}{@{} l c p{2cm} p{7cm} @{}}
\toprule
\textbf{Group} & \textbf{Condition on \( m \)} & \textbf{Ring} & \textbf{Surjectivity Result} \\
\midrule
\( \text{SL}(2, \mathbb{Z}_p) \) & \( m \geq 1 \) & \(p>3\) & For $m=1$ partial results are known, see Avni et. al.  \cite{AvniGelanderKassabovShalev}; for $m\geq 2$ seems unknown, see Kleminko et. al. \cite{kunvkacmoddy}. \\
\addlinespace
\( \text{PSL}(2, \mathbb{Z}_p) \) & \( m \geq 1 \) & \(p>3 \) & Surjective for $m=1$, due to Avni et. al.  \cite{AvniGelanderKassabovShalev}; seems unknown for $m\geq 2$, see \cite{kunvkacmoddy}. \\
\addlinespace
\( \text{SL}(2, \mathbb{Z}), \ \ \text{PSL}(2, \mathbb{Z}) \) & \( m \geq 1 \) & \(\mathbb{Z}\) & Not surjective for $m=1$ (hence for $m\geq 2$ as well) due to Ghosh et. al.~\cite{SL2ChenMeiri} \\
\bottomrule
\end{tabular}
\end{table}

\subsection{Notation and Conventions}
In this section, we recall some definitions and set the notation for the rest of the article.
A commutative ring $\mathcal{R}$ with unity is said to be \emph{complete with respect to an ideal $I$} if the canonical map $\mathcal{R} \rightarrow \varprojlim\limits_{j\geq 1} \mathcal{R}/I^j\mathcal{R}$ is an isomorphism.
We denote by $\mathcal{O}$ a local principal ideal ring that is complete with respect to its unique maximal ideal $\mathfrak{m} = (\pi)$. We assume that the residue field $k$ has characteristic not equal to $2$. For $\ell\geq 1$, we denote the quotient ring $\mathcal{O}_{\ell} = \mathcal{O}/\pi^{\ell}\mathcal{O}$. This is a local principal ideal ring of length $\ell$ with unique maximal ideal $\mathfrak{m}_\ell = (\pi_\ell)$, where $\pi_\ell = \pi + (\pi^\ell)$. The canonical map $\mathcal{O}_{\ell + 1} \rightarrow \mathcal{O}_{\ell}$ is denoted by $\theta_\ell$. The kernel of the natural surjection $\theta_j\colon \mathcal{O}_{j+1} \rightarrow \mathcal{O}_j$ is $ker(\theta_j) = \frac{\mathfrak{m}^j}{\mathfrak{m}^{j+1}}$. Therefore, $(ker(\theta_j))^2=0$ for each $j\geq 1$. Note that $\mathcal{O}_1 = \mathcal{O}/\mathfrak{m} \cong k$ and we simply denote $\theta_1 = \theta$.

For a local ring $\mathcal{A}$ with residue field $k$ and unique maximal ideal $\mathfrak{m}$, let $\M_2(\mathcal{A})$ denotes the set of all $2\times 2$ matrices with entries from $\mathcal{A}$. The general linear group $\GL_2(\mathcal{A})$ is the set of all elements $X\in \M_2(\mathcal{A})$ such that $\det(X) \in \mathcal{A}^{\times}$. The special linear group $\SL_2(\mathcal{A})$ is the set of all elements $X\in \M_2(\mathcal{A})$ with $\det(X)=1$. The quotient map $\theta \colon \mathcal{A} \rightarrow k =\mathcal{A}/ \mathfrak{m}$ induces a canonical map $\theta \colon \M_2(\mathcal{A}) \rightarrow \M_2(k)$ denoted by $X\mapsto \overline{X}$. Thus, we get a map $\theta\colon \GL_2(\mathcal{A}) \rightarrow \GL_2(k)$ and $\theta\colon \SL_2(\mathcal{A}) \rightarrow \SL_2(k)$. Further, note that the quotient map $\theta\colon \mathcal{A} \rightarrow k$ also induces a surjective map on the polynomial ring $\theta \colon \mathcal{A}[x_1, x_2, \ldots, x_n] \rightarrow k[x_1, x_2, \ldots, x_n]$ defined by reduction of the coefficients under $\theta$. By abuse of notation, all the above reduction maps are denoted simply by $\theta$, usually clear from the context. Similarly, all the reductions from $\mathcal{O}_{\ell+1}$ level to $\mathcal{O}_{\ell}$ level are denoted by $\theta_{\ell}$. 

The commutator word map $e_1^{\mathcal{A}} \colon \SL_2(\mathcal{A}) \times
\SL_2(\mathcal{A}) \rightarrow \SL_2(\mathcal{A})$ given by  $(X, Y)\mapsto XYX^{-1}Y^{-1}=[X,Y]$ induces a commutator word map at residue field level $\overline{e}_1 \colon \SL_2(k) \times \SL_2(k) \rightarrow \SL_2(k)$. The induced $(m+1)$-th Engel word map $e_{m+1}^{\mathcal{A}} \colon \SL_2(\mathcal{A}) \times
\SL_2(\mathcal{A}) \rightarrow \SL_2(\mathcal{A})$ is defined recursively as $e_{m+1}^{\mathcal{A}}(X,Y) = e_1^{\mathcal{A}}(e_{m}^{\mathcal{A}}(X,Y), Y)$ for $m\geq 1$. We denote $e_0^{\mathcal{A}}(X, Y)=X$. The reduced Engel word map over the field level is denoted by $\Bar{e}_{m+1}\colon \SL_2(k) \times \SL_2(k)\rightarrow \SL_2(k) $ which is defined in a similar way (thus $\Bar{e}_{m+1}$ is $e_{m+1}^k$). The notation $\im(e_{m+1}^{\mathcal{A}})$ denotes the image of the $(m+1)$-th Engel word $e_{m+1}^{\mathcal{A}}$ over the group $\SL_2(\mathcal{A})$. Therefore, $\im(\bar e_{m+1})$ means the image of the $m+1$-th Engel word $\bar e_{m+1}$ over the group $\SL_2(k)$. 

For a polynomial $F(t) = \Sigma_{i=0}^r a_i t^i\in \mathcal R[t]$, its formal derivative with respect to $t$ is denoted by the notation $\frac{df}{dt}$, which is given by $\Sigma_{i=0}^r ia_i t^{i-1}\in \mathcal R[t]$. In the same spirit, for a polynomial $F(x_1,x_2,\ldots, x_n)\in \mathcal R[x_1,x_2,\ldots,x_n]$ we denote the formal partial derivative of $F$ with respect to $x_i$ by the notation $\frac{\partial F}{\partial x_i}$, which is nothing but the formal derivative of $F$ with respect to $x_i$ considering $x_1,\ldots, x_{i-1}, x_{i+1},\ldots, x_n$ as constants. For a polynomial $f(x_1,x_2,\ldots, x_n)\in k[x_1,x_2,\ldots, x_n]$, the Jacobian matrix corresponding to $f$ is given by $J_f = \left[\begin{array}{cccccccc}
\frac{\partial f}{\partial x_1} & \frac{\partial f}{\partial x_2} & \cdots & \frac{\partial f}{\partial x_n} 
\end{array}\right]$. Moreover, $J_f|_{\alpha_0}$
 denotes the Jacobian evaluated at $\alpha_0$ for some $\alpha_0\in k^n$. A polynomial $F(t)\in \R[t]$ is said to be \emph{fundamental irreducible} if $\Bar{F}(t) = f(t)$ is irreducible in $k[t]$. 

An element $X\in \GL_2(\mathcal{A})$ is said to be \emph{regular semisimple} if $\overline{X}$ is regular semisimple in $\GL_n(k)$. For a ring $\mathcal R$ and two elements $A, B \in \M_2(\mathcal R)$, the notation $A \sim_{\mathcal R} B$ means that there exists $C\in \GL_2(\mathcal R)$ such that $A = CBC^{-1}$. The $\GL$-conjugacy class of $X\in \SL_2(\mathcal{A})$ is the set containing conjugates of $X$ by the elements of $\GL_2(\mathcal{A})$. For any $X\in \M_n(\mathcal{A})$ and $g\in \GL_n(\mathcal{A})$ the notation $X^g$ is used to denote $g^{-1}Xg$ and the notation $Ad_g(X)$ denotes $gXg^{-1}$. 
For a commutative ring $\mathcal R$ with unity and a polynomial $\mathfrak f(t)=t^2+ c_1t+c_0\in \mathcal R[t]$, the companion matrix $C_{\mathfrak f}\in\M_2(\mathcal R)$ of degree $2$ is the matrix 
$$\begin{pmatrix} 0 & -c_0 \\ 1 &  -c_1 \end{pmatrix}.$$
The zero matrix in $\M_2(\mathcal R)$ is denoted by a boldfaced zero $\mathbf 0$. The characteristic polynomial of $A \in \GL_2(\mathcal R)$ is denoted by the symbol $\chi(A)(t)$, which is $\det(tI-A)$. It is well known that square matrices over commutative rings with unity satisfy their characteristic equation. Moreover, for a commutative ring $\mathcal{R}$ with unity, $\tr(AB)= \tr(BA)$ for any two matrices $A, B\in \M_n(\mathcal{R})$. For any $r\in \mathbb{N}$, the $r$-th Engel word on $\SL_2(\mathcal{R})$ will be denoted by $e_r^{\mathcal{R}}$. Thus, the trace of the $r$-th Engel word map is denoted by $\tr(e_r(X, Y))$. 
\subsection{$\GL$-conjugacy classes of elements in $\SL_2(\R)$}{\label{O_2conj}}
Let $\R$ be a local principal ideal ring of length $2$. In this section, we provide a brief description of the $\GL$-conjugacy classes of the elements of $\SL_2(\R)$ by looking at their splitting into different types of conjugacy classes. A $\GL$-conjugacy class of an element $A\in \SL_2(\R)$ is the set $\{P^{-1}AP\mid P\in \GL_2(\R)\}$ and we denote this by $[A]_{\GL}$. 
\begin{example}
Consider two matrices $A=\left(\begin{array}{cc}
0  & -1 \\ 1 & 0 \end{array}\right)$ and $B=\left(\begin{array}{cc}   0  & 1 \\  -1 & 0
\end{array}\right)$ in $\SL_2(\R)$, where $\R$ has the residue field $\mathbb{R}$. These two matrices are conjugate in $\GL_2(\R)$. More specifically, we can take $P=\left(\begin{array}{cc} 1 & 0 \\  0 & -1 \end{array}\right)$ such that $P^{-1}BP=A$. But these two matrices are not conjugate in $\SL_2(\R)$. Because, if so, then the reduced matrices $\Bar{A}$ and $\Bar{B}$ must be conjugate by an element of $\SL_2(\mathbb R)$. If possible suppose we have $P=\left(\begin{array}{cc} a & b \\ c & d
\end{array}\right)\in \SL_2(\mathbb{R})$ such that $PAP^{-1}=B$. This implies $a^2 + b^2 = -1$, i.e. $x^2+y^2=-1$ has a solution in $\mathbb{R}^2$; which is a contradiction. 
\end{example}
\noindent Therefore in what follows, we consider the $\GL$-conjugacy classes of the elements of $\SL_2(\R)$ by seeing these inside $\GL_2(\R)$. That would be enough for our purpose as we don't really require exact $\SL_2(\R)$ conjugacy classes in $\SL_2(\R)$. 
 
Let $A$ be a matrix in $\SL_2(\R)$ such that there exists $\mathbf{g}_1$ and $\mathbf{g}_2$ in $\SL_2(\R)$ such that $e_1(\mathbf{g}_1, \mathbf{g}_2) = A$. Then for any $P\in\GL_2(\R)$ we have $e_1(\widehat{\mathbf{g}}_1, \widehat{\mathbf{g}}_2) = P^{-1}AP$, where $\widehat{\mathbf{g}}_i = P^{-1}\mathbf{g}_i P$ for $i=1, 2$. Note that $\widehat{\mathbf{g}}_1, \widehat{\mathbf{g}}_2$ are in $\SL_2(\R)$ again. Hence, an element of $\SL_2(\R)$ is in $\im(e_1)$ if and only if the whole $\GL$-conjugacy class of that element is in $\im(e_1)$. The same would apply to the images of any Engel word $e_m$.

Since, $\SL_2(\R)\unlhd\GL_2(\R)$, we consider the group action via conjugation defined by $$\GL_2(\R) \times\SL_2(\R) \rightarrow \SL_2(\R)$$ $$(\textbf{g}, X)\mapsto \textbf{g}\cdot X=\textbf{g}^{-1}X\textbf{g}.$$ 
Let us denote the orbit of $X$ with respect to the above action by $\GL_2(\R)\centerdot X$.  Therefore, we can write 
\begin{align*}
\SL_2(\R) = \bigcup\limits_{X\in\SL_2(\R)} \GL_2(\R)\cdot X = \bigcup\limits_{X \in \SL_2(\R)} \{ P^{-1} X P \mid P \in \GL_2(\R) \}.
\end{align*}
In \cite[page 1290]{BarringtonCliffWen2010}; Leigh, Cliff, and Wen have listed the similarity class information for matrices in $\GL_2(\mathcal{R})$, where $\mathcal{R} $ is a finite commutative local principal ideal ring with unity, with the residue field of odd characteristic. Their idea is based on constructing a subgroup chain that remains invariant under the action of $\GL_2(\R)$ by conjugation. The length of the chain is finite, which is critical. In fact, the same construction can be carried over to any local principal ideal ring of finite length and residue field with odd characteristic exponent. We make a note of the families of $\GL$-conjugacy classes of $\SL_2(\R)$ below: 
\begin{enumerate}
\item $S(\alpha)  : \left( \begin{array}{cc} \alpha & 0 \\
0 & \alpha\end{array} \right);  \alpha\in \R^{\times}; \alpha^2=1$
\item $D(\alpha,\delta,i)  :  \left( \begin{array}{cc} \alpha & 0 \\
0 & \delta\end{array} \right);   \alpha,\delta\in \R^{\times};\alpha-\delta\in \pi_2^i\R^{\times}; \alpha\delta=1;  0\leq i<2$
\item $H(\alpha,\beta,i) :  \left( \begin{array}{cc} \alpha & \pi_2^{i+1}\beta \\
\pi_2^i & \alpha\end{array} \right); \alpha\in \R^{\times},\beta\in \R/\pi_2^{1-i}\R; \alpha^2-\pi_2^{2i+1}\beta=1;  0\leq i<2 $
\item $H'(\alpha,\beta,i)  :  \left( \begin{array}{cc} \alpha & \pi_2^i\epsilon\beta \\
\pi_2^i\beta & \alpha\end{array} \right); \alpha\in \R,\beta\in \R^{\times}, \alpha^2-\epsilon\beta^2\pi_2^{2i}=1; 0\leq i<2; \epsilon$  is a non square unit in $\R$.
\end{enumerate}
Recall, $\mathfrak{m}_2=(\pi_2)$ is the unique maximal ideal of $\R$.

\begin{remark}\label{surjpower} When $k$ is algebraically closed, the \Cref{lift1} implies that \emph{any unit in $\R$ must be a square.} Therefore a fixed unit $\epsilon$ as above must be a square. In that case, $D(\alpha, \delta, 0)$ family and $H'(\alpha, \beta, 0)$ family coincides. Therefore, any regular semisimple element in $\SL_2(\R)$ must be split. 
\end{remark}

We explore the $\R$-similarity classes of elements of $\SL_2(\R)$ in more detail and present them more clearly.

\noindent\textbf{Family (1).} Take $A = \left(\begin{array}{cc} \alpha &  \\ & \alpha
\end{array}\right)\in \SL_2(\R)$. As $\theta(A)=\Bar{A}\in \SL_2(k)$,we have $\Bar{\alpha} = \pm 1$. Therefore $\alpha$ is of the form $1 + m_1$ or $-1 + m_2$ in $\R^{\times}$, where $m_1, m_2\in \mathfrak{m}_2$. When $\alpha = 1+m_1$ using $\alpha^2=1$, we obtain $m_1=0$. Similarly, for $\alpha = -1 + m_2$, we have $m_2 = 0$. Hence, in this case, the only possibilities for $A$ are $\pm I$.

\noindent\textbf{Family (2).} Let $A = \left(\begin{array}{cc} \alpha &  \\ & \delta
\end{array}\right)\in \SL_2(\R)$. \begin{enumerate}
\item $\mathbf{i=0}:$ In this case $\delta = \alpha^{-1}$ and $\alpha - \delta\in \R^{\times}$.  
\item $\mathbf{i=1}:$ In this case we have $\alpha - \delta\in \pi_2\R^{\times}$. From the residue field information we have $\alpha$ is of the form $1+ \pi_2 a_1$ or $-1 +  \pi_2 a_2$ for $a_1, a_2 \in \R$. Therefore, $\delta$ is of the form $1 + \pi_2(a_1 + v_1)$ or $-1 + \pi_2(a_2 + v_2)$ corresponding to $\alpha = 1+ \pi_2 a_1$ or $1 + \pi_2 a_2$ respectively, for some $v_1, v_2\in \R^{\times}$. Now from the $\alpha \delta = 1$ condition, one can obtain $\delta=1-\pi_2 a_1$ when $\alpha = 1 + \pi_2 a_1$ and $\delta = -1 - \pi_2 a_2$ when $\alpha = -1 + \pi_2 a_2$ respectively. Further, $\alpha - \delta \in \pi_2\R^{\times}$ implies that $a_1, a_2$ must be in $\R^{\times}$.
\end{enumerate}

\noindent\textbf{Family (3).} Let $A=\left( \begin{array}{cc} \alpha & \pi_2^{i+1}\beta \\
\pi_2^i & \alpha\end{array} \right)\in \SL_2(\R)$.
\begin{enumerate}
\item $\mathbf{i=0}:$ In this case there are two possibilities. (a) If $\beta$ is any non-unit then $\alpha = \pm 1$ from the determinant condition.
(b) If $\beta$ is a unit then a $\pi_2^{i+1} \beta \in \pi_2^{i+1}\R$ (see the additive isomorphism $\theta$ on page 1290, \cite{BarringtonCliffWen2010}). Therefore, we can write $\pi_2^{i+1}\beta = \pi_2^{i+1}v$ for some unit $v$ in $\R$. From the determinant condition, we get $\alpha^2 - \pi_2 v = 1$. Reducing this equation mod $\mathfrak{m}$, we get $\bar{\alpha} = \pm1$. Therefore $\alpha$ can be of the form $1 + m_1$ or $-1 + m_2$ where $m_1, m_2\in \mathfrak{m}$. In these cases we obtain $m_1 = 2^{-1}\pi_2 v$ and $m_2 = -2^{-1}\pi_2 v$.

\item $\mathbf{i=1}:$ In this case $\pi_2^{i+1} \beta = 0$. Therefore, the determinant condition gives $\alpha = \pm 1$.
\end{enumerate}

\noindent\textbf{Family (4).} Let $A=\left( \begin{array}{cc} \alpha & \pi_2^i \epsilon \beta \\ \pi_2^i\beta & \alpha \end{array} \right)\in \SL_2(\R)$. 
\begin{enumerate}
\item $\mathbf{i=0}:$ In this case $A=\left( \begin{array}{cc} \alpha & \epsilon\beta \\
\beta & \alpha\end{array} \right)$.
\item $\mathbf{i=1}:$ In this case the determinant condition gives $\alpha = \pm 1$ again.
\end{enumerate}
Based on the above arguments, we list the $\GL$-conjugacy classes of elements of $\SL_2(\R)$ explicitly in Table~\ref{tab:simplified}.

\begin{table}[ht]\caption{$\GL$-conjugacy classes of elements in $\SL_2(\R)$}\label{tab:GL-conjugacy-SL}
\centering
\resizebox{\textwidth}{!}{%
\renewcommand{\arraystretch}{1.8}
\begin{tabular}{|c|c|c|c|c|c|}
\hline
 & \textbf{$S({\alpha})$} & \textbf{$D(\alpha,\delta,i)$} & \multicolumn{2}{c|}{\textbf{$H(\alpha,\label{tab:simplified}\beta,i)$}} & \textbf{$H'(\alpha,\beta,i)$} \\
\hline
\multirow{2}{*}{i=0} & 
$\pm I$ &
$\begin{pmatrix}
\alpha & 0 \\
0 & \delta
\end{pmatrix}$ &
$\begin{pmatrix}
1 & 0 \\
1 & 1
\end{pmatrix}$ &
$\begin{pmatrix}
1+2^{-1}\pi_2 v & \pi_2 v \\
1 & 1+2^{-1}\pi_2 v
\end{pmatrix}$ &
$\begin{pmatrix}
\alpha & \epsilon\beta \\
\beta & \alpha
\end{pmatrix}$ \\
\cline{4-5}
 & & & $\begin{pmatrix}
-1 & 0 \\
1 & -1
\end{pmatrix}$ & $\begin{pmatrix}
-1-2^{-1}\pi_2 v & \pi_2 v \\
1 & -1-2^{-1}\pi_2 v
\end{pmatrix}$ & \\
\hline
\multirow{2}{*}{i=1} &
$\pm I$ &
$\begin{array}{@{}c|c@{}}
\begin{pmatrix}
1+\pi_2 a & 0 \\
0 & 1-\pi_2 a
\end{pmatrix} & \begin{pmatrix}
-1+\pi_2 a & 0 \\
0 & -1-\pi_2 a
\end{pmatrix}
\end{array}$ &
$\begin{pmatrix}
1 & 0 \\
\pi_2 & 1
\end{pmatrix}$ &
$\begin{pmatrix}
-1 & 0 \\
\pi_2 & -1
\end{pmatrix}$ &
$\begin{array}{@{}c|c@{}}
\begin{pmatrix}
1 & \pi_2\epsilon\beta \\
\pi_2\beta & 1
\end{pmatrix} & \begin{pmatrix}
-1 & \pi_2\epsilon\beta \\
\pi_2\beta & -1
\end{pmatrix}
\end{array}$ \\
\hline
\end{tabular}%
}
\end{table}

\section{Lifting a solution of multivariable polynomial over $\R$}{\label{mult-lift}}

In this section, $\R$ denotes a local principal ideal ring of length $2$ which is $\mathcal{O}/ \mathfrak{m}^2$. When the residue field $k$ is perfect, there is a complete classification of local principal ideal rings of length two with residue field $k$; see \cite[Theorem 2.1]{simlengthtwoamri}. Consider a field $k$ and a monic polynomial $f(t) \in k[t]$ in one variable. A root $\alpha_0$ of $f(t)$ is a \emph{simple root} if $$\frac{df}{dt}\Large\vert_{t=\alpha_0}\neq 0$$  where $\frac{df}{dt}$ denotes the first order formal derivative of $f(t)$ with respect to $t$. Similarly, one can define the notion of a simple root for multivariable polynomials over an arbitrary field $k$; see \cite{JPSERREArithmatic}, page 15.

\begin{definition}
Let $f(x_1,x_2,\ldots,x_n)\in k[x_1,x_2,\ldots,x_n]$ be a monic polynomial over the field $k$ and $\alpha$ be a root of $f$. Then, $\alpha$ is said to be a \emph{simple root} of $f(x_1,\ldots, x_n)$ if at least one of the formal partial derivatives $\frac{\partial f}{\partial x_i}|_{\alpha}$ is nonzero for some $1\leq i \leq n$. 
\end{definition}

Now, we study when a simple root of the reduction modulo $\mathfrak{m}$ of a monic polynomial in $\R[x_1,x_2,\ldots,x_n]$ lifts to a root of the original polynomial over $\R$.
\begin{lemma}\label{liftingsimple}
Let $\R$ be a local principal ideal ring of length two with residue field $k$ with characteristic $\neq 2$. Let $F(x_1,x_2,\ldots , x_n)\in \R[x_1,x_2,\ldots, x_n]$ be a monic polynomial such that the reduced polynomial $f(x_1,x_2,\ldots, x_n)\in k[x_1,x_2,\ldots, x_n]$ has a simple root $(s_1,s_2,\ldots, s_n)\in k^n$, then this simple root can be lifted to a root $(\hat s_1,\hat s_2,\ldots, \hat s_n) \in \R^n$ of the polynomial $F(x_1, x_2, \ldots, x_n)$ such that $\theta(\hat s_i) = s_i$ for all $i$. 
\end{lemma}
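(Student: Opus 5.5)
We use a one-step Hensel argument, exploiting that the maximal ideal $\mathfrak{m}_2=(\pi_2)$ of $\R$ satisfies $\mathfrak{m}_2^2=0$. We first fix, for each $i$, an arbitrary lift $a_i\in\R$ of $s_i$, so that $\theta(a_i)=s_i$. Since $f(s_1,\ldots,s_n)=0$ in $k$, we have $F(a_1,\ldots,a_n)\in\mathfrak{m}_2$, and we write $F(a)=\pi_2 c$ for some $c\in\R$. We then look for the root in the form $\hat s_i=a_i+\pi_2 t_i$ with $t_i\in\R$ to be determined. The lift is then automatic, because $\theta(\hat s_i)=s_i$ holds for any choice of $t_i$.

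Next we expand $F(a+\pi_2 t)$. Since $F$ has coefficients in $\R$, the polynomial identity
\[
F(x+h)=F(x)+\sum_{i=1}^n \frac{\partial F}{\partial x_i}(x)\,h_i+\sum_{|\nu|\geq 2}G_\nu(x)h^\nu
\]
holds in $\R[x,h]$ with integral coefficients $G_\nu$; these come from binomial expansion, so no division by factorials is needed. Substituting $h=\pi_2 t$, every term of degree $\geq 2$ in $h$ lies in $\mathfrak{m}_2^2=0$. This gives
\[
F(a+\pi_2 t)=\pi_2\Big(c+\sum_{i}\frac{\partial F}{\partial x_i}(a)\,t_i\Big).
\]
The term $\pi_2\cdot\frac{\partial F}{\partial x_i}(a)t_i$ depends only on the residues of the factors modulo $\mathfrak{m}_2$. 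Moreover, $\theta\big(\frac{\partial F}{\partial x_i}(a)\big)=\frac{\partial f}{\partial x_i}(s)$, since formal differentiation commutes with reduction of coefficients.

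It then suffices to choose $t$ with $c+\sum_i \frac{\partial F}{\partial x_i}(a)t_i\in\mathfrak{m}_2$, since multiplying by $\pi_2$ then gives $0$. By the simplicity hypothesis, some index $j$ has $\frac{\partial f}{\partial x_j}(s)\neq 0$, so $u:=\frac{\partial F}{\partial x_j}(a)$ is a unit in $\R$. We take $t_i=0$ for $i\neq j$ and $t_j=-u^{-1}c$. Then the bracket vanishes, hence $F(\hat s)=0$, which gives the required root.

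The only step needing care is the Taylor expansion over $\R$. We must check that the higher-order coefficients $G_\nu$ lie in $\R$ without dividing by $\nu!$, which the binomial theorem gives directly; and we must use $\pi_2^2=0$, which holds because $\R$ has length two. The hypotheses that $F$ is monic and that $\operatorname{char}k\neq 2$ are not needed for this argument.
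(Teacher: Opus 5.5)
Your proof is correct and follows the same one-step Hensel argument as the paper. Both take an arbitrary lift, expand $F$ to first order, use $\mathfrak{m}_2^2=0$ to kill the higher-order terms, and solve the resulting linear equation with a partial derivative that is a unit.

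The only difference is how that linear equation is solved. You place the entire correction $-u^{-1}c$ on a single coordinate. The paper instead spreads $-F(\check\sigma)$ over all $r$ coordinates whose partial derivatives are units. That forces it to invert $r$, or $2$ and $r-1$ when $p\mid r$, which is the source of its split into characteristic cases and its use of $\mathrm{char}\,k\neq 2$. It also forces the paper's ``without loss of generality'' to mean that the remaining partial derivatives are non-units.

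Your single-coordinate choice removes all of this. Your closing remark is therefore right: neither monicity of $F$ nor $\mathrm{char}\,k\neq 2$ is needed for this lemma.
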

\begin{proof} Let $\sigma=(s_1, \ldots, s_n)$ be a simple root of $f(x_1, \ldots, x_n)$ in $k^n$. Without loss of generality, let $\frac{\partial f}{\partial x_i}|_{\sigma}\neq 0$ for some $i\leq r$ where $1\leq r\leq n$. We make separate cases depending on the field characteristic $k$. 

\noindent\textbf{$(1)$ When the characteristic of $k$ is zero.}   Take any lift $\check \sigma=(\check s_1, \check s_2, \ldots, \check s_n)$ of $\sigma$ in $\R^n$. We provide a choice of $(m_1,m_2,\ldots, m_n)\in \mathfrak{m}_2^n$ for $\check \sigma_0 = (\check s_1+m_1, \check s_2+m_2, \ldots, \check s_n + m_n)$ in $\R^n$ such that $F(\check \sigma_0) = 0$, where $m_i\in \mathfrak{m}_2$ for all $i$. Over any commutative ring $\mathcal{R}$ with unity, and for a polynomial $F(\textbf{x})$ in $n$-variables over $\mathcal{R}$, one can write $$F(\textbf{x} + \textbf{y}) = F(\textbf{x}) + \sum_{i=1}^{n} \frac{\partial F}{\partial x_i} y_i + \sum_{1 \leq i,j \leq n} C_{ij}(\textbf{x},\textbf{y}) y_i y_j$$
where $C_{ij}(\textbf{x}, \textbf{y})\in \mathcal{R}[\textbf{x}, \textbf{y}]$; $\mathbf{x}=(x_1,x_2,\ldots, x_n)$ and $\mathbf{y}=(y_1,y_2, \ldots,y_n)$.  Now $F(\check \sigma_0)=0$ implies $F(\check \sigma)+ \Sigma_{i=1}^r m_i\frac{\partial F}{\partial x_i}|_{\check \sigma} = 0$. As the reductions of $\frac{\partial F}{\partial x_i}|_{\check \sigma}$ are $\frac{\partial f}{\partial x_i}|_{\sigma}$ for $i=1,2, \ldots, r$; which are units in $k$ by assumption. Therefore $\frac{\partial F}{\partial X_i}|_{\check \sigma}$ must be units in $\R$ for $i=1,2, \ldots, r$. Now choose 
$$m_i = -F(\check \sigma)\left(r\frac{\partial F}{\partial x_i}|_{\check \sigma}\right)^{-1} \hspace{5pt} \text{for i=1,2, \ldots,r}$$ 
and for $i=r+1,r+2, \ldots, n$ any element of $\mathfrak{m}$ will work as a choice of $m_i$. This is because $\frac{\partial F}{\partial x_i}|_{\check \sigma}$ are non-units in $\R$ . Therefore, we get the required choice for $m_i$, and we take $\hat s_i= \check s_i+m_i$ for $i=1,2,\ldots,n$.

\noindent\textbf{$(2)$ When the characteristic of $k$ is $p$, an odd prime.} 

\begin{itemize}
\item \textbf{Case I} ($p\nmid r$) :
Choose $$m_i=-F(\check\sigma)\left(r\frac{\partial F}{\partial x_i}|_{\check\sigma} \right)^{-1} \hspace{5pt} \text{for i=1,2,\ldots,r}.$$ 
\item \textbf{Case II} ($p|r$) :
Choose $$m_i = -F(\check\sigma)\left(2(r-1) \frac{\partial F}{\partial x_i}|_{\check\sigma}\right)^{-1} \hspace{5pt} \text{for i=1,2,\ldots,r-1}, \quad m_r = -F(\check\sigma)\left(2\frac{\partial F}{\partial x_i}|_{\check\sigma}\right)^{-1}$$
and for $i=r+1, r+2, \ldots, n$ any element of $\mathfrak{m}$ will work as a choice of $m_i$.  Because then $\frac{\partial F}{\partial x_i}|_{\check\sigma}$ are non-units in $\R$ . Therefore, we get proper choices for $m_i$ and we take $\hat s_i=\check s_i+m_i$ for $i=1,2,\ldots,n$. 
\end{itemize}
\end{proof}

\begin{remark}{\label{rmk lifting}}
Since $(ker(\theta_j))^2=0$ for all $j\geq 1$, this method works for lifting simple roots of polynomial equations from the residue field to any $\mathcal{O}_{\ell}$ of any finite length $\ell\geq 2$. Hence, for a complete local principal ideal ring with residue field $k$ of characteristic $\neq 2$, this lifting strategy can be applied.
\end{remark}

\begin{corollary}\label{lift1}
Let $\R$ be a local principal ideal ring of length two with perfect residue field $k$ of characteristic $\neq 2$. Then, any $L$-th power map over $\R$, of which reduction with respect to $\theta$ is surjective over $k$, is also surjective over the unit group $\R^{\times}$ where $gcd(L, p')= 1$ and $p'$ is the characteristic exponent of $k$.
\end{corollary}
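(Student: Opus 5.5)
Given $u\in\R^{\times}$, I plan to realize $u$ as an $L$-th power by applying Lemma~\ref{liftingsimple} to the one-variable polynomial
\[
F(x)=x^L-u\in\R[x],
\]
which is monic. Its reduction modulo $\mathfrak{m}_2$ is $f(x)=x^L-\bar u\in k[x]$, with $\bar u=\theta(u)\in k^{\times}$.

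The steps are as follows.

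\begin{enumerate}
\item By hypothesis the $L$-th power map on $k$ (equivalently on $k^{\times}$, since $0\mapsto 0$) is surjective. Hence there exists $s\in k$ with $s^L=\bar u$. Since $\bar u\neq 0$, we have $s\neq 0$.

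\item Show that $s$ is a simple root of $f$. We have $\frac{df}{dt}\big|_{s}=L s^{L-1}$. Here $s^{L-1}\neq 0$. Moreover $L\neq 0$ in $k$: in characteristic zero this is automatic, and in characteristic $p$ it follows from $\gcd(L,p')=1$, i.e.\ $p\nmid L$. So the derivative is a unit.

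\item Apply Lemma~\ref{liftingsimple} with $n=1$, so that $r=1$ and only Case I (or the characteristic-zero case) occurs. This produces $\hat s\in\R$ with $\theta(\hat s)=s$ and $F(\hat s)=0$, i.e.\ $\hat s^L=u$. Explicitly, $\hat s=\check s-F(\check s)\bigl(L\check s^{L-1}\bigr)^{-1}$ for any lift $\check s$ of $s$.

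\item Since $\theta(\hat s)=s\neq 0$, the element $\hat s$ is a unit in the local ring $\R$. Hence the $L$-th power map $\R^{\times}\to\R^{\times}$ is surjective.
\end{enumerate}

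The only point needing care is the simplicity of the root, which is exactly where $\gcd(L,p')=1$ enters. Without it, $f'(s)=0$ identically and the Hensel-type step of Lemma~\ref{liftingsimple} breaks down.

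Perfectness of $k$ is not genuinely needed here; it only matches the setting of the cited classification of length-two rings. By Remark~\ref{rmk lifting}, the same argument extends to every $\mathcal{O}_\ell$ and to complete $\mathcal{O}$.
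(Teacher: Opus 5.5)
Your proposal is correct and is essentially the paper's own proof: apply Lemma~\ref{liftingsimple} to the monic polynomial $t^L-\alpha$, noting that a root of the reduction is a unit and that $L\beta_0^{L-1}\neq 0$ in $k$ because $\gcd(L,p')=1$. Your side remarks are also accurate: the explicit one-step Newton correction works because $\mathfrak{m}_2^2=0$, and perfectness of $k$ is never actually used.
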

\begin{proof}
Let $\alpha$ be a unit in $\R$. Consider $F(t) = t^L - \alpha$. Since the reduced polynomial $f(t)=t^L-\Bar{\alpha}$ under $\theta$ must have a solution in $k$, let us denote a solution of it by $\beta_0$. As $\Bar{\alpha}$ must be a unit in $k$, therefore $\beta_0$ must be a unit. Hence $f'(\beta_0) = L \beta_0^{L-1}$ must be non-zero in $k$. Hence, by Lemma~\ref{liftingsimple} we get $F(t) = 0$ has a solution in $\R^{\times}$.
\end{proof}
\noindent An example of a ring $\R$ used in Corollary~\ref{lift1}, one can consider those $\R$ with a perfect residue field $k$, which is algebraically closed.


\section{The Trace Polynomial}{\label{tr}}

The trace polynomial corresponding to the commutator word on $\SL_2(k)$ plays a crucial role in various aspects when the characteristic of $k$ is zero. For a given $\mu\in k$, we have $\mathcal{M}_{\mu} = \{(x_1, x_2, x_3) \in k^3 \mid x_1^2 + x_2^2 + x_3^2-x_1x_2x_3 = \mu \}$, a Markoff surface in $\mathbb{A}_k^3$. Its vanishing polynomial equation is satisfied by commutators over $\SL_2(k)$ whose trace value is $\mu - 2$. In~\cite{AmitPeter2022}, there has also been a study related to integral points and the Hasse principle over the Markoff surface $\mathcal{M}_{\mu}$ when $\mu$ is a given integer. We need to deal with this surface (for Engel words) over $\R$ naturally. We would need to recover matrices in $\SL_2(\R)$ from the existence of $\R$ points (under certain conditions) on these kinds of surfaces defined over $\R$ by using given trace values of the $m+1$-th Engel word on $\SL_2(\R)$. Further, it will require translating the process at each $\mathcal{O}_{\ell}$ level for regular semisimple elements; we discuss this in more detail in Section~\ref{reg-sem}.

Let $K$ be a field and $\textbf{h}_1, \textbf{h}_2\in \SL_2(K)$. Cossey et. al.~\cite{CosseyMacdonaldStreet} proved that $\tr(\textbf{h}_1 \textbf{h}_2) + \tr(\textbf{h}_1 \textbf{h}_2^{-1}) = \tr(\textbf{h}_1) \tr(\textbf{h}_2)$. In fact, this holds true for $\SL_2(\mathcal{R})$, where $\mathcal{R}$ is any commutative ring with unity. This can be easily seen from the proof of~\cite[Lemma 5.2.1]{CosseyMacdonaldStreet}. 

Now we look at the expression of trace of $e_{m+1}^{\mathcal{R}}(A,B)_{\GL}$ where $A,B\in \GL_2(R)$. We use the suffix $\GL$ in the notation $e_{m+1}^{\mathcal{R}}(A,B)$ to denote that $A,B$ be any arbitrary matrices in $ \GL_2(\mathcal{R})$ and to distinguish the case from the case when $A,B$ are the members of $\SL_2(\mathcal{R})$ only.
\begin{lemma}{\label{traceengelinGL}}
    Let $\mathcal{R}$ be a commutative ring with unity and $A, B$ be two elements of $\GL_2(\mathcal{R})$. Then,  $$\tr(e_1^{\mathcal{R}}(A,B)_{\GL}) =[d_Bs_{\GL}^2+u_{\GL}^2+d_Av_{\GL}^2-s_{\GL}v_{\GL}u_{\GL}-2d_Ad_B]d_A^{-1}d_B^{-1}$$ and $$ \tr(e_{m+1}^{\mathcal{R}}(A,B)_{\GL})=[d_Bs_{m_{\GL}}^2+2v_{\GL}^2-s_{m_{\GL}}v_{\GL}^2-2d_B]d_B^{-1}\ \ \ \text{for} \ m\geq 1$$
where $s_{m_{\GL}}=\tr(e_m^{\mathcal{R}}(A,B)_{\GL})$. Moreover $s_{\GL}=\tr(A)$, $u_{\GL}=\tr(AB)$, $v_{\GL}=\tr(B)$, $d_A=\det(A),\ d_B=\det(B)$.
 
\end{lemma}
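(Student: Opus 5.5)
The plan is to reduce both formulas to one identity valid for $2\times 2$ matrices over an arbitrary commutative ring $\mathcal{R}$ with unity, and then substitute. For $Y\in \GL_2(\mathcal{R})$, Cayley--Hamilton gives $Y^2-\tr(Y)Y+\det(Y)I=\mathbf 0$, hence
$$Y^{-1}=\det(Y)^{-1}\bigl(\tr(Y)I-Y\bigr),$$
and therefore, for any $X\in\M_2(\mathcal{R})$,
$$\tr(XY^{-1})=\det(Y)^{-1}\bigl(\tr(X)\tr(Y)-\tr(XY)\bigr).$$
This is the $\GL$-version of the identity of Cossey et al.\ quoted above, and only uses facts valid over any commutative ring. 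Cayley--Hamilton likewise gives $A^2=sA-d_AI$ and $B^2=vB-d_BI$, writing $s=s_{\GL}$, $v=v_{\GL}$, $u=u_{\GL}$.

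\textbf{The commutator formula.} First I will write
$$e_1^{\mathcal{R}}(A,B)=ABA^{-1}B^{-1}=(AB)(BA)^{-1}.$$
Next I apply the identity with $X=AB$ and $Y=BA$. Using $\tr(BA)=\tr(AB)=u$ and $\det(BA)=d_Ad_B$, this gives
$$\tr(e_1^{\mathcal{R}}(A,B)_{\GL})=(d_Ad_B)^{-1}\bigl(u^2-\tr(ABBA)\bigr).$$
By cyclicity, $\tr(ABBA)=\tr(A^2B^2)$. Then I expand
$$A^2B^2=(sA-d_AI)(vB-d_BI)=sv\,AB-sd_B\,A-vd_A\,B+d_Ad_B\,I.$$
Taking traces yields $\tr(A^2B^2)=svu-s^2d_B-v^2d_A+2d_Ad_B$. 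Substituting this gives exactly the stated expression $[d_Bs^2+u^2+d_Av^2-svu-2d_Ad_B]d_A^{-1}d_B^{-1}$.

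\textbf{The recursive formula.} For $m\geq 1$, set $E=e_m^{\mathcal{R}}(A,B)$, so that $e_{m+1}^{\mathcal{R}}(A,B)=[E,B]$. Since $m\geq1$, $E$ is itself a commutator $[X,B]$ with $X=e_{m-1}^{\mathcal{R}}(A,B)$ (where $e_0=A$), so $\det E=1$. Applying the first formula to the pair $(E,B)$ gives
$$\tr([E,B])=\bigl[d_Bs_m^2+\tr(EB)^2+v^2-s_mv\,\tr(EB)-2d_B\bigr]d_B^{-1},$$
with $s_m=s_{m_{\GL}}$. The key step is then to compute
$$EB=XBX^{-1}B^{-1}B=XBX^{-1},$$
so $\tr(EB)=\tr(B)=v$. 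Substituting $\tr(EB)=v$ collapses the bracket to $d_Bs_m^2+2v^2-s_mv^2-2d_B$, which is the claimed formula.

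There is no serious obstacle. The only points needing care are, first, noticing the simplification $\tr(EB)=v$, which uses that the second argument of the Engel word is always $B$; and second, checking that every identity used (Cayley--Hamilton, cyclicity of trace, the formula for $Y^{-1}$) holds over an arbitrary commutative ring, so that no field hypothesis is needed.
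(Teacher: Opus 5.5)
Your proposal is correct and follows essentially the same route as the paper. Both use the Cayley--Hamilton identity $\tr(XY^{-1})=\det(Y)^{-1}\bigl(\tr X\,\tr Y-\tr(XY)\bigr)$ with $X=AB$, $Y=BA$, reduce $\tr(ABBA)$ to $\tr(A^2B^2)$ and expand, and then use $\det e_m=1$ and $\tr(e_mB)=v$ for the recursion. The one addition is that you justify $\tr(e_mB)=\tr(B)$ via $e_mB=e_{m-1}Be_{m-1}^{-1}$, which the paper merely asserts.
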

\begin{proof}
    Let $A,B\in \GL_2(\mathcal{R})$. Then $\chi(A)(A)=\mathbf{0}$ implies $A^2-\tr(A)A+\det(A)I=\mathbf{0}$. This implies $$A-\tr(A)I+\det(A)A^{-1}=\mathbf{0}.$$ Multiplying both side by $B$ from the left, we obtain: $$BA-\tr(A)B+\det(A)BA^{-1}=\mathbf{0}.$$ Taking trace of both side, we get $$\tr(BA^{-1})=[\tr(A)\tr(B)-\tr(AB)]\det(A)^{-1}.$$

Now $e_1(A,B)=(AB)(BA)^{-1}$. Let $X=AB$ and $Y=BA$. Therefore $\tr(e_1(A,B))=\tr(XY^{-1})$. By the above trace expression, we get $$\tr(e_1(A,B))=[\tr(X)\tr(Y)-\tr(XY)]det(Y)^{-1}.$$ Let $s_{\GL}=\tr(A)$, $u_{\GL}=\tr(AB)$, $v_{\GL}=\tr(B)$. Then $\tr(XY)=\tr(AB^2A)=\tr(A^2B^2)$. Again from the respective characteristic equations of $A$ and $B$ we have $A^2=\tr(A)A-\det(A)I$ and $B^2=\tr(B)B-\det(B)I$. Substituting these values in the expression $\tr(XY)=\tr(A^2B^2)$, we get $$\tr(XY)=s_{\GL}v_{\GL}u_{\GL}-d_Bs_{\GL}^2-d_Av_{\GL}^2+2d_Ad_B;$$ where $d_A=\det(A),\ d_B=\det(B)$. Therefore $$\tr(e_1^{\mathcal{R}}(A,B)_{\GL})=[d_Bs_{\GL}^2+u_{\GL}^2+d_Av_{\GL}^2-s_{\GL}v_{\GL}u_{\GL}-2d_Ad_B]d_A^{-1}d_B^{-1}.$$ For $m\geq 1$, $d_{e_m^{\mathcal{R}}(A,B)_{\GL}}=1$ and $\tr(e_m^{\mathcal{R}}(A,B)_{\GL}B)=v_{\GL}$. Therefore for $m\geq 1$;  $$ \tr(e_{m+1}^{\mathcal{R}}(A,B)_{\GL})=[d_Bs_{m_{\GL}}^2+2v_{\GL}^2-s_{m_{\GL}}v_{\GL}^2-2d_B]d_B^{-1}$$
where $s_{m_{\GL}}=\tr(e_m^{\mathcal{R}}(A,B)_{\GL})$.
\end{proof}

\begin{corollary}
Let $\mathcal{R}$ be a commutative ring with unity and $A_1, A_2$ be two elements of $\SL_2(\mathcal{R})$. Then,  $$\tr(e_1^{\mathcal{R}}(A_1,A_2)) = s^2 + u^2 + v^2 - suv - 2$$ 
where $s=\tr(A_1)$, $u=\tr(A_1A_2)$ and $v=\tr(A_2)$. 
 \end{corollary}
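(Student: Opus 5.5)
This follows by specializing Lemma~\ref{traceengelinGL} to determinant-one matrices. Take $A=A_1$ and $B=A_2$ in the first formula of that lemma. Since $A_1,A_2\in\SL_2(\mathcal{R})\subseteq\GL_2(\mathcal{R})$, we have $d_A=d_B=1$. Write $s=s_{\GL}=\tr(A_1)$, $u=u_{\GL}=\tr(A_1A_2)$ and $v=v_{\GL}=\tr(A_2)$. Substituting gives
$$\tr(e_1^{\mathcal{R}}(A_1,A_2)) = [s^2+u^2+v^2-suv-2]\cdot 1^{-1}\cdot 1^{-1} = s^2+u^2+v^2-suv-2,$$
which is the claim.

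The only point to check is that the lemma's derivation applies to $\SL_2$ without modification, and it does. It uses only the Cayley--Hamilton identity $A^2-\tr(A)A+\det(A)I=\mathbf 0$, valid over any commutative ring with unity, together with $\tr(XY)=\tr(YX)$. Here $e_1(A_1,A_2)=(A_1A_2)(A_2A_1)^{-1}$ and $\det(A_2A_1)=1$, so no inversion of non-units is involved.

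As a sanity check, one can redo the computation directly with determinants equal to one. Start from $\tr(XY^{-1})=\tr(X)\tr(Y)-\tr(XY)$ for $Y\in\SL_2$. Take $X=A_1A_2$ and $Y=A_2A_1$, so that $\tr(X)=\tr(Y)=u$. Then expand $\tr(XY)=\tr(A_1^2A_2^2)$ using $A_i^2=\tr(A_i)A_i-I$:
$$\tr\big((sA_1-I)(vA_2-I)\big)=suv-s^2-v^2+2.$$
Hence $\tr(e_1(A_1,A_2))=u^2-(suv-s^2-v^2+2)$, in agreement with the formula above. There is no real obstacle here; the main thing is to keep track of signs and of the factors $d_A^{-1}d_B^{-1}$, which are trivial in this case.
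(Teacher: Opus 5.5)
Your proposal is correct and is the paper's own proof: the paper also sets $A=A_1$, $B=A_2$ in Lemma~\ref{traceengelinGL} and uses $d_A=d_B=1$. Your direct recomputation via $Y^{-1}=\tr(Y)I-Y$ and $A_i^2=\tr(A_i)A_i-I$ is also correct, but the specialization alone already proves the statement.
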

\begin{proof}       
Consider $A=A_1$ and $B=A_2$ in Lemma \ref{traceengelinGL}. Then $d_A=1=d_B$ and the result follows immediately.
\end{proof}
\noindent Therefore for any arbitrary $X, Y\in \SL_2(\R)$, $\tr(e_1^{\R}(X, Y))$ is a $3$-variable polynomial in $\R[s, u, v]$; where $s=\tr(X)$, $u=\tr(XY)$ and $v=\tr(Y)$ are the polynomials in the entries of $X$ and $Y$. Let us denote this by 
$$\Psi_1(s, u, v) = s^2 + u^2 + v^2 - suv - 2 \in \R[s,u,v].$$ 
This is called the \emph{trace polynomial} corresponding to the word $e_1$ over $\SL_2(\R)$.

Now take $s_1 = \Psi_1(s, u, v)$. Then, $$\tr(e_2^{\R}(X, Y))= \tr(e_1^{\R}(e_1^{\R}(X, Y), Y)) = s_1^2 + 2v^2 - s_1v^2 -2$$ as $\tr(e_1^{\R}(X, Y), Y)= \tr(Y) = v$. Now, denote $\tr(e_2(X, Y))$ by $\Psi_2(s_1, v)$. We can write $s_2=\Psi_2(s_1, v)$. Inductively, we note that 
$$s_{m+1} = \Psi_{m+1}(s_m, v) = \tr(e_{m+1}^{\R}(X, Y)) = s_m^2 + 2v^2 - s_mv^2 -2$$ 
for all $m\geq 1$. As $s_1$ is a function of $s, u, v$ and $s_{m+1}$ is a function of $s_m, v$ for all $m\geq 1$ therefore $\Psi_{m+1}(s_m, v)$ can be written as a function of $s, u, v$. Thus, it will be denoted by $\Psi_{m+1}(s, u, v)$ or $s_{m+1}(s, u, v)$. We use the notation $s_{m+1}(s_m,v)$ and $\Psi_{m+1}(s_m, v)$ interchangeably. The reduction of $\Psi_i$ on the residue field will be denoted by $\psi_i$. 

\begin{lemma}{\label{jacobian1}}
Let $\bar\lambda\in k$ and $\Bar{\lambda}\neq \pm 2$. Suppose $(\alpha_0, v_0)\in k^2$ is a solution of $\Tilde{\psi}_{m+1}(s_m, v):=\psi_{m+1}(s_m, v)- \bar \lambda =0$.  
Then, the Jacobian $$J_{\Tilde{\psi}_{m+1}}|_{(\alpha_0,v_0)}=\left[\begin{array}{ccc} \frac{\partial\Tilde{\psi}_{m+1}}{\partial s_m} & \frac{\partial\Tilde{\psi}_{m+1}}{\partial v}  \end{array}\right]\Big |_{(\alpha_0,v_0)}\neq 0.$$
\end{lemma}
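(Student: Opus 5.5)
Compute the partial derivatives of $\psi_{m+1}(s_m,v)=s_m^2+2v^2-s_mv^2-2$ directly:
$$\frac{\partial\Tilde{\psi}_{m+1}}{\partial s_m}=2s_m-v^2,\qquad \frac{\partial\Tilde{\psi}_{m+1}}{\partial v}=4v-2s_mv=2v(2-s_m).$$
We argue by contradiction: suppose both vanish at $(\alpha_0,v_0)$. Since $\mathrm{char}(k)\neq 2$, the second equation gives $v_0=0$ or $\alpha_0=2$. We treat the two cases separately and show that in each the value $\psi_{m+1}(\alpha_0,v_0)$ is forced to be $\pm 2$, contradicting $\bar\lambda\neq\pm2$.

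In the case $v_0=0$, the first equation gives $2\alpha_0=0$, so $\alpha_0=0$. Then $\psi_{m+1}(0,0)=-2$, so $\bar\lambda=-2$, which is excluded.

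In the case $\alpha_0=2$, the first equation gives $v_0^2=4$. Substituting, $\psi_{m+1}(2,v_0)=4+2v_0^2-2v_0^2-2=2$, so $\bar\lambda=2$, again excluded. Hence at least one partial derivative is nonzero, and $J_{\Tilde{\psi}_{m+1}}|_{(\alpha_0,v_0)}\neq 0$.

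There is no real obstacle. The only point needing care is the use of $\mathrm{char}(k)\neq 2$, which is used twice: to cancel the factor $2$ in $2v(2-s_m)$, and to deduce $\alpha_0=0$ from $2\alpha_0=0$. It is also worth noting that the argument treats $s_m$ as an independent variable, which is exactly how the lemma is phrased, so no information about $s_m$ as a function of $(s,u,v)$ is needed.
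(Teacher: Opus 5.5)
Your proof is correct and is the same as the paper's. Both reduce to $2\alpha_0-v_0^2=0$ and $v_0(\alpha_0-2)=0$, and then split into the case $\alpha_0=2$ (forcing $\bar\lambda=2$) and the case $v_0=0$ (forcing $\alpha_0=0$, $\bar\lambda=-2$). The paper also carries a chain-rule term $(2\alpha_0-v_0^2)\,\partial s_m/\partial v$ in the $v$-derivative, but that term vanishes by the first equation, so your direct computation is the same argument written more cleanly.
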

\begin{proof}
For a fixed scalar $\bar\lambda \in k$, let  $\Tilde{\psi}_{m+1}(s_m, v) := \psi_{m+1}(s_m, v)- \bar \lambda = s_m^2 + 2v^2 - s_mv^2 - 2 - \Bar{\lambda}$ for $m\geq 1$. We have $(\alpha_0, v_0)\in k^2$ a solution of $\Tilde{\psi}_{m+1}(s_m, v) =0$. Further, let $(s_0, u_0, v_0)\in k^3$ that satisfies $s_m(s_0, u_0, v_0) = \alpha_0$. We prove the required result by taking a contrapositive. Let us check when the Jacobian is zero, and find the condition for $\Bar{\lambda}$. 

We have the following two equations:
\begin{enumerate}
\item $\frac{\partial\Tilde{\psi}_{m+1}}{\partial s_m}\Big |_{(\alpha_0,v_0)} = 0$ gives $(2s_m-v^2)\Big |_{(\alpha_0, v_0)} = 0$.
\item $\frac{\partial\Tilde{\psi}_{m+1}}{\partial v}|_{(\alpha_0, v_0)} = 0$ implies $\frac{\partial\Tilde{\psi}_{m+1}}{\partial s_m}\Big |_{(\alpha_0,v_0)} \cdot \frac{\partial s_m}{\partial v}\Big |_{(s_{m-1}(s_0, u_0, v_0), v_0)} + \frac{\partial\Tilde{\psi}_{m+1}}{\partial v}\Big |_{(\alpha_0, v_0)} = 0$ when $m\geq 2$, and 
\item[] $\frac{\partial\Tilde{\psi}_{m+1}}{\partial s_m}\Big |_{(\alpha_0,v_0)}\cdot \frac{\partial s_m}{\partial v}\Big |_{(s_{m-1}(s_0, u_0, v_0), u_0, v_0)} + \frac{\partial\Tilde{\psi}_{m+1}}{\partial v}\Big |_{(\alpha_0,v_0)} = 0$ when  $m=1$; $s_0(s_0,u_0,v_0)=s_0$.
\end{enumerate}
Thus, we need to solve the following equations:
\begin{enumerate}
\item $2\alpha_0-v_0^2=0$.
\item $(2\alpha_0-v_0^2)\cdot \frac{\partial s_m}{\partial v}\Big |_{(s_{m-1}(s_0,u_0,v_0),v_0)}+4v_0-2\alpha_0v_0=0\ \ \text{when}\ m\geq 2$, and 
\item[] $(2\alpha_0-v_0^2)\cdot \frac{\partial s_m}{\partial v}\Big |_{(s_{m-1}(s_0,u_0,v_0),u_0,v_0)}+4v_0-2\alpha_0v_0=0\ \ \text{when}\ m=1.$
\end{enumerate}
 
Now, by $(1)$ and $(2)$ we obtain $v_0(\alpha_0 - 2) = 0$. Therefore, either $v_0 = 0$ or $\alpha_0 = 2$. 
\begin{table}[ht]\caption{Values of the tuple $(\alpha_0,v_0,\Bar{\lambda})$}
\label{tab:product s_0u_0v_0}
\scriptsize
\centering
\renewcommand{\arraystretch}{1.8}\begin{tabular}{|l|l|l|}
\hline
$\alpha_0$ & $v_0$ & $\Bar{\lambda}$ \\ \hline
2    & 2  & 2 \\ \hline
2   & -2   & 2 \\ \hline
0   & 0   & -2 \\ \hline
\end{tabular}
\end{table}

\textbf{Case I:} When $\alpha_0 = 2$, we have $v_0 = \pm 2$. This implies $\Bar{\lambda}=2$.

\textbf{Case II:} Now, let $v_0 = 0$. Then, from the above equation, we get $\alpha_0 = 0$. In this case $\Bar{\lambda} = -2$. We list the choice for $(\alpha_0, v_0)$ in the Table~\ref{tab:product s_0u_0v_0}. Hence, when $\Bar{\lambda} \neq \pm 2$, Jacobian is nonzero, i.e., at least one of the partial derivatives $\frac{\partial\Tilde{\psi}_{m+1}}{\partial s_m}$ or $ \frac{\partial\Tilde{\psi}_{m+1}}{\partial v}$ must be nonzero at $(\alpha_0, v_0)$.
\end{proof}

\subsection{Jacobian condition for the trace equation corresponding to Engel words}\label{sub-trace}
In this section, we prove certain technical results required later.

\begin{lemma}{\label{tr 2}}
Let $A_0 = \left (\begin{array}{cc} a  & b \\ c & d \end{array}\right)$ and $B_0=\left (\begin{array}{cc} e & n \\
m & w  \end{array}\right )$ are in $\SL_2(k)$ such that $\tr(\Bar{e}_r(A_0, B_0)) = \lambda_0$ for some $\lambda_0\in k\backslash\{2\}$. Let $r\geq 2$ and  $$f_r(x_1, x_2, x_3, x_4, y_1, y_2, y_3, y_4) = \tr(\Bar{e}_r(x,y))-\lambda_0$$ be a polynomial in $8$ variables over $k$. Consider matrices $x=\left (\begin{array}{cc} x_1  & x_2 \\
        x_3 & x_4
\end{array}\right)$ and $y=\left (\begin{array}{cc}
       y_1  & y_2 \\
        y_3 & y_4
\end{array}\right)$ in $\SL_2(k)$ such that $$\tr(\Bar{e}_r(x,y))-\lambda_0=\Tilde{\psi}_r(s_{r-1},v)$$ where $s_{r-1} = \tr(\Bar{e}_{r-1}(x,y))$ is also a function of $s,u$ and $v$; where $s=x_1+x_4$, $u=x_1y_1+x_2y_3+x_3y_2+x_4y_4$, $v=y_1+y_4$. 
    
Then, the matrix $\left[\begin{array}{cccccccc}
\frac{\partial f_r}{\partial x_1} & \frac{\partial f_r}{\partial x_2} & \frac{\partial f_r}{\partial x_3} & \frac{\partial f_r}{\partial x_4} & \frac{\partial f_r}{\partial y_1} & \frac{\partial f_r}{\partial y_2} & \frac{\partial f_r}{\partial y_3} & \frac{\partial f_r}{\partial y_4}
\end{array}\right]\Big|_{\alpha_0} = \mathbf{0}$ implies $\left[\begin{array}{ccc}
\frac{\partial\Tilde{\psi}_r}{\partial s_{r-1}} & \frac{\partial\Tilde{\psi}_r}{\partial v}  \end{array}\right]\Big |_{(\gamma_{r-1},v_0)}=\mathbf{0}$; where $\alpha_0=(a,b,c,d,e,n,m,w)$ and $\beta_0=(s_0,u_0,v_0)$, $\gamma_0=(s_0,u_0)$, $\beta_0=(\gamma_0,v_0)$ and $\gamma_j=s_j(\gamma_{j-1},v_0)$ for $1\leq j\leq r-1$. Moreover, $s_0 = tr(A_0) = a+d$, $u_0 = tr(A_0B_0) = ae+bm+cn+dw$ and $v_0=tr(B_0)=e+w$.
\end{lemma}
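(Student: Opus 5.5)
By the chain rule, $f_r$ factors through the three trace functions. Write $f_r = \Tilde{\psi}_r(s_{r-1}(s,u,v),v)$ with $s,u,v$ the polynomials in the eight entries from the statement, and put $P=\frac{\partial \Tilde{\psi}_r}{\partial s_{r-1}}$ and $Q=\frac{\partial \Tilde{\psi}_r}{\partial v}$, both evaluated at $(\gamma_{r-1},v_0)$. Then each partial derivative of $f_r$ at $\alpha_0$ is a combination of $P$ and $Q$ weighted by the derivatives of $s,u,v$. The aim is to choose a few of the eight coordinates for which the gradients of $s,u,v$ are visibly independent, and read off $P=Q=0$. The key point is that $x_2,x_3$ do not appear in $s$, that $x_1,x_4$ do not appear in $v$, and that $y$-entries enter $u$ through $A_0$.

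First I compute the $x$-derivatives. Since $v$ does not depend on $x$, for $j=1,\dots,4$ we have
\[\frac{\partial f_r}{\partial x_j}\Big|_{\alpha_0} = P\cdot\Big(\frac{\partial s_{r-1}}{\partial s}\frac{\partial s}{\partial x_j}+\frac{\partial s_{r-1}}{\partial u}\frac{\partial u}{\partial x_j}\Big)\Big|_{\beta_0}.\]
Here $\partial s/\partial x_2=0$, $\partial u/\partial x_2 = m$, and $\partial u/\partial x_3=n$. So if $P\neq 0$, vanishing at $x_2,x_3$ forces $\frac{\partial s_{r-1}}{\partial u}(\beta_0)\,m=\frac{\partial s_{r-1}}{\partial u}(\beta_0)\,n=0$. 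Vanishing at $x_1,x_4$ then gives
\[\frac{\partial s_{r-1}}{\partial s}+e\,\frac{\partial s_{r-1}}{\partial u}=0,\qquad \frac{\partial s_{r-1}}{\partial s}+w\,\frac{\partial s_{r-1}}{\partial u}=0 .\]

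I then argue by contradiction from $P\neq 0$. Suppose $\frac{\partial s_{r-1}}{\partial u}(\beta_0)=0$; the $x_1$ equation then forces $\frac{\partial s_{r-1}}{\partial s}(\beta_0)=0$. Suppose instead $\frac{\partial s_{r-1}}{\partial u}(\beta_0)\neq0$; then $m=n=0$ and $e=w$, so $B_0=\pm I$ because $ew=1$. In that case $\bar e_r(A_0,B_0)=I$ and $\lambda_0=2$, which is excluded. So in either case we are reduced to $\nabla_{(s,u)}s_{r-1}(\beta_0)=0$.

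This is the main obstacle: one has to show that the gradient of $s_{r-1}$ in $(s,u)$ cannot vanish at $\beta_0$. For this I would use the recursion $s_j=s_{j-1}^2+2v^2-s_{j-1}v^2-2$. It gives $\nabla s_j=(2s_{j-1}-v^2)\nabla s_{j-1}$, so by induction $\nabla_{(s,u)}s_{r-1}=\prod_{j=1}^{r-2}(2\gamma_j-v_0^2)\,\nabla_{(s,u)}s_1$. Also $\nabla s_1=(2s-uv,\,2u-sv)$. Therefore a vanishing gradient forces either some factor $2\gamma_j=v_0^2$ or $2s_0=u_0v_0$ together with $2u_0=sv_0$.

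If that route stalls, I would switch to the $y$-derivatives, which bring in $Q$ and variables independent of $x$. Since $\partial v/\partial y_2=0$ and $\partial u/\partial y_2=c$, vanishing at the $y$-coordinates yields the analogous relations with $Q$ included, and these can be combined with the $x$-relations. Failing that, I would reduce to the case treated in Lemma~\ref{jacobian1}. Concretely, $P=0$ together with $f_r=0$ is the condition $2\gamma_{r-1}=v_0^2$. Feeding this into the $y_1$ and $y_4$ equations gives $Q=0$, because the $P$-terms drop out and $\partial v/\partial y_1=\partial v/\partial y_4=1$. So the whole argument rests on establishing $P=0$, and the only genuinely delicate case is the degenerate gradient of $s_{r-1}$. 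That case I would handle by propagating backward through the factors $(2\gamma_j-v_0^2)$, using $\lambda_0\neq2$ to rule out the trivial configurations.
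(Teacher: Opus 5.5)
Your reductions before the ``main obstacle'' are sound. For $P\neq0$ the $x$-partials force either $B_0=\pm I$ (so $\lambda_0=2$) or $\nabla_{(s,u)}s_{r-1}(\beta_0)=0$. The product formula $\nabla_{(s,u)}s_{r-1}=\prod_{j=1}^{r-2}(2\gamma_j-v_0^2)\,\nabla_{(s,u)}s_1$ is correct. The case $\nabla_{(s,u)}s_1=0$ is routine: it forces $s_0(4-v_0^2)=0$ and then $\lambda_0=2$. And $P=0$ gives $Q=0$ from the $y_1$-partial.

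The gap is the remaining case $2\gamma_j=v_0^2$ for some $1\le j\le r-2$, which you leave to an unspecified ``backward propagation using $\lambda_0\neq2$''. That step cannot be carried out, because for $r\ge3$ this case really occurs with $P\neq0$ and $\lambda_0\neq2$. Take $r=3$ with $2\gamma_1=v_0^2$, and put $g:=\gamma_1$. Then:
\begin{itemize}
\item $\partial_s\psi_3=\partial_u\psi_3=0$ at $\beta_0$;
\item $\gamma_2=-g^2+4g-2$;
\item the total $v$-derivative is $\partial_v\psi_3(\beta_0)=(2\gamma_2-v_0^2)\cdot 2v_0(2-g)+2v_0(2-\gamma_2)=2v_0(g-2)^2(2g-1)$.
\end{itemize}
So $g=1/2$, $v_0^2=1$ kills the whole $(s,u,v)$-gradient. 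By the chain rule all eight partials of $f_3$ vanish at $\alpha_0$, while $P=2\gamma_2-v_0^2=-3/2\neq0$ and $\lambda_0=5/16$.

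Explicitly, over $\mathbb{F}_7$ take $A_0=\begin{pmatrix}1&1\\0&1\end{pmatrix}$ and $B_0=\begin{pmatrix}0&2\\3&1\end{pmatrix}$. Then $(s_0,u_0,v_0)=(2,4,1)$, $\gamma_1=4$, $\gamma_2=5$ and $\lambda_0=\gamma_3=6\neq\pm2$. Also $2\gamma_1-v_0^2=0$ and $\partial_v\psi_3(\beta_0)=2\cdot 3+1=0$. Yet $\partial\tilde\psi_3/\partial s_2=2\gamma_2-v_0^2=2\neq0$ at $(\gamma_2,v_0)$.

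So the statement is false for $r\ge3$. For $r=2$ the product is empty, and your argument with the routine case filled in is a complete proof.

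The paper's own proof breaks at the same point. In Step II it derives $v_0(2-\gamma_{r-1})=0$ from its two chain-rule equations, which does not follow. The correct relations between stages $j-1$ and $j$ are $\partial_{s_j}\tilde\psi_r\cdot(2\gamma_{j-1}-v_0^2)=0$ and $\partial_{s_j}\tilde\psi_r\cdot 2v_0(2-\gamma_{j-1})+\partial_v\tilde\psi_r|_{(\gamma_j,v_0)}=0$. Both hold with $\partial_{s_j}\tilde\psi_r\neq0$ in the example above.

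What your product formula does prove is the lemma under the extra hypothesis $2\gamma_j\neq v_0^2$ for $1\le j\le r-2$. The downstream lifting arguments would need to choose the residue-field pair $(A_0,B_0)$ so that this holds, rather than rely on the lemma as stated.
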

\begin{proof} 
\textbf{Step I:}
First, we prove that, for any fixed $r\geq 2$
\begin{eqnarray*}
\left[\begin{array}{ccc}
    \frac{\partial\Tilde{\psi}_r}{\partial s} & \frac{\partial\Tilde{\psi}_r}{\partial u} & \frac{\partial\Tilde{\psi}_r}{\partial v}  \end{array}\right]\Big |_{\beta_0}=\mathbf{0}\implies \left[\begin{array}{ccc}\frac{\partial\Tilde{\psi}_r}{\partial s_1} & \frac{\partial\Tilde{\psi}_r}{\partial v}  \end{array}\right]\Big |_{(\gamma_1,v_0)}=\mathbf{0} \\\implies \left[\begin{array}{ccc}
    \frac{\partial\Tilde{\psi}_r}{\partial s_2} & \frac{\partial\Tilde{\psi}_r}{\partial v}  \end{array}\right]\Big |_{(\gamma_2,v_0)}=\mathbf{0}\implies\cdots\implies \left[\begin{array}{ccc}
    \frac{\partial\Tilde{\psi}_r}{\partial s_{r-1}} & \frac{\partial\Tilde{\psi}_r}{\partial v}  \end{array}\right]\Big |_{(\gamma_{r-1},v_0)}=\mathbf{0} ; 
    \end{eqnarray*}
where $\gamma_0=(s_0,u_0)$, $\beta_0=(\gamma_0,v_0)$ and $\gamma_j=s_j(\gamma_{j-1},v_0)$ for $1\leq j\leq r-1$. Here, the chain rule, analogous to the partial derivatives, is applicable. As $s_1$ is a function of $s,u,v$ therefore the condition $\left[\begin{array}{cccccccc}
  \frac{\partial\Tilde{\psi}_r}{\partial s} & \frac{\partial\Tilde{\psi}_r}{\partial u} & \frac{\partial\Tilde{\psi}_r}{\partial v}\end{array}\right]\Big|_{\beta_0}=\mathbf{0}$ gives the following equations:
\begin{equation}{\label{lemsimp i}}
\frac{\partial\Tilde{\psi}_r}{\partial s}|_{\beta_0}=\frac{\partial\Tilde{\psi}_r}{\partial s_1}|_{(\gamma_1,v_0)}\frac{\partial s_1}{\partial s}|_{\beta_0}=0\implies \frac{\partial\Tilde{\psi}_r}{\partial s_1}|_{(\gamma_1,v_0)}(2s_0-u_0v_0)=0 \end{equation}
 \begin{equation}{\label{lemsimp ii}}
   \frac{\partial\Tilde{\psi}_r}{\partial u}|_{\beta_0}=\frac{\partial\Tilde{\psi}_r}{\partial s_1}|_{(\gamma_1,v_0)}\frac{\partial s_1}{\partial u}|_{\beta_0}=0\implies \frac{\partial\Tilde{\psi}_r}{\partial s_1}|_{(\gamma_1,v_0)}(2u_0-s_0v_0)=0 
  \end{equation}
  \begin{equation}{\label{lemsimp iii}}
   \frac{\partial\Tilde{\psi}_r}{\partial v}|_{\beta_0}=\frac{\partial\Tilde{\psi}_r}{\partial v}|_{(\gamma_1,v_0)}=0.
\end{equation}
Claim: The partial derivative $\frac{\partial\Tilde{\psi}_r}{\partial s_1}|_{(\gamma_1,v_0)}= 0$. We prove this claim by assuming contrarily as follows: If possible, let $\frac{\partial\Tilde{\psi}_r}{\partial s_1}|_{(\gamma_1,v_0)}\neq 0$. Then by Equation~(\ref{lemsimp i}) and Equation~(\ref{lemsimp ii}) we obtain $s_0(4-v_0^2)=0$. Therefore either $v_0=\pm2$ or $s_0=0$. 
Here we recall the trace formula introduced in Section~\ref{tr}, viz., $s_1=\tr(\bar e_1(x,y))=s^2+u^2+v^2-suv-2$ and $s_{m+1}=\tr(\bar e_{m+1}(x,y))=s_m^2+2v^2-s_mv^2-2$ for $m\geq 1$.
  
\noindent \textbf{Case I.} Let $s_0 = 0$. This immediately implies $u_0 = 0$, by Equation~(\ref{lemsimp ii}). Then the trace formula gives $\gamma_1=s_1(\gamma_0)=v_0^2-2$ which further implies $$s_2(\gamma_1,v_0)=(v_0^2-2)^2+2v_0^2-(v_0^2-2)v_0^2-2=2.$$   Continuing this process, inductively we obtain $\lambda_0=s_r(\gamma_{r-1},v_0)=2$; which is a contradiction.

\noindent \textbf{Case II.} When $v_0=\pm 2$ then using Equation~(\ref{lemsimp i}) and Equation~(\ref{lemsimp ii}), we obtain the Table~\ref{tab: values 5 points}.
 \begin{table}[ht]
\scriptsize
\centering
\renewcommand{\arraystretch}{1.8}\begin{tabular}{|l|l|l|l|l|}
\hline
$s_0$ & $u_0$ & $v_0$ & $s_1$ & $\lambda_0$ \\ \hline
$u_0$   & $u_0$  & 2 & 2 & 2 \\ \hline
$-u_0$   & $u_0$   & -2 & 2 & 2 \\ \hline
\end{tabular}
\caption{Values of $(s_0,u_0,v_0,s_1,\lambda_0)$}
\label{tab: values 5 points}
\end{table}
In this case, by a similar argument to case I, we obtain $\lambda_0=2$, which again contradicts the assumption. Therefore, $\frac{\partial\Tilde{\psi}_r}{\partial s_1}|_{(\gamma_1,v_0)}=0$. Hence, the claim is true.

This claim and the Equation~(\ref{lemsimp iii}) together implies $\left[\begin{array}{ccc}
    \frac{\partial\Tilde{\psi}_r}{\partial s_1} & \frac{\partial\Tilde{\psi}_r}{\partial v}  \end{array}\right]\Big |_{(\gamma_1,v_0)}=\mathbf{0}$.

\textbf{Step II:} Secondly we prove that for $2\leq j\leq r-1$; 
\begin{eqnarray*}
\left[\begin{array}{ccc}
\frac{\partial\Tilde{\psi}_r}{\partial s_{j-1}} & \frac{\partial\Tilde{\psi}_r}{\partial v}  \end{array}\right]\Big |_{(\gamma_{j-1},v_0)}=\mathbf{0}\  \text{implies}\  \left[\begin{array}{ccc}
    \frac{\partial\Tilde{\psi}_r}{\partial s_j} & \frac{\partial\Tilde{\psi}_r}{\partial v}  \end{array}\right]\Big |_{(\gamma_j,v_0)}=\mathbf{0}.
     \end{eqnarray*}
Now the condition $\left[\begin{array}{ccc}
    \frac{\partial\Tilde{\psi}_r}{\partial s_{j-1}} & \frac{\partial\Tilde{\psi}_r}{\partial v}  \end{array}\right]\Big |_{(\gamma_{j-1},v_0)}=\mathbf{0}$
    gives the following equations:
    \begin{equation}{\label{lemsimp iv}}
    \frac{\partial\Tilde{\psi}_r}{\partial s_{j-1}}\big |_{(\gamma_{j-1},v_0)}=\frac{\partial\Tilde{\psi}_r}{\partial s_{j}}\big |_{(\gamma_j,v_0)}\frac{\partial s_{j}}{\partial s_{j-1}}\big |_{(\gamma_{j-1},v_0)}=0\implies \frac{\partial\Tilde{\psi}_r}{\partial s_{j}}\big |_{(\gamma_j,v_0)}(2s_{j-1}(\gamma_{j-2},v_0)-v_0^2)=0
  \end{equation}
\begin{equation}{\label{lemsimp v}}
\frac{\partial\Tilde{\psi}_r}{\partial v}\big |_{(\gamma_{j-1},v_0)} =0\implies \frac{\partial\Tilde{\psi}_r}{\partial v}\big |_{(\gamma_{j},v_0)}=0\implies\frac{\partial\Tilde{\psi}_r}{\partial s_{j-1}}\big|_{(\gamma_{j-1},v_0)}\frac{\partial s_{j-1}}{\partial v}\big|_{(\gamma_{j-2},v_0)}+ 2v_0(2-s_{r-1}(\gamma_{r-2},v_0))=0.
\end{equation}
By Equation~(\ref{lemsimp iv}) and Equation~(\ref{lemsimp v}) we have $v_0(2-s_{r-1}(\gamma_{r-2},v_0))=0$. This implies $v=0$ or $s_{r-1}(\gamma_{r-2},v_0)=2$. If possible let  $s_{r-1}(\gamma_{r-2},v_0)=2$. Then $\lambda_0=s_r(\gamma_{r-1},v_0)=2$; which is a contradiction. Therefore $v_0=0$. 
 
If possible let $v_0^2=2s_{j-1}(\gamma_{j-2},v_0)$ in Equation~(\ref{lemsimp iv}). Then we immediately get $s_{j-1}(\gamma_{j-2},v_0)=0$. This implies $s_j(\gamma_{j-1},v_0)=-2$. As $j\leq r-1$, therefore $\lambda_0=s_r(\gamma_{r-1},v_0)=2$ which is again a contradiction. Therefore the only possibility is $v_0^2\neq 2s_{j-1}(\gamma_{j-2},v_0)$. This immediately implies $\frac{\partial\Tilde{\psi}_r}{\partial s_{j}}\big |_{(\gamma_j,v_0)}=0$. Now, considering this together with the first part, it is easy to see that 
\begin{eqnarray*}
\left[\begin{array}{ccc}
\frac{\partial\Tilde{\psi}_r}{\partial s} & \frac{\partial\Tilde{\psi}_r}{\partial u} & \frac{\partial\Tilde{\psi}_r}{\partial v}  \end{array}\right]\Big |_{\beta_0}=\mathbf{0}\implies \left[\begin{array}{ccc}\frac{\partial\Tilde{\psi}_r}{\partial s_1} & \frac{\partial\Tilde{\psi}_r}{\partial v}  \end{array}\right]\Big |_{(\gamma_1,v_0)}=\mathbf{0} \implies\cdots\implies \left[\begin{array}{ccc}
\frac{\partial\Tilde{\psi}_r}{\partial s_{r-1}} & \frac{\partial\Tilde{\psi}_r}{\partial v}  \end{array}\right]\Big |_{(\gamma_{r-1},v_0)}=\mathbf{0}.
\end{eqnarray*}

\textbf{Step III:} As the final step, we prove that 
\begin{eqnarray*}
\left[\begin{array}{cccccccc}
\frac{\partial f}{\partial x_1} & \frac{\partial f}{\partial x_2} & \frac{\partial f}{\partial x_3} & \frac{\partial f}{\partial x_4} & \frac{\partial f}{\partial y_1} & \frac{\partial f}{\partial y_2} & \frac{\partial f}{\partial y_3} & \frac{\partial f}{\partial y_4}
\end{array}\right]\Big|_{\alpha_0} = \mathbf{0}\  \text{implies}\  \left[\begin{array}{cccccccc}
\frac{\partial\Tilde{\psi}_r}{\partial s} & \frac{\partial\Tilde{\psi}_r}{\partial u} & \frac{\partial\Tilde{\psi}_r}{\partial v}\end{array}\right]\Big|_{\beta_0}=\mathbf{0}.
\end{eqnarray*}
As $\Tilde{\psi}_r(s,u,v)=f(x_1,x_2,x_3,x_4,y_1,y_2,y_3,y_4)$ and $s,u,v$ all are the polynomials in $x_1,x_2,x_3,x_4,y_1,y_2,y_3,y_4$ therefore 
\begin{eqnarray*}
\frac{\partial f}{\partial x_i}=\frac{\partial f}{\partial s}\frac{\partial s}{\partial x_i}+\frac{\partial f}{\partial u}\frac{\partial u}{\partial x_i}+\frac{\partial f}{\partial v}\frac{\partial v}{\partial x_i} \ \text{and}\  \frac{\partial f}{\partial y_i}=\frac{\partial f}{\partial s}\frac{\partial s}{\partial y_i}+\frac{\partial f}{\partial u}\frac{\partial u}{\partial y_i}+\frac{\partial f}{\partial v}\frac{\partial v}{\partial y_i}\  \text{for}\  i=1,2,3,4.
       \end{eqnarray*}
This gives the following equations:
\begin{equation}{\label{lemsimp vi}}
\frac{\partial f}{\partial x_1}=\frac{\partial f}{\partial s}\frac{\partial s}{\partial x_1}+\frac{\partial f}{\partial u}\frac{\partial u}{\partial x_1}=\frac{\partial \Tilde{\psi}_r}{\partial s}+\frac{\partial \Tilde{\psi}_r}{\partial u}y_1 \implies \frac{\partial f}{\partial x_1}|_{\alpha_0}=\frac{\partial \Tilde{\psi}_r}{\partial s}|_{\beta_0}+e\frac{\partial \Tilde{\psi}_r}{\partial u}|_{\beta_0}. 
\end{equation} 
\begin{equation}{\label{lemsimp vii}}
\frac{\partial f}{\partial x_2}=\frac{\partial \Tilde{\psi}_r}{\partial u}y_3 \implies \frac{\partial f}{\partial x_2}|_{\alpha_0}=m\frac{\partial \Tilde{\psi}_r}{\partial u}|_{\beta_0}.
       \end{equation}
\begin{equation}{\label{lemsimp viii}}
\frac{\partial f}{\partial x_3}=\frac{\partial \Tilde{\psi}_r}{\partial u}y_2\implies \frac{\partial f}{\partial x_3}|_{\alpha_0}=n\frac{\partial \Tilde{\psi}_r}{\partial u}|_{\beta_0}.
\end{equation} 
\begin{equation}{\label{lemsimp ix}}
      \frac{\partial f}{\partial x_4}=\frac{\partial \Tilde{\psi}_r}{\partial s}+\frac{\partial \Tilde{\psi}_r}{\partial u}y_4\implies \frac{\partial f}{\partial x_4}|_{\alpha_0}=\frac{\partial \Tilde{\psi}_r}{\partial s}|_{\beta_0}+w\frac{\partial \Tilde{\psi}_r}{\partial u}|_{\beta_0}.
      \end{equation}
       \begin{equation}{\label{lemsimp x}}
       \frac{\partial f}{\partial y_1}=\frac{\partial f}{\partial v}\frac{\partial v}{\partial y_1}+\frac{\partial f}{\partial u}\frac{\partial u}{\partial y_1}=\frac{\partial \Tilde{\psi}_r}{\partial v}+\frac{\partial \Tilde{\psi}_r}{\partial u}x_1\implies \frac{\partial f}{\partial y_1}|_{\alpha_0}=\frac{\partial \Tilde{\psi}_r}{\partial v}|_{\beta_0}+a\frac{\partial \Tilde{\psi}_r}{\partial u}|_{\beta_0}
       \end{equation}
       \begin{equation}{\label{lemsimp xi}}
       \frac{\partial f}{\partial y_2}=\frac{\partial \Tilde{\psi}_r}{\partial u}x_3\implies \frac{\partial f}{\partial y_2}|_{\alpha_0}=c\frac{\partial \Tilde{\psi}_r}{\partial u}|_{\beta_0}. 
       \end{equation}
       \begin{equation}{\label{lemsimp xii}}
       \frac{\partial f}{\partial y_3}=\frac{\partial \Tilde{\psi}_r}{\partial u}x_2\implies \frac{\partial f}{\partial y_3}|_{\alpha_0}=b\frac{\partial \Tilde{\psi}_r}{\partial u}|_{\beta_0}.
       \end{equation}
       \begin{equation}{\label{lemsimp xiii}}
       \frac{\partial f}{\partial y_4}=\frac{\partial \Tilde{\psi}_r}{\partial v}+\frac{\partial \Tilde{\psi}_r}{\partial u}x_4\implies\frac{\partial f}{\partial y_4}|_{\alpha_0}=\frac{\partial \Tilde{\psi}_r}{\partial v}|_{\beta_0}+d\frac{\partial \Tilde{\psi}_r}{\partial u}|_{\beta_0}. 
       \end{equation}
Now, $\left[\begin{array}{cccccccc}
\frac{\partial f}{\partial x_1} & \frac{\partial f}{\partial x_2} & \frac{\partial f}{\partial x_3} & \frac{\partial f}{\partial x_4} & \frac{\partial f}{\partial y_1} & \frac{\partial f}{\partial y_2} & \frac{\partial f}{\partial y_3} & \frac{\partial f}{\partial y_4}
\end{array}\right]\Big|_{\alpha_0} = \mathbf{0}$ implies \begin{enumerate}
\item  $(e-w)\frac{\partial\Tilde{\psi}_r}{\partial u}|_{\beta_0}=0$ by  Equation~(\ref{lemsimp vi}) and Equation~(\ref{lemsimp ix}).
\item $(a-d)\frac{\partial\Tilde{\psi}_r}{\partial u}|_{\beta_0}=0$ by  Equation~(\ref{lemsimp x}) and Equation~(\ref{lemsimp xiii}).
\item $m\frac{\partial \Tilde{\psi}_r}{\partial u}|_{\beta_0}=0=n\frac{\partial \Tilde{\psi}_r}{\partial u}|_{\beta_0}$ by  Equation~(\ref{lemsimp vii}) and Equation~(\ref{lemsimp viii}).
\item $c\frac{\partial \Tilde{\psi}_r}{\partial u}|_{\beta_0}=0=b\frac{\partial \Tilde{\psi}_r}{\partial u}|_{\beta_0}$ by  Equation~(\ref{lemsimp xi}) and Equation~(\ref{lemsimp xii}).
\end{enumerate}
If possible let $\frac{\partial \Tilde{\psi}_r}{\partial u}|_{\beta_0}\neq 0$. Then $A_0=aI$ and $B_0=eI$. As $A_0,B_0$ are in $\SL_2(k)$ therefore these conditions give $a=\pm 1$ and $e=\pm 1$. Therefore $\Bar{e}_2(A_0,B_0)=I$ and $\lambda_0=2$ which is a contradiction. Therefore  $\frac{\partial \Tilde{\psi}_r}{\partial u}|_{\beta_0}= 0$. 
       
This and the condition
$$\left[\begin{array}{cccccccc}
\frac{\partial f}{\partial x_1} & \frac{\partial f}{\partial x_2} & \frac{\partial f}{\partial x_3} & \frac{\partial f}{\partial x_4} & \frac{\partial f}{\partial y_1} & \frac{\partial f}{\partial y_2} & \frac{\partial f}{\partial y_3} & \frac{\partial f}{\partial y_4}
\end{array}\right]\Big|_{\alpha_0} = \mathbf{0}$$  together with the Equation~(\ref{lemsimp vi}) and Equation~(\ref{lemsimp x}), implies that $\left[\begin{array}{cccccccc}
\frac{\partial\Tilde{\psi}_r}{\partial s} & \frac{\partial\Tilde{\psi}_r}{\partial u} & \frac{\partial\Tilde{\psi}_r}{\partial v}\end{array}\right]\Big|_{\beta_0}=\mathbf{0}$. Hence the lemma.
  \end{proof}

\begin{lemma}{\label{sum1}}
For $\ell\geq 1$, consider two matrices $A, B \in \GL_n(\mathcal{O}_{\ell+1})$ of the form $A = A_0 + \pi_{\ell+1}^{\ell} X$ and $B = B_0 + \pi_{\ell+1}^{\ell} Y$ for some $A_0, B_0\in \GL_n(\mathcal{O}_{\ell+1})$ and $X, Y$ in $\M_n(\mathcal{O}_{\ell+1})$. Then for every $r\geq 1$, the trace function corresponding to $e_{r+1}^{\mathcal{O}_{\ell+1}}(A,B)_{\GL}$ can be written as $$\tr(e_{r+1}^{\mathcal{O}_{\ell+1}}(A, B)_{\GL}) = \tr(e_{r+1}^{\mathcal{O}_{\ell+1}}(A_0, B_0)_{\GL}) + \pi_{\ell+1}^{\ell} L_{r+1}(X,Y)$$
where $L_{r+1}$ is a function $L_{r+1}\colon \M_n(\mathcal{O}_{\ell+1}) \times \M_n(\mathcal{O}_{\ell+1}) \rightarrow \mathcal{O}_{\ell+1}$ that satisfies $L_{r+1}(X_1+ X_2, Y_1 + Y_2) = L_{r+1}(X_1, Y_1) + L_{r+1}(X_2, Y_2)$ for all $X_1, X_2, Y_1, Y_2 \in M_n(\mathcal{O}_{\ell+1})$. 
\end{lemma}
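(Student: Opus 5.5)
The idea is to expand everything to first order in $\varepsilon:=\pi_{\ell+1}^{\ell}$, using only that $\varepsilon^2=0$ in $\mathcal{O}_{\ell+1}$ (since $2\ell\ge \ell+1$ for $\ell\ge1$). Every product of matrices of the form $M_0+\varepsilon N$ then has the form $(\text{product of the }M_0\text{'s})+\varepsilon(\text{a sum of terms, each linear in one of the }N\text{'s})$. It follows that each Engel word evaluated at $(A,B)$ is $e(A_0,B_0)$ plus $\varepsilon$ times an expression that is additive in the perturbation. Taking the trace then gives the claimed form.

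\textbf{Step 1: inverses.} Since $A_0\in\GL_n(\mathcal{O}_{\ell+1})$ and $\varepsilon^2=0$, the inverse is
\[
A^{-1}=A_0^{-1}-\varepsilon A_0^{-1}XA_0^{-1},
\]
because $(A_0+\varepsilon X)(A_0^{-1}-\varepsilon A_0^{-1}XA_0^{-1})=I+\varepsilon XA_0^{-1}-\varepsilon XA_0^{-1}=I$. The same formula holds for $B$ with $Y$ in place of $X$.

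\textbf{Step 2: the commutator.} Multiplying out
\[
e_1(A,B)=ABA^{-1}B^{-1}
\]
and discarding every $\varepsilon^2$ term gives
\[
e_1(A,B)=e_1(A_0,B_0)+\varepsilon\, D_1(X,Y),
\]
with
\[
D_1(X,Y)=XB_0A_0^{-1}B_0^{-1}+A_0YA_0^{-1}B_0^{-1}-A_0B_0A_0^{-1}XA_0^{-1}B_0^{-1}-A_0B_0A_0^{-1}B_0^{-1}YB_0^{-1}.
\]
This map is $\mathbb{Z}$-linear, indeed $\mathcal{O}_{\ell+1}$-linear, in the pair $(X,Y)$ for fixed $A_0,B_0$.

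\textbf{Step 3: induction on $r$.} Suppose
\[
e_r(A,B)=e_r(A_0,B_0)+\varepsilon D_r(X,Y)
\]
with $D_r$ additive in $(X,Y)$. The next word is $e_{r+1}=[e_r,B]$. Set $C_0:=e_r(A_0,B_0)$, which is invertible, and apply Step 2 with $A_0$ replaced by $C_0$, $X$ replaced by $D_r(X,Y)$, and $B$ unchanged. This gives
\[
e_{r+1}(A,B)=e_{r+1}(A_0,B_0)+\varepsilon D_{r+1}(X,Y),
\]
where $D_{r+1}(X,Y)$ is obtained by substituting $D_r(X,Y)$ and $Y$ into a fixed bilinear-free expression that is linear in its perturbation slots. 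A composition of additive maps is additive, so $D_{r+1}$ is additive. All of this happens in $\GL_n$; the fact that the words land in $\SL$ plays no role.

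\textbf{Step 4: conclusion.} Define $L_{r+1}(X,Y):=\tr D_{r+1}(X,Y)$. Trace is additive, so
\[
L_{r+1}(X_1+X_2,Y_1+Y_2)=L_{r+1}(X_1,Y_1)+L_{r+1}(X_2,Y_2),
\]
and $\tr(e_{r+1}(A,B))=\tr(e_{r+1}(A_0,B_0))+\varepsilon L_{r+1}(X,Y)$. This is the statement of the lemma.

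The only point needing care is bookkeeping. First, $L_{r+1}$ depends on the fixed $A_0,B_0$ but not on $X,Y$. Second, additivity must hold for the \emph{joint} pair sum $(X_1+X_2,Y_1+Y_2)$ and not separately in each variable; this is automatic because each $D_r$ is additive in the pair. One might optionally note that $L_{r+1}$ depends only on $X,Y$ modulo $\mathfrak{m}$, since $\varepsilon\mathfrak{m}=0$, but the lemma does not require this.
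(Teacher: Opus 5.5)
Your proposal is correct and is essentially the paper's proof. It uses the same first-order inverse $A^{-1}=A_0^{-1}-\pi_{\ell+1}^{\ell}A_0^{-1}XA_0^{-1}$ and the same four-term commutator perturbation (the paper's $Q_1$ is your $D_1$). It also uses the same recursion, obtained by re-applying that expansion at $(e_j(A_0,B_0),B_0)$ with perturbations $(Q_j(X,Y),Y)$, and sets $L_{r+1}=\tr Q_{r+1}$.

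The only slip is in your optional closing aside. It is $\pi_{\ell+1}^{\ell}L_{r+1}(X,Y)$, not $L_{r+1}(X,Y)$ itself, that depends only on $X,Y$ modulo $\mathfrak{m}$.
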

\begin{proof}
In this proof we simply write $e_{r+1}^{\mathcal{O}_{\ell+1}}(A, B)$ instead of $e_{r+1}^{\mathcal{O}_{\ell+1}}(A, B)_{\GL}$ for the convenience of notation. We prove the result by using induction on $r$. Since $A = A_0 + \pi_{\ell+1}^{\ell} X$ and $B = B_0 + \pi_{\ell+1}^{\ell} Y$ are in $\GL_n(\mathcal{O}_{\ell+1})$, we get $A^{-1}=A_0^{-1}-\pi_{\ell+1}^{\ell} A_0^{-1}XA_0^{-1}$ and $B^{-1}=B_0^{-1}-\pi_{\ell+1}^{\ell} B_0^{-1}YB_0^{-1}$. Therefore,
\begin{eqnarray*}
e_1^{\mathcal{O}_{\ell+1}}(A,B) &=& (A_0 + \pi_{\ell+1}^{\ell} X)(B_0+\pi_{\ell+1}^{\ell} Y)(A_0^{-1}-\pi_{\ell+1}^{\ell}A_0^{-1}XA_0^{-1})(B_0^{-1}-\pi_{\ell+1}^{\ell} B_0^{-1}YB_0^{-1}) \\ 
&=& A_0B_0A_0^{-1}B_0^{-1}+\pi_{\ell+1}^{\ell}(XB_0A_0^{-1}B_0^{-1}+A_0YA_0^{-1}B_0^{-1}-A_0B_0A_0^{-1}XA_0^{-1}B_0^{-1}-A_0B_0A_0^{-1}B_0^{-1}YB_0^{-1})\\
&=& e_1^{\mathcal{O}_{\ell+1}}(A_0, B_0) + \pi_{\ell+1}^{\ell} Q_1(X,Y) 
\end{eqnarray*}
where $Q_1(X,Y) = XB_0A_0^{-1}B_0^{-1} + A_0YA_0^{-1}B_0^{-1} -A_0 B_0 A_0^{-1}XA_0^{-1} B_0^{-1} -A_0 B_0 A_0^{-1} B_0^{-1} Y B_0^{-1}$. It is easy to see 
\begin{equation}{\label{coordinate linearity of Q1}}
     Q_1(X_1+X_2,Y_1+Y_2)=Q_1(X_1,Y_1)+Q_1(X_2,Y_2)
\end{equation}
for all $X_1,X_2,Y_1,Y_2\in M_n(\mathcal{O}_{\ell+1})$. Now a similar calculation gives the following:
\begin{eqnarray*}
e_2^{\mathcal{O}_{\ell+1}}(A,B) = e_1^{\mathcal{O}_{\ell+1}}(e_1^{\mathcal{O}_{\ell+1}}(A,B),B) &=& e_1^{\mathcal{O}_{\ell+1}}(e_1^{\mathcal{O}_{\ell+1}}(A_0,B_0)+\pi_{\ell+1}^{\ell} Q_1(X,Y),B_0+\pi_{\ell+1}^{\ell} Y) \\ 
&=& e_2^{\mathcal{O}_{\ell+1}}(A_0,B_0)+\pi_{\ell+1}^{\ell} Q_2(X,Y),
\end{eqnarray*}
where 

\begin{eqnarray*}
&&Q_2(X,Y)=Q_1(X,Y)B_0e_1^{\mathcal{O}_{\ell+1}}(A_0,B_0)^{-1}B_0^{-1}+e_1^{\mathcal{O}_{\ell+1}}(A_0,B_0)Ye_1^{\mathcal{O}_{\ell+1}}(A_0,B_0)^{-1}B_0^{-1}- \\ &&e_1^{\mathcal{O}_{\ell+1}}(A_0,B_0)B_0e_1^{\mathcal{O}_{\ell+1}}(A_0,B_0)^{-1}Q_1(X,Y)e_1^{\mathcal{O}_{\ell+1}}(A_0,B_0)^{-1}B_0^{-1}-e_1^{\mathcal{O}_{\ell+1}}(A_0,B_0)B_0e_1^{\mathcal{O}_{\ell+1}}(A_0,B_0)^{-1}B_0^{-1}YB_0^{-1}.
\end{eqnarray*}
By using induction on $r$, first we prove the following statement: For all positive integer $r\geq 1$, $$Q_{r+1}(X_1+X_2,Y_1+Y_2)=Q_{r+1}(X_1,Y_1)+Q_{r+1}(X_2,Y_2)$$ for all $X_1,X_2,Y_1,Y_2\in M_n(\mathcal{O}_{\ell+1})$.
    
\noindent \textbf{Base Case} $\mathbf{(r=1)}.$
Equation~(\ref{coordinate linearity of Q1}) together with the expression of $Q_2(X,Y)$ implies $$Q_2(X_1+X_2,Y_1+Y_2)=Q_2(X_1,Y_1)+Q_2(X_2,Y_2)$$ for all $X_1,X_2,Y_1,Y_2\in M_n(\mathcal{O}_{\ell+1})$. Hence, the base case is true.

\textbf{Induction Hypothesis.} Let the statement is true for $r= 2, 3,\ldots, j-1$.

\textbf{Inductive step} $\mathbf{(r=j)}.$ In this case, one can write 
\begin{eqnarray}{\label{expression ej+1}}
e_{j+1}^{\mathcal{O}_{\ell+1}}(A,B)= e_1^{\mathcal{O}_{\ell+1}}(e_j^{\mathcal{O}_{\ell+1}}(A,B),B)          &=& e_1^{\mathcal{O}_{\ell+1}}(e_j^{\mathcal{O}_{\ell+1}}(A_0,B_0)+\pi_{\ell+1}^{\ell} Q_j(X,Y),B_0+\pi_{\ell+1}^{\ell} Y) \\ \nonumber
 &=& e_{j+1}^{\mathcal{O}_{\ell+1}}(A_0,B_0)+\pi_{\ell+1}^{\ell} Q_{j+1}(X,Y),
\end{eqnarray}
where 
\begin{eqnarray*}
&&Q_{j+1}(X,Y)=Q_j(X,Y)B_0e_j^{\mathcal{O}_{\ell+1}}(A_0,B_0)^{-1}B_0^{-1}+e_j^{\mathcal{O}_{\ell+1}}(A_0,B_0)Ye_j(A_0,B_0)^{-1}B_0^{-1}-\\
&& e_j^{\mathcal{O}_{\ell+1}}(A_0,B_0)B_0e_j^{\mathcal{O}_{\ell+1}}(A_0,B_0)^{-1}Q_j(X,Y)e_j^{\mathcal{O}_{\ell+1}}(A_0,B_0)^{-1}B_0^{-1}-e_j^{\mathcal{O}_{\ell+1}}(A_0,B_0)B_0e_j^{\mathcal{O}_{\ell+1}}(A_0,B_0)^{-1}B_0^{-1}YB_0^{-1}.
      \end{eqnarray*}
By the induction hypothesis we have $$Q_j(X_1+X_2,Y_1+Y_2) = Q_j(X_1,Y_1) + Q_j(X_2,Y_2)$$
for all $X_1, X_2, Y_1, Y_2\in M_n(\mathcal{O}_{\ell+1})$. This, together with the expression of $ Q_{j+1}(X, Y)$, implies that the statement holds for $r=j$ and hence the statement is true for all $r\geq 1$.

Consider $L_{r+1}(X, Y) = \tr(Q_{r+1}(X, Y))$ for any $r\geq 1$. In the other words, for fixed $A_0, B_0\in \GL_n(\mathcal{O}_{\ell+1})$, we have the map 
$$L_{r+1}\colon \M_n(\mathcal{O}_{\ell+1}) \times \M_n(\mathcal{O}_{\ell+1}) \rightarrow \mathcal{O}_{\ell+1}$$ 
given by $(X,Y) \mapsto \tr(Q_{r+1}(X,Y))$. Therefore for any $r\geq 1$, $L_{r+1}(X_1+ X_2,Y_1 + Y_2) = L_{r+1}(X_1, Y_1) + L_{r+1}(X_2, Y_2)$ for all $X_1, X_2, Y_1, Y_2\in M_n(\mathcal{O}_{\ell+1})$. This and Equation~(\ref{expression ej+1}) together complete the proof.
\end{proof}

\begin{corollary}{\label{scl}}
Suppose $X= \mu_1 A_0$ and $Y=\mu_2B_0$ for some fixed $\mu_1, \mu_2\in \mathcal{O}_{\ell+1}$ $(\ell\geq 1)$. Then, $Q_{r+1}(X,Y)=\mathbf{0}$, hence $L_{r+1}(X,Y)=\tr(Q_{r+1}(X,Y))=0$.
\end{corollary}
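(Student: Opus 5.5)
The plan is a direct induction on $r$, using the explicit recursive formulas for $Q_1$ and $Q_{j+1}$ from the proof of Lemma~\ref{sum1}. The point is that each $Q_{j+1}$ is built from $Q_j$ and $Y$ by terms that cancel in pairs once $Y$ is a scalar multiple of $B_0$ and $Q_j=\mathbf{0}$.

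\textbf{Base step ($Q_1$).} Substitute $X=\mu_1A_0$ and $Y=\mu_2B_0$ into
$$Q_1(X,Y)=XB_0A_0^{-1}B_0^{-1}+A_0YA_0^{-1}B_0^{-1}-A_0B_0A_0^{-1}XA_0^{-1}B_0^{-1}-A_0B_0A_0^{-1}B_0^{-1}YB_0^{-1}.$$
Since $\mu_1,\mu_2$ are central scalars, the four terms become $\mu_1 e_1(A_0,B_0)$, $\mu_2 e_1(A_0,B_0)$, $-\mu_1 e_1(A_0,B_0)$ and $-\mu_2 e_1(A_0,B_0)$. They cancel, so $Q_1(X,Y)=\mathbf{0}$.

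\textbf{Inductive step.} Note that the displayed formula for $Q_2$ is exactly the general formula for $Q_{j+1}$ with $j=1$. So assume $Q_j(X,Y)=\mathbf{0}$ and write $E=e_j^{\mathcal{O}_{\ell+1}}(A_0,B_0)$. In the formula for $Q_{j+1}$, the first and third terms contain the factor $Q_j(X,Y)$ and hence vanish. The remaining two terms are
$$EYE^{-1}B_0^{-1}-EB_0E^{-1}B_0^{-1}YB_0^{-1}=\mu_2 EB_0E^{-1}B_0^{-1}-\mu_2 EB_0E^{-1}B_0^{-1}=\mathbf{0}.$$
Hence $Q_{j+1}(X,Y)=\mathbf{0}$ for all $j\ge 1$, and therefore $L_{r+1}(X,Y)=\tr(Q_{r+1}(X,Y))=0$.

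\textbf{Sanity check.} The same conclusion follows conceptually. We have $A=(1+\pi_{\ell+1}^{\ell}\mu_1)A_0$ and $B=(1+\pi_{\ell+1}^{\ell}\mu_2)B_0$, and commutators are unchanged when their arguments are multiplied by central scalars. So $e_{r+1}(A,B)=e_{r+1}(A_0,B_0)$ exactly, which forces $\pi_{\ell+1}^{\ell}Q_{r+1}(X,Y)=\mathbf{0}$.

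The explicit cancellation above is preferable, because it gives $Q_{r+1}=\mathbf{0}$ itself and not just its product with $\pi_{\ell+1}^{\ell}$. There is no real obstacle; the only care needed is to track the order of the non-commuting factors correctly, noting that $\mu_2$ can be pulled out of any position.
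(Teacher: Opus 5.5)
Your proof is correct and follows the paper's argument. The paper likewise checks $Q_1(X,Y)=\mathbf{0}$ directly from the explicit formula, notes $Q_2(X,Y)=\mu_2e_2(A_0,B_0)-\mu_2e_2(A_0,B_0)=\mathbf{0}$, and continues inductively, which is your cancellation of the two surviving $Y$-terms. You are also right that scalar invariance alone would only give $\pi_{\ell+1}^{\ell}Q_{r+1}(X,Y)=\mathbf{0}$, so the explicit computation is what proves the stated claim.
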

\begin{proof}
By using the expression, we can verify that $Q_1(X, Y) = \mathbf{0}$. Therefore $Q_2(X,Y) = \mu_2e_2^{\mathcal{O}_{\ell+1}}(A_0, B_0)-\mu_2e_2^{\mathcal{O}_{\ell+1}}(A_0, B_0) = \mathbf{0}$. Continuing this process inductively for any $r\geq 1$ we can show $Q_{r+1}(X,Y)=\mathbf{0}$, hence $L_{r+1}(X,Y)=\tr(Q_{r+1}(X,Y))=0$.
\end{proof}

Now, we introduce a specific subgroup of $\GL_2(\mathcal{A})$ and the trace expression of the $m+1$-th Engel word over this subgroup, which will be used further to prove the main result of Section~\ref{reg-sem}. 

\subsection{A very special subgroup of $\GL_2(\mathcal{A})$.}{\label{trace in GL}} Let $\mathcal{A}$ be any local ring with residue field $k$ of characteristic other than $2$. Consider the set $\mathfrak{H}=\{P\in \GL_2(\mathcal{A})\mid\ \theta(P)\in \SL_2(k)\}$. Clearly, with respect to matrix multiplication, $\mathfrak{H}$ forms a subgroup of $\GL_2(\mathcal{A})$. Moreover, the reduction map $\mathfrak{H}\rightarrow \SL_2(k)$ is a surjection.

\noindent\textbf{Expression of $\tr(e_{m+1}(X,Y)_{\mathfrak{H}})$ in terms of the entries of $X,Y$ when $X,Y\in \mathfrak{H}$ are matrix variables:} Let $X=\left (\begin{array}{cc}
     x_1& x_2 \\
     x_3&x_4 
\end{array}\right )$ and $Y=\left (\begin{array}{cc}
     y_1&y_2  \\
     y_3&y_4 
\end{array}\right )$ be two matrix variables from $\mathfrak{H}$. As $s_{1_{\mathfrak{H}}}=\tr(e_1(X,Y)_{\mathfrak{H}})$ is a function of $s_{\mathfrak{H}}=\tr(X)$, $u_{\mathfrak{H}}=\tr(XY)$, $v_{\mathfrak{H}}=\tr(Y)$ and all these three are the functions in the entries of $X,Y$ therefore one can think $\tr(e_{m+1}(X,Y)_{\mathfrak{H}})$ as a function in the entries of $X,Y$ when $X,Y$ are matrix variables from $\mathfrak{H}$. Now for any given $\lambda\in \mathcal{A}$, consider  $\Phi_{m+1}(s_{m_{\mathfrak{H}}},v_{\mathfrak{H}}, d_{Y})=\tr(e_{m+1}(X,Y)_{\mathfrak{H}})-\lambda$. Using this expression together with Lemma~\ref{traceengelinGL} we get $$d_Y\Phi_{m+1}(s_{m_{\mathfrak{H}}},v_{\mathfrak{H}}, d_{Y})=d_Ys_{m_{\mathfrak{H}}}^2+2v_{\mathfrak{H}}^2-s_{m_{\mathfrak{H}}}v_{\mathfrak{H}}^2-2d_Y-\lambda d_Y.$$ Substituting the expressions of $s_{m_{\mathfrak{H}}}$ successively we obtain $$d_X^{\mu_1} d_Y^{\mu_2}\Phi_{m+1}(s_{m_{\mathfrak{H}}},v_{\mathfrak{H}}, d_{Y})=\text{Polynomial in}\ x_1,x_2,x_3,x_4,y_1,y_2,y_3,y_4$$ where $\mu_1,\mu_2$ are positive integers and $s_{m_{\mathfrak{H}}}=\tr(e_{m+1}(X,Y)_{\mathfrak{H}})$ for $m\geq 1$.

We denote this polynomial (in the right hand side of the above equation) by $\widetilde F_{m+1}^{\mathcal{A}}(x_1,x_2,x_3,x_4,y_1,y_2,y_3,y_4)$ and $d_X^{\mu_1} d_Y^{\mu_2}\Phi_{m+1}(s_{m_{\mathfrak{H}}},v_{\mathfrak{H}}, d_{Y})$ by $\widetilde \Phi(s_{m_{\mathfrak{H}}},v_{\mathfrak{H}}, d_{Y},d_X)$. Therefore if we consider the equation $\Phi_{m+1}(s_{m_{\mathfrak{H}}},v_{\mathfrak{H}}, d_{Y})=0$, it is equivalent to consider the equation $$\widetilde F_{m+1}^{\mathcal{A}}(x_1,x_2,x_3,x_4,y_1,y_2,y_3,y_4)=0$$ in terms of the entries of $X$ and $Y$. As $\theta (X),\theta(Y)\in \SL_2(k)$ therefore $d_Y,d_X$  always take values in $\mathcal{A}^{\times}$.

\section{Lift of Scalars as Commutators in $\SL_2(\R)$}{\label{commutator-local ring-length two}}

In this section, we look at the lift of $\pm I \in \SL_2(k)$ to $\SL_2(\R)$ if they are commutators. Recall, $\R$ is a local principal ideal ring of length $2$. An element $A \in \GL_2(\R)$ is said to be regular semisimple if its reduction under $\theta$, i.e., $\Bar{A}$, is regular semisimple. That is, the characteristic polynomial of $\Bar{A}$ is separable over $k$. If $A \in \SL_2(\R)$ is regular semisimple, then $\tr(\Bar{A})$ cannot be $\pm 2$. This is because, if it is $\pm 2$ then the characteristic polynomial of $\bar A$ will be $(x \pm 1)^2$, which means $\bar A$ has a repeated eigenvalue. We note that the smallest degree monic polynomial that annihilates $A$ is said to be a \emph{minimal polynomial} for $A$. It turns out that (see \cite[section 3]{PanjaRoySingh2025}), for a regular semisimple $A$, its minimal polynomial is uniquely determined, which is equal to the characteristic polynomial $\chi(A)(t)$.

\begin{lemma}{\label{sim1}}
Let $A, B$ be two regular semisimple elements in $\GL_2(\R)$ such that $B = TAT^{-1}$ for some $T\in \GL_2(\R)$. Let $x - \alpha$ be a divisor of the characteristic polynomial of $A$ where $\alpha \in \R^{\times}$. Then, for each $s\in \R^{\times}$ there exists a matrix $S\in \GL_2(\R)$ with $det(S)=s$ such that $B = SAS^{-1}$.
\end{lemma}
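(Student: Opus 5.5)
The idea is to modify $T$ by an element of the centralizer of $A$ whose determinant can be chosen freely. Since $A$ is regular semisimple and $\chi(A)(t)$ has the linear factor $t-\alpha$ with $\alpha\in\R^{\times}$, $\chi(A)(t)$ splits as $(t-\alpha)(t-\delta)$ over $\R$, with $\delta=\det(A)\alpha^{-1}\in\R^{\times}$. Regular semisimplicity gives $\bar\alpha\neq\bar\delta$, so $\alpha-\delta\in\R^{\times}$. The plan has three steps:
\begin{enumerate}
\item Diagonalize $A$ over $\R$.
\item Show that every diagonal matrix lies in the centralizer of the diagonal form.
\item Use such a centralizing element to correct the determinant of $T$ to the prescribed $s$.
\end{enumerate}

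For the diagonalization, consider $E_1=(A-\delta I)(\alpha-\delta)^{-1}$ and $E_2=(A-\alpha I)(\delta-\alpha)^{-1}$. By Cayley--Hamilton, $(A-\alpha I)(A-\delta I)=\mathbf 0$. Hence $E_1,E_2$ are complementary orthogonal idempotents with $E_1+E_2=I$, and $A=\alpha E_1+\delta E_2$. Over the local ring $\R$, the images $E_i\R^2$ are finitely generated projective modules, hence free. Their ranks are read off modulo $\mathfrak m_2$: since $\bar\alpha\neq\bar\delta$ are the two distinct eigenvalues of $\bar A$, each $\bar E_i$ has rank $1$. So $\R^2=E_1\R^2\oplus E_2\R^2$ with each summand free of rank one. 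Choosing basis vectors $w_1,w_2$ of the summands gives $P=[w_1\ w_2]\in\GL_2(\R)$ with $P^{-1}AP=D:=\mathrm{diag}(\alpha,\delta)$.

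Next, every diagonal invertible matrix $\mathrm{diag}(c,1)$, $c\in\R^{\times}$, commutes with $D$. Therefore $C=P\,\mathrm{diag}(c,1)P^{-1}$ lies in $\GL_2(\R)$, commutes with $A$, and has $\det C=c$. Given $s\in\R^{\times}$, set $c=s\det(T)^{-1}$, which is a unit, and put $S=TC$. Then
\[
SAS^{-1}=TCAC^{-1}T^{-1}=TAT^{-1}=B
\]
and $\det S=\det(T)\,c=s$, as required.

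The only step needing care is the first one, namely that the idempotent decomposition yields free rank-one summands. I would handle it either by projectivity over a local ring, as above, or concretely. For the concrete route, pick a column of $E_1$ that reduces to a nonzero vector mod $\mathfrak m_2$; this column is unimodular and spans $E_1\R^2$ by Nakayama's lemma. Do the same for $E_2$. The two chosen columns reduce to a basis of $k^2$, so the resulting $P$ has unit determinant.
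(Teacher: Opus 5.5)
Your proof is correct and rests on the same key idea as the paper: in a frame where the matrix is $\mathrm{diag}(\alpha,\delta)$, the element $\mathrm{diag}(c,1)$ centralizes it and lets you prescribe the determinant. The paper diagonalizes both $A$ and $B$ by citing the conjugacy table and Lemma~2.8 of \cite{PanjaRoySingh2025}, and takes $S=T_2^{-1}\mathrm{diag}(s_0,1)T_1$. You instead diagonalize only $A$, via the explicit idempotents $E_1,E_2$, and set $S=TC$, which makes your version self-contained; in fact $C=I+(c-1)E_1$ already works without the free rank-one step, since $\tr E_1=1$ and $\det E_1=0$.
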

\begin{proof}
As $A\sim _{\R}  B$, both regular semisimples, by $\R$-conjugacy family information described in Table~\ref{tab:simplified} and by Lemma 2.8 \cite{PanjaRoySingh2025} there exists $T_1, T_2 \in \GL_2(\R)$ such that  $T_1AT_1^{-1} = T_2BT_2^{-1} = \left(\begin{array}{cc}  \alpha & 0  \\ 0 & \beta \end{array}\right)$ for some $\beta\in \R^{\times}$. Let us take $s_0 = s (det(T_2 T_1^{-1}))$ and consider $S_0 = \left(\begin{array}{cc} s_0 & 0  \\ 0 & 1    \end{array} \right)$. Therefore, $S_0 T_1 A T_1^{-1} S_0^{-1} = T_1 A T_1^{-1} = T_2 B T_2^{-1} \implies T_2^{-1} S_0 T_1 A T_1^{-1} S_0^{-1} T_2 = B$. The element $S = T_2^{-1}S_0T_1$ works. 
\end{proof}

\begin{remark}{\label{existence of delta not}}
When $|k|\geq 5$ of characteristic $\neq 2$ there always exists some $\delta \in k$ such that $\delta^2\neq 1,0$. Pick any element from $k^{\times}\backslash\{\pm 1\}$ and denote it by $\delta$. Take a lift say, $\Delta_0$ of $\delta$ in $\R^{\times}$. Then, $\Delta_0\neq \Delta_0^{-1}$. Because otherwise $\delta^2$ will be $1$, which is a contradiction.
\end{remark}

\begin{lemma}{\label{lift I}}
For $\beta\in \R^{\times}$ and $\epsilon$ a non square unit in $\R$, the element $ \left(\begin{array}{cc} 1 & \pi_2\epsilon\beta \\  \pi_2\beta &1 \end{array}\right)$ is a commutator, i.e., it belongs to $\im(e_1^{\R})$ in $\SL_2(\R)$. 
\end{lemma}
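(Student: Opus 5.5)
We will exhibit the commutator directly, using the structure of the target matrix $A=\left(\begin{smallmatrix} 1 & \pi_2\epsilon\beta \\ \pi_2\beta & 1\end{smallmatrix}\right)$. Write $A = I + \pi_2 N$ with $N=\left(\begin{smallmatrix} 0 & \epsilon\beta \\ \beta & 0\end{smallmatrix}\right)$. Since $\pi_2^2=0$ in $\R$, it suffices to find $X,Y\in\SL_2(\R)$ with $XYX^{-1}Y^{-1}=I+\pi_2 N$.

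\textbf{First approach.} Take $Y$ to be a regular semisimple element with reduction $\bar Y$ such that $N$ lies in the image of $\mathrm{Id}-\mathrm{Ad}_{\bar Y}$ on $\M_2(k)$. Concretely, take the diagonal matrix $Y=\mathrm{diag}(\Delta_0,\Delta_0^{-1})$ with $\Delta_0$ as in Remark~\ref{existence of delta not}, and $X = I+\pi_2 Z$ with $Z\in\M_2(\R)$ and $\tr(Z)=0$. Then $\det X = 1$, and
\[
XYX^{-1}Y^{-1} = (I+\pi_2 Z)(I-\pi_2 YZY^{-1}) = I+\pi_2\bigl(Z - YZY^{-1}\bigr).
\]
For $Z=\left(\begin{smallmatrix} 0 & z_2\\ z_3 & 0\end{smallmatrix}\right)$ we get
\[
Z-YZY^{-1}=\begin{pmatrix} 0 & (1-\Delta_0^2)z_2 \\ (1-\Delta_0^{-2})z_3 & 0\end{pmatrix}.
\]
Both $1-\Delta_0^{2}$ and $1-\Delta_0^{-2}$ are units, since $\delta^2\neq 1$. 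So we may set $z_2=\epsilon\beta(1-\Delta_0^2)^{-1}$ and $z_3=\beta(1-\Delta_0^{-2})^{-1}$, and we are done. Only the values of these entries modulo $\pi_2$ matter.

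\textbf{Main obstacle.} This argument requires $|k|\geq 5$, so that $\delta$ exists. If $|k|=3$, no such diagonal $Y$ exists. In that case we would instead take $\bar Y$ to be a non-split regular semisimple element (for example, the order-$4$ element $\left(\begin{smallmatrix}0&-1\\1&0\end{smallmatrix}\right)$) and check that the image of $\mathrm{Id}-\mathrm{Ad}_{\bar Y}$ on $\mathfrak{sl}_2(k)$ contains the class of $N$. For this $\bar Y$, $\mathrm{Ad}_{\bar Y}$ sends $\left(\begin{smallmatrix} a&b\\c&-a\end{smallmatrix}\right)$ to $\left(\begin{smallmatrix} -a&-c\\-b&a\end{smallmatrix}\right)$. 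Hence the image of $\mathrm{Id}-\mathrm{Ad}_{\bar Y}$ is $\{\left(\begin{smallmatrix} 2a&b+c\\b+c&-2a\end{smallmatrix}\right)\}$. This contains $N$ only when $\epsilon\equiv 1$, which is false. So in this case we would conjugate, using Lemma~\ref{sim1}-style freedom (a general $\GL_2$-conjugate of $\bar Y$), so that the image, which is the orthogonal complement of the centralizer line of $\bar Y$ under the trace form, contains $N$. It suffices to choose $\bar Y$ regular semisimple in $\SL_2(k)$ with $\tr(\bar Y N)=0$, for instance $\bar Y$ with zero diagonal, $\bar Y=\left(\begin{smallmatrix}0&-c^{-1}\\c&0\end{smallmatrix}\right)$ and $\tr(\bar Y N)=c\epsilon\beta-c^{-1}\beta=0$, requiring $c^2=\epsilon^{-1}$, which is impossible. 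So we would take $\bar Y$ with nonzero diagonal entries and solve the resulting quadratic over $k$. We expect this small-field case analysis to be the only delicate point, and the paper's standing assumption $|k|\ge 5$ likely sidesteps it.
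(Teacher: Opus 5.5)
Your proof is correct for $|k|\ge 5$, and it takes a genuinely different, more explicit route than the paper.

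\textbf{The paper's route.} It sets $D=\begin{pmatrix}\Delta_0&\pi_2\\0&\Delta_0^{-1}\end{pmatrix}$ and observes that $AD$ and $D$ are regular semisimple with the same characteristic polynomial. It then uses the $\GL$-conjugacy classification of Section~\ref{O_2conj} together with Lemma 2.8 of \cite{PanjaRoySingh2025} to get $AD\sim_{\R}D$. Finally, Lemma~\ref{sim1} lets it choose the conjugator $S$ with determinant one, so that $A=SDS^{-1}D^{-1}$ is a commutator. This is the Thompson-style criterion ``$g$ is a commutator iff $gD$ is $\SL$-conjugate to $D$''.

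\textbf{Your route.} You write the commutator in closed form, with $X=I+\pi_2 Z$ in the first congruence subgroup and $[I+\pi_2Z,Y]=I+\pi_2(\mathrm{Id}-\mathrm{Ad}_Y)(Z)$. Everything then reduces to two linear equations with unit coefficients $1-\Delta_0^{\pm 2}$, and your computations all check out.

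\textbf{What each buys.} Your argument is self-contained: it needs neither the conjugacy classification nor Lemma~\ref{sim1}. It is also exactly the first-order mechanism behind Proposition~\ref{adelic 1} and the derivative-map arguments later in the paper. The paper's version, by contrast, matches how it handles the diagonal lift of $I$ in Proposition~\ref{prop:lift of I}. Note that the paper's proof also needs $\delta$ from Remark~\ref{existence of delta not}, hence $|k|\ge 5$, even though the lemma does not state this hypothesis. So your worry about $k=\mathbb{F}_3$ applies equally to the paper.

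\textbf{Closing the small-field case.} Your unfinished analysis closes immediately with your own linearization, and no quadratic has to be solved. For $Y=\begin{pmatrix}a&b\\c&d\end{pmatrix}$ one has $\tr(YN)=\beta(b+\epsilon c)$, which is linear in the entries. For instance, $Y=\begin{pmatrix}1&-\epsilon\\1&1-\epsilon\end{pmatrix}$ lies in $\SL_2(\R)$, is non-scalar modulo $\mathfrak{m}_2$, and satisfies $\tr(YN)=0$.

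Since $Z(\bar Y)$ is spanned by $\bar Y-\tfrac12\tr(\bar Y)I$ and $\tr\bar N=0$, this says $\bar N\in Z(\bar Y)^{\perp}$. On $\mathfrak{sl}_2(k)$ the map $\mathrm{Id}-\mathrm{Ad}_{\bar Y}$ has one-dimensional kernel $Z(\bar Y)$ and image contained in $Z(\bar Y)^{\perp}$. By non-degeneracy of the trace form, its image is therefore exactly $Z(\bar Y)^{\perp}$, which contains $\bar N$.

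Lift a preimage $\bar Z$ to a trace-zero $Z$; then $[I+\pi_2 Z,Y]=A$. This proves the lemma for every residue field of characteristic $\neq 2$, and it requires only that $\bar Y$ be non-scalar, not regular semisimple.
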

\begin{proof}
Consider  $A= \left(\begin{array}{cc} 1 & \pi_2\epsilon\beta \\  \pi_2\beta & 1 \end{array} \right)$ where $\epsilon$ is a non-square unit and $\beta$ is an unit in $\R$. Take, $\lambda\in \R^{\times}$ such that $\lambda^2\neq 1$ and consider $D = \left(\begin{array}{cc} \Delta_0  &  \pi_2 \\ 0& \Delta_0^{-1} \end{array}\right)$. There exists  $\Delta_0 \in \R^{\times}$ such that $\bar \Delta_0 \neq \bar\Delta_0^{-1}$ by Remark~\ref{existence of delta not}. Now, $D$ and $AD$ are regular semisimples with the same characteristic polynomial, which has a factor $t -\Delta_0$. Therefore, $AD \sim_{\R} D$ by the $\R$-conjugacy class information listed in Table~\ref{tab:simplified} and Lemma 2.8 \cite{PanjaRoySingh2025}. Hence invoking Lemma~\ref{sim1}, there exists $S\in \SL_2(\R)$ such that $A=e_1^{\R}(S^{-1}, D)$.
\end{proof}

The following is a well-known result in the field case, which is true more generally. 
\begin{proposition}{\label{minus Id}}
The element $-I$ is a commutator, i.e., it belongs to $\im(e_1^{\R})$ in $\SL_2(\R)$ if and only if $-1$ can be written as a sum of two squares in the residue field $k$ of $\R$.
\end{proposition}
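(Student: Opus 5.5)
The plan is to reduce both directions to the classical structure of commutators equal to $-I$, and to handle the lift from $k$ to $\R$ with the Hensel-type Lemma~\ref{liftingsimple}.

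\textbf{Necessity.} Suppose $-I=XYX^{-1}Y^{-1}$ with $X,Y\in\SL_2(\R)$. Reducing modulo $\mathfrak m_2$ gives $-I=[\bar X,\bar Y]$ in $\SL_2(k)$, that is, $\bar X\bar Y\bar X^{-1}=-\bar Y$. Taking traces gives $\tr(\bar Y)=0$, and symmetrically $\tr(\bar X)=0$. The same relation also gives $\tr(\bar X\bar Y)=0$.

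Write $\bar Y=\begin{pmatrix} a&b\\ c&-a\end{pmatrix}$ and $\bar X=\begin{pmatrix} p&q\\ r&-p\end{pmatrix}$. The condition $\tr(\bar X\bar Y)=0$ reads $2ap+br+cq=0$. Together with $-a^2-bc=1$ and $-p^2-qr=1$, this yields a representation of $-1$ as a sum of two squares. The cleanest route is Thompson's theorem for fields \cite[Theorem 1]{Thompson1961}, which needs $|k|>3$; for $|k|=3$ we have $-1=1+1$ anyway. Either way, $-1$ is a sum of two squares in $k$.

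\textbf{Sufficiency.} Suppose $-1=x_0^2+y_0^2$ in $k$. The standard field construction takes
$$\bar X=\begin{pmatrix}0&1\\-1&0\end{pmatrix},\qquad \bar Y=\begin{pmatrix}x_0&y_0\\ y_0&-x_0\end{pmatrix}.$$
Then $\det\bar Y=-x_0^2-y_0^2=1$, and one checks $\bar X\bar Y\bar X^{-1}=-\bar Y$, so $[\bar X,\bar Y]=-I$.

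To lift, we want $x,y\in\R$ with $x^2+y^2=-1$, reducing to $(x_0,y_0)$. Consider $F(x,y)=x^2+y^2+1\in\R[x,y]$. Since $\operatorname{char}k\neq 2$, its partial derivatives $2x_0$ and $2y_0$ cannot both vanish, because $x_0=y_0=0$ would force $-1=0$. So $(x_0,y_0)$ is a simple root, and Lemma~\ref{liftingsimple} lifts it to $(x,y)\in\R^2$. Now set
$$X=\begin{pmatrix}0&1\\-1&0\end{pmatrix},\qquad Y=\begin{pmatrix}x&y\\ y&-x\end{pmatrix}\in\SL_2(\R).$$
The identity $XYX^{-1}=-Y$ is a polynomial identity in the entries and holds over any commutative ring. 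It follows that $[X,Y]=-I$, as required.

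\textbf{Main obstacle.} The only real gap is the necessity direction for very small $k$, where Thompson's theorem needs $|k|>3$. For $k=\mathbb F_3$ we simply note $-1=1+1$. Since $k$ is a finite field here, or more generally one could use the identity above directly, there are no further issues.
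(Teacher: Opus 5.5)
Your argument is correct. The sufficiency half is the same as the paper's: you lift a simple root of $x^2+y^2+1$ by Lemma~\ref{liftingsimple} and use $\begin{pmatrix}0&1\\-1&0\end{pmatrix}$ and $\begin{pmatrix}x&y\\y&-x\end{pmatrix}$.

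The difference is in necessity. You reduce to $\SL_2(k)$ and then invoke Thompson's field theorem as a black box, treating $\mathbb F_3$ separately because of its size hypothesis. Your coordinate computation with $2ap+br+cq=0$ is left unfinished and carries no weight. The paper instead proves the field statement directly:
\begin{itemize}
\item from $\tr\bar B_0=0$ one gets $\bar B_0^2=-I$;
\item so $\bar B_0$ is non-scalar with characteristic polynomial $t^2+1$, hence $\GL_2(k)$-conjugate to $J=\begin{pmatrix}0&-1\\1&0\end{pmatrix}$;
\item the matrices anticommuting with $J$ are exactly $\begin{pmatrix}a_0&b_0\\b_0&-a_0\end{pmatrix}$, and $\det=1$ gives $a_0^2+b_0^2=-1$.
\end{itemize}

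Your route is shorter on the page. However, it makes the result depend on the precise form of Thompson's theorem, in particular its ``only if'' part for $-I$, and it forces a case split on $|k|$. The paper's route is self-contained and uniform in $k$. It is also exactly the short computation that would finish your coordinate approach: work in the basis $w,\bar Y w$ for a cyclic vector $w$ of $\bar Y$.

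One small correction: the proposition does not assume $k$ is finite. For finite $k$ of odd characteristic the right-hand condition always holds, so necessity is vacuous and the real content lies in fields like $\mathbb R$. Your argument never uses finiteness, so that remark is harmless.
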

\begin{proof}
Suppose we have $A_0, B_0\in \SL_2(\R)$ such that $[A_0, B_0]= A_0B_0A_0^{-1}B_0^{-1} =-I$. This implies $A_0B_0A_0^{-1}=-B_0$, hence, $\tr(B_0) = 0$ since characteristic of $k\neq 2$. Hence, $B_0$ satisfies its characteristic polynomial $x^2 + 1$, thus, $B_0^2 = -I$. Over $k$, this gives $\Bar{B}_0^2 = -I$ and $\Bar{B}_0 = P \left(\begin{array}{cc}  0 & -1 \\ 1  & 0 \end{array}\right)P^{-1}$ for some $P\in GL_2(k)$. Let $P^{-1}\Bar{A}_0P= \left(\begin{array}{cc}  a_0 & b_0 \\  c_0  & d_0
\end{array}\right)$. Then $\Bar{A}_0 \Bar{B}_0 = -\Bar{B}_0 \Bar{A}_0$ implies $(P^{-1}\Bar{A}_0P)\left(\begin{array}{cc}
    0 & -1 \\
   1  & 0
\end{array}\right)=-\left(\begin{array}{cc}
    0 & -1 \\
   1  & 0
\end{array}\right)(P^{-1}\Bar{A}_0P)$. Solving this equation together with the condition $det(\Bar{A}_0)=1$ we get $a_0^2 + b_0^2 = -1$ in $k$. 

Conversely, let there exist $a, b$ in $k$ such that $a^2 + b^2 = -1$. Consider the polynomial $F(x, y)= x^2 + y^2 + 1$ in $ \R[x,y]$. Then the reduced polynomial $f$ over $k$ has a root $(a, b) \in k^2$. Since $a^2+ b^2 = -1$, therefore at least one of $ a$ and $ b$ must be nonzero. Hence, at least one of $\frac{\partial f}{\partial x}|_{(a,b)}, \frac{\partial f}{\partial y}|_{(a,b)}$ must be nonzero in $k$ as $2$ is unit in $k$. Thus, $(a, b)$ is a simple root of $f(x, y) = x^2 + y^2 + 1$ in $k^2$. By taking $n=2$ in Lemma~\ref{liftingsimple} we get $F(x, y) = 0$ has a solution $(\alpha_0, \beta_0) \in \R$ such that $\bar{\alpha}_0 = a$ and $\bar{\beta}_0 = b$. This gives, $\alpha_0^2 + \beta_0^2 = -1$ in $\R$. Now consider the matrices $Q_1=\left(\begin{array}{cc} 0 &  1\\ -1 & 0 \end{array}\right)$ and $Q_2 = \left(\begin{array}{cc} \alpha_0 &  \beta_0\\
\beta_0 & -\alpha_0 \end{array}\right)$ in $\SL_2(\R)$, then $[Q_1, Q_2] = -I$.
\end{proof}

\begin{corollary}
If $\R$ is finite with residue field of odd characteristic, then $-I$ must be in $\im(e_1^{\R})$. (This is because the norm map on the residue field level is surjective, which is a finite field).
\end{corollary}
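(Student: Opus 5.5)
The statement is a corollary of Proposition~\ref{minus Id}, so the plan is to check its hypothesis: $-1$ must be a sum of two squares in the residue field $k$ of $\R$. Since $\R$ is finite, $k$ is a finite field $\mathbb{F}_q$ with $q$ odd. We then invoke the converse direction of Proposition~\ref{minus Id}. That direction lifts a simple root of $x^2+y^2+1$ via Lemma~\ref{liftingsimple} and writes $-I=[Q_1,Q_2]$ explicitly.

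The key step is to show that every element of $\mathbb{F}_q$, in particular $-1$, is a sum of two squares. I would use a counting (pigeonhole) argument. The set of squares $S=\{x^2 : x\in\mathbb{F}_q\}$ has $(q+1)/2$ elements, since squaring is two-to-one on $\mathbb{F}_q^\times$ and $0$ is a square. For a fixed $c\in\mathbb{F}_q$, the set $c-S=\{c-y^2\}$ also has $(q+1)/2$ elements. Since $|S|+|c-S|=q+1>q$, the two sets intersect, which gives $x^2=c-y^2$, that is, $x^2+y^2=c$. Taking $c=-1$ gives $a,b\in k$ with $a^2+b^2=-1$.

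Alternatively, in the spirit of the parenthetical remark, one can note that $-1$ is a norm from $\mathbb{F}_{q^2}$. If $-1$ is a square, take $b=0$. Otherwise $\mathbb{F}_{q^2}=\mathbb{F}_q(i)$ with $i^2=-1$, the norm is $a^2+b^2$, and surjectivity of the norm on finite fields finishes the argument. I would present the pigeonhole argument because it avoids the case split.

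There is no real obstacle here. The only things to confirm are that $\R$ finite forces $k$ finite, and that the characteristic is odd, so that $2$ is a unit and Proposition~\ref{minus Id} applies.
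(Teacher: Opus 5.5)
Your proposal is correct and follows the paper's route. You reduce to the converse direction of Proposition~\ref{minus Id} by checking that $-1$ is a sum of two squares in the finite residue field $k=\mathbb{F}_q$; the paper justifies this by surjectivity of the norm $\mathbb{F}_{q^2}\to\mathbb{F}_q$, which is your stated alternative, while your pigeonhole count is an equally valid and slightly more elementary way to get the same fact without the case split on whether $-1$ is a square.
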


Now we are ready to talk about the lifts of $I$ in $\SL_2(\mathcal{O}_2)$ being commutator. Similar questions for the lifts of $-I$ will be discussed in Section~\ref{towards Shalev's conj}.  
\begin{proposition}\label{prop:lift of I}
Let $\R$ be a local principal ideal ring of length two with residue field $k$, perfect of odd characteristic, and $|k|\geq 5$. Then, any lift of $I$ in $\SL_2(\mathcal{O}_2)$ must be a commutator in $\SL_2(\mathcal{O}_2)$. 
\end{proposition}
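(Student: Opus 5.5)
Any lift $A$ of $I$ has the form $A = I + \pi_2 X$ with $X \in \M_2(\R)$ well defined modulo $\pi_2$. The condition $\det A = 1$ forces $\tr(\bar X) = 0$. Since membership in $\im(e_1^{\R})$ is invariant under $\GL_2(\R)$-conjugation (Section~\ref{O_2conj}), it suffices to treat one representative of each $\GL$-conjugacy class of lifts of $I$ in Table~\ref{tab:simplified}. The reductions of these representatives are exactly $I$, so the relevant entries are $S(1) = I$, the split class $D(1+\pi_2 a, 1-\pi_2 a, 1)$ with $a \in \R^{\times}$, the unipotent class $H(1,\beta,1)=\begin{pmatrix} 1 & 0\\ \pi_2 & 1\end{pmatrix}$, and the non-split class $H'(1,\beta,1)$. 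The identity is trivially $e_1(I,I)$. The class $H'(1,\beta,1)$ is exactly Lemma~\ref{lift I}, so only the split and unipotent classes remain.

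For both remaining classes I will copy the argument of Lemma~\ref{lift I}. Fix $\Delta_0 \in \R^{\times}$ with $\bar\Delta_0 \neq \bar\Delta_0^{\pm1}$ by Remark~\ref{existence of delta not}; this is where $|k| \geq 5$ is used. Take a regular semisimple $D \in \SL_2(\R)$ with $\chi(D)(t) = (t-\Delta_0)(t-\Delta_0^{-1})$ such that $AD$ has the same characteristic polynomial. Then $AD$ is again regular semisimple, because its reduction is $\bar D$. By Table~\ref{tab:simplified} and \cite[Lemma 2.8]{PanjaRoySingh2025}, $AD \sim_{\R} D$. Lemma~\ref{sim1} then gives a conjugator $S$ with $\det S = 1$ and $AD = SDS^{-1}$, so $A = SDS^{-1}D^{-1} = e_1^{\R}(S, D)$.

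The only real work, and the step I expect to be the main obstacle, is choosing $D$ so that $\det(AD)=1$ and $\tr(AD)=\tr(D)$. Since $\det A = 1$, the determinant condition is automatic. For the trace, write $\tr(AD) = \tr(D) + \pi_2\tr(XD)$, so I need $\pi_2 \tr(XD) = 0$, i.e. $\tr(\bar X\bar D) = 0$ in $k$.
\begin{itemize}
\item \emph{Split class.} Here $\bar X = \mathrm{diag}(\bar a,-\bar a)$. Take $D = \begin{pmatrix} \Delta_0 & \pi_2 \\ 0 & \Delta_0^{-1}\end{pmatrix}$ as in Lemma~\ref{lift I}; then $\tr(\bar X \bar D) = \bar a(\bar\Delta_0 - \bar\Delta_0^{-1}) \neq 0$, so this $D$ fails. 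Instead take $D$ with $\bar D = \begin{pmatrix} \alpha & \beta\\ \gamma & \alpha\end{pmatrix}$, where $\alpha = (\bar\Delta_0+\bar\Delta_0^{-1})/2$ and $\beta\gamma = \alpha^2-1 \neq 0$, and lift it to $\SL_2(\R)$ using Lemma~\ref{liftingsimple} on the determinant equation. Equal diagonal entries kill $\tr(\bar X\bar D)$, and $\bar D$ is regular semisimple since its eigenvalues are $\bar\Delta_0^{\pm1}$. Alternatively, one can take $D$ to be a lift of $\bar P\,\mathrm{diag}(\bar\Delta_0,\bar\Delta_0^{-1})\bar P^{-1}$ for a suitable $\bar P \in \SL_2(k)$.
\item \emph{Unipotent class.} Here $\bar X = \begin{pmatrix}0&0\\1&0\end{pmatrix}$, so $\tr(\bar X\bar D) = \bar D_{12}$. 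The upper-triangular choice $D = \mathrm{diag}(\Delta_0,\Delta_0^{-1})$ with zero $(1,2)$ entry works directly.
\end{itemize}
In each case I must also check that $AD$ and $D$ have identical characteristic polynomials over $\R$, not merely over $k$. This holds because the trace equality was arranged exactly in $\R$, using $\pi_2^2 = 0$.

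Having handled every class, the $\GL$-invariance of $\im(e_1^{\R})$ finishes the proof. The perfectness of $k$ enters only through the classification of the conjugacy classes and through the lifting lemmas being applicable.
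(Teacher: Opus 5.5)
Your argument is correct, but it differs from the paper's in two of the three nontrivial classes.

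\textbf{What the paper does.} The paper uses the mechanism of Lemma~\ref{lift I} only for the non-split class $H'(1,\beta,1)$. The other two classes are handled by explicit constructions:
\begin{itemize}
\item For the unipotent class, it writes down a commutator of lower triangular matrices, $e_1^{\R}(Z_1,Z_2)=\begin{pmatrix}1&0\\-2\pi_2&1\end{pmatrix}$, with $Z_i=\begin{pmatrix}a_i&0\\1&a_i^{-1}\end{pmatrix}$, $a_1=-1$, $a_2=1+\pi_2$.
\item For the split class, it uses the non-regular diagonal element $D=\mathrm{diag}(1-\tfrac{\pi_2a}{2},1+\tfrac{\pi_2a}{2})$. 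Then $BD$ is $D$ with its diagonal entries swapped, so a determinant-corrected swap matrix conjugates $BD$ back to $D$.
\end{itemize}

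\textbf{What you do instead.} You run a single mechanism for all three classes. Write $A=I+\pi_2X$ and choose a split regular semisimple $D\in\SL_2(\R)$ with $\tr(\bar X\bar D)=0$, so that $\chi(AD)=\chi(D)$. Then Lemma~\ref{sim1} supplies a determinant-one conjugator.

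\textbf{Trade-offs.} Your route is more uniform and more conceptual. It shows that $I+\pi_2X$ is a commutator as soon as the hyperplane $\{\bar D:\tr(\bar X\bar D)=0\}$ contains a split regular semisimple element of $\SL_2(k)$. The cost is twofold:
\begin{itemize}
\item It relies on conjugacy of regular semisimple elements with equal characteristic polynomial.
\item It needs $|k|\ge 5$ in every case, since $\SL_2(\mathbb{F}_3)$ has no split regular semisimple elements.
\end{itemize}
The paper's explicit constructions for the split and unipotent classes do not use $|k|\ge 5$. Only its non-split case does.

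\textbf{Two small corrections.}
\begin{itemize}
\item The condition on $\Delta_0$ should read $\bar\Delta_0\neq\bar\Delta_0^{-1}$, not $\bar\Delta_0\neq\bar\Delta_0^{\pm1}$.
\item In the split case, lifting $\bar D$ only through the determinant equation need not give $\tr D=\Delta_0+\Delta_0^{-1}$ exactly. This is harmless, because Lemma~\ref{sim1} only needs a unit root of $\chi(D)$, and Hensel's lemma provides one. Alternatively, take the explicit lift $D=\begin{pmatrix}\tilde\alpha&1\\ \tilde\alpha^2-1&\tilde\alpha\end{pmatrix}$ with $\tilde\alpha=(\Delta_0+\Delta_0^{-1})/2$. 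It has determinant exactly $1$ and characteristic polynomial exactly $(t-\Delta_0)(t-\Delta_0^{-1})$.
\end{itemize}
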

\begin{proof}
All possible lifts of scalars are listed in row $2$ corresponding to $i = 1$ in Table~\ref{tab:simplified}, up to $\GL_2(\R)$ conjugacy. The matrix $ \left(\begin{array}{cc}
1 & \pi_2\epsilon\beta \\ \pi_2\beta & 1 
\end{array}\right)\in \im(e_1^{\R})$ by Lemma~\ref{lift I}. Now let us take $a_1 = -1$ and $a_2 = 1+ \pi_2$ in $\R^{\times}$. As the residue field has a characteristic not equal to $2$, therefore $2\in \R^{\times}$. Let us consider  $Z_i=\left(\begin{array}{cc}  a_i  & 0 \\ 1  & a_i^{-1} \end{array}\right)$ in $\SL_2(\R)$ for $i=1, 2$. Then $e_1^{\R}(Z_1, Z_2) = \left(\begin{array}{cc} 1  & 0 \\ -2\pi_2  & 1 \end{array} \right) \sim_{\R}\left(\begin{array}{cc} 1  & 0 \\ \pi_2  & 1 \end{array} \right)$. The only remaining element as a lift of $I$ is $B = \left(\begin{array}{cc} 1+\pi_2 a &  \\  & 1-\pi_2 a  \end{array}\right)$ where $a \in \R^{\times}$. Consider $D = \left(\begin{array}{cc} 1-\frac{\pi_2 a}{2} &  \\  & 1+\frac{\pi_2 a}{2} \end{array} \right)$. Then $BD = \left(\begin{array}{cc} 1+\frac{\pi_2 a}{2} &  \\
& 1-\frac{\pi_2 a}{2} \end{array}\right)$. Note that $BD$ and $D$ have the same eigenvalues, but neither is regular semisimple. However, in this case, a similar result to Lemma~\ref{sim1} can be proved. There exists $T_1=\left(\begin{array}{cc} 0 & 1 \\ 1 & 0 \end{array}\right)\in \GL_2(\R)$ such that $T_1 B DT_1^{-1} = D$. Consider,  $s_0 = det(T_1^{-1})$ and $S_0 = diag(s_0, 1)$. Then $S_0 T_1 B D T_1^{-1} S_0^{-1} = D$. Now take $S = S_0 T_1$, then $S\in \SL_2(\R)$. Therefore $B=e_1^{\R}(S^{-1},D)$. 
\end{proof}

\section{Magnus Embedding and the triangular Unipotent elements as image of Engel Words}{\label{Mag-Uni}}

In this section, we use a generalization of the ``Magnus embedding"(see~\cite{BaZa2016}, \cite{BNT2024}) to investigate if an unipotent element of $\SL_2(\R)$ is in $\im(e_r^{\R})$.  An element $A\in \SL_2(\R)$ is said to be \emph{unipotent} if $A-I$ is nilpotent in $\M_2(\R)$ (see page 22, \cite{HahnOMeara}).  Up to conjugacy, non-trivial unipotent elements can be read from the family $(3)$ listed in Table~\ref{tab:GL-conjugacy-SL}. These are either the lower triangular unipotent elements of the form   $\left(\begin{array}{cc} 1  & 0 \\ 1  & 1 \end{array}\right )$ and $\left(\begin{array}{cc} 1  & 0 \\ \pi_2 & 1 \end{array}\right )$ listed in Table~\ref{tab:simplified}, or of the form $\begin{pmatrix} 1 + 2^{-1}\pi v &\pi_2 v \\ 1 & 1+2^{-1}\pi v \end{pmatrix}$ up to $\GL$-conjugates; where $v$ is an unit.

Consider $\Omega_2 = \R[t_1, t_1^{-1}, t_2, t_2^{-1}]$, the commutative ring of Laurent polynomials over $\R$. Let us consider the subgroups 
$$ ST_2(\Omega_2) = \left\{\left(\begin{array}{cc} a  & 0 \\  b & a^{-1} \end{array} \right)\mid  a, b\in \Omega_2, a\in\Omega_2^{\times} \right\}$$ 
of $\SL_2(\R)$ and the subgroup 
$$T_2(\Omega_2) = \left\{\left(\begin{array}{cc}
   a  & 0 \\   b & 1 \end{array}\right)\mid a,b \in \Omega_2, a \in \Omega_2^{\times} \right\}$$
of $\GL_2(\R)$. 
We have the group homomorphism $\mu_1\colon \mathcal F_2 \rightarrow ST_2(\Omega_2)$ defined by $x_i \mapsto \left(\begin{array}{cc} t_i  & 0 \\ 1 & t_i^{-1} \end{array}\right) $ (see page 383, \cite{BNT2024}). Then by Proposition 2.4~\cite{BNT2024};  $ker(\mu_1) = \mathcal F_2^{(2)}(\mathcal F_2^{(1)})^p$ where $p$ is the characteristic of $\R$.  

\begin{lemma}{\label{kernel}}
Let $\R$ be a local principal ideal ring of length two with residue field $k$ of characteristic $\neq 2$ and $|k|\geq 5$. Let $\mathcal F_2$ be the free group generated by $x_1$ and $x_2$. Consider the group homomorphism $\mu_1 \colon \mathcal F_2 \rightarrow ST_2(\Omega_2)$ defined by $x_i \mapsto \left( \begin{array}{cc}
 t_i  & 0 \\ 1 & t_i^{-1} \end{array}\right) $ where $\Omega_2 = \R[t_1, t_1^{-1}, t_2, t_2^{-1}]$. Then, $e_r(x_1, x_2) \notin ker(\mu_1)$ for any $r\geq 1$.
\end{lemma}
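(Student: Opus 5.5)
The plan is to compute $\mu_1(e_r(x_1,x_2))$ explicitly in $ST_2(\Omega_2)$ and check that it is not the identity matrix. Since $\mu_1$ is a homomorphism, we have $\mu_1(e_r(x_1,x_2)) = e_r^{\Omega_2}(X_1,X_2)$ with $X_i=\left(\begin{array}{cc} t_i & 0\\ 1 & t_i^{-1}\end{array}\right)$. So it suffices to show that the lower-left entry of this matrix is a nonzero element of $\Omega_2$. The key structural fact is that the commutator of two lower triangular matrices has trivial diagonal part, so every $e_r^{\Omega_2}(X_1,X_2)$ with $r\geq 1$ has the form $\left(\begin{array}{cc} 1 & 0\\ c_r & 1\end{array}\right)$. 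The task therefore reduces to tracking the single entry $c_r$.

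\textbf{Step 1 (the commutator, $r=1$).} The matrices $X_1X_2$ and $X_2X_1$ both have diagonal $(t_1t_2,(t_1t_2)^{-1})$. Their lower-left entries are $t_2+t_1^{-1}$ and $t_1+t_2^{-1}$ respectively. Inverting $X_2X_1$ and multiplying gives
$$c_1=(t_1t_2)^{-1}\bigl(t_2-t_2^{-1}+t_1^{-1}-t_1\bigr).$$

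\textbf{Step 2 (recursion).} For $U=\left(\begin{array}{cc} 1&0\\ c&1\end{array}\right)$ and $Y=X_2$, conjugation gives $YU^{-1}Y^{-1}=\left(\begin{array}{cc} 1&0\\ -t_2^{-2}c&1\end{array}\right)$, since conjugating by $Y$ scales the $(2,1)$ entry by $t_2^{-1}/t_2$. Hence $[U,Y]=U\cdot YU^{-1}Y^{-1}$ has lower-left entry $c(1-t_2^{-2})$. By induction on $r$,
$$c_r=(t_1t_2)^{-1}\bigl(t_2-t_2^{-1}+t_1^{-1}-t_1\bigr)\bigl(1-t_2^{-2}\bigr)^{r-1}.$$

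\textbf{Step 3 (nonvanishing).} This is the only point needing care, because $\R$ has zero divisors ($\pi_2^2=0$), so a product of nonzero Laurent polynomials could a priori vanish. The fix is to look at extremal monomials, whose coefficients are units. Multiply $c_r$ by the unit $t_1t_2$ of $\Omega_2$ and order monomials by their degree in $t_2$.
\begin{itemize}
\item In $t_2-t_2^{-1}+t_1^{-1}-t_1$, the unique term of highest $t_2$-degree is $t_2$, with coefficient $1$.
\item In $1-t_2^{-2}$, the unique term of highest $t_2$-degree is $1$, with coefficient $1$.
\end{itemize}
So the product contains the monomial $t_2$ with coefficient $1\cdot 1^{r-1}=1\neq 0$ in $\R$, and no other product of terms reaches that $t_2$-degree. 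Thus $c_r\neq 0$, so $\mu_1(e_r(x_1,x_2))\neq I$, that is, $e_r(x_1,x_2)\notin\ker(\mu_1)$ for every $r\geq 1$.

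I expect Step 3 to be the only subtle point, handled by the unit-leading-coefficient argument. The hypotheses on $k$ are not used in this computation.
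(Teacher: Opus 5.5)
Your proof is correct. Steps 1--2 coincide with the paper's computation: your $c_r$ is exactly the paper's $\mathcal{L}_{e_r}(t_1,t_2)=\mathcal{L}_{e_1}(t_1,t_2)(1-t_2^{-2})^{r-1}$, since expanding $(t_2-t_1)(1+t_1t_2)(t_1t_2)^{-2}$ gives your formula for $c_1$.

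The difference is in the nonvanishing step. The paper passes to $\theta\circ\mu_1$, that is, it reduces to $k[t_1^{\pm1},t_2^{\pm1}]$. It then shows the reduced Laurent polynomial is nonzero by exhibiting a point of $(k^\times)^2$ where it does not vanish. That point is $(1,2)$ when $\mathrm{char}\,k\neq 3$, and $(1,b)$ with $b\neq\pm1$ when $\mathrm{char}\,k=3$; the second case is where $|k|\geq 5$ enters.

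Your leading-term argument in $t_2$ works directly in $\Omega_2$, despite the zero divisors in $\R$, and needs no hypothesis on $k$. Because the leading coefficient is $1$, it survives reduction mod $\mathfrak{m}$, so you also get $e_r\notin\ker(\theta\circ\mu_1)$. In particular, your argument shows the lemma holds even for $k=\mathbb{F}_3$.

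What the paper's route buys is pointwise information: a pair $(t_1,t_2)\in(k^\times)^2$ with $\theta(\mathcal{L}_{e_r}(t_1,t_2))\neq 0$. This is what Lemma~\ref{uni2} actually uses to produce a unit $d$. Your argument does not supply such a point, and no argument can in general. Over $\mathbb{F}_3$ the nonzero Laurent polynomial vanishes on all of $(\mathbb{F}_3^\times)^2$. So if this lemma is meant to feed into Lemma~\ref{uni2}, the evaluation step with $|k|\geq 5$ is still needed there.
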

\begin{proof}
Since $\mu_1$ is a group homomorphism, we get $$\mu_1([x_1, x_2]) = \begin{pmatrix} t_1  & 0 \\ 1 & t_1^{-1} \end{pmatrix} \begin{pmatrix}
 t_2  & 0 \\ 1 & t_2^{-1} \end{pmatrix} \begin{pmatrix} t_1^{-1}  & 0 \\ -1 & t_1 \end{pmatrix} \begin{pmatrix} t_2^{-1}  & 0 \\ -1 & t_2 \end{pmatrix} = \left(\begin{array}{cc} 1  & 0 \\ \mathcal{L}_{e_1}(t_1, t_2) & 1
\end{array} \right)$$ 
where $\mathcal{L}_{e_1}(t_1, t_2) = (t_2 - t_1)(1 + t_1t_2)(t_1^{-1}t_2^{-1})^2$ is a Laurent polynomial in $\Omega_2$. Now, a similar computation gives 
$$\mu_1(e_2(x_1, x_2)) = \begin{pmatrix} 1  & \\ \mathcal{L}_{e_1}(t_1, t_2) & 1 \end{pmatrix} \begin{pmatrix}
 t_2  & 0 \\ 1 & t_2^{-1} \end{pmatrix} \begin{pmatrix} 1  & 0 \\ -\mathcal{L}_{e_1}(t_1, t_2) & 1 \end{pmatrix} \begin{pmatrix} t_2^{-1}  & 0 \\ -1 & t_2 \end{pmatrix} = \left(\begin{array}{cc} 1  & 0 \\   \mathcal{L}_{e_2}(t_1, t_2) & 1 \end{array} \right)$$ 
 where $\mathcal{L}_{e_2}(t_1, t_2) = \mathcal{L}_{e_1}(t_1, t_2)(1-t_2^{-2}) \in \Omega_2.$
Inductively, we obtain $\mu_1(e_r(x_1, x_2))=\left(\begin{array}{cc}
   1  & 0 \\  \mathcal{L}_{e_r}(t_1, t_2) & 1
\end{array}\right)$, where $\mathcal{L}_{e_r}(t_1,t_2) = \mathcal{L}_{e_1}(t_1, t_2)(1-t_2^{-2})^{r-1}\in \Omega_2$, for any $r\geq 1$. 

Now, if possible let $e_r(x_1, x_2)\in ker(\mu_1)$ for $r\geq 1$. Consider the following diagram 
$$\begin{tikzcd}
	{\mathcal F_2} & {ST_2(\Omega_2)} \\
	& {ST_2(\bar{\Omega}_2)}
	\arrow["{\mu_1}", from=1-1, to=1-2]
	\arrow["{\theta\circ \mu_1}"', from=1-1, to=2-2]
	\arrow["\theta", from=1-2, to=2-2]
\end{tikzcd}$$ and we get $e_r(x_1, x_2)\in ker(\theta \circ \mu_1)$; where $\bar{\Omega}_2 = k[t_1, t_1^{-1}, t_2, t_2^{-1}]$. Hence, $\mu_1(e_r(x_1, x_2))=I$ gives $\mathcal{L}_{e_r}(t_1, t_2) = 0$, the zero polynomial over the residue field $k$, i.e. $\mathcal{L}_{e_r}(t_1, t_2) = 0$ for all $(t_1, t_2)\in k^2$. Which is a contradiction if we show that $\mathcal L_{e_r}(x_1, x_2)$ is a non-trivial polynomial. To show this, let us consider the following two cases.
\begin{enumerate}
\item \textbf{$k$ has characteristic $0$.} In this case, as the prime subfield of $k$ is isomorphic to $\mathbb{Q}$, we can check this for $t_1 = 1$, $t_2 = 2$. Note that in this case $\mathcal{L}_{e_r}(1, 2)\neq 0$.
\item \textbf{$k$ has characteristic $p$, an odd prime.} In this case the prime subfield of $k$ is isomorphic to $\mathbb{F}_p$. When $p\neq 3$, we can choose $t_1 = 1$ and $t_2 = 2$, so that $\mathcal{L}_{w_c}(1, 2)\neq 0$ and hence $\mathcal{L}_{e_r}(1,2)\neq 0$. 
    
When $p=3$, as we are considering $|k| \geq 5$ therefore $|k^{\times}| \geq 4$. Now we want $a, b$ in $k^{\times}$ such that $a\neq b$ and $ab\neq 2$. Consider $a=1$. Then $b$ must not take the values $ 1$ or $ 2$. Consider $\mathcal{S}=\{1, 2\}$. Then $|k^{\times}\backslash \mathcal{S}| \geq 2$. Pick $b$ from $k^{\times}\backslash \mathcal{S}$. This implies $\mathcal{L}_{e_r}(1, b)\neq 0$.
\end{enumerate}
Therefore, there exists $(t_1, t_2)$ in $k^{\times}\times k^{\times}$, so that $\mathcal{L}_{e_r}(t_1, t_2)$ is nonzero in $k$ for any $r\geq 1$. This proves the result.
\end{proof}

We note here that the assumption that $|k|$ is sufficiently large at the field level is needed. For $k=\mathbb{F}_3$, $\mathcal{L}_{e_r}(t_1, t_2) = 0$ always for any $(t_1, t_2)\in k^2$ such that $t_1, t_2$ are units and $r\geq 1$. Some issues regarding the case $k = \mathbb{F}_3$ have been discussed in Section~\ref{F_3 case}.

\begin{lemma}{\label{uni2}}
Suppose $|k|\geq 5$. There exists an element $d\in \R^{\times}$ such that the unipotent $\left(\begin{array}{cc} 1  & 0 \\ d & 1 \end{array}\right)\in \im(e_r^{\R})$ for all $r\geq 1$.
\end{lemma}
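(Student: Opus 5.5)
The plan is to specialize the Magnus-type computation from the proof of Lemma~\ref{kernel} to concrete matrices over $\R$. For units $t_1,t_2\in\R^{\times}$ set
\[
X_i=\begin{pmatrix} t_i & 0\\ 1 & t_i^{-1}\end{pmatrix}\in \SL_2(\R),\qquad i=1,2 .
\]
The matrix identities computed there for $\mu_1$ are identities in $\M_2(\Omega_2)$. Specializing the variables to $(t_1,t_2)$ is a ring homomorphism $\Omega_2\to\R$, so they give
\[
e_r^{\R}(X_1,X_2)=\begin{pmatrix}1&0\\ \mathcal{L}_{e_r}(t_1,t_2)&1\end{pmatrix},
\qquad
\mathcal{L}_{e_r}(t_1,t_2)=(t_2-t_1)(1+t_1t_2)(t_1t_2)^{-2}(1-t_2^{-2})^{r-1}.
\]
So it suffices to choose $t_1,t_2\in\R^{\times}$ for which $d:=\mathcal{L}_{e_r}(t_1,t_2)$ is a unit of $\R$.

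An element of the local ring $\R$ is a unit exactly when its reduction is nonzero in $k$. The reduction of $d$ is a product of factors, so $d\in\R^{\times}$ amounts to the following conditions on $\bar t_1,\bar t_2\in k^{\times}$:
\[
\bar t_1\neq \bar t_2,\qquad \bar t_1\bar t_2\neq -1,\qquad \bar t_2^2\neq 1 .
\]
The first condition is needed only once, but the last must hold for every $r\geq 2$. That is fine, because it does not depend on $r$.

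The main obstacle is the small residue field case, so I will make the choice by counting, using $|k|\geq 5$. First pick $\bar t_2\in k^{\times}\setminus\{\pm1\}$, which exists by Remark~\ref{existence of delta not}. Then pick $\bar t_1\in k^{\times}\setminus\{\bar t_2,\,-\bar t_2^{-1}\}$; this is possible since $|k^{\times}|\geq 4>2$. With these choices all three conditions hold.

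Lift $\bar t_1,\bar t_2$ arbitrarily to units $t_1,t_2\in\R^{\times}$ and put $d=\mathcal{L}_{e_r}(t_1,t_2)$. Then $d\in\R^{\times}$, and $\begin{pmatrix}1&0\\ d&1\end{pmatrix}=e_r^{\R}(X_1,X_2)\in\im(e_r^{\R})$.

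The statement is read as ``for each $r$ there is such a $d$''. If one wants a single $d$ working for all $r$, use conjugation. Conjugating by $\mathrm{diag}(c,1)\in\GL_2(\R)$ sends $\begin{pmatrix}1&0\\ d&1\end{pmatrix}$ to $\begin{pmatrix}1&0\\ c^{-1}d&1\end{pmatrix}$, and $\im(e_r^{\R})$ is stable under $\GL_2(\R)$-conjugation, as noted in Section~\ref{O_2conj}. Hence every $\begin{pmatrix}1&0\\ d'&1\end{pmatrix}$ with $d'\in\R^{\times}$ lies in $\im(e_r^{\R})$. In particular one may take $d=1$ uniformly for all $r\geq 1$.
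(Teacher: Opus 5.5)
Your proof is correct and follows the paper's route: you specialize the Magnus matrices $Z_i=\begin{pmatrix} t_i & 0\\ 1 & t_i^{-1}\end{pmatrix}$ and choose units $t_1,t_2$ making $\mathcal{L}_{e_r}(t_1,t_2)$ a unit. The paper instead argues by contradiction from the nonvanishing points found in Lemma~\ref{kernel}, which needs a case split on the characteristic, while your direct count of $\bar t_1,\bar t_2$ avoiding the three bad conditions is cleaner. Your closing conjugation by $\mathrm{diag}(c,1)$, which gives a uniform $d=1$, is exactly what the paper does at the start of Lemma~\ref{triang unipotent}.
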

\begin{proof}
In the view of computation in the proof of Lemma~\ref{kernel}, we can get a lower triangular unipotent of the required kind provided $\mathcal{L}_{e_r}(t_1,t_2)$ takes a unit value in $\R^{\times}$. On contrary, suppose all the values of $\mathcal{L}_{e_r}(t_1, t_2)$ are nonunits when we evaluate them for $t_1, t_2 \in \R^{\times}$. This implies for every nonzero $t_1, t_2\in k$, the reduction of $\mathcal{L}_{e_r}(t_1, t_2)$, i.e, $\theta(\mathcal{L}_{e_r}(t_1, t_2))$ takes the value $0$. This contradicts the fact that $e_r(x_1, x_2) \notin ker(\theta \circ \mu_1)$ in Lemma~\ref{kernel}. Therefore there exists at least one $d\in \R^{\times}$ and a corresponding tuple $(a_1, a_2)\in \R^{\times} \times \R^{\times}$ such that $\mathcal{L}_{e_r}(a_1, a_2) = d$. Now take the matrices $Z_i=\left(\begin{array}{cc} a_i  & 0 \\
1  & a_i^{-1} \end{array}\right)$ in $\SL_2(\R)$ for $i=1, 2$. This immediately gives $e_r^{\R}(Z_1, Z_2) = \left(\begin{array}{cc} 1  & 0 \\ d & 1 \end{array}\right)$.
\end{proof}

\begin{lemma}{\label{triang unipotent}} The elements $\left(\begin{array}{cc} 1   & 0 \\ 1   & 1 \end{array}\right)$ for $|k|\geq 5$, and $\left(\begin{array}{cc} 1   & 0 \\ \pi_2   & 1 \end{array} \right)$ for $|k|\geq 7$ belong to $\im(e_r^{\R})$ in $\SL_2(\R)$ for all $r\geq 1$.
\end{lemma}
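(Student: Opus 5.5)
The plan is to build both elements from the Magnus-type computation in the proof of Lemma~\ref{kernel}, and then use that $\im(e_r^{\R})$ is closed under $\GL_2(\R)$-conjugation, as noted in Section~\ref{O_2conj}. For units $a_1,a_2\in\R^{\times}$, set $Z_i=\left(\begin{array}{cc} a_i & 0\\ 1 & a_i^{-1}\end{array}\right)\in\SL_2(\R)$. Evaluating the identity $\mu_1(e_r(x_1,x_2))=\left(\begin{array}{cc}1&0\\ \mathcal{L}_{e_r}(t_1,t_2)&1\end{array}\right)$ at $t_i=a_i$ gives
$$e_r^{\R}(Z_1,Z_2)=\left(\begin{array}{cc}1&0\\ \mathcal{L}_{e_r}(a_1,a_2)&1\end{array}\right),\qquad \mathcal{L}_{e_r}(t_1,t_2)=(t_2-t_1)(1+t_1t_2)(t_1t_2)^{-2}(1-t_2^{-2})^{r-1}.$$
For $c\in\R^{\times}$ and $P=\mathrm{diag}(c,1)\in\GL_2(\R)$ we have
$$P^{-1}\left(\begin{array}{cc}1&0\\ d&1\end{array}\right)P=\left(\begin{array}{cc}1&0\\ cd&1\end{array}\right).$$
So it suffices to make the lower-left entry a unit in the first case, and an element of $\pi_2\R^{\times}$ in the second.

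\textbf{First element ($|k|\geq 5$).} Lemma~\ref{uni2} already gives a unit $d$ with $\left(\begin{array}{cc}1&0\\ d&1\end{array}\right)\in\im(e_r^{\R})$. Conjugating by $\mathrm{diag}(d^{-1},1)$ yields $\left(\begin{array}{cc}1&0\\ 1&1\end{array}\right)$, and conjugation invariance of the image finishes this case.

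\textbf{Second element ($|k|\geq 7$).} Here I want $a_1,a_2$ for which $\mathcal{L}_{e_r}(a_1,a_2)$ has the form $\pi_2 u$ with $u\in\R^{\times}$.
\begin{enumerate}
\item Pick $\bar a\in k$ avoiding $0$, $\pm1$, and the (at most two) roots of $x^2=-1$. At most five elements are excluded, so $|k|\geq 7$ guarantees such an $\bar a$ exists. This is exactly where the stronger hypothesis is used.
\item Let $a_1$ be any lift of $\bar a$ and set $a_2=a_1+\pi_2$. Then $a_2\in\R^{\times}$ and $t_2-t_1$ evaluates to $\pi_2$.
\item Each remaining factor reduces to a nonzero value: $1+\bar a^2\neq 0$, $\bar a^{-4}\neq 0$, and $1-\bar a^{-2}\neq 0$. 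Hence each is a unit in $\R$.
\end{enumerate}
Therefore $\mathcal{L}_{e_r}(a_1,a_2)=\pi_2 u$ with $u\in\R^{\times}$. Conjugating $e_r^{\R}(Z_1,Z_2)$ by $\mathrm{diag}(u^{-1},1)$ gives $\left(\begin{array}{cc}1&0\\ \pi_2&1\end{array}\right)$, which therefore lies in $\im(e_r^{\R})$.

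The only real obstacle is in the second case: keeping every factor other than $t_2-t_1$ a unit while forcing that factor to be exactly $\pi_2$ times a unit. The counting of excluded residues in step 1 resolves this and explains the threshold $|k|\geq 7$. Both cases work uniformly for all $r\geq1$, since only the exponent of the unit factor $(1-t_2^{-2})$ depends on $r$.
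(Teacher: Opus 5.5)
Your proposal is correct and is essentially the paper's own proof. The first element comes from Lemma~\ref{uni2} followed by a diagonal conjugation. For the second, you lift a residue avoiding $\{0,\pm 1\}$ and the square roots of $-1$ to $a_1$ and set $a_2=a_1+\pi_2$, so that $\mathcal{L}_{e_r}(a_1,a_2)=\pi_2 u$ with $u\in\R^{\times}$, and then conjugate the unit away. The only cosmetic difference is that you invoke $\GL_2(\R)$-conjugation invariance of the image via $\mathrm{diag}(c,1)$, whereas the paper conjugates by $\mathrm{diag}(1,d)$ and writes the conjugated pair $\tilde Z_i$ explicitly.
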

\begin{proof}
Since $\SL_2(\R)\trianglelefteq \GL_2(\R)$, we consider the group action via conjugation, viz.
$$\GL_2(\R)\times \SL_2(\R)\longrightarrow \SL_2(\R) $$ $$(\mathbf{g},X)\mapsto \mathbf{g}^{-1}X\mathbf{g}.$$
Note that the action is trivial over the scalar matrices. Now for any $r\geq 1$, $e_r^{\R}(Z_1, Z_2)$ obtained in Lemma~\ref{uni2} can be written as $I + \left(\begin{array}{cc}
  0  & 0 \\  d & 0 \end{array}\right)$. Take $\left( \begin{array}{cc} 1  & 0 \\  0 & d \end{array} \right) \in \GL_2(\R)$ and we get $\left(\begin{array}{cc} 1  & 0 \\
0 & d \end{array} \right)^{-1} \left(\begin{array}{cc} 0  & 0 \\
d & 0 \end{array}\right)\left(\begin{array}{cc} 1  & 0 \\ 0 & d
\end{array}\right) = \left(\begin{array}{cc} 0  & 0 \\
 1 & 0 \end{array}\right)$. Let us consider $\Tilde{Z}_i = \left(\begin{array}{cc} 1  & 0 \\ 0 & d \end{array} \right)^{-1} Z_i \left(\begin{array}{cc}  1  & 0 \\ 0 & d
\end{array} \right)$ in $\SL_2(\R)$ for $i=1, 2$. Then $e_r(\Tilde{Z}_1, \Tilde{Z}_2) = \left(\begin{array}{cc}
1  & 0 \\ 1 & 1 \end{array}\right)$. Therefore, $ \left( \begin{array}{cc} 1  & 0 \\ 1 & 1 \end{array} \right)$ belongs to $\im(e_r^{\R})$ for any $r\geq 1$, when $|k|\geq 5$.

Consider $\mathcal{S} = \{\pm 1\}$ if $-1$ is not a square in $k$ and $\mathcal{S} = \{\pm1, \gamma_1, \gamma_2\}$ if $-1$ is a square in $k$; where $\gamma_i$ are the roots of $x^2 = -1$ in $k^{\times}$ in the second case. Since $|k|\geq 7$, the set $|k^{\times}\backslash\mathcal{S}| \geq 2$. We choose a $\delta \in k^{\times} \backslash \mathcal{S}$. Take $a_1\in \R^{\times}, a_2 = a_1 + \pi\in \R^{\times}$ such that $\theta(a_1) = \delta$. Therefore, $\mathcal{L}_{e_r}(a_1, a_2) = \mathcal{L}_{e_1}(a_1, a_2)(1 - a_2^{-2})^{r-1} = \pi(1 + a_1^2 + \pi a_1)(a_1a_2)^{-2} (1 - a_2^{-2})^{r-1}$. As the residue field has characteristic not equal to $2$ and $\delta \notin \mathcal{S}$, therefore $(1 + a_1^2+\pi a_1)(a_1a_2)^{-2}(1 - a_2^{-2})^{r-1}\in \R^{\times}$ for every $r\geq 1$. Let us denote $u_r = (1+a_1^2+\pi a_1)(a_1a_2)^{-2}(1-a_2^{-2})^{r-1}$ and $Z_i=\left(\begin{array}{cc} a_i  & 0 \\ 1  & a_i^{-1}
\end{array}\right)$ in $\SL_2(\R)$ for $i=1, 2$. Therefore, we have $e_r^{\R}(Z_1, Z_2) = \left(\begin{array}{cc} 1  & 0 \\  \pi_2 u_r  & 1 \end{array}\right)$. Moreover, $\left( \begin{array}{cc} 1  & 0 \\ 0 & u_r \end{array} \right)^{-1} \left( \begin{array}{cc} 0  & 0 \\ \pi_2 u_r & 0 \end{array} \right) \left( \begin{array}{cc} 1  & 0 \\ 0 & u_r \end{array} \right) = \left(\begin{array}{cc} 0  & 0 \\ \pi_2 & 0
\end{array} \right)$. Again, let us consider $\Tilde{Z}_i = \left(\begin{array}{cc} 1  & 0 \\ 0 & u_r \end{array} \right)^{-1} Z_i \left(\begin{array}{cc} 1  & 0 \\ 0 & u_r
\end{array} \right)$ in $\SL_2(\R)$ for $i=1, 2$. Then $e_r^{\R}(\Tilde{Z}_1, \Tilde{Z}_2) = \left(\begin{array}{cc} 1  & 0 \\
   \pi_2 & 1 \end{array} \right)$ and hence, $\left(\begin{array}{cc} 1  & 0 \\ \pi_2 & 1
\end{array}\right)$ belongs to $\im(e_r^{\R})$ in $\SL_2(\R)$ for any $r\geq 1$.
\end{proof} 
For the third kind of unipotent element that we have described at the beginning of this section, this technique of Magnus embedding doesn't work well. We handle this differently in the following sections.

\section{Commutators in $\SL_n(\mathcal O)$}{\label{towards Shalev's conj}}

Aner Shalev in~\cite{Shalevzeta}, conjectured that every element of $\mathrm{SL}_n(\mathbb{Z}_p)$ for $p$ a prime and $n\geq 2$ is a commutator (for $n=2$ one needs to assume $p>3$). Following that, Avni, Gelander, Kassabov, and Shalev (see \cite{AvniGelanderKassabovShalev}) proved that every lift of non-scalars is a commutator in $\mathrm{SL}_n(\mathcal{O})$, for any local ring $\mathcal{O}$ with residue field $k$ that contains more than $n+1$ elements. That leaves the difficult task of dealing with the lift of scalars. Indeed, part of this was done when the residue field contains an $n$-th root of unity. This section recalls these results for commutators over $\SL_n(\mathbb{Z}_p)$, fixes some gaps in the proof given earlier, and looks at a few more fresh cases of lifting scalars, enough for our purpose of the case $n=2$ at hand. For this, we recall and refine some of the techniques they developed.  

The Lie algebra of $\SL_n(k)$ is denoted by $\mathfrak{sl}_n(k) = \{P \in \M_n(k) \mid \tr(P) = 0\}$. We have a nondegenerate symmetric bilinear form given by $\langle P, Q \rangle = \tr(PQ)$ over the $k$ vector space $\mathfrak{sl}_n(k)$, when $char(k)$ is odd and $char(k) \nmid n$. Note that for $X, Y \in \mathfrak{sl}_n(k)$, the non-degeneracy of the Killing form $\langle X, Y\rangle_{\text{Killing}} = \tr(ad_X  ad_Y)$ does not guarantee the non-degeneracy of this bilinear form when $char(k)\mid n$.
\noindent With respect to this bilinear form, the orthogonal complement of a subspace $\mathfrak{U} \subset \mathfrak{sl}_n(k)$ will be denoted by $\mathfrak{U}^{\perp}$. Moreover, for an element $D\in \SL_n(k)$, we use the notation $Z(D)$ to denote the centralizer of $D$ in $\mathfrak{sl}_n(k)$, i.e., $Z(D) = Z_{\M_n(k)}(D) \cap \mathfrak{sl}_n(k)$. For any finite dimensional vector space $\mathcal{V}$ if there is a non degenerate symmetric bilinear form $\mathfrak{B}$ associated with it, then for any subspace $\mathcal{W}$ of $\mathcal{V}$ we have $dim(\mathcal{V}) = dim(\mathcal{W}) + dim(\mathcal{W}^{\perp})$, where $\mathcal{W}^{\perp}$ is the orthogonal complement of $\mathcal{W}$ with respect to the bilinear form $\mathfrak{B}$.
Moreover, this dimension formula and $\mathcal{W} \subset (\mathcal{W}^{\perp})^{\perp}$ together implies $(\mathcal{W}^{\perp})^{\perp} = \mathcal{W}$, in this case.
For any $D, g\in \SL_2(k)$, the notation $Z(D)^g = Z(g^{-1}Dg) = g^{-1}Z(D)g$.

The co-adjoint action is given as follows: 
\begin{eqnarray*}
\mathrm{Ad}^* \colon & \rm{SL}_n(k) \times \mathfrak{sl}_n^*(k) \rightarrow \mathfrak{sl}_n^*(k) \\ & (h, \phi) \mapsto \mathrm{Ad}^*_h\phi
\end{eqnarray*}
where $\mathrm{Ad}^*_{h}\phi(X) = \phi(\mathrm{Ad}_{h^{-1}}(X))$, for all $X\in \mathfrak{sl}_n(k)$. 
Here $\mathfrak{sl}_n^*(k)$  is the dual of $\mathfrak{sl}_n(k)$ as a $k$-vector space. A subgroup $H$ of $\SL_n(k)$ has a non-trivial fixed vector in $\mathfrak{sl}_n^*(k)$ if there exists a non-zero element in $\mathfrak{sl}_n^*(k)$ which is fixed by every element of $H$ via co-adjoint action. That is, $H$ has a fixed vector $\phi$ in $\mathfrak{sl}_n^*(k)$ means $\mathrm{Ad}_h^*\phi = \phi\ \forall h\in H$. We begin with recalling the following from \cite{AvniGelanderKassabovShalev} regarding lifting of commutators for special linear groups over local rings.

\begin{proposition}[Avni, Gelander, Kassabov, Shalev]{\label{adelic 1}}
Let $\mathcal{O}$ be a local ring with residue field $k$ of any characteristic. Let $\mathbf{\bar g}_1, \mathbf{\bar g}_2 \in \SL_n(K)$ be elements such that the group $H = \langle \mathbf{\bar g}_1, \mathbf{\bar g}_2 \rangle \subset \SL_n(k)$ does not have any fixed vectors in $\mathfrak {sl}_n^*(k)$. Then, for any lift $\mathbf{g}$ of $[\mathbf{\bar g}_1, \mathbf{\bar g}_2]$ to $\SL_n(\mathcal O)$ there exist $\mathbf{g}_1$ and $\mathbf{g}_2$ lifts of $\mathbf{\bar g}_1$ and $\mathbf{\bar g}_2$ respectively, such that $\mathbf{g} = [\mathbf{g}_1, \mathbf{g}_2]$.
\end{proposition}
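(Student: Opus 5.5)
We prove the statement by successive approximation: first for the finite-length quotients $\mathcal{O}_\ell = \mathcal{O}/\mathfrak{m}^\ell$, lifting one level at a time from $\mathcal{O}_\ell$ to $\mathcal{O}_{\ell+1}$ (whose kernel $\mathfrak{m}^\ell/\mathfrak{m}^{\ell+1}$ has square zero and is a $k$-vector space), and then passing to the limit using completeness of $\mathcal{O}$. (If $\mathcal{O}$ is merely local and not complete, the same linearization should be applied directly; we follow the argument of \cite{AvniGelanderKassabovShalev}.) The step case is the heart of the argument. Suppose $\mathbf{g}\in\SL_n(\mathcal{O}_{\ell+1})$ and we already have $\mathbf{h}_1,\mathbf{h}_2\in\SL_n(\mathcal{O}_{\ell+1})$ lifting $\bar{\mathbf g}_1,\bar{\mathbf g}_2$ with $[\mathbf{h}_1,\mathbf{h}_2]\equiv \mathbf{g} \pmod{\pi^\ell}$. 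Write $\mathbf{g} = (I+\pi^\ell Z)[\mathbf{h}_1,\mathbf{h}_2]$, where $\tr(Z)\equiv 0$ modulo $\pi$, so that $\bar Z\in\mathfrak{sl}_n(k)$ (this uses the determinant condition). We look for corrections $\mathbf{g}_i = (I+\pi^\ell X_i)\mathbf{h}_i$ with $X_i\in\mathfrak{sl}_n$.

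Expanding exactly as in the proof of Lemma~\ref{sum1}, and using that products of two kernel elements vanish, we get
\[
[\mathbf{g}_1,\mathbf{g}_2] = \bigl(I+\pi^\ell\,(X_1 + \mathrm{Ad}_{h_1}X_2 - \mathrm{Ad}_{h_1h_2h_1^{-1}}X_1 - \mathrm{Ad}_{[h_1,h_2]}X_2)\bigr)[\mathbf{h}_1,\mathbf{h}_2],
\]
where $h_i=\bar{\mathbf g}_i$. Here the adjoint actions only matter modulo $\pi$, since the coefficients are multiplied by $\pi^\ell$. So the lifting problem reduces to solving, in $\mathfrak{sl}_n(k)$,
\[
(I-\mathrm{Ad}_{c})\bar X_1 + \mathrm{Ad}_{h_1}(I-\mathrm{Ad}_{h_2})\bar X_2 = \bar Z,
\]
with $c=h_1h_2h_1^{-1}$, up to a reparametrization of $X_1$. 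Thus it suffices to show that the linear map
\[
T\colon \mathfrak{sl}_n(k)^2\to\mathfrak{sl}_n(k),\qquad (X_1,X_2)\mapsto (1-\mathrm{Ad}_c)X_1+\mathrm{Ad}_{h_1}(1-\mathrm{Ad}_{h_2})X_2
\]
is surjective. Its image equals $\mathrm{Im}(1-\mathrm{Ad}_c)+\mathrm{Ad}_{h_1}\mathrm{Im}(1-\mathrm{Ad}_{h_2})$. A functional $\phi\in\mathfrak{sl}_n^*(k)$ annihilates this image exactly when $\mathrm{Ad}^*_{c}\phi=\phi$ and $\mathrm{Ad}^*_{h_1h_2h_1^{-1}}$-type conditions hold; unwinding, these say that $\phi$ is fixed by $c$ and by $h_1h_2h_1^{-1}$, equivalently that $\mathrm{Ad}^*_{h_1^{-1}}\phi$ is fixed by $h_2$ and by $h_1$. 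Hence the image of $T$ is all of $\mathfrak{sl}_n(k)$ if and only if $H$ has no nonzero fixed vector in $\mathfrak{sl}_n^*(k)$, which is exactly the hypothesis. This linear-algebra dualization is the step that needs the most care, since the composition of the adjoint conjugations must be tracked correctly.

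Finally, for any $\ell$ we can choose lifts $X_i$ of the solutions $\bar X_i$, correcting the traces so that $\det(\mathbf{g}_i)=1$; this is possible because $\det(I+\pi^\ell X)=1+\pi^\ell\tr X$ modulo square-zero terms. Iterating from $\ell=1$, where the base case holds with $\mathbf{g}_i$ equal to any lifts since $[\bar{\mathbf g}_1,\bar{\mathbf g}_2]=\bar{\mathbf g}$, we obtain compatible sequences. By completeness these converge to $\mathbf{g}_1,\mathbf{g}_2\in\SL_n(\mathcal{O})$ with $[\mathbf{g}_1,\mathbf{g}_2]=\mathbf{g}$. We expect the main obstacle to be the surjectivity/duality step, together with the bookkeeping of trace-zero corrections when $\mathrm{char}(k)\mid n$, where $\mathfrak{sl}_n$ is not a direct complement of the scalars.
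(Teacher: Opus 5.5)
You are following the right scheme. It is the argument of \cite{AvniGelanderKassabovShalev}, which the paper quotes without proof and then imitates for $e_2$ in Lemma~\ref{lem-adelic 2}. Your first displayed expansion is also correct. The error is in the regrouping that follows, which is precisely the step everything rests on.

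Since $h_1^{-1}[h_1,h_2]=h_2h_1^{-1}h_2^{-1}$, the $X_2$-terms give
\[
\mathrm{Ad}_{h_1}X_2-\mathrm{Ad}_{[h_1,h_2]}X_2=\mathrm{Ad}_{h_1}\bigl(1-\mathrm{Ad}_{h_2h_1^{-1}h_2^{-1}}\bigr)X_2,
\]
not $\mathrm{Ad}_{h_1}(1-\mathrm{Ad}_{h_2})X_2$. No reparametrization of $X_1$ rescues your $T$, because $T$ is never surjective. Indeed, as $c=h_1h_2h_1^{-1}$, one has $\mathrm{Im}(1-\mathrm{Ad}_c)=\mathrm{Ad}_{h_1}\mathrm{Im}(1-\mathrm{Ad}_{h_2})$, so the two summands of $\mathrm{Im}\,T$ are the same subspace. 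That subspace is annihilated by every $\mathrm{Ad}^*_c$-fixed functional. These form a space of dimension $\dim\ker(1-\mathrm{Ad}_c)\ge n-1\ge 1$ (the centralizer of $c$ in $\mathfrak{sl}_n(k)$), whatever $H$ is. Your dualization sentence shows the same symptom: \emph{fixed by $c$ and by $h_1h_2h_1^{-1}$} is one condition, not two, and fixedness under $h_1$ does not follow from it. So, as written, the surjectivity claim and its equivalence with the hypothesis on $H$ are unproved, and in fact false for your $T$.

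The repair is short. Use three facts: $\mathrm{Im}(1-\mathrm{Ad}_{gxg^{-1}})=\mathrm{Ad}_g\,\mathrm{Im}(1-\mathrm{Ad}_x)$, $\mathrm{Im}(1-\mathrm{Ad}_{x^{-1}})=\mathrm{Im}(1-\mathrm{Ad}_x)$, and $\mathrm{Im}(1-\mathrm{Ad}_x)$ is $\mathrm{Ad}_x$-stable. Then the image of the true linearization is
\[
\mathrm{Im}(1-\mathrm{Ad}_c)+\mathrm{Ad}_{h_1}\mathrm{Im}\bigl(1-\mathrm{Ad}_{h_2h_1^{-1}h_2^{-1}}\bigr)=\mathrm{Ad}_{h_1h_2}\bigl(\mathrm{Im}(1-\mathrm{Ad}_{h_1})+\mathrm{Im}(1-\mathrm{Ad}_{h_2})\bigr).
\]
Hence $\phi\in\mathfrak{sl}_n^*(k)$ annihilates it if and only if $\phi\circ\mathrm{Ad}_{h_1h_2}$ is fixed by both $h_1$ and $h_2$, i.e.\ is an $H$-fixed vector. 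Compare the two genuinely different summands of $L(X,Y)$ in the proof of Lemma~\ref{lem-adelic 2}.

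Two of your worries disappear once this is fixed:
\begin{itemize}
\item The dualization is pure linear algebra on $\mathfrak{sl}_n(k)$ and its dual. No invariant form is needed, so $\mathrm{char}(k)\mid n$ causes no trouble.
\item Since $\pi^\ell\mathfrak{m}=0$ in $\mathcal{O}_{\ell+1}$, $\det(I+\pi^\ell X)=1+\pi^\ell\tr X$ depends only on $\tr\bar X$. So any lift of a solution $\bar X_i\in\mathfrak{sl}_n(k)$ already has determinant $1$, and no trace correction is needed.
\end{itemize}

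Finally, drop the parenthetical about non-complete local rings. Without a nilpotent kernel there is nothing to linearize, and without completeness the successive approximations need not converge. Once repaired, your argument proves the statement for complete local rings and their finite-length quotients, which is the setting in which the paper applies it.
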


Now we review the work on lift of scalars for $\SL_n(k)$. We remark that, in \cite[Corollary 3.4]{AvniGelanderKassabovShalev}, the element $\bar g_1$ is taken as $diag(1, \lambda, \lambda^2,\ldots,\lambda^{n-1})$ where $\lambda$ is a primitive $n$-th root of unity. Note that $det(\bar g_1) = \lambda^{n(n-1)/2}$ which is $1$ when $n$ is odd, thus works in that case. Whereas when $n$ is even $det(\bar g_1) = \lambda^{n(n-1)/2} = -1$ since $\lambda^{n/2} = -1$ as $\lambda$ is a primitive $n$-th root of unity. Hence $\bar g_1$ is not an element of $\SL_n(k)$ in that case. We carefully review this situation and provide an alternate proof of this case here. 

\begin{proposition}{\label{lifts of minus I}}
Let $\mathcal{O}$ be a local ring with residue field $k$ of odd characteristic $p$. Let $A = b I\in \SL_n(k)$ where $n$ is even and $b$ is a primitive $n$-th root of unity. Then, any lift of $A$ in $\SL_n(\mathcal{O})$ is a commutator in $\SL_n(\mathcal{O})$. 
\end{proposition}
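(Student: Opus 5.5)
Following \cite[Corollary 3.4]{AvniGelanderKassabovShalev}, we reduce to Proposition~\ref{adelic 1}: it suffices to find $\bar g_1,\bar g_2\in\SL_n(k)$ with $[\bar g_1,\bar g_2]=bI$ such that $H=\langle\bar g_1,\bar g_2\rangle$ has no nonzero fixed vector in $\mathfrak{sl}_n^*(k)$. Then Proposition~\ref{adelic 1} lifts every lift of $bI$ to a commutator in $\SL_n(\mathcal O)$. So the whole task is to repair the determinant defect of $\operatorname{diag}(1,\lambda,\dots,\lambda^{n-1})$ when $n$ is even.

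\textbf{Construction.} Let $P$ be the cyclic permutation matrix $e_i\mapsto e_{i+1}$ (indices mod $n$). Then $\det P=(-1)^{n-1}=-1$ for $n$ even. Let $D=\operatorname{diag}(1,b,b^2,\dots,b^{n-1})$, so $\det D=b^{n(n-1)/2}=-1$. A direct check gives $DPD^{-1}P^{-1}=b^{\pm1}I$, and we choose the ordering (or replace $b$ by $b^{-1}$) so that the commutator is $bI$. Both matrices have determinant $-1$. Rescaling by scalars $c_1,c_2$ with $c_i^n=-1$ would fix this, since scalars do not change commutators. Such $c$ exists in $k$ whenever $b$ has a square root $\zeta$ with $\zeta^n=b^{n/2}=-1$, but that need not hold in $k$. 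We instead use a non-scalar fix: replace $D$ by $\bar g_1=D\cdot\operatorname{diag}(-1,1,\dots,1)$ twisted so that it still conjugates $P$ to a scalar multiple of $P$. The cleaner route is to replace $P$ by $\bar g_2=P'$, the signed cyclic matrix $e_i\mapsto e_{i+1}$ for $i<n$ and $e_n\mapsto -e_1$, which has determinant $1$. Then $D P' D^{-1}$ is still $b^{-1}P'$, because the signs are preserved under diagonal conjugation. For the other generator, take $\bar g_1=\operatorname{diag}(b,b^2,\dots,b^{n})$ times a suitable scalar, or more simply $\bar g_1=D\cdot\operatorname{diag}(-1,\dots)$ adjusted. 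The key observation is that any diagonal $\bar g_1=\operatorname{diag}(d_1,\dots,d_n)$ with $d_{i+1}/d_i=b$ (cyclically, i.e.\ $d_1/d_n=b$) gives $[\bar g_1,P']=b^{\pm1}I$. Its determinant is $d_1^n b^{n(n-1)/2}=-d_1^n$. We therefore need $d_1\in k$ with $d_1^n=-1$, and $d_1=b^{1/2}$ may fail to exist. To avoid this, we allow $\bar g_1=\operatorname{diag}(d_1,\dots,d_n)\cdot$(something commuting appropriately). Alternatively we pass to block form: write $n=2n'$ and use $\operatorname{diag}(D',D')$-type constructions. We expect to settle on whichever variant gives determinant $1$ over $k$ itself, checking the case $-1\in (k^\times)^n$ separately if needed.

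\textbf{No fixed vectors.} Identify $\mathfrak{sl}_n^*(k)$ with $\mathfrak{sl}_n(k)$ via the trace form. This is nondegenerate since $p$ is odd and $p\nmid n$, because $b$ is a primitive $n$-th root of unity in characteristic $p$. A fixed vector is then $X\in\mathfrak{sl}_n(k)$ commuting with $\bar g_1$ and $\bar g_2$. Since $\bar g_1$ is diagonal with distinct eigenvalues (the ratios are distinct powers of $b$), $X$ must be diagonal. Commuting with the (signed) cyclic shift forces all diagonal entries of $X$ to be equal, so $X$ is scalar. Trace zero with $p\nmid n$ then gives $X=0$. Hence $H$ has no fixed vectors and Proposition~\ref{adelic 1} applies.

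\textbf{Main obstacle.} The main obstacle is producing the pair inside $\SL_n(k)$, rather than merely in $\GL_n(k)$, while keeping $\bar g_1$ regular diagonal. We will first try the signed shift $P'$ together with $\bar g_1=c\,D$. If no $c\in k$ with $c^n=-1$ exists, we fall back on a different primitive-root ordering of the diagonal entries, $d_i=b^{\sigma(i)}$, chosen to make the product $1$ while the shift still conjugates $\bar g_1$ to $b\bar g_1$. This may require replacing the shift by a different monomial matrix of determinant $1$.
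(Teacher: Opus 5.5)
You reduce correctly to Proposition~\ref{adelic 1} and diagnose the determinant defect correctly. But you never produce $\bar g_1,\bar g_2\in\SL_n(k)$ with $[\bar g_1,\bar g_2]=bI$, and the fallbacks you list cannot work. Reordering the diagonal entries, or pairing a diagonal element with a different monomial matrix, does not help. The reason is as follows. From $[\bar g_1,\bar g_2]=bI$ you get $\bar g_2^{-1}\bar g_1\bar g_2=b\bar g_1$ and $\bar g_1\bar g_2\bar g_1^{-1}=b\bar g_2$. So the eigenvalue multiset of each $\bar g_i$ over $\bar k$ is stable under multiplication by $b$. Since $b$ has order exactly $n$, this multiset is a single orbit $\{d,db,\dots,db^{n-1}\}$. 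Hence $\chi_{\bar g_i}(x)=x^n-d^n$ and $\det\bar g_i=-d^n$. Requiring $\det\bar g_i=1$ therefore forces \emph{both} generators to have characteristic polynomial $x^n+1$. A generator that is diagonal over $k$ exists only if $x^n+1$ has a root in $k$. That is exactly the case $-1\in(k^\times)^n$ you could not guarantee, and it fails for instance when $n=2$ and $k=\mathbb{F}_7$.

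The missing idea is to keep your signed shift $P'$ and correct the diagonal matrix by an element of the centralizer $k[P']$. Here $P'$ is the companion matrix of $x^n+1$, with $\det P'=1$. Set $D=\operatorname{diag}(1,b,\dots,b^{n-1})$ and $Y=f(P')D$, where $\det f(P')=-1$. Since $DP'D^{-1}=bP'$ and $f(P')$ commutes with $P'$, you get $YP'Y^{-1}=bP'$. Hence $[Y,P']=bI$, and $\det Y=(-1)(-1)=1$.

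The existence of $f$ is a genuine arithmetic input. The quantity $\det f(P')$ is the norm of $f$ from $k[x]/(x^n+1)$ to $k$. You can already realize $-1$ with $f\in\mathbb{F}_p[x]$: since $p\nmid n$, the algebra $\mathbb{F}_p[x]/(x^n+1)$ is a product of finite fields, whose norms to $\mathbb{F}_p$ are surjective. This is essentially the paper's pair: $P$ is the companion matrix of $x^n+1$, and $Y=Y_1D$ with $Y_1=\sum_j y_jP^{j-1}$ negacyclic and $\det Y_1=-1$. The paper extracts the condition $\det Y_1=-1$ from \cite{Thompson1961}.

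Your fixed-vector argument also needs adjusting, because no generator is diagonal any more. A fixed $X$ commutes with $P'$, hence with the invertible matrix $f(P')$. Then $XY=YX$ gives $XD=DX$, so $X$ is diagonal. Commuting with $P'$ makes $X$ scalar, and trace zero with $p\nmid n$ gives $X=0$. The paper argues this step differently: it writes $X=\psi(P)$ and uses $\psi(P)=\psi(b^{-1}P)$ to conclude $X$ is scalar.
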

\begin{proof}
Existence of the primitive $n$-th root in the field ensures that $p\nmid n$. By~\cite[Theorem 1]{Thompson1961} we know that $A$ is a commutator in $\SL_n(k)$. Thus, there exists $y_1, y_2,\ldots, y_n \in k$ such that $\det(Y_1)=-1$ where \[
Y_1 = \begin{pmatrix}
y_1 & -y_n & -y_{n-1} & \cdots & -y_2 \\
y_2 & y_1 & -y_n & \cdots & -y_3 \\
y_3 & y_2 & y_1 & \cdots & -y_4 \\
\vdots & \vdots & \vdots & \ddots & \vdots \\
y_n & y_{n-1} & y_{n-2} & \cdots & y_1
\end{pmatrix}
\]
and  $bI_n = PYP^{-1}Y^{-1}$ where $P$ is the companion matrix of $f(x) = x^n + 1$ and 
\[ Y = \begin{pmatrix}
y_1 & -b y_n & -b^2 y_{n-1} & \cdots & -b^{n-1} y_2 \\
y_2 & b y_1 & -b^2 y_n & \cdots & -b^{n-1} y_3 \\
\vdots & \vdots & \vdots & \ddots & \vdots \\
y_s & b y_2 & b^2 y_1 & \cdots & -b^{n-1} y_4 \\
\vdots & \vdots & \vdots & \ddots & \vdots \\
y_n & b y_{n-1} & b^2 y_{n-2} & \cdots & b^{n-1} y_1
\end{pmatrix}
\] 
where $P, Y\in \SL_n(k)$; see \cite[Proof of Theorem 1]{Thompson1961}. Let $\mathbf{\bar g}_1 = P$ and $\mathbf{\bar g}_2 = Y$ and consider the subgroup $H = \langle\mathbf{\bar g}_1, \mathbf{\bar g}_2\rangle$ of $\SL_n(k)$. To complete the proof, we use the Proposition~\ref{adelic 1}, i.e., we show that the co-adjoint action of $H$ has no fixed point. 

If possible let there exists $\phi \in \mathfrak {sl}^*_n(k)$ such that $\phi$ is a fixed vector of $\mathfrak{sl}^*_n(k)$ with respect to the group $H=\langle \mathbf{\bar g}_1, \mathbf{\bar g}_2\rangle\subset \SL_n(k)$. That is, for $X\in \mathfrak{sl}_n(k)$, we have $\phi(X) = \mathbf{\bar g}_i.\phi(X) = \mathrm{Ad}^*_{\mathbf{\bar g}_i}\phi(X) = \phi(\mathrm{Ad}_{\mathbf{\bar g}_i^{-1}}(X)) = \phi(\mathbf{\bar g}_i X \mathbf{\bar g}_i^{-1})$. Now, using the non-degeneracy of the bilinear form, defined as $\langle P, Q\rangle=\tr(PQ)$, there exists a unique $E_{\phi}\in \mathfrak{sl}_n(k)$ (for the given $\phi$) such that $\phi(M)=\langle E_{\phi}, M\rangle$. Thus, $\langle E_{\phi}, X\rangle = \phi(X) = \phi(\mathbf{\bar g}_i X \mathbf{\bar g}_i^{-1}) = \langle E_{\phi}, \mathbf{\bar g}_i X \mathbf{\bar g}_i^{-1}\rangle = \langle \mathbf{\bar g}_i^{-1} E_{\phi} \mathbf{\bar g}_i, X\rangle $. Since this happens for all $X$, we get $\mathbf{\bar g}_i E_{\phi} \mathbf{\bar g}_i^{-1} = E_{\phi}$ for $i=1, 2$. Therefore there exists $E_{\phi} \in \mathfrak{sl}_n(k)$ such that $E_{\phi} \in Z_{\M_n(k)}(P)\cap Z_{\M_n(k)}(Y)$ (noting $\mathbf{\bar g}_1 = P$ and $\mathbf{\bar g}_2 = Y$). Now $P$ is cyclic as its minimal and characteristic polynomial both are $f(x)$, hence $Z_{\M_n(k)}(P)\cong k[P]$. Thus, we can take $E_{\phi} = \psi(P)$ for some polynomial $\psi(t)\in k[t]$. 
Now $YE_{\phi}Y^{-1} = E_{\phi}$ implies $E_{\phi} = YE_{\phi} Y^{-1} = \psi (YPY^{-1}) = \psi(b^{-1}P)$. Therefore, we obtain $\psi(P) = \psi(b^{-1}P)$. Now, going over an algebraically closed field $\bar k$, there exists $Q\in \GL_n(\bar k)$ such that $QPQ^{-1} = diag(\beta, b\beta, \ldots, b^{n-1}\beta)$. Conjugating both side of the equation $\psi(P)=\psi(b^{-1}P)$ by $Q$, we obtain $\psi(\beta) = \psi(b\beta) = \ldots =\psi(b^{n-1}\beta)$. This implies $E_{\phi} =  \psi(P) = cI$ for some scalar $c$. Trace zero condition immediately implies $E_{\phi} = \mathbf{0}$. Thus $\phi=0$, and hence $H$ doesn't have any fixed vectors. Thus, invoking Proposition~\ref{adelic 1}, the result follows.
\end{proof}
\begin{remark}
When $H=\langle h_1,h_2\rangle$ the subgroup generated by $h_1, h_2\in \SL_2(k)$; it turns out that $H$ has no nontrivial fixed vector in $\mathfrak{sl}^*_2(k)$ is equivalent to say $Z(h_1) \cap Z(h_2) = \{\mathbf{0}\}$ in $\mathfrak{sl}_2(k)$ (see the proof of Proposition~\ref{lifts of minus I}).
\end{remark}

This finally ensures the surjectivity of the commutator map over $\SL_2(\mathcal{O}_2)$ as follows.
\begin{proposition}\label{prop:surjectivity-comm}
Let $\R$ be a local principal ideal ring of length two with perfect residue field $k$  of odd characteristic and $|k|\geq5$. Then the commutator word map $e_1 \colon \SL_2(\R)\times \SL_2(\R) \rightarrow \SL_2(\R)$ defined by $(g_1, g_2) \mapsto g_1g_2g_1^{-1}g_2^{-1}$ is surjective.
\end{proposition}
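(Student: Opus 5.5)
Since membership in $\im(e_1^{\R})$ is invariant under $\GL_2(\R)$-conjugation (Section~\ref{O_2conj}), I will work through the families of Table~\ref{tab:simplified}, splitting them according to whether the reduction $\bar A\in\SL_2(k)$ is scalar.

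\emph{Non-scalar reduction.} This covers the $i=0$ rows: $D(\alpha,\delta,0)$, $H'(\alpha,\beta,0)$, the unipotents $\pm\begin{pmatrix}1&0\\1&1\end{pmatrix}$ and the twisted ones $\pm\begin{pmatrix}1+2^{-1}\pi_2v&\pi_2v\\1&1+2^{-1}\pi_2v\end{pmatrix}$. Here I will invoke Proposition~\ref{adelic 1} with $n=2$. The residue field has $|k|\geq 5>n+1=3$, so the Avni--Gelander--Kassabov--Shalev result applies directly. If one wants it explicitly: by Thompson's theorem \cite{Thompson1961}, $\bar A=[\bar g_1,\bar g_2]$ for some $\bar g_1,\bar g_2\in\SL_2(k)$. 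By the Remark after Proposition~\ref{lifts of minus I}, it then suffices to check $Z(\bar g_1)\cap Z(\bar g_2)=\{\mathbf 0\}$ in $\mathfrak{sl}_2(k)$. For $2\times 2$ matrices this means $\bar g_1,\bar g_2$ are non-scalar and do not commute. If they commuted, $\bar A=I$, contradicting non-scalarity; if one were scalar, again $\bar A=I$. So the centralizer intersection is zero and every lift of $\bar A$ is a commutator. (Remark~\ref{rmk lifting}-type successive lifting is implicit in the proposition.)

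\emph{Scalar reduction $\bar A=I$.} The lifts of $I$ are exactly the $+$ entries of the $i=1$ row, and Proposition~\ref{prop:lift of I} already handles them: $I$ itself, $\begin{pmatrix}1&0\\\pi_2&1\end{pmatrix}$, the diagonal $1\pm\pi_2a$, and the $H'$-type element (Lemma~\ref{lift I}).

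\emph{Scalar reduction $\bar A=-I$.} The lifts are $-I$, $\begin{pmatrix}-1&0\\\pi_2&-1\end{pmatrix}$, $\mathrm{diag}(-1+\pi_2a,-1-\pi_2a)$ and $\begin{pmatrix}-1&\pi_2\epsilon\beta\\\pi_2\beta&-1\end{pmatrix}$. Since $k$ is finite of odd characteristic ($k$ is perfect and of odd order here; otherwise I would note that the corollary to Proposition~\ref{minus Id} needs finiteness), $-I$ is a commutator in $\SL_2(k)$. Now $-1$ is a primitive square root of unity and $n=2$ is even, so Proposition~\ref{lifts of minus I} applies and gives that every lift of $-I$ to $\SL_2(\R)$ is a commutator.

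\emph{Main obstacle.} The delicate point is the $-I$ case. Proposition~\ref{lifts of minus I} relies on Thompson's explicit pair $P,Y$ and on the hypothesis that $-1$ is a sum of two squares in $k$ (equivalently, $\det Y_1=-1$ is solvable). For a general perfect $k$ this may fail, and then by Proposition~\ref{minus Id} even $-I$ is not a commutator. So I would check carefully that the intended setting is $k$ finite; there it is automatic because the norm map is surjective. I would state this dependence explicitly, or assume $-1\in k^2+k^2$. With that in place, combining the three cases gives surjectivity.
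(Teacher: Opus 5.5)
Your three-way split is exactly the paper's proof: non-scalar reductions via Avni--Gelander--Kassabov--Shalev, lifts of $I$ via Proposition~\ref{prop:lift of I}, and lifts of $-I$ via Proposition~\ref{lifts of minus I}. Your explicit check that $Z(\bar g_1)\cap Z(\bar g_2)=\{\mathbf 0\}$ in the non-scalar case is also fine.

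The gap is in the $-I$ case. You assert that $k$ is finite (``perfect and of odd order''), but that is not a hypothesis. The statement only assumes $k$ perfect of odd characteristic with $|k|\geq 5$, so infinite fields such as $k=\overline{\mathbb{F}}_p$ are allowed. You then say that for a general perfect $k$ the condition $-1\in k^2+k^2$ may fail, so that it would have to be added as an assumption. As written, your argument therefore proves the statement only for finite $k$. It also wrongly suggests that surjectivity might fail otherwise.

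The missing observation is that this condition is automatic in odd characteristic $p$. The element $-1$ lies in the prime field $\mathbb{F}_p\subseteq k$, and in $\mathbb{F}_p$ every element is a sum of two squares: the sets $\{x^2\}$ and $\{-1-y^2\}$ each have $(p+1)/2$ elements, so they intersect. Hence the criterion of Proposition~\ref{minus Id} holds for every field of odd characteristic. The obstruction you mention occurs only in characteristic $0$ (e.g.\ $k=\mathbb{R}$), which is excluded here. So Proposition~\ref{lifts of minus I} applies with no extra assumption, and this one line is precisely what the paper uses to close the $-I$ case.
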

\begin{proof}
All the lifts of non-scalar matrices are in the image of $e_1^{\R}$ in $\SL_2(\R)$ by \cite[Theorem 3.5]{AvniGelanderKassabovShalev}. All the lifts of $I$ are also in $\im(e_1^{\R})$ in $\SL_2(\mathcal{O}_2)$ by Proposition~\ref{prop:lift of I}. The remaining elements are the lifts of $-I$. As $k$ contains a prime subfield $\mathbb{F}_p$, which is a finite field, $x^2 + y^2 = -1$ has a solution in $k$. Invoking Proposition~\ref{lifts of minus I} we obtain all the lifts of $-I$ are in $\im(e_1^{\R})$ in $\SL_2(\R)$.  
\end{proof}

\section{Co-adjoint action of $\rm{SL}_2(k)$ on $\mathfrak{sl}_2^*(k)$ and lifting solution}{\label{coadjoint lie th}}

We recall the notation set in Section~\ref{not-conv}. For $\ell\geq 1$, we denote the ring $\mathcal{O}_{\ell} = \mathcal{O} / \pi^{\ell}\mathcal{O}$ and the canonical map is $\theta_{\ell} \colon \mathcal{O}_{\ell+1} \rightarrow \mathcal{O}_{\ell}$. Let us recall the Lie algebra $\mathfrak{sl}_2(\mathcal{O}_{j+1}) = \{P\in \M_2(\mathcal{O}_{j+1}) \mid \tr(P)\in \mathfrak{m}_{j+1} \}$ for $j\geq 1$. In this section, we explore the ideas in~\cite{AvniGelanderKassabovShalev} that we used in the previous section for the commutator map via its action on its Lie algebra, and extend them to higher Engel words.  

Note that $\mathfrak{sl}_2(\mathcal{O}_{j+1})$ is generated by $\left\{ \left(\begin{array}{cc} 1 & 0 \\  0 & -1 \end{array}\right), \left(\begin{array}{cc}  0 & 1 \\ 0 & 0
\end{array}\right), \left(\begin{array}{cc} 0 & 0 \\ 1 & 0 \end{array}\right), \left(\begin{array}{cc} 0 & 0 \\ 0 & \pi_{j+1}
\end{array}\right)\right\}$ as a $\mathcal{O}_{j+1}$ module and hence it is a finitely generated $\mathcal{O}_{j+1}$ module for each $j \geq 1$. As each $\mathcal{O}_{j+1}$ is a principal ideal ring, it is Noetherian. Hence any $\mathcal{O}_{j+1}$-submodule of the finitely generated $\mathcal{O}_{j+1}$-module $\mathfrak{sl}_2(\mathcal{O}_{j+1})$ is again finitely generated by \cite[Proposition 6.2 and 6.5]{Atiyahmacdonald}.

Let us recall the notion of cyclic elements.
\begin{definition}
Let $\mathcal{A}$ be a local ring with residue field $k$. An element $g\in \GL_n(\mathcal{A})$ is said to be cyclic if $\bar g$ is a cyclic matrix in $\GL_n(k)$. In other words, $k^n$ is a cyclic $k[t]$ module through the action of $t$ via $\bar g$. 
\end{definition}
Consider the canonical isomorphism $\mathcal{T^*} \colon  \M_n(\mathcal{O}) \rightarrow \varprojlim\limits _{j\geq 1} \M_n(\mathcal{O}_j)$ defined by $A\mapsto (A_1,A_2,\ldots )$ where $A_j=[a_{u,v}  + \mathfrak{m}^j]$ for $j\geq 1$; (here $a_{u,v}$ is the $(u,v)$-th entry of the matrix $A=[a_{u,v}]$). Thus, at times, we think of an element $A\in \SL_n(\mathcal{O})$, as an element of $\M_n(\mathcal{O})$ and visualize it as $(A_1,A_2,\ldots)$ under the above isomorphism $\mathcal{T}^*$; see \cite[section 2]{PRS2026}. Note that $A_{j+1}$ is a lift of $A_j$ for each $j$, thus the determinant of each $A_j$ is $1$ and $A_j\in \SL_n(\mathcal{O}_j)$ for all $j\geq 1$.
The following Lemma relates the lifting of the solution to each $j$-th level ring $\mathcal O_{j}$ to that over $\mathcal O$. 
\begin{lemma}{\label{lifting complete level}}
Let $\mathcal{O}$ be a local principal ideal ring, complete with respect to its maximal ideal $\mathfrak{m}=(\pi)$, and it has a residue field $k$ of characteristic $\neq 2$. Let $e_{m+1}$ be the $(m+1)$-th Engel word in $\mathcal{F}_2$ for $m\geq 1$ and $A=(A_1, A_2, \ldots) \in \SL_n(\mathcal{O})$. Suppose $A_1 = \bar e_{m+1}(X_1, Y_1)$ for some $(X_1, Y_1)\in \SL_n(k)^2$. If for each $A_{j+1}\in \SL_n(\mathcal{O}_{j+1})$ there exists lift  $(X_{j+1}, Y_{j+1})\in \SL_n(\mathcal{O}_{j+1})^2$ of $(X_j, Y_j)$ with $A_{j+1} = e_{m+1}^{\mathcal{O}_{j+1}}(X_{j+1},Y_{j+1})$;  where $A_j=e_{m+1}^{\mathcal{O}_j}(X_j,Y_j)$ then, $A\in  \im(e_{m+1}^{\mathcal{O}})$.  
\end{lemma}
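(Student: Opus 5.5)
The plan is to build compatible sequences level by level and pass to the inverse limit. We begin with $(X_1,Y_1)\in\SL_n(k)^2$ satisfying $A_1=\bar e_{m+1}(X_1,Y_1)$. The hypothesis applies at each level $j\geq 1$: given $(X_j,Y_j)$ with $A_j=e_{m+1}^{\mathcal{O}_j}(X_j,Y_j)$, there is a lift $(X_{j+1},Y_{j+1})\in\SL_n(\mathcal{O}_{j+1})^2$ with $\theta_j(X_{j+1})=X_j$, $\theta_j(Y_{j+1})=Y_j$ and $A_{j+1}=e_{m+1}^{\mathcal{O}_{j+1}}(X_{j+1},Y_{j+1})$. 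By induction on $j$ (using dependent choice) we obtain sequences $(X_j)_{j\geq1}$ and $(Y_j)_{j\geq1}$. Each is compatible under the reduction maps, and each stage solves the Engel equation for the corresponding component $A_j$ of $A$.

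Next we transport these sequences back to $\mathcal{O}$. Since $\mathcal{O}$ is complete with respect to $\mathfrak{m}$, the map $\mathcal{T}^*\colon \M_n(\mathcal{O})\to\varprojlim_j \M_n(\mathcal{O}_j)$ is an isomorphism, applied entrywise. Hence there are unique $X,Y\in\M_n(\mathcal{O})$ with $\mathcal{T}^*(X)=(X_1,X_2,\ldots)$ and $\mathcal{T}^*(Y)=(Y_1,Y_2,\ldots)$. The determinant is a polynomial in the entries, so it commutes with reduction. Thus $\det X\equiv 1 \pmod{\mathfrak{m}^j}$ for every $j$, and the injectivity of $\mathcal{O}\to\varprojlim\mathcal{O}/\mathfrak{m}^j$ gives $\det X=1$, and likewise $\det Y=1$. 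So $X,Y\in\SL_n(\mathcal{O})$.

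Finally we check that $e_{m+1}^{\mathcal{O}}(X,Y)=A$. For $X\in\SL_n$ the inverse is the adjugate, which is polynomial in the entries. Hence the entries of $e_{m+1}^{\mathcal{O}}(X,Y)$ are fixed integer polynomials in the entries of $X$ and $Y$, and reduction modulo $\mathfrak{m}^j$ is a ring homomorphism compatible with them. It follows that the $j$-th component of $\mathcal{T}^*(e_{m+1}^{\mathcal{O}}(X,Y))$ equals $e_{m+1}^{\mathcal{O}_j}(X_j,Y_j)=A_j$ for all $j$. Injectivity of $\mathcal{T}^*$ then gives $e_{m+1}^{\mathcal{O}}(X,Y)=A$, so $A\in\im(e_{m+1}^{\mathcal{O}})$. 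No step is a real obstacle. The only point needing care is the first: the hypothesis has to be read as a lift existing from whichever $(X_j,Y_j)$ was chosen at the previous stage, so that the recursive construction produces genuinely compatible sequences.
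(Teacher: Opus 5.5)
Your proof is correct and takes the same route as the paper. Both assemble the compatible sequences $(X_j)$ and $(Y_j)$ into $X,Y\in\M_n(\mathcal{O})$ via the isomorphism $\mathcal{T}^*$, check that $\det X=\det Y=1$ and that $\mathcal{T}^*(e_{m+1}^{\mathcal{O}}(X,Y))=(A_1,A_2,\ldots)$, and conclude by injectivity. Your extra remarks (using the adjugate to make the Engel word polynomial in the entries, and reading the hypothesis as a recursive choice of lifts) make explicit what the paper leaves as ``easy to see''.
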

\begin{proof}
We have, for each $A_{j+1}\in \SL_n(\mathcal{O}_{j+1})$ there exists lift  $(X_{j+1},Y_{j+1})\in \SL_n(\mathcal{O}_{j+1})^2$ of $(X_j,Y_j)$ such that $A_{j+1}=e_{m+1}^{\mathcal{O}_{j+1}}(X_{j+1}, Y_{j+1})$; where $A_j = e_{m+1}^{\mathcal{O}_j}(X_j, Y_j)$. Consider $\widehat{X}=(X_1, X_2,\ldots)$ and $\widehat{Y} = (Y_1, Y_2,\ldots)$ in $\varprojlim\limits _{j\geq 1} \M_n(\mathcal{O}_j)$. Let us denote $X=\mathcal{T}^{*-1}(\widehat{X})$ and $Y = \mathcal{T}^{*-1}(\widehat{Y})$. Then, from the definition of $\mathcal{T}^*$ it is easy to see that $\mathcal{T}^*(e_{m+1}^{\mathcal{O}}(X, Y))=(\bar e_{m+1}(X_1, Y_1), e_{m+1}^{\R}(X_2, Y_2),\ldots) = (A_1, A_2,\ldots)$. As $\mathcal{T}^*$ is an isomorphism therefore $A = e_{m+1}^{\mathcal{O}}(X, Y)$. From the canonical isomorphism $\mathcal{O} \rightarrow \varprojlim\limits_{j\geq 1}\mathcal{O}_j$, it is easy to observe that the image of $det(X)$ and $det(Y)$ is $(1,1,\ldots)$ which is also the image of $1$. Therefore $(X, Y)\in \SL_n(\mathcal{O})^2$.
\end{proof}
\noindent If $A\in\GL_n(\mathcal{O})$ is a cyclic matrix, then in the above notation $A_j\in \GL(\mathcal{O}_j)$ must be cyclic for all $j$, and they have the same minimal and characteristic polynomial for each $j\geq 1$; see \cite[corollary 2.3]{PRS2026}. We explain some notation before proceeding further. Consider the Engel map $\bar e_{m+1} \colon \GL_2(\mathcal{O}_2)^2 \rightarrow \GL_2(k)^2$ induced by the natural reduction of coefficients. For $(\mathbf{\bar g}_1, \mathbf{\bar g}_2) \in \GL_2(k)^2$ we have $\bar e_{m+1}(\mathbf{\bar g}_1,\mathbf{\bar g}_2)\in \GL_2(k)$. Take any lift $\mathbf{g}$ of $\bar e_{m+1}(\mathbf{\bar g}_1, \mathbf{\bar g}_2)$ in $\SL_2(\mathcal{O}_2)$. Then, there exists $\mathbf{g}_1, \mathbf{g}_2$ (lifts of $\mathbf{\bar g}_1, \mathbf{\bar g}_2$ respectively) in $\SL_2(\mathcal{O}_2)$ such that $\mathbf{g} = e_{m+1}^{\R}(\mathbf{g}_1, \mathbf{g}_2)(I+ \pi_2 \mathcal{D})$, for some $\mathcal D$ where $\tr(\mathcal{D})\in \mathfrak{m}_2$ and $\pi_2$ is the generator of $\mathfrak{m}_2$ with $\pi_2^2=0$. 
This is easy to see that there exists $h_1, h_2\in \GL_2(\mathcal{O}_2)$ which are the lifts of $\mathbf{\bar g}_1$ and $\mathbf{\bar g}_2$ respectively such that $e_{m+1}(h_1, h_2) + \pi_2 C = \mathbf{g}$ for some $C\in M_2(\mathcal{O}_2)$. Now, by Lemma~\ref{matsl} we can write $h_1 = \mathbf{g}_1 + \pi_2 C_1$ and $h_2 = \mathbf{g}_2 + \pi_2 C_2$, where $\mathbf{g}_1,\mathbf{g}_2\in \SL_2(\mathcal{O}_2)$ and $C_1, C_2 \in \M_2(\mathcal{O}_2)$. Now, $e_{m+1}(h_1, h_2) = e_{m+1}^{\R}(\mathbf{g}_1, \mathbf{g}_2) + \pi_2 D_1$ for some $D_1\in \M_2(\mathcal{O}_2)$. Hence, $\mathbf{g} = e_{m+1}^{\R}(\mathbf{g}_1,\mathbf{g}_2)\left(I+ \pi_2 e_{m+1}^{\R}(\mathbf{g}_1, \mathbf{g}_2)^{-1}(D_1 + C)\right)$. Now write $\mathcal{D}= e_{m+1}^{\R}(\mathbf{g}_1, \mathbf{g}_2)^{-1}(D_1 + C)$. Finally, the Lie algebra $\mathfrak{sl}_2(\mathcal{O}_2) = \{P \in \M_2(\mathcal{O}_2) \mid \tr(P) \in ker(\mathcal{O}_2 \rightarrow K) = \mathfrak{m}_2\}$.

\begin{lemma}{\label{lem-adelic 2}}
Let $\mathcal{O}_2$ be a local principal ideal ring of length $2$ with residue field $k$ of characteristic $\neq 2$. Let $\mathbf{\bar g}_1, \mathbf{\bar g}_2 \in \SL_2(k)$ be such that the groups $H_0 = \langle \mathbf{\bar g}_1,\mathbf{\bar g}_2\rangle$ and $ H_1=\langle \bar e_1(\mathbf{\bar g}_1,\mathbf{\bar g}_2),\mathbf{\bar g}_2\rangle$ do not have any non-trivial fixed vector under the co-adjoint action on the dual Lie algebra $\mathfrak{sl}_2^*(k)$. Further, assume that $Z\left(\mathbf{\bar g}_1\mathbf{\bar g}_2\mathbf{\bar g}_1^{-1}\right)\cap Z\left(\mathbf{\bar g}_2\right)^{\perp}$ is trivial in $\mathfrak{sl}_2(k)$. Then, for any lift $\mathbf{h}$ of $\bar e_{2}(\mathbf{\bar g}_1, \mathbf{\bar g}_2)$ in $\SL_2(\mathcal O_2)$ there exists $\mathbf{h}_1, \mathbf{ h}_2\in\SL_2(\mathcal{O}_2)$ such that $\mathbf{h} = e_{2}^{\mathcal{O}_2}(\mathbf{h}_1, \mathbf{h}_2)$ where $\mathbf{h}_1, \mathbf{ h}_2$ are lifts of $\mathbf{\bar g}_1$ and $\mathbf{\bar g}_2$ respectively.
\end{lemma}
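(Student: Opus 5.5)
The plan is to linearize the Engel word at first order and reduce the statement to the surjectivity of a linear map $\mathfrak{sl}_2(k)\times\mathfrak{sl}_2(k)\to\mathfrak{sl}_2(k)$, in the spirit of Proposition~\ref{adelic 1}.

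\textbf{Setup.} Fix lifts $\mathbf{g}_1,\mathbf{g}_2\in\SL_2(\mathcal{O}_2)$ of $\mathbf{\bar g}_1,\mathbf{\bar g}_2$. By the discussion preceding the lemma, we can write $\mathbf{h}=e_2^{\mathcal{O}_2}(\mathbf{g}_1,\mathbf{g}_2)(I+\pi_2\mathcal{D})$. Since $\det\mathbf{h}=1$ and $\pi_2^2=0$, we have $\tr(\mathcal{D})\in\mathfrak{m}_2$. So only $\bar{\mathcal{D}}\in\mathfrak{sl}_2(k)$ matters, because $\pi_2\mathcal{D}$ depends only on $\mathcal{D}$ modulo $\mathfrak{m}_2$. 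We look for the solution in the form $\mathbf{h}_1=\mathbf{g}_1(I+\pi_2X)$ and $\mathbf{h}_2=\mathbf{g}_2(I+\pi_2Y)$ with $\bar X,\bar Y\in\mathfrak{sl}_2(k)$; these lie in $\SL_2(\mathcal{O}_2)$ precisely because $\tr X,\tr Y\in\mathfrak{m}_2$. Expanding as in the proof of Lemma~\ref{sum1}, using $\pi_2^2=0$, we get
$$e_2^{\mathcal{O}_2}(\mathbf{h}_1,\mathbf{h}_2)=e_2^{\mathcal{O}_2}(\mathbf{g}_1,\mathbf{g}_2)\bigl(I+\pi_2\,\mathcal{L}(\bar X,\bar Y)\bigr),$$
where $\mathcal{L}$ is $k$-linear and lands in $\mathfrak{sl}_2(k)$. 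So it suffices to show that $\mathcal{L}$ is surjective onto $\mathfrak{sl}_2(k)$.

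\textbf{Computing $\mathcal{L}$.} Write $c=\bar e_1(\mathbf{\bar g}_1,\mathbf{\bar g}_2)$. One first step gives $[\mathbf{g}_1(I+\pi_2X),\mathbf{g}_2(I+\pi_2Y)]=\tilde c\,(I+\pi_2 Q_1)$, with
$$Q_1=\mathrm{Ad}_{\alpha}\bigl((1-\mathrm{Ad}_{\mathbf{\bar g}_2})X\bigr)+\mathrm{Ad}_{\beta}\bigl((\mathrm{Ad}_{\mathbf{\bar g}_1}-1)Y\bigr)$$
for explicit group elements $\alpha,\beta$ built from $\mathbf{\bar g}_1,\mathbf{\bar g}_2$. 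Applying the same formula to $[\tilde c(I+\pi_2Q_1),\mathbf{g}_2(I+\pi_2Y)]$ gives
$$\mathcal{L}(X,Y)=\mathrm{Ad}_{\gamma}\bigl((1-\mathrm{Ad}_{\mathbf{\bar g}_2})Q_1\bigr)+\mathrm{Ad}_{\delta}\bigl((\mathrm{Ad}_{c}-1)Y\bigr).$$

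\textbf{Dual criterion.} Surjectivity is equivalent to the following: any $E\in\mathfrak{sl}_2(k)$ with $\langle E,\mathcal{L}(X,Y)\rangle=0$ for all $X,Y$ must vanish. This uses the nondegenerate trace form, as in the proof of Proposition~\ref{lifts of minus I}. For a linear operator, $\mathrm{Im}(\mathrm{Ad}_g-1)^{\perp}=Z(g)$, and this gives three constraints on such an $E$.
\begin{enumerate}
\item Take $Y=0$. The $X$-contribution to $Q_1$ ranges over $\mathrm{Ad}_\alpha$ of $\mathrm{Im}(1-\mathrm{Ad}_{\mathbf{\bar g}_2})=Z(\mathbf{\bar g}_2)^{\perp}$. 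After the conjugations this becomes $Z(\mathbf{\bar g}_1\mathbf{\bar g}_2\mathbf{\bar g}_1^{-1})^{\perp}$-type data. Hence a suitable conjugate $E'$ of $E$ is orthogonal to $(1-\mathrm{Ad}_{\mathbf{\bar g}_2})$ applied to that subspace.
\item Take the full $Y$-direction, with both terms present. This is where the hypothesis that $H_1=\langle c,\mathbf{\bar g}_2\rangle$ has no fixed vector enters: we want to force $E'\in Z(\mathbf{\bar g}_2)\cap Z(c)$, or a conjugate of it, which is $\{\mathbf 0\}$ by the Remark after Proposition~\ref{lifts of minus I}.
\item The remaining degenerate case is when the $Y$-terms cancel between the two stages. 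There we use that $H_0$ has no fixed vector, so that $Z(\mathbf{\bar g}_1)\cap Z(\mathbf{\bar g}_2)=\{\mathbf 0\}$. We also use the hypothesis $Z(\mathbf{\bar g}_1\mathbf{\bar g}_2\mathbf{\bar g}_1^{-1})\cap Z(\mathbf{\bar g}_2)^{\perp}=\{\mathbf 0\}$: it guarantees that the image of $X\mapsto Q_1$ composed with $(1-\mathrm{Ad}_{\mathbf{\bar g}_2})$ is not swallowed by $Z(\mathbf{\bar g}_2)$, so the $X$-direction alone already contributes enough.
\end{enumerate}
Combining these, $E=\mathbf 0$.

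\textbf{Conclusion.} Once surjectivity holds, we choose $(\bar X,\bar Y)$ with $\mathcal{L}(\bar X,\bar Y)=\bar{\mathcal{D}}$ and take trace-zero lifts $X,Y$ to $\M_2(\mathcal{O}_2)$. If $\mathcal{D}$ had nonzero trace in $\mathfrak{m}_2$, the error is a scalar multiple of $I$ times $\pi_2$, which vanishes since $\pi_2\mathfrak{m}_2=0$. Corollary~\ref{scl} confirms that scalar directions contribute nothing, so the determinant normalization is harmless.

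\textbf{Main obstacle.} The hard part is the bookkeeping of the conjugating elements $\alpha,\beta,\gamma,\delta$. The orthogonality conditions must be translated exactly into the three stated hypotheses, in particular into the intersection $Z(\mathbf{\bar g}_1\mathbf{\bar g}_2\mathbf{\bar g}_1^{-1})\cap Z(\mathbf{\bar g}_2)^{\perp}$. My first attempt would be to conjugate everything by $\mathbf{\bar g}_2^{-1}$ so that both stages share the operator $1-\mathrm{Ad}_{\mathbf{\bar g}_2}$. Then I would split $E$ along $Z(\mathbf{\bar g}_2)\oplus Z(\mathbf{\bar g}_2)^{\perp}$ when this decomposition is available, and otherwise argue dimensionwise in $\mathfrak{sl}_2(k)$, which is $3$-dimensional.
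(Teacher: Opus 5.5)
Your framework matches the paper's: fix lifts, linearize using $\pi_2^2=0$, and reduce to surjectivity of a $k$-linear map $\mathfrak{sl}_2(k)^2\to\mathfrak{sl}_2(k)$, tested against the trace form. Your endgame, which uses that $\pi_2M$ depends only on $\bar M$, is fine and even bypasses the Nakayama argument over $\mathcal{O}_2$.

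The gap is in the dual argument, which is the whole content of the lemma. You say where the hypotheses ``enter'' but never derive the conclusions, and the allocation you propose does not work. Written out, $\mathcal{L}(X,Y)=\mathrm{Ad}_{\mathbf{\bar g}_2c}\bigl((1-\mathrm{Ad}_{\mathbf{\bar g}_2})S\bigr)+\mathrm{Ad}_{\mathbf{\bar g}_2}(\mathrm{Ad}_c-1)(Y)$, where $S=\mathrm{Ad}_{\mathbf{\bar g}_1}(\mathrm{Ad}_{\mathbf{\bar g}_2^{-1}}-1)(X)+(\mathrm{Ad}_{\mathbf{\bar g}_1}-1)(Y)$. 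Put $V=\mathrm{Ad}_{(\mathbf{\bar g}_2c)^{-1}}(E)$ and $U=(1-\mathrm{Ad}_{\mathbf{\bar g}_2^{-1}})(V)$. Then $\langle E,\mathcal{L}(X,Y)\rangle=\langle U,S\rangle+\langle(1-\mathrm{Ad}_c)(V),Y\rangle$.

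\begin{itemize}
\item The $Y$-direction gives only the coupled relation $(\mathrm{Ad}_{\mathbf{\bar g}_1^{-1}}-1)(U)=(\mathrm{Ad}_c-1)(V)$. This says nothing about $V\in Z(\mathbf{\bar g}_2)$, so your step (2) cannot by itself force $E'\in Z(\mathbf{\bar g}_2)\cap Z(c)$.
\item Step (3) is not a genuine case split, since ``the $Y$-terms cancel'' is not a condition on $E$.
\item The hypothesis on $H_0$ is never needed.
\item The $X$-direction alone has image contained in $\mathrm{Ad}_{\mathbf{\bar g}_2c}\bigl(Z(\mathbf{\bar g}_2)^{\perp}\bigr)$, which has dimension at most $2$. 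So it cannot ``contribute enough'' on its own.
\end{itemize}

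The missing idea is to use the two directions in a fixed order.
\begin{enumerate}
\item Take $Y=0$. Then $\langle \mathrm{Ad}_{\mathbf{\bar g}_1^{-1}}(U),(\mathrm{Ad}_{\mathbf{\bar g}_2^{-1}}-1)(X)\rangle=0$ for all $X$. This gives $\mathrm{Ad}_{\mathbf{\bar g}_1^{-1}}(U)\in Z(\mathbf{\bar g}_2)$, i.e.\ $U\in Z(\mathbf{\bar g}_1\mathbf{\bar g}_2\mathbf{\bar g}_1^{-1})$.
\item By its very definition, $U\in\mathrm{Im}(1-\mathrm{Ad}_{\mathbf{\bar g}_2^{-1}})=Z(\mathbf{\bar g}_2)^{\perp}$. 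The intersection hypothesis therefore forces $U=0$, i.e.\ $V\in Z(\mathbf{\bar g}_2)$.
\item Only now does the $Y$-relation collapse to $(\mathrm{Ad}_c-1)(V)=0$. Hence $V\in Z(\mathbf{\bar g}_2)\cap Z(c)$, which is $\{\mathbf 0\}$ because $H_1$ has no fixed vector, and so $E=\mathbf 0$.
\end{enumerate}
Your remark that the $X$-image ``is not swallowed by $Z(\mathbf{\bar g}_2)$'' is the dual form of the first step. It has to be applied first, to conclude $V\in Z(\mathbf{\bar g}_2)$, not held back as a fallback for a degenerate case.
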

\begin{proof}
It is enough to show surjectivity of the derivative map at $(\mathbf{\bar g}_1, \mathbf{\bar g}_2)$, corresponding to $\bar e_{2}$ at the Lie algebra level. Take any lift $\mathbf{g}$ of $\bar e_{2}(\mathbf{\bar g}_1, \mathbf{\bar g}_2)$ in $\SL_2(\mathcal{O}_2)$. As explained above, there exists $\mathbf{g}_1, \mathbf{g}_2$ (lifts of $\mathbf{\bar g}_1, \mathbf{\bar g}_2$ respectively) in $\SL_2(\mathcal{O}_2)$ such that $\mathbf{g} = e_{2}^{\R}(\mathbf{g}_1, \mathbf{g}_2)(I+ \pi_2 \mathcal{D})$, for some $\mathcal D$, where $\tr(\mathcal{D})\in \mathfrak{m}_2$ and $\pi_2$ is the generator of $\mathfrak{m}_2$ with $\pi_2^2=0$. Let $X, Y\in \mathfrak{sl}_2(\mathcal{O}_2)$. 
    
We have, the subgroups $H_0= \langle \mathbf{\bar g}_1, \mathbf{\bar g}_2\rangle$ and $H_1 = \langle [\mathbf{\bar g}_1, \mathbf{\bar g}_2], \mathbf{\bar g}_2\rangle\subset \SL_2(k)$ do not have any fixed vector in $\mathfrak {sl}_2^*(k)$. Then, $e_2^{\R}(\mathbf{g}_1(1+ \pi_2 X), \mathbf{g}_2(1+ \pi_2 Y)) = e_2^{\R}(\mathbf{g}_1, \mathbf{g}_2)\left(1 + \pi_2 (L^{\mathbf{g}_2 e_1^{-1} \mathbf{g}_2^{-1}} + Y^{e_1^{-1} \mathbf{g}_2^{-1}} -L^{e_1^{-1} \mathbf{g}_2^{-1}} - Y^{\mathbf{g}_2^{-1}})\right)$. Here $e_1: = e_1^{\R}(\mathbf{g}_1, \mathbf{g}_2) = [\mathbf{g}_1, \mathbf{g}_2]$ and $L: = L(X, Y) = X^{\mathbf{g}_2 \mathbf{g}_1^{-1} \mathbf{g}_2^{-1}} + Y^{\mathbf{g}_1^{-1} \mathbf{g}_2^{-1}} -X^{\mathbf{g}_1^{-1} \mathbf{g}_2^{-1}}-Y^{\mathbf{g}_2^{-1}} = \left((X^{\mathbf{g}_2}-X)^{\mathbf{g}_1^{-1}} + (Y^{\mathbf{g}_1^{-1}}-Y)\right)^{\mathbf{g}_2^{-1}}$ where the notation $A^g$ denotes $g^{-1}Ag$. Moreover, $e_1(\mathbf{g}_1(1 + \pi_2 X), \mathbf{g}_2(1 + \pi_2 Y)) = e_1(\mathbf{g}_1, \mathbf{g}_2)\left(1 + \pi_2 L(X,Y)\right)$. Therefore, the derivative of the map $e_2$, denoted as $De^{\R}_2$, at $(\mathbf{g}_1, \mathbf{g}_2)$ is obtained from the expression of $L(X, Y)$ above:
$$De^{\R}_2\colon (X, Y) \mapsto \left((X^{\mathbf{g}_2}-X)^{\mathbf{g}_1^{-1}} + (Y^{\mathbf{g}_1^{-1}}-Y)\right)^{e_1^{-1}\mathbf{g}_2^{-1}} - \left((X^{\mathbf{g}_2}-X)^{\mathbf{g}_1^{-1}}+(Y^{\mathbf{g}_1^{-1}} -Y)\right)^{\mathbf{g}_2^{-1}e_1^{-1}\mathbf{g}_2^{-1}}+(Y^{e_1^{-1}}-Y)^{\mathbf{g}_2^{-1}}.$$ 
The notation $De^{\R}_2$ indicates that we are working over $\mathcal{O}_2$. We need to show that this map is surjective. 

Recall Proposition 3.1~\cite{AvniGelanderKassabovShalev} that for any $\bar g\in \SL_2(k)$, the image of $Z\mapsto Z^{\bar g}-Z$ is the orthogonal complement of the centralizer of $\bar g$ in $\mathfrak{sl}_2(k)$, with respect to the non-degenerate symmetric bilinear form $\langle P, Q\rangle = \tr(PQ)$. Further, the $\mathrm{Ad}$-map leaves this form invariant. We use the notation $\overline{De_2}$ to denote the same derivative map after reduction mod $\mathfrak{m}_2$, i.e., over $\mathfrak{sl}_2(k)$. We show, under the given assumptions, that the orthogonal complement $\im(\overline{De}_2)^{\perp}$ of $\im(\overline{De}_2)$ is trivial. For this, let $T$ be any vector in $\im(\overline{De}_2)^{\perp}$. Therefore, $\langle T, \overline{De}_2(X, Y)\rangle = 0 \ \forall X, Y\in \mathfrak{sl}_2(k)$. Now,
\begin{eqnarray*}
\overline{D}e_2(X,Y)&=& \mathrm{Ad}_{\mathbf{\bar g}_2 \bar e_1}\left(\mathrm{Ad}_{\mathbf{\bar g}_1}(\mathrm{Ad}_{\mathbf{\bar g}_2^{-1}}(X)-X)+(\mathrm{Ad}_{\mathbf{\bar g}_1}(Y)-Y)\right)-\mathrm{Ad}_{\mathbf{\bar g}_2 \bar e_1\mathbf{\bar g}_2}\Big(\mathrm{Ad}_{\mathbf{\bar g}_1}(\mathrm{Ad}_{\mathbf{\bar g}_2^{-1}}(X)-X) \\ && +(\mathrm{Ad}_{\mathbf{\bar g}_1}(Y)-Y)\Big) + \mathrm{Ad}_{\mathbf{\bar g}_2}(\mathrm{Ad}_{ \bar e_1}(Y)-Y).
\end{eqnarray*}
\noindent Let $V := \mathrm{Ad}_{\mathbf{\bar g}_2\bar e_1}^{-1}(T) = \mathrm{Ad}_{(\mathbf{\bar g}_2\bar e_1)^{-1}}(T)$ and $S := \mathrm{Ad}_{\mathbf{\bar g}_1}(\mathrm{Ad}_{\mathbf{\bar g}_2^{-1}}(X) - X)+(\mathrm{Ad}_{\mathbf{\bar g}_1}(Y)-Y)$. 

We consider the following two cases.
\begin{enumerate}
\item \textbf{Case I. When $(Y=\mathbf{0})$:} In this case $\langle T, \overline{De}_2(X, 0) \rangle = 0$. That is, for all $X$, we have
\begin{eqnarray*}
0=\langle T, \overline{De}_2(X,\mathbf{0}) \rangle &=& \left\langle T,\mathrm{Ad}_{\mathbf{\bar g}_2 \bar e_1}\left(\mathrm{Ad}_{\mathbf{\bar g}_1}(\mathrm{Ad}_{\mathbf{\bar g}_2^{-1}}-I)\right)(X)\right\rangle - \left\langle T, \mathrm{Ad}_{\mathbf{\bar g}_2 \bar e_1}\mathrm{Ad}_{\mathbf{\bar g}_2}\Big(\mathrm{Ad}_{\mathbf{\bar g}_1}(\mathrm{Ad}_{\mathbf{\bar g}_2^{-1}}-I)\Big)(X)\right\rangle\\
&=& \left\langle \mathrm{Ad}_{\mathbf{\bar g}_2 \bar e_1}^{-1} (T),\left(\mathrm{Ad}_{\mathbf{\bar g}_1}(\mathrm{Ad}_{\mathbf{\bar g}_2^{-1}}-I)\right)(X)\right\rangle - \left\langle  \mathrm{Ad}_{\mathbf{\bar g}_2}^{-1}\mathrm{Ad}_{\mathbf{\bar g}_2 \bar e_1}^{-1} (T),\Big(\mathrm{Ad}_{\mathbf{\bar g}_1}(\mathrm{Ad}_{\mathbf{\bar g}_2^{-1}}-I)\Big)(X)\right\rangle\\
&=& \left\langle V -  \mathrm{Ad}_{\mathbf{\bar g}_2}^{-1}(V), \mathrm{Ad}_{\mathbf{\bar g}_1}(\mathrm{Ad}_{\mathbf{\bar g}_2^{-1}}-I)(X)\right\rangle =  \left\langle \mathrm{Ad}_{\mathbf{\bar g}_1^{-1}}(I- \mathrm{Ad}_{\mathbf{\bar g}_2^{-1}})(V), (\mathrm{Ad}_{\mathbf{\bar g}_2^{-1}}-I)(X)\right\rangle.
\end{eqnarray*} 
Note that $Im(\mathrm{Ad}_{\mathbf{\bar g}_2^{-1}}-I)\perp ker(\mathrm{Ad}_{\mathbf{\bar g}_2^{-1}}-I)$, through the non-degeneracy of the bilinear form we get $\mathrm{Ad}_{\mathbf{\bar g}_1^{-1}}(I- \mathrm{Ad}_{\mathbf{\bar g}_2^{-1}})(V)\in Ker (I - \mathrm{Ad}_{\mathbf{\bar g}_2^{-1}}) = Z(\mathbf{\bar g}_2^{-1}) = Z(\mathbf{\bar g}_2)= Ker (I - \mathrm{Ad}_{\mathbf{\bar g}_2})$. Hence  $(I - \mathrm{Ad}_{\mathbf{\bar g}_2})\mathrm{Ad}_{\mathbf{\bar g}_1^{-1}}(I- \mathrm{Ad}_{\mathbf{\bar g}_2^{-1}})(V) =0$ which we rewrite as:
\begin{equation}{\label{ortho1}}
(I-\mathrm{Ad}_{\mathbf{\bar g}_2})\mathrm{Ad}_{\mathbf{\bar g}_1^{-1}}(U)=\mathbf{0} \ \ \text{where}\  U:=(I-\mathrm{Ad}_{\mathbf{\bar g}_2^{-1}})(V).  
\end{equation}

\item \textbf{Case II. When $(X=\mathbf{0})$:} In  this case $\langle T, \overline{De}_2(X, Y)\rangle = 0$. That is for all $Y$, we have,
\begin{eqnarray*}
0&=& \langle T,\overline{De}_2(\mathbf{0},Y)\rangle = \langle T, \mathrm{Ad}_{\mathbf{\bar g}_2 \bar e_1}\left(\mathrm{Ad}_{\mathbf{\bar g}_1}-I\right)(Y)-\mathrm{Ad}_{\mathbf{\bar g}_2 \bar e_1}\mathrm{Ad}_{\mathbf{\bar g}_2}\Big(\mathrm{Ad}_{\mathbf{\bar g}_1}-I\Big)(Y)+ \mathrm{Ad}_{\mathbf{\bar g}_2}\Big(\mathrm{Ad}_{\mathbf{\bar e}_1}-I\Big)(Y) \rangle \\
&=& \langle T,\mathrm{Ad}_{\mathbf{\bar g}_2 \bar e_1}\left(\mathrm{Ad}_{\mathbf{\bar g}_1}-I\right)(Y)\rangle-\langle T,\mathrm{Ad}_{\mathbf{\bar g}_2 \bar e_1}\mathrm{Ad}_{\mathbf{\bar g}_2}\Big(\mathrm{Ad}_{\mathbf{\bar g}_1}-I\Big)(Y)\rangle +\langle T,\mathrm{Ad}_{\mathbf{\bar g}_2}\Big(\mathrm{Ad}_{\mathbf{\bar e}_1}-I\Big)(Y)\rangle \\
&=& \langle V,\left(\mathrm{Ad}_{\mathbf{\bar g}_1}-I\right)(Y)\rangle-\langle \mathrm{Ad}_{\mathbf{\bar g}_2^{-1}}V,\left(\mathrm{Ad}_{\mathbf{\bar g}_1}-I\right)(Y)\rangle+\langle \mathrm{Ad}_{\mathbf{\bar g}_2^{-1}}(T),\left(\mathrm{Ad}_{\bar e_1}-I\right)(Y)\rangle \\
&=& \langle (I - \mathrm{Ad}_{\mathbf{\bar g}_2^{-1}})(V), (\mathrm{Ad}_{\mathbf{\bar g}_1}-I)(Y)\rangle + \langle \mathrm{Ad}_{\mathbf{\bar g}_2^{-1}}(T),(\mathrm{Ad}_{\bar e_1}-I)Y \rangle 
= \langle U, (\mathrm{Ad}_{\mathbf{\bar g}_1}-I)(Y)\rangle  + \langle \mathrm{Ad}_{\mathbf{\bar e}_1}(V),(\mathrm{Ad}_{\bar e_1}-I) Y \rangle\\
&=& \langle U,\mathrm{Ad}_{\mathbf{\bar g}_1}(Y)\rangle-\langle U,Y\rangle + \langle  \mathrm{Ad}_{\mathbf{\bar e}_1}(V), \mathrm{Ad}_{\mathbf{\bar e}_1}(Y)\rangle-\langle  \mathrm{Ad}_{\mathbf{\bar e}_1}(V),Y\rangle\\
&=&\langle \mathrm{Ad}_{\mathbf{\bar g}_1^{-1}}(U),Y\rangle-\langle U,Y\rangle + \langle V,Y\rangle-\langle  \mathrm{Ad}_{\mathbf{\bar e}_1}(V),Y\rangle = \langle \mathrm{Ad}_{\mathbf{\bar g}_1^{-1}}(U)-U  + V- \mathrm{Ad}_{\mathbf{\bar e}_1}(V), Y\rangle
\end{eqnarray*}
where we have used $\mathrm{Ad}_{\mathbf{\bar g}_2^{-1}}(T) = \mathrm{Ad}_{\bar e_1}(V)$ and invariance of form under $Ad$-map. The non-degeneracy of the bilinear form immediately implies that
\begin{equation}{\label{ortho2}}
(I-\mathrm{Ad}_{\mathbf{\bar g}_1^{-1}})(U)=(I-\mathrm{Ad}_{\bar e_1})(V).
\end{equation}
\end{enumerate}

Now, we claim that $U= \mathbf{0}$. We have the following properties of $U$: (1) From Equation~(\ref{ortho1}) it follows that $U\in Z(\mathbf{\bar g}_2)^{\mathbf{\bar g}_1^{-1}}$ and (2) from the definition of $U$ it is clear that $U\in Z(\mathbf{\bar g}_2)^{\perp}$. These together imply $U\in Z(\mathbf{\bar g}_2)^{\mathbf{\bar g}_1^{-1}}\cap Z(\mathbf{\bar g}_2)^{\perp} $. By assumption $Z(\mathbf{\bar g}_1\mathbf{\bar g}_2\mathbf{\bar g}_1^{-1})\cap Z(\mathbf{\bar g}_2)^{\perp}=\{\mathbf{0}\}$ in $\mathfrak{sl}_2(k)$. Therefore, the claim is true.

Since $U=0 = (I-\mathrm{Ad}_{\mathbf{\bar g}_2^{-1}})(V)$, then $V\in Z(\mathbf{\bar g}_2)$ from the definition of $U$. From Equation~(\ref{ortho2}), we get $V\in Z( \bar e_1)$. Therefore $V\in Z(\mathbf{\bar g}_2)\cap Z( \bar e_1) $ in $\mathfrak{sl}_2(k)$. The assumption $H_1 = \langle [\mathbf{\bar g}_1, \mathbf{\bar g}_2], \mathbf{\bar g}_2\rangle\subset \SL_2(k)$ does not have any fixed vector in $\mathfrak {sl}_2^*(k)$, implies $V = \mathbf{0}$. This further implies $T = \mathbf{0}$. Therefore, if $U=\mathbf{0}$, then $\im(\overline{De}_2)^\perp \cap \mathfrak{sl}_2(k) = \{\mathbf{0}\}$. Now, the dimension formula $dim(\im(\overline{De}_2)) + dim(\im(\overline{De}_2)^\perp) = dim(\mathfrak{sl}_2(k))$ implies $coker(\overline{De}_2) = \mathfrak{sl}_2(k)/ \im(\overline{De}_2) = \{0\}$. 

Note that $\im(De^{\mathcal O_2}_2)$ is an $\mathcal{O}_2$ submodule of the $\mathcal{O}_2$ module $\mathfrak{sl}_2(\mathcal{O}_2)$. Now, consider the short exact sequence 
\[\begin{tikzcd}
\mathbf{0} & {\im(De^{\R}_2)} & {\mathfrak{sl}_2(\mathcal{O}_2)} & {\mathfrak{sl}_2(\mathcal{O}_2)/\im(De^{\R}_2)} & \mathbf{0}.
	\arrow[from=1-1, to=1-2]
	\arrow[from=1-2, to=1-3]
	\arrow[from=1-3, to=1-4]
	\arrow[from=1-4, to=1-5]
\end{tikzcd}\]
Tensoring with $\mathcal{O}_2/\mathfrak{m}_2$, and using the right exactness we obtain $\left(\mathfrak{sl}_2(\mathcal{O}_2)/\im(De^{\R}_2)\right) \bigotimes_{\mathcal{O}_2}\mathcal{O}_2/\mathfrak{m}_2\cong coker(\overline{De}_2) = \{\mathbf{0}\}$. Let us denote $\mathcal{M} = \mathfrak{sl}_2(\mathcal{O}_2)/ \im(De^{\R}_2)$; clearly it is a finitely generated $\mathcal{O}_2$ module. Therefore $\mathcal{M}/ \mathfrak{m}_2\mathcal{M} = \{\mathbf{0}\}$. Invoking Proposition 2.6 \cite{Atiyahmacdonald} we obtain $\im(De^{\R}_2) = \mathfrak{sl}_2(\mathcal{O}_2)$. This completes the proof.
\end{proof}

\begin{proposition}{\label{adelic 2}}
Let $\mathcal{O}$ be a local principal ideal ring, complete with respect to its maximal ideal $\mathfrak{m}=(\pi)$ with residue field $k$ of characteristic $\neq 2$. Let $\mathbf{\bar g}_1, \mathbf{\bar g}_2 \in \SL_2(k)$ be such that the groups $H_0=\langle \mathbf{\bar g}_1,\mathbf{\bar g}_2\rangle, H_1=\langle \bar e_1(\mathbf{\bar g}_1,\mathbf{\bar g}_2),\mathbf{\bar g}_2\rangle, \ldots, H_m = \langle \bar e_m(\mathbf{\bar g}_1,\mathbf{\bar g}_2), \mathbf{\bar g}_2\rangle$ do not have any non-trivial fixed vector under the co-adjoint action on the dual Lie algebra $\mathfrak{sl}_2^*(k)$. Further assume, $Z\left(\mathbf{\bar g}_1\mathbf{\bar g}_2\mathbf{\bar g}_1^{-1}\right)\cap Z\left(\mathbf{\bar g}_2\right)^{\perp}$,   $Z\left(\bar e_1(\mathbf{\bar g}_1, \mathbf{\bar g}_2) \mathbf{\bar g}_2\bar e_1(\mathbf{\bar g}_1, \mathbf{\bar g}_2)^{-1}\right) \cap Z\left(\mathbf{\bar g}_2\right)^{\perp}, \ldots,$ $Z\left(\bar e_{m}(\mathbf{\bar g}_1, \mathbf{\bar g}_2)\mathbf{\bar g}_2\bar e_{m}(\mathbf{\bar g}_1, \mathbf{\bar g}_2)^{-1}\right)\cap Z\left(\mathbf{\bar g}_2\right)^{\perp}$ are trivial in $\mathfrak{sl}_2(k)$. Then, for any lift $\mathbf{h}$ of $\bar e_{m+1}(\mathbf{\bar g}_1, \mathbf{\bar g}_2)$ in $\SL_2(\mathcal O)$ there exists $\mathbf{h}_1, \mathbf{ h}_2\in\SL_2(\mathcal{O})$ such that $\mathbf{h} = e_{m+1}^{\mathcal{O}}(\mathbf{h}_1, \mathbf{h}_2)$ where $\mathbf{h}_1, \mathbf{ h}_2$ are lifts of $\mathbf{\bar g}_1$ and $\mathbf{\bar g}_2$ respectively.
\end{proposition}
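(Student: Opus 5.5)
The plan is to reduce to a level-by-level lifting statement and then invoke Lemma~\ref{lifting complete level}. Write $\mathbf{h}=(A_1,A_2,\ldots)$ with $A_1=\bar e_{m+1}(\mathbf{\bar g}_1,\mathbf{\bar g}_2)$, and set $X_1=\mathbf{\bar g}_1$, $Y_1=\mathbf{\bar g}_2$. Fix $j\geq 1$ and suppose we already have lifts $(X_j,Y_j)\in\SL_2(\mathcal{O}_j)^2$ of $(\mathbf{\bar g}_1,\mathbf{\bar g}_2)$ with $A_j=e_{m+1}^{\mathcal{O}_j}(X_j,Y_j)$. We must produce lifts $(X_{j+1},Y_{j+1})$ with $A_{j+1}=e_{m+1}^{\mathcal{O}_{j+1}}(X_{j+1},Y_{j+1})$. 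Lemma~\ref{lifting complete level} then gives $\mathbf{h}\in\im(e_{m+1}^{\mathcal{O}})$, with $\mathbf{h}_1,\mathbf{h}_2$ lifting $\mathbf{\bar g}_1,\mathbf{\bar g}_2$.

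For the inductive step, I take arbitrary lifts $\mathbf{g}_1,\mathbf{g}_2\in\SL_2(\mathcal{O}_{j+1})$ of $X_j,Y_j$. As in the discussion preceding Lemma~\ref{lem-adelic 2}, I write
\[
A_{j+1}=e_{m+1}^{\mathcal{O}_{j+1}}(\mathbf{g}_1,\mathbf{g}_2)\bigl(I+\pi_{j+1}^j\mathcal{D}\bigr), \qquad \tr(\mathcal{D})\in\mathfrak{m}_{j+1}.
\]
Since $\ker(\theta_j)=\mathfrak{m}^j/\mathfrak{m}^{j+1}$ has square zero and is a one-dimensional $k$-space, the perturbation
\[
e_{m+1}\bigl(\mathbf{g}_1(I+\pi^jX),\,\mathbf{g}_2(I+\pi^jY)\bigr)=e_{m+1}(\mathbf{g}_1,\mathbf{g}_2)\bigl(I+\pi^j\,\overline{De}_{m+1}(\bar X,\bar Y)\bigr)
\]
depends only on the reductions $\bar X,\bar Y\in\mathfrak{sl}_2(k)$. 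So it suffices to show that the reduced derivative $\overline{De}_{m+1}$ at $(\mathbf{\bar g}_1,\mathbf{\bar g}_2)$ maps $\mathfrak{sl}_2(k)^2$ onto $\mathfrak{sl}_2(k)$. Choosing $(\bar X,\bar Y)$ with image $\bar{\mathcal{D}}$ then gives the required $X_{j+1},Y_{j+1}$, which still lie in $\SL_2$ and still lift $X_j,Y_j$. The same Nakayama argument as at the end of Lemma~\ref{lem-adelic 2} could be used instead of reducing to $k$.

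The core is the surjectivity of $\overline{De}_{m+1}$, which I prove by induction on $m$. Put $L_r(X,Y):=\overline{De}_r(X,Y)$, defined by $e_r(\mathbf{g}_1(1+\epsilon X),\mathbf{g}_2(1+\epsilon Y))=e_r(\mathbf{g}_1,\mathbf{g}_2)(1+\epsilon L_r)$. From $e_{r+1}=[e_r,\mathbf{g}_2]$, the same computation as in Lemma~\ref{lem-adelic 2} gives
\[
L_{r+1}(X,Y)=\bigl(L_r^{\mathbf{g}_2}-L_r\bigr)^{\bar e_r^{-1}\mathbf{g}_2^{-1}}+\bigl(Y^{\bar e_r^{-1}}-Y\bigr)^{\mathbf{g}_2^{-1}},
\]
up to the precise conjugation conventions. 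I then run the orthogonality argument of Lemma~\ref{lem-adelic 2} with $L_r$ in place of $L=L_1$. Take $T\perp\im L_{r+1}$, put $V=\mathrm{Ad}^{-1}_{\mathbf{\bar g}_2\bar e_r}(T)$, and set $U=(I-\mathrm{Ad}_{\mathbf{\bar g}_2^{-1}})(V)$.

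Testing against the $L_r$-part, and using the induction hypothesis that $\im L_r=\mathfrak{sl}_2(k)$, forces $U\in Z(\mathbf{\bar g}_2)$ conjugated by $\bar e_r^{-1}$. Indeed $U\perp(\mathrm{Ad}_{\mathbf{\bar g}_2}-I)\mathrm{Ad}_{\bar e_r}(\mathfrak{sl}_2)$. Because $U\in Z(\mathbf{\bar g}_2)^\perp$ by definition, the hypothesis $Z(\bar e_r\mathbf{\bar g}_2\bar e_r^{-1})\cap Z(\mathbf{\bar g}_2)^\perp=0$ gives $U=0$, so $V\in Z(\mathbf{\bar g}_2)$. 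Testing against $X=0$ then gives $V\in Z(\bar e_r)$. The hypothesis that $H_r$ has no fixed vector, equivalently $Z(\bar e_r)\cap Z(\mathbf{\bar g}_2)=0$, yields $V=0$, hence $T=0$, and nondegeneracy gives surjectivity. The base case $L_1$ is the Avni--Gelander--Kassabov--Shalev statement, which uses $H_0$ (Proposition~\ref{adelic 1}).

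I expect the main obstacle to be this inductive step. With $X$ and $Y$ coupled, the $Y$-term $L_r(0,Y)$ is entangled with the extra term $(Y^{\bar e_r^{-1}}-Y)$. So I cannot literally set ``$X=0$'' in $L_r$ as in Lemma~\ref{lem-adelic 2}. I would first try to use only the full image of $L_r$ to get the condition on $U$, and then use $Y$ with the second summand. Failing that, I would split $Y$ into components via a direct decomposition adapted to $Z(\bar e_r)$.
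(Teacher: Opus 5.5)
Your reduction to the residue field is fine, and slightly cleaner than the paper's level-by-level Nakayama argument: since $\pi^j\mathfrak{m}=0$ in $\mathcal{O}_{j+1}$, only the reduced derivative $\overline{De}_{m+1}$ at $(\mathbf{\bar g}_1,\mathbf{\bar g}_2)$ matters. The gap is in the core surjectivity step, and it is the one you flag yourself. With $T\perp\im\overline{De}_{r+1}$, $V=\mathrm{Ad}_{(\mathbf{\bar g}_2\bar e_r)^{-1}}(T)$ and $U=(I-\mathrm{Ad}_{\mathbf{\bar g}_2^{-1}})(V)$, one has
\[
\langle T,\overline{De}_{r+1}(X,Y)\rangle=\langle \mathrm{Ad}_{\mathbf{\bar g}_2}(U),\overline{De}_{r}(X,Y)\rangle+\langle (I-\mathrm{Ad}_{\bar e_r})(V),Y\rangle .
\]
Both terms depend on $Y$, so the inductive hypothesis $\im\overline{De}_r=\mathfrak{sl}_2(k)$ gives no information about $U$. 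The only decoupled information is the slice $Y=\mathbf{0}$, namely $\mathrm{Ad}_{\mathbf{\bar g}_2}(U)\perp\im\overline{De}_r(\cdot,\mathbf{0})$. This slice is never all of $\mathfrak{sl}_2(k)$: it lies in $\mathrm{Ad}_{\mathbf{\bar g}_2\bar e_{r-1}}(Z(\mathbf{\bar g}_2)^{\perp})$, which has dimension at most $2$. So surjectivity of $\overline{De}_r$ is the wrong invariant to induct on, and your claim $U\perp(\mathrm{Ad}_{\mathbf{\bar g}_2}-I)\mathrm{Ad}_{\bar e_r}(\mathfrak{sl}_2(k))$ is not derived. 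That space is just $Z(\mathbf{\bar g}_2)^{\perp}$, so the claim would say $U\in Z(\mathbf{\bar g}_2)$. This is unrelated to the hypothesis $Z(\bar e_r\mathbf{\bar g}_2\bar e_r^{-1})\cap Z(\mathbf{\bar g}_2)^{\perp}=\{\mathbf{0}\}$ that you then invoke; the centralizer condition that actually governs $U$ at this stage is the one with index $r-1$.

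The missing idea, which is how the paper argues, is to unwind the $Y=\mathbf{0}$ slice all the way down to $\overline{De}_1(X,\mathbf{0})=\mathrm{Ad}_{\mathbf{\bar g}_2\mathbf{\bar g}_1}(\mathrm{Ad}_{\mathbf{\bar g}_2^{-1}}-I)(X)$.

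\begin{itemize}
\item Set $U_m=U$ and $U_i=(I-\mathrm{Ad}_{\mathbf{\bar g}_2^{-1}})\mathrm{Ad}_{\bar e_i^{-1}}(U_{i+1})$ for $1\le i\le m-1$. The $Y=\mathbf{0}$ condition then becomes $\mathrm{Ad}_{\mathbf{\bar g}_1^{-1}}(U_1)\in Z(\mathbf{\bar g}_2)$.
\item Every $U_i$ lies in $Z(\mathbf{\bar g}_2)^{\perp}$. So the hypotheses $Z(\bar e_i\mathbf{\bar g}_2\bar e_i^{-1})\cap Z(\mathbf{\bar g}_2)^{\perp}=\{\mathbf{0}\}$ for $i=0,\ldots,m-1$ (with $\bar e_0=\mathbf{\bar g}_1$) kill $U_1$ first.
\item Next, $U_1=\mathbf{0}$ forces $U_2\in Z(\bar e_1\mathbf{\bar g}_2\bar e_1^{-1})$, so $U_2=\mathbf{0}$. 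Continuing in this way kills every $U_i$, up to $U$ itself.
\item Only once $U=\mathbf{0}$ does the $X=\mathbf{0}$ slice collapse to $(I-\mathrm{Ad}_{\bar e_m})(V)=\mathbf{0}$. This gives $V\in Z(\mathbf{\bar g}_2)\cap Z(\bar e_m)=\{\mathbf{0}\}$ by the assumption on $H_m$.
\end{itemize}

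If you want an induction, the invariant to carry is that $\im\overline{De}_r(\cdot,\mathbf{0})$ is the full $2$-dimensional space $\mathrm{Ad}_{\mathbf{\bar g}_2\bar e_{r-1}}(Z(\mathbf{\bar g}_2)^{\perp})$. This propagates via the centralizer conditions, and the hypotheses on $H_0,\ldots,H_{m-1}$ are then not needed.
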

\begin{proof}
It is enough to show the surjectivity of the derivative map of $\bar e_{m+1}$ at $(\mathbf{\bar g}_1, \mathbf{\bar g}_2)$ at the Lie algebra level. At first, we work over $\mathcal O_2$. Take any lift $\mathbf{g}$ of $\bar e_{m+1}(\mathbf{\bar g}_1,\mathbf{\bar g}_2)$ in $\SL_2(\mathcal{O}_2)$. Then (as explained before Lemma~\ref{lem-adelic 2}) there exists $\mathbf{g}_1,\mathbf{g}_2$ (lifts of $\mathbf{\bar g}_1,\mathbf{\bar g}_2$ respectively) in $\SL_2(\mathcal{O}_2)$ such that $\mathbf{g}=e_{m+1}^{\R}(\mathbf{g}_1,\mathbf{g}_2)(I+\pi_2 \mathcal{D})$ for some $\mathcal D$ where $\tr(\mathcal{D})\in \mathfrak{m}_2$. Recall that $\pi_2$ is the generator of $\mathfrak{m}_2$ with $\pi_2^2=0$. Let $X, Y\in \mathfrak{sl}_2(\mathcal{O}_2)$. When $m=1$, we have proved this in Lemma~\ref{lem-adelic 2}.

{\bf Step I:}  Along the line of proof in the previous Lemma, we look at the equation $\langle T, \overline{De}_{m+1}(X, Y)\rangle = 0$ for $m+1$-th Engel word $ e_{m+1}^{\R}$ for a given $m\geq 2$, where $T\in \im(\overline{De}_{m+1})^{\perp}, V=\mathrm{Ad}_{\bar e_m^{-1}\mathbf{\bar g}_2^{-1}}(T)$; when we put $Y=\mathbf{0}$ and $X=\mathbf{0}$ separately. Let us list down the two equations for $De^{\R}_{m+1}\colon  \mathfrak{sl}_2(\mathcal{O}_2) \times\mathfrak{sl}_2(\mathcal{O}_2) \rightarrow \mathfrak{sl}_2(\mathcal{O}_2)$ defined by $$(X, Y)\mapsto [((De_m^{\R}(X,Y))^{\mathbf{g}_2} - De_m^{\R}(X,Y))^{e_m^{-1}}+(Y^{e_m^{-1}}-Y)]^{\mathbf{g}_2^{-1}}.$$  
as follows

\begin{enumerate}
\item \textbf{Case I. When $ (Y=\mathbf{0})$:} In this case $\langle T, \overline{De}_{m+1}(X,Y)\rangle=0$.
That is, for all $X$, we have
\begin{eqnarray}{\label{eqnortho}}\nonumber
0=\langle T, \overline{De}_{m+1}(X,\mathbf{0}) \rangle &=& \left\langle T,\mathrm{Ad}_{\mathbf{\bar g}_2 \bar e_m}\left(\mathrm{Ad}_{\mathbf{\bar g}_2^{-1}}\left(\overline{De}_m(X,\mathbf{0})\right)-\overline{De}_m(X,\mathbf{0})\right)\right\rangle \\\nonumber
&=& \left\langle \mathrm{Ad}_{\bar e_m^{-1}\mathbf{\bar g}_2^{-1}}(T), \mathrm{Ad}_{\mathbf{\bar g}_2^{-1}}\left(\overline{De}_m(X,\mathbf{0})\right)\right\rangle- \left\langle \mathrm{Ad}_{\bar e_m^{-1}\mathbf{\bar g}_2^{-1}}(T), \overline{De}_m(X,\mathbf{0})\right\rangle \\
&=& \left\langle V, \mathrm{Ad}_{\mathbf{\bar g}_2^{-1}}\left(\overline{De}_m(X,\mathbf{0})\right)\right\rangle-\left\langle V,\overline{De}_m(X,\mathbf{0})\right\rangle=\left\langle \mathrm{Ad}_{\mathbf{\bar g}_2}(V)-V,\overline{De}_m(X,\mathbf{0}) \right\rangle.
\end{eqnarray} 
Now, \begin{eqnarray*}
\overline{De}_m(X,\mathbf{0}) =((\overline{De}_{m-1}(X,\mathbf{0}))^{\mathbf{g}_2}-\overline{De}_{m-1}(X,\mathbf{0}))^{e_{m-1}^{-1}\mathbf{\bar g}_2^{-1}} 
= \mathrm{Ad}_{\mathbf{\bar g}_2}\left(\mathrm{Ad}_{\bar e_{m-1}}\left(\mathrm{Ad}_{\mathbf{\bar g}_2^{-1}}\left(\overline{De}_{m-1}(X,\mathbf{0})\right)-\overline{De}_{m-1}(X,\mathbf{0})\right)\right).
 \end{eqnarray*}
Therefore,
\begin{eqnarray*}
&& \left\langle \mathrm{Ad}_{\mathbf{\bar g}_2}(V)-V,\overline{De}_m(X,\mathbf{0}) \right\rangle = \left\langle \mathrm{Ad}_{\mathbf{\bar g}_2}(V)-V,\mathrm{Ad}_{\mathbf{\bar g}_2}\left(\mathrm{Ad}_{\bar e_{m-1}}\left(\mathrm{Ad}_{\mathbf{\bar g}_2^{-1}}\left(\overline{De}_{m-1}(X,\mathbf{0})\right)-\overline{De}_{m-1}(X,\mathbf{0})\right)\right)\right\rangle \\
&=& \left\langle U,\mathrm{Ad}_{\bar e_{m-1}}\left(\mathrm{Ad}_{\mathbf{\bar g}_2^{-1}}\left(\overline{De}_{m-1}(X,\mathbf{0})\right)-\overline{De}_{m-1}(X,\mathbf{0})\right)\right\rangle \ \ [\text{where}\ U=(I-\mathrm{Ad}_{\mathbf{\bar g}_2^{-1}})V] \\
&=& \left\langle \mathrm{Ad}_{\bar e_{m-1}^{-1}}(U),\mathrm{Ad}_{\mathbf{\bar g}_2^{-1}}\left(\overline{De}_{m-1}(X,\mathbf{0})\right)\right\rangle-\left\langle \mathrm{Ad}_{\bar e_{m-1}^{-1}}(U), \overline{De}_{m-1}(X,\mathbf{0})\right\rangle = \left\langle \left(\mathrm{Ad}_{\mathbf{\bar g}_2}-I\right)\left(\mathrm{Ad}_{\bar e_{m-1}^{-1}}(U)\right), \overline{De}_{m-1}(X,\mathbf{0})\right\rangle
 \end{eqnarray*}
Again, using the expression 
\begin{eqnarray*}
\overline{De}_{m-1}(X,\mathbf{0}) =((\overline{De}_{m-2}(X,\mathbf{0}))^{\mathbf{g}_2}-\overline{De}_{m-2}(X,\mathbf{0}))^{e_{m-2}^{-1}\mathbf{\bar g}_2^{-1}} = \mathrm{Ad}_{\mathbf{\bar g}_2}\left(\mathrm{Ad}_{\bar e_{m-2}}\left(\mathrm{Ad}_{\mathbf{\bar g}_2^{-1}}\left(\overline{De}_{m-2}(X,\mathbf{0})\right)-\overline{De}_{m-2}(X,\mathbf{0})\right)\right)
 \end{eqnarray*}
we obtain 
\begin{eqnarray*}
&& \left\langle \left(\mathrm{Ad}_{\mathbf{\bar g}_2}-I\right)\left(\mathrm{Ad}_{\bar e_{m-1}^{-1}}(U)\right), \overline{De}_{m-1}(X,\mathbf{0})\right\rangle\\
&=& \left\langle \left(\mathrm{Ad}_{\mathbf{\bar g}_2}-I\right)\left(\mathrm{Ad}_{\bar e_{m-1}^{-1}}(U)\right), \mathrm{Ad}_{\mathbf{\bar g}_2}\left(\mathrm{Ad}_{\bar e_{m-2}}\left(\mathrm{Ad}_{\mathbf{\bar g}_2^{-1}}\left(\overline{De}_{m-2}(X,\mathbf{0})\right)-\overline{De}_{m-2}(X,\mathbf{0})\right)\right) \right\rangle\\
&=& \left\langle \left(I-\mathrm{Ad}_{\mathbf{\bar g}_2^{-1}}\right)\left(\mathrm{Ad}_{\bar e_{m-1}^{-1}}(U)\right), \mathrm{Ad}_{\bar e_{m-2}}\left(\mathrm{Ad}_{\mathbf{\bar g}_2^{-1}}\left(\overline{De}_{m-2}(X,\mathbf{0})\right)-\overline{De}_{m-2}(X,\mathbf{0})\right)\right\rangle\\
&=& \left\langle U_{m-1}, \mathrm{Ad}_{\bar e_{m-2}}\left(\mathrm{Ad}_{\mathbf{\bar g}_2^{-1}}\left(\overline{De}_{m-2}(X,\mathbf{0})\right)-\overline{De}_{m-2}(X,\mathbf{0})\right)\right\rangle\\
&=& \left\langle\mathrm{Ad}_{\bar e_{m-2}^{-1}} (U_{m-1}), \mathrm{Ad}_{\mathbf{\bar g}_2^{-1}}\left(\overline{De}_{m-2}(X,\mathbf{0})\right)-\overline{De}_{m-2}(X,\mathbf{0})\right\rangle 
= \left\langle \left(\mathrm{Ad}_{\mathbf{\bar g}_2}-I\right)\left(\mathrm{Ad}_{\bar e_{m-2}^{-1}}(U_{m-1})\right), \overline{De}_{m-2}(X,\mathbf{0})\right\rangle\ \\ && \text{where}\ U_{m-1}= (I-\mathrm{Ad}_{\mathbf{\bar g}_2^{-1}})\mathrm{Ad}_{\bar e_{m-1}^{-1}}(U).
 \end{eqnarray*}

Continuing this process, inductively we obtain \begin{eqnarray*}
&& \left\langle \left(\mathrm{Ad}_{\mathbf{\bar g}_2}-I\right)\left(\mathrm{Ad}_{\bar e_{m-2}^{-1}}(U_{m-1})\right), \overline{De}_{m-2}(X,\mathbf{0})\right\rangle \\ &=& \left\langle (\mathrm{Ad}_{\mathbf{\bar g}_2}-I)\mathrm{Ad}_{\bar e_1^{-1}} (I-\mathrm{Ad}_{\mathbf{\bar g}_2^{-1}})\mathrm{Ad}_{\bar e_2^{-1}} (I-\mathrm{Ad}_{\mathbf{\bar g}_2^{-1}})\mathrm{Ad}_{\bar e_3^{-1}}(I-\mathrm{Ad}_{\mathbf{\bar g}_2^{-1}}) 
     \cdots 
      \mathrm{Ad}_{\bar e_{m-1}^{-1}}(I-\mathrm{Ad}_{\mathbf{\bar g}_2^{-1}})(V), \overline{De}_1(X,\mathbf{0})\right\rangle
 \end{eqnarray*}
Now $\overline{De}_1(X,\mathbf{0})=\bar L(X,\mathbf{0})=\mathrm{Ad}_{\mathbf{\bar g}_2}\left(\mathrm{Ad}_{\mathbf{\bar g}_1}\left(\mathrm{Ad}_{\mathbf{\bar g}_2^{-1}}(X)-X\right)\right)$; see Lemma~\ref{lem-adelic 2}. Therefore \begin{eqnarray*}
&& \left\langle \left(\mathrm{Ad}_{\mathbf{\bar g}_2}-I\right)\left(\mathrm{Ad}_{\bar e_{m-2}^{-1}}(U_{m-1})\right), \overline{De}_{m-2}(X,\mathbf{0})\right\rangle \\ &=& \left\langle (I-\mathrm{Ad}_{\mathbf{\bar g}_2^{-1}})\mathrm{Ad}_{\bar e_1^{-1}} (I-\mathrm{Ad}_{\mathbf{\bar g}_2^{-1}})\mathrm{Ad}_{\bar e_2^{-1}} (I-\mathrm{Ad}_{\mathbf{\bar g}_2^{-1}})\mathrm{Ad}_{\bar e_3^{-1}}(I-\mathrm{Ad}_{\mathbf{\bar g}_2^{-1}}) 
     \cdots 
      \mathrm{Ad}_{\bar e_{m-1}^{-1}}(I-\mathrm{Ad}_{\mathbf{\bar g}_2^{-1}})(V), \mathrm{Ad}_{\mathbf{\bar g}_1}\left(\mathrm{Ad}_{\mathbf{\bar g}_2^{-1}}(X)-X\right)\right\rangle.
 \end{eqnarray*}
\noindent Therefore the Equation~(\ref{eqnortho}) finally turns out to be
\begin{eqnarray*}
&\left\langle (I-\mathrm{Ad}_{\mathbf{\bar g}_2^{-1}})\mathrm{Ad}_{\bar e_1^{-1}}(I-\mathrm{Ad}_{\mathbf{\bar g}_2^{-1}})\mathrm{Ad}_{\bar e_2^{-1}}(I-\mathrm{Ad}_{\mathbf{\bar g}_2^{-1}})\mathrm{Ad}_{\bar e_3^{-1}}(I-\mathrm{Ad}_{\mathbf{\bar g}_2^{-1}})\cdots  \cdots \mathrm{Ad}_{\bar e_{m-1}^{-1}}(I-\mathrm{Ad}_{\mathbf{\bar g}_2^{-1}})(V),\mathrm{Ad}_{\mathbf{\bar g}_1}(\mathrm{Ad}_{\mathbf{\bar g}_2^{-1}}-I)(X)\right\rangle \\& =0 \ \forall X\in \mathfrak{sl}_2(k). 
\end{eqnarray*}
\noindent This further implies 
\begin{eqnarray*}
&\mathrm{Ad}_{\mathbf{\bar g}_1^{-1}}(I-\mathrm{Ad}_{\mathbf{\bar g}_2^{-1}})\mathrm{Ad}_{\bar e_1^{-1}}(I-\mathrm{Ad}_{\mathbf{\bar g}_2^{-1}})\mathrm{Ad}_{\bar e_2^{-1}}(I-\mathrm{Ad}_{\mathbf{\bar g}_2^{-1}})\mathrm{Ad}_{\bar e_3^{-1}}(I-\mathrm{Ad}_{\mathbf{\bar g}_2^{-1}})\cdots \mathrm{Ad}_{\bar e_{m-1}^{-1}}(I-\mathrm{Ad}_{\mathbf{\bar g}_2^{-1}})(V)\in Z(\mathbf{\bar g}_2^{-1})\\&=Z(\mathbf{\bar g}_2)=Ker (I-\mathrm{Ad}_{\mathbf{\bar g}_2}).
\end{eqnarray*}
\noindent Therefore, 
\begin{equation}{\label{eqmain1}}
(I-\mathrm{Ad}_{\mathbf{\bar g}_2})\mathrm{Ad}_{\mathbf{\bar g}_1^{-1}}(U_1)=\mathbf{0}\ ;
\end{equation}
where $U_1 = (I-\mathrm{Ad}_{\mathbf{\bar g}_2^{-1}})\mathrm{Ad}_{\bar e_1^{-1}}(I-\mathrm{Ad}_{\mathbf{\bar g}_2^{-1}})\mathrm{Ad}_{\bar e_2^{-1}}(I-\mathrm{Ad}_{\mathbf{\bar g}_2^{-1}})\mathrm{Ad}_{\bar e_3^{-1}}(I-\mathrm{Ad}_{\mathbf{\bar g}_2^{-1}})\cdots \mathrm{Ad}_{\bar e_{m-1}^{-1}}(I-\mathrm{Ad}_{\mathbf{\bar g}_2^{-1}})(V)$. 
    
\noindent In the further steps, we denote $U_i=(I-\mathrm{Ad}_{\mathbf{\bar g}_2^{-1}})\mathrm{Ad}_{\bar e_i^{-1}}(U_{i+1})$ for $i=1, 2, \ldots, m-2$, where $U_{m-1}=(I- \mathrm{Ad}_{\mathbf{\bar g}_2^{-1}}) \mathrm{Ad}_{\bar e_{m-1}^{-1}}(U)$ and $U=(I - \mathrm{Ad}_{\mathbf{\bar g}_2^{-1}})(V)$.

\item \textbf{Case II. When $ (X=\mathbf{0})$ :} 
In  this case $\langle T, \overline{De}_{m+1}(X,Y)\rangle = 0$. That is, for all $Y\in \mathfrak{sl}_2(k)$ we have 
\begin{eqnarray*}
0 &=& \left\langle T,\overline{De}_{m+1}(\mathbf{0}, Y) \right\rangle = \left\langle T, \mathrm{Ad}_{\mathbf{\bar g}_2 \bar e_m}\left(\mathrm{Ad}_{\mathbf{\bar g}_2^{-1}}\left(\overline{De}_m(\mathbf{0},Y)\right)-\overline{De}_m(\mathbf{0},Y)\right)+\mathrm{Ad}_{\mathbf{\bar g}_2}\left(\left(\mathrm{Ad}_{\bar e_m}-I\right)Y\right)\right\rangle \\
&=& \left\langle \mathrm{Ad}_{\bar e_m^{-1}\mathbf{\bar g}_2^{-1}}(T), \mathrm{Ad}_{\mathbf{\bar g}_2^{-1}}\left(\overline{De}_m(\mathbf{0},Y)\right)\right\rangle- \left\langle \mathrm{Ad}_{\bar e_m^{-1}\mathbf{\bar g}_2^{-1}}(T), \overline{De}_m(\mathbf{0},Y)\right\rangle+\left\langle T,\mathrm{Ad}_{\mathbf{\bar g}_2}\left(\left(\mathrm{Ad}_{\bar e_m}-I\right)Y\right)\right\rangle \\
&=& \left\langle V, \mathrm{Ad}_{\mathbf{\bar g}_2^{-1}}\left(\overline{De}_m(\mathbf{0},Y)\right)\right\rangle-\left\langle V,\overline{De}_m(\mathbf{0},Y)\right\rangle+\left\langle T,\mathrm{Ad}_{\mathbf{\bar g}_2}\left(\left(\mathrm{Ad}_{\bar e_m}-I\right)Y\right)\right\rangle \\ &=&\left\langle \mathrm{Ad}_{\mathbf{\bar g}_2}(V)-V,\overline{De}_m(\mathbf{0},Y) \right\rangle +\left\langle T,\mathrm{Ad}_{\mathbf{\bar g}_2}\left(\left(\mathrm{Ad}_{\bar e_m}-I\right)Y\right)\right\rangle 
= \left\langle \mathrm{Ad}_{\mathbf{\bar g}_2}(U),\overline{De}_m(\mathbf{0},Y) \right\rangle +\left\langle T,\mathrm{Ad}_{\mathbf{\bar g}_2}\left(\left(\mathrm{Ad}_{\bar e_m}-I\right)Y\right)\right\rangle.
\end{eqnarray*}
\noindent Therefore, we get the equation
\begin{equation}\label{eq56}
\left\langle \mathrm{Ad}_{\mathbf{\bar g}_2}(U),\overline{De}_m(\mathbf{0},Y) \right\rangle +\left\langle T,\mathrm{Ad}_{\mathbf{\bar g}_2}\left(\left(\mathrm{Ad}_{\bar e_m}-I\right)Y\right)\right\rangle=0\ \forall \ Y\in \mathfrak{sl}_2(k).
\end{equation}
        
\noindent Now, $\mathrm{Ad}_{\mathbf{\bar g}_2^{-1}}(T) = \mathrm{Ad}_{\bar e_m}(V)$ implies $\left\langle T, \mathrm{Ad}_{\mathbf{\bar g}_2} \left( \left(\mathrm{Ad}_{\bar e_m}-I\right)Y \right) \right\rangle = \left\langle \left(I-\mathrm{Ad}_{\bar e_m} \right)V, Y \right\rangle$. Using this value in the previous Equation~(\ref{eqmain1}), we obtain 
\begin{equation}
\left\langle \mathrm{Ad}_{\mathbf{\bar g}_2}(U),\overline{De}_m(\mathbf{0},Y) \right\rangle +\left\langle \left(I-\mathrm{Ad}_{\bar e_m}\right)V,Y\right\rangle=0\ \forall \ Y\in \mathfrak{sl}_2(k)
\end{equation}
 \end{enumerate}

{\bf Step II:} Now, we claim that $U_1 = \mathbf{0}$. We have the following properties of $U_1$: (1) From Equation~(\ref{eqmain1}) it follows that $U_1 \in Z(\mathbf{\bar g}_2)^{\mathbf{\bar g}_1^{-1}}$, and (2) from the definition of $U_1$ it is clear that $U_1\in Z(\mathbf{\bar g}_2)^{\perp}$. These together imply $U_1\in Z(\mathbf{\bar g}_2)^{\mathbf{\bar g}_1^{-1}}\cap Z(\mathbf{\bar g}_2)^{\perp} $. By assumption $Z(\mathbf{\bar g}_1\mathbf{\bar g}_2\mathbf{\bar g}_1^{-1})\cap Z(\mathbf{\bar g}_2)^{\perp} = \{\mathbf{0} \}$ in $\mathfrak{sl}_2(k)$. Therefore, the claim is true.

We have the following properties of $U_2$: (1) As $U_1=(I-\mathrm{Ad}_{\mathbf{\bar g}_2^{-1}})\mathrm{Ad}_{\bar e_1^{-1}}(U_{2})$, therefore $U_1=\mathbf{0} \implies (I- \mathrm{Ad}_{\mathbf{\bar g}_2^{-1}}) \mathrm{Ad}_{\bar e_1^{-1}}(U_{2}) = \mathbf{0}$ and (2) from the definition of $U_2$ (in case I) it is clear that $U_2\in Z(\mathbf{\bar g}_2)^{\perp}$. 
Now, the assumption $Z(\bar e_1(\mathbf{\bar g}_1, \mathbf{\bar g}_2)\mathbf{\bar g}_2\bar e_1(\mathbf{\bar g}_1,\mathbf{\bar g}_2)^{-1})\cap Z(\mathbf{\bar g}_2)^{\perp}=\{\mathbf{0}\}$ immediately implies $U_2=\mathbf{0}$.

Following a similar process, we can prove that the conditions $Z(\mathbf{\bar g}_1\mathbf{\bar g}_2\mathbf{\bar g}_1^{-1})\cap Z(\mathbf{\bar g}_2)^{\perp}$, $Z(\bar e_1(\mathbf{\bar g}_1, \mathbf{\bar g}_2)\mathbf{\bar g}_2\bar e_1(\mathbf{\bar g}_1,\mathbf{\bar g}_2)^{-1})\cap Z(\mathbf{\bar g}_2)^{\perp},\ldots$, $Z(\bar e_{m-2}(\mathbf{\bar g}_1,\mathbf{\bar g}_2)\mathbf{\bar g}_2\bar e_{m-2}(\mathbf{\bar g}_1,\mathbf{\bar g}_2)^{-1})\cap Z(\mathbf{\bar g}_2)^{\perp}$, $Z(\bar e_{m-1}(\mathbf{\bar g}_1,\mathbf{\bar g}_2)\mathbf{\bar g}_2\bar e_{m-1}(\mathbf{\bar g}_1,\mathbf{\bar g}_2)^{-1})\cap Z(\mathbf{\bar g}_2)^{\perp}$  are trivial in $\mathfrak{sl}_2(k)$ implies that $U_1, U_2, \ldots, U_{m-1}$ and $U$ are all $\mathbf{0}$. As $U = \mathbf{0}$, therefore Equation~(\ref{eq56}) turns out to be 
$$\left\langle\left(I-\mathrm{Ad}_{\bar e_m}\right)V,Y\right\rangle=0\ \forall Y\in \mathfrak{sl}_2(k).$$ 
Consequently, by the non-degeneracy of the bilinear form, we get 
 
\begin{equation}{\label{eqmain2}}
 \left(I-\mathrm{Ad}_{\bar e_m}\right)V =\mathbf{0}.
\end{equation}
 
By using the condition $U = \mathbf{0}$ together with Equation~(\ref{eqmain2}), we obtain $V\in Z(\mathbf{\bar g}_2) \cap Z(\bar e_m)$. Invoking the assumption that $H_m$ does not have any nontrivial fixed vector in $\mathfrak{sl}_2^*(k)$, we obtain $V = \mathbf{0}$. This further implies $T = \mathbf{0}$ and hence $\im(\overline{De}_{m+1})^\perp \cap \mathfrak{sl}_2(k) = \{\mathbf{0}\}$. Therefore, the dimension formula $dim(\im(\overline{De}_{m+1})) + dim(\im(\overline{De}_{m+1})^\perp) = dim (\mathfrak{sl}_2(k))$ implies $coker(\overline{De}_{m+1}) = \mathfrak{sl}_2(k)/ \im(\overline{De}_{m+1}) = \{\mathbf{0}\}$. Now consider the short exact sequence 
\[\begin{tikzcd}
	\mathbf{0} & {\im(De^{\R}_{m+1})} & {\mathfrak{sl}_2(\mathcal{O}_2)} & {\mathfrak{sl}_2(\mathcal{O}_2)/\im(De^{\R}_{m+1})} & \mathbf{0}.
	\arrow[from=1-1, to=1-2]
	\arrow[from=1-2, to=1-3]
	\arrow[from=1-3, to=1-4]
	\arrow[from=1-4, to=1-5]
\end{tikzcd}\]
Tensoring with $\mathcal{O}_2/\mathfrak{m}_2$, and using right exactness we obtain $\mathfrak{sl}_2(\mathcal{O}_2)/\im(De^{\R}_{m+1})\bigotimes_{\mathcal{O}_2}\mathcal{O}_2/\mathfrak{m}_2\cong coker(\overline{De}_{m+1}) = \{\mathbf{0}\}$. Let us denote $\mathcal{M} = \mathfrak{sl}_2(\mathcal{O}_2)/ \im(De^{\R}_{m+1})$; clearly it is an $\mathcal{O}_2$ module. Therefore $\mathcal{M}/ \mathfrak{m}_2\mathcal{M} = \{\mathbf{0}\}$. Invoking \cite[Proposition 2.6]{Atiyahmacdonald} we obtain $\im(De^{\R}_{m+1}) = \mathfrak{sl}_2(\mathcal{O}_2)$.

{\bf Step III:} Therefore, if $A\in \SL_2(\mathcal{O})$ is a lift of $\bar e_{m+1}(\mathbf{\bar g}_1, \mathbf{\bar g}_2)$ then there exists $(\mathbf{h}_1, \mathbf{h}_2)\in \SL_2(\mathcal{O}_2)^2$ such that $\mathbf{h}_i$ are lifts of corresponding $\mathbf{\bar g}_i$ for $i=1,2$ and  $e_{m+1}(\mathbf{h}_1, \mathbf{h}_2) = A_2$. The kernel of the reduction map $\SL_2(\mathcal{O}_{j+1}) \rightarrow \SL_2(\mathcal{O}_j)$ is $\{I + \delta_j P \mid  \delta_j\in ker (\theta_j), \ P\in \mathfrak{sl}_2(\mathcal{O}_{j+1})\}$ here $\delta_j$ is the generator of $ker(\theta_j)$. Note that $(ker(\theta_j))^2=0$ implies $\delta_j^2=0$. Therefore in the context of lifting of elements from $\im(e_{m+1}^{\mathcal{O}_j})$ to $\im(e_{m+1}^{\mathcal{O}_{j+1}})$, one can observe $e_{m+1}^{\mathcal{O}_{j+1}}(B_1(I + \delta_jX), B_2(I + \delta_jY)) = e_{m+1}^{\mathcal{O}_{j+1}}(B_1, B_2)[1 + \delta_j De_{m+1}^{\mathcal{O}_{j+1}}(X, Y)]$ where $B_1, B_2\in \SL_2(\mathcal{O}_{j+1})$ are the lifts of $\mathbf{\bar g}_1$ and $\mathbf{\bar g}_2$ respectively (achieved through successive lifting of $\mathbf{\bar g}_1$ and $\mathbf{\bar g}_2$ at each $\mathcal{O}_{\ell}$ level for $\ell \leq j+1$), moreover $X, Y\in \mathfrak{sl}_2(\mathcal{O}_{j+1})$ and $De_{m+1}^{\mathcal{O}_{j+1}}\colon  \mathfrak{sl}_2(\mathcal{O}_{j+1}) \times \mathfrak{sl}_2(\mathcal{O}_{j+1}) \rightarrow \mathfrak{sl}_2( \mathcal{O}_{j+1}) $ is the derivative map corresponding to $e_{m+1}$ at $(B_1, B_2)$ in $j+1$-th level defined earlier at level $2$. As $\mathfrak{sl}_2(\mathcal{O}_{j+1})/ \im(De^{\mathcal{O}_{j+1}}_{m+1})\bigotimes_{\mathcal{O}_{j+1}}\mathcal{O}_{j+1}/\mathfrak{m}_{j+1}\cong coker(\overline{De}_{m+1}) = \{\mathbf{0}\}$, therefore Nakayama's lemma \cite{Atiyahmacdonald} guarantees that all these $De_{m+1}^{\mathcal{O}_{j+1}}$ are surjective for $j\geq 1$. Therefore there exists $(P_{j+1}, Q_{j+1})\in \SL_2(\mathcal{O}_{j+1})^2$ which is the lift of corresponding $(P_j, Q_j)\in \SL_2(\mathcal{O}_j)$ such that $A_{j+1} = e_{m+1}^{\mathcal{O}_{j+1}}(P_{j+1}, Q_{j+1})$ for each $j\geq 1$; where $(P_1, Q_1) = (\mathbf{\bar g}_1, \mathbf{\bar g}_2)$ and $(P_2, Q_2)=(\mathbf{h}_1, \mathbf{h}_2)$. Applying Lemma~\ref{lifting complete level} yields the required result. 
\end{proof}

\section{Lift of Cyclic elements as image of Engel maps}{\label{cyclic as Engel}}

In this section, we look at cyclic elements in two parts: first, regular semisimple ones, and then non-regular semisimple ones. First, we prove that any non-trivial unipotent matrix in $\SL_2(k)$ satisfies the assumptions mentioned in Proposition~\ref{adelic 2}.
\begin{proposition}{\label{fixed vector 2}}
Let $A\in \SL_2(k)$ be a non-scalar matrix such that $A = [P, Q]$ for some $P$ and $Q$ in $\SL_2(k)$. Then the subgroup $H=\langle P, Q\rangle$ does not have any non-trivial fixed vector under the co-adjoint action on $\mathfrak{sl}_2^*(k)$. 
\end{proposition}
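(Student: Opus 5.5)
By the Remark following Proposition~\ref{lifts of minus I}, it suffices to show that $Z(P)\cap Z(Q)=\{\mathbf{0}\}$ in $\mathfrak{sl}_2(k)$. Recall that this reduction uses the non-degenerate form $\langle X,Y\rangle=\tr(XY)$, which is available since $\mathrm{char}(k)\neq 2$. A fixed vector $\phi$ corresponds to a unique $E_\phi\in\mathfrak{sl}_2(k)$ commuting with both $P$ and $Q$. So I argue by contradiction: suppose there is a nonzero $E\in\mathfrak{sl}_2(k)$ with $PE=EP$ and $QE=EQ$.

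\textbf{Step 1.} First I use the $2\times 2$ structure. Since $E$ is nonzero and has trace zero, it is non-scalar; in fact $E\neq cI$ for any $c\neq 0$, because $\tr(cI)=2c\neq 0$. A non-scalar $2\times 2$ matrix is cyclic, so $Z_{\M_2(k)}(E)=k[E]=\{aI+bE \mid a,b\in k\}$. Hence $P=a_1I+b_1E$ and $Q=a_2I+b_2E$ for some $a_i,b_i\in k$.

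\textbf{Step 2.} Then $P$ and $Q$ both lie in the commutative algebra $k[E]$, so they commute. This gives $A=[P,Q]=PQP^{-1}Q^{-1}=I$. But $A$ is assumed non-scalar, which is a contradiction. Therefore $E=\mathbf{0}$, so $\phi=0$, and $H=\langle P,Q\rangle$ has no nontrivial fixed vector.

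\textbf{Main point to check.} The only step needing care is the identification of fixed vectors with $Z(P)\cap Z(Q)$. This relies on non-degeneracy of the trace form on $\mathfrak{sl}_2(k)$, which holds because $\mathrm{char}(k)\nmid 2$. It also relies on $\mathrm{Ad}$-invariance of the form, exactly as in the proof of Proposition~\ref{lifts of minus I}. I would repeat that short argument here to derive $PE_\phi P^{-1}=E_\phi$ and $QE_\phi Q^{-1}=E_\phi$, and then apply Steps 1 and 2. Note that the argument is field-independent: it uses only that $A\neq I$. In particular it covers the unipotent case mentioned before the statement.
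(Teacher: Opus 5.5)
Your argument is correct, but it is organized differently from the paper's. The paper applies cyclicity to $Q$ rather than to the fixed vector. It first notes that $P$ and $Q$ are non-scalar, hence cyclic, and writes the common centralizing element as $V=f(Q)$. It then reduces $f$ modulo $\chi_Q$ to $h(x)=\alpha x+\beta$. Using $PQP^{-1}=AQ$ together with $\chi_{PQP^{-1}}=\chi_Q$, it gets $h(Q)=h(AQ)$, i.e.\ $\alpha(I-A)Q=\mathbf{0}$. Since $A\neq I$ this forces $\alpha=0$, so $V=\beta I$, and tracelessness gives $V=\mathbf{0}$.

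You instead observe that a nonzero traceless $2\times 2$ matrix $E$ is itself non-scalar. Its centralizer is therefore the commutative algebra $k[E]$, which contains both $P$ and $Q$ and forces $[P,Q]=I$. Your route is shorter and symmetric in $P$ and $Q$. It needs no preliminary check that $P,Q$ are cyclic, no division by $\chi_Q$, and no use of the relation $PQP^{-1}=AQ$. It also makes the obstruction transparent. There is an easy converse: if $P,Q$ commute, the traceless part of whichever of them is non-scalar commutes with both, and if both are scalar every element does. So for $2\times 2$ matrices one gets $Z(P)\cap Z(Q)=\{\mathbf{0}\}$ if and only if $PQ\neq QP$.

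The paper's version instead follows the template it uses for the $n\times n$ scalar case in Proposition~\ref{lifts of minus I}. There the fixed vector is expressed as a polynomial in a cyclic generator, and one tracks how conjugation by the other generator acts on it. For $n\geq 3$ a nonzero traceless matrix need not be cyclic, so your shortcut would not be available there.
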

\begin{proof}
Since $A$ is non-scalar, the elements $P, Q$ are both non-scalar, hence cyclic in $\SL_2(k)$. On contrary suppose $H$ has a fixed vector $\phi$ in $\mathfrak{sl}_2^*(k)$. Then, $\mathrm{Ad}_P^*\phi(X)= \phi(X) = \mathrm{Ad}_Q^*\phi(X)$ for all $X\in \mathfrak{sl}_2(k)$. Now, since the bilinear form $\langle, \rangle$ is nondegenerate, the induced map $ \mathfrak{sl}_2(k) \rightarrow \mathfrak{sl}_2^*(k)$ defined by $E\mapsto \langle E,\ -\ \rangle$ is an isomorphism of the vector spaces (see Section~\ref{towards Shalev's conj}). Under this isomorphism, for $\phi\in \mathfrak{sl}_2^*(k)$, we choose $V\in \mathfrak{sl}_2(k)$, that satisfies $\phi = \langle V,\ -\ \rangle$. Thus, using this and the definition $\mathrm{Ad}_P^*\phi(X) = \phi(\mathrm{Ad}_{P^{-1}}(X)) = \langle V, \mathrm{Ad}_{P^{-1}}(X)\rangle$ and similarly for $Q$, we get $\langle \mathrm{Ad}_P(V), X\rangle = \langle V, X \rangle = \langle \mathrm{Ad}_Q(V), X\rangle$ for all $X$. This implies,  $V\in Z(P)\cap Z(Q)$. We need to show that $\phi = 0$, which we do by showing that $V=0$.
 
Now, since $Q$ is cyclic and $V\in Z(Q)$, we get $V = f(Q)$ for some $f(x)\in k[x]$. Then, $V\in Z(P)$ gives $V = PVP^{-1} = f(PQP^{-1}) = f(AQ)$. Now consider $f(x)$ and the characteristic polynomial $\chi_Q(x)$ and write $f(x)= g(x)\chi_Q(x) + h(x)$ for some $g(x), h(x)$ where $h(x)=0$ or $deg(h(x)) < 2$. We claim here that $h(x)=0$. If not, then we write $h(x) = \alpha x + \beta$ for some $\alpha, \beta \in k$. As $PQP^{-1}$ and $Q$ are similar in $\GL_2(k)$, therefore $\chi_Q(x) = \chi_{PQP^{-1}}(x)$.  Then, $V= f(Q) = f(PQP^{-1})=f(AQ)$ implies $ h(Q) = h(AQ) \implies \alpha Q + \beta I = \alpha AQ + \beta I$, that is, $\alpha(I-A)Q=\mathbf{0}$. Therefore, $\alpha (I-A)=\mathbf{0}$. Since $A$ is non-scalar, $I-A$ is nonzero, i.e., at least one of the entries of $I-A$ is nonzero in $k$. This implies $\alpha = 0$. Therefore $V= f(Q) = \beta I$. Now $V\in \mathfrak{sl}_2(k)$ implies $\beta=0$. Therefore, the only possibility is $h(x)=0$, consequently $V = \mathbf{0}$. Hence, $\phi$ is the zero vector in $\mathfrak{sl}_2^*(k)$.
\end{proof}

\begin{lemma}{\label{fixed vector ortho compl}}
Let $k$ be a perfect field of characteristic $\neq 2$ and $A\in \SL_2(k)$ be a non-scalar matrix. Suppose $A=[P, Q]$ for some $P,Q$ in $\SL_2(k)$ and $Z(Q)\cap Z(A)=\{\mathbf{0\}}$ in $\mathfrak{sl}_2(k)$. Then we have the following: 
\begin{enumerate}
\item If $Q$ is not a regular semisimple element then $Z(PQP^{-1})\cap Z(Q)^{\perp}= \{\mathbf{0}\}$ in $\mathfrak{sl}_2(k)$. 
\item If $Q$ is regular semisimple and either $\tr(A)=2$ or
$2 \tr(A)\neq \tr(Q)^2$ then $Z(PQP^{-1})\cap Z(Q)^{\perp}=\{\mathbf{0}\}$ in $\mathfrak{sl}_2(k)$. 
\end{enumerate}
\end{lemma}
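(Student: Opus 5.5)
The plan is to reduce both statements to a single scalar identity in the $3$-dimensional space $\mathfrak{sl}_2(k)$. Since $A=[P,Q]$ is non-scalar, $Q$ is non-scalar. Write $t=\tr(Q)$ and $Q_0=Q-\tfrac t2 I\in\mathfrak{sl}_2(k)$; then $Q_0\neq \mathbf 0$. Because $Q$ is cyclic, $Z(Q)=k\,Q_0$ is a line, and $Z(Q)^{\perp}=\{X\in\mathfrak{sl}_2(k)\mid \tr(XQ_0)=0\}$ (recall $\tr(X)=0$). Likewise $Z(PQP^{-1})=k\,PQ_0P^{-1}$. So $Z(PQP^{-1})\cap Z(Q)^{\perp}$ is nonzero exactly when $\tr(PQ_0P^{-1}Q_0)=0$.

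Next I compute this trace using $PQP^{-1}=AQ$, $\tr(AQ)=\tr(PQP^{-1})=t$, and $Q^2=tQ-I$ (Cayley--Hamilton). This gives
\begin{equation*}
\tr(PQ_0P^{-1}Q_0)=\tr(AQ^2)-\tfrac{t^2}{2}=t^2-\tr(A)-\tfrac{t^2}{2}=\tfrac{t^2}{2}-\tr(A).
\end{equation*}
Hence the intersection is trivial as soon as $2\tr(A)\neq t^2$.

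For part (2), $Q$ is regular semisimple, so $t\neq\pm2$. If $2\tr(A)\neq t^2$ we are done by the identity. If instead $\tr(A)=2$, the quantity equals $\tfrac{t^2}{2}-2\neq 0$, and again we are done. For part (1), $Q$ is non-scalar but not regular semisimple, so $t=\pm 2$, $t^2=4$, and $Q_0$ is a nonzero nilpotent. If $\tr(A)\neq 2$ the identity finishes the proof.

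The remaining case, $\tr(A)=2$ in part (1), is the main obstacle, because there the trace vanishes identically; this is where the hypothesis $Z(Q)\cap Z(A)=\{\mathbf 0\}$ must enter. Here $N=Q_0$ and $N'=PNP^{-1}$ are rank-one nilpotents. Writing them (over $k$) as $N=u_0w_0^{T}$ with $w_0^Tu_0=0$, and similarly $N'=c\,u w^T$ with $w^Tu=0$, a direct computation shows that $\tr(N'N)=0$ forces $u\parallel u_0$, i.e.\ $N'\in kN$. Thus $PQP^{-1}=AQ$ commutes with $Q$, and therefore $A=(AQ)Q^{-1}$ commutes with $Q$. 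Then $A_0=A-\tfrac{\tr A}{2}I$ is nonzero (since $A$ is non-scalar) and lies in $Z(Q)\cap Z(A)$, contradicting the hypothesis. So the trace cannot vanish, and the intersection is trivial in this case as well.
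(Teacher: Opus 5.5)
Your proof is correct and follows the same strategy as the paper. Both arguments observe that the intersection is either $\{\mathbf 0\}$ or the whole line $Z(PQP^{-1})=Z(AQ)$, reduce to the nonvanishing of the single pairing $\tr(PQ_0P^{-1}Q_0)=\tr(AQ\,Q_0)=\tfrac{t^2}{2}-\tr(A)$, and handle the leftover case $t=\pm 2$, $\tr(A)=2$ by showing that $A$ commutes with $Q$, which contradicts $Z(Q)\cap Z(A)=\{\mathbf 0\}$.

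The difference is only computational. You evaluate the pairing invariantly via Cayley--Hamilton, and the degenerate case via rank-one factorizations. The paper instead conjugates $Q$ to Jordan or diagonal form, passing to $\bar k$ in the regular semisimple case. Your version is a bit cleaner, works uniformly over $k$, and shows that the alternative $\tr(A)=2$ in part (2) is already covered by $2\tr(A)\neq\tr(Q)^2$ there.
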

\begin{proof}
Let $Y\in Z(PQP^{-1})\cap Z(Q)^{\perp}$. Then, $A=PQP^{-1}Q^{-1}$ gives $Y\in Z(PQP^{-1})=Z(AQ) $.
Now, $AQ$ must be a non-scalar, otherwise $AQ=PQP^{-1}$ gives $Q$ is a scalar and hence $Z(Q)=\mathfrak{sl}_2(k)$ contradicts $Z(Q)\cap Z(A)=\{\mathbf{0\}}$. Therefore, $AQ$ is cyclic and $Y = a AQ + bI$ for some $a, b\in k$. Since,  $Y\in Z(Q)^{\perp}$ we have $\langle Y, \mathfrak{X}\rangle =  tr(Y\mathfrak X) =0$ for all $\mathfrak{X}\in Z(Q)$. This gives, $a\tr(AQ\mathfrak{X}) +b \tr(\mathfrak{X}) = 0$, which further implies $a\tr(AQ\mathfrak{X})=0$ since $\mathfrak{X}\in Z(Q) \subset \mathfrak{sl}_2(k)$. Let us consider two separate cases:
   
\noindent \textbf{Case I: $Q$ is not regular semisimple:} We claim that $\tr(AQ\mathfrak{X})\neq 0$. 
Since $Q$ is not regular semisimple, it is of the form $\left(\begin{array}{cc} 1 & 1 \\ 0 & 1 \end{array}\right)$ or $\left(\begin{array}{cc} -1 & 1 \\ 0 & -1 \end{array}\right)$ up to $\GL_2$-conjugates. Let $J\in \GL_2(k)$ such that $JQJ^{-1}=Q'=\left(\begin{array}{cc} 1 & 1 \\ 0 & 1 \end{array}\right)$. Take $\mathfrak{X}' = J \mathfrak{X}J^{-1} = \delta\left(\begin{array}{cc} 0 & 1 \\
0 & 0 \end{array}\right)$ for some $\delta \neq 0$ in $k$. Write $A' = JAJ^{-1} = \left(\begin{array}{cc} u_1 & u_2 \\ u_3 & u_4 \end{array}\right)$.        
Now $\tr(A'Q'\mathfrak{X}') = \delta \tr (\left(\begin{array}{cc}u_1 & u_2 \\ u_3 & u_4   \end{array} \right) \left(\begin{array}{cc} 1 & 1 \\ 0 & 1 \end{array}\right) \left(\begin{array}{cc} 0 & 1 \\ 0 & 0 \end{array}\right)) = \delta u_3$ and $\tr(A'Q') = u_1+u_3+u_4= u_3+\tr(A') = Tr(AQ) = Tr(PQP^{-1}) = 2$. Therefore, $u_3 = 2- \tr(A)$. 
        
If $u_3 = 0$ then $\mathbf{\tr(A') = 2 =\tr(A)}$. We have $\det(A) = u_1u_4 = 1$ and $tr(A) = u_1 + u_4 = 2$, this gives $u_1 = u_4 = 1$. Since $A'$ is non-scalar, $u_2\neq 0$ and we can write $A' = I + u_2 \delta^{-1}\mathfrak{X}'$. Note that $\mathfrak{X}'$ commutes with $A'$. Therefore, $\mathfrak{X}' \in Z(A') \cap Z(Q')$, which further implies $\mathfrak{X}\in Z(A) \cap Z(Q) = 0$. Thus, $Z(Q)=0$, a contradiction. Thus, $u_3\neq 0$ when $Q'=\left(\begin{array}{cc} 1 & 1 \\ 0 & 1
\end{array}\right)$. A similar argument for the case $Q' = \left(\begin{array}{cc} -1 & 1 \\ 0 & -1 \end{array}\right)$ shows that $u_3 \neq 0$. Therefore when $Q$ is cyclic but not regular semisimple then $\delta u_3 = \tr(A'Q'\mathfrak{X}') = \tr(JAQ\mathfrak{X}J^{-1})=\tr(AQ\mathfrak{X})\neq 0$. 

Now, $\tr(A'Q'\mathfrak{X}') = \tr(AQ\mathfrak{X})\neq 0$ but $a\tr(AQ\mathfrak{X})=0$ gives $a=0$. Hence, $Y=bI$ in $\mathfrak {sl}_2(k)$ gives $Y=0$. This proves the statement.

\noindent \textbf{Case II: $Q$ is regular semisimple:} Over $\Bar{k}$ we can write $Q'=JQJ^{-1}=\left(\begin{array}{cc}   \lambda  & 0 \\ 0 & \lambda^{-1} \end{array}\right)$ for some $J\in \GL_2(\Bar{k})$; where $\lambda\neq \lambda^{-1}$. The elements of $Z(Q')$ in $\mathfrak{sl}_2(\Bar{k})$ are of the form $\mathfrak{X}' = J\mathfrak{X}J^{-1} = \gamma\left(\begin{array}{cc} 1  & 0 \\ 0 & -1 \end{array}\right)$ for some $\gamma\neq 0$ in $\bar k$. Let $A' = JAJ^{-1} = \left(\begin{array}{cc} u_1 & u_2 \\    u_3 & u_4 \end{array}\right)$ in $\SL_2(\bar k)$.  Now $\tr(A'Q'\mathfrak{X}') = \gamma\tr (\left(\begin{array}{cc}
u_1 & u_2 \\ u_3 & u_4 \end{array}\right)\left(\begin{array}{cc} \lambda & 0 \\ 0 & \lambda^{-1} \end{array}\right) \left(\begin{array}{cc} 1 & 0 \\ 0 & -1   \end{array}\right)) = \gamma(u_1\lambda-u_4\lambda^{-1})$ and $\tr(A'Q') = u_1 \lambda + u_4\lambda^{-1}$ which is equal to $\tr(Q') = \lambda + \lambda^{-1}$ because of the commutator relation $[P', Q'] = A'$, where $P'=JPJ^{-1}$. Therefore, we obtain 
\begin{equation}\label{eq5}
\lambda(1 - u_1)+ \lambda^{-1}(1-u_4)=0. 
\end{equation} 
Now we deal with the two subcases.

\noindent $\mathbf{(i)}$ \textbf{When} $\mathbf {\tr(A)=2}$: Then $\tr(A')=2$, i.e., $u_1+ u_4 = 2$, which further implies $(1- u_1) = (u_4 -1)$. This and Equation~(\ref{eq5}) together implies $(1-u_1)(\lambda-\lambda^{-1})=0$. As $\lambda\neq \lambda^{-1}$ therefore $u_1=1$ which gives $u_4=1$. Hence $\tr(A'Q'\mathfrak{X}')=\gamma(\lambda-\lambda^{-1})\neq 0$. 

\noindent $\mathbf{(ii)}$ \textbf{When} $ \mathbf{2\tr(A)\neq} (\tr(Q))^2\ $: It gives $2\tr(A')\neq (\tr(Q'))^2$. Let $\tr(A') = 2\sigma$ for some $\sigma \in k$. Then $u_1-\sigma = \sigma - u_4$. Using this, from the Equation~(\ref{eq5}) we obtain $(u_1-\sigma)(\lambda-\lambda^{-1})=(1-\sigma)\tr(Q')$. This implies $u_1 = (\tr(Q')-2\sigma\lambda^{-1})(\lambda-\lambda^{-1})^{-1}$ and $u_4=-(\tr(Q')-2\sigma\lambda)(\lambda-\lambda^{-1})^{-1}$. Then, $\tr(A'Q'\mathfrak{X}') = \gamma((\tr(Q'))^2-2\tr(A'))(\lambda-\lambda^{-1})^{-1}\neq 0$.
       
Therefore, in all the cases above $\tr(A'Q'\mathfrak{X}')\neq 0$, hence $\tr(AQ\mathfrak{X})\neq 0$. Then $a \tr(AQ\mathfrak{X}) = 0$ implies $a=0$, which further implies $Y=bI$. Now $\tr(Y)=0$ implies $b=0$ hence  $Y=\mathbf{0}$.
\end{proof}

\begin{remark}
When $Q$ is regular semisimple in Lemma~\ref{fixed vector ortho compl} and either $\tr(A)=0$ or $\tr(Q)= 0$, then $2\tr(A)\neq (\tr(Q))^2$ implies $Y=0$.
\end{remark}


\subsection{Regular Semisimples as elements of $\im(e_{m+1})$}{\label{reg-sem}}

In this section, for a regular semisimple element $M\in \SL_2(\R)$, we aim to get a pair $(\widehat{\mathbf{g}}_1, \widehat{\mathbf{g}}_2) \in \SL_2(\R)\times \SL_2(\R)$ such that $M$ can be written as $\R$-conjugate to $e^{\R}_{m+1}(\widehat{\mathbf{g}}_1, \widehat{\mathbf{g}}_2)$. For this, we need the existence of a solution of the equation $\Psi_{m+1}(s, u, v) = \lambda$ in $\R^3$ as a lift of a solution from the field level as described in Section~\ref{sub-trace}. In particular, we need the expression of the trace map corresponding to the $m+1$-th Engel word over $\GL_2(\mathcal{O})$, and we need to restrict our attention to some specific subgroup of $\GL_2(\mathcal{O})$. For a regular semisimple element in $\SL_n(\mathcal{O})$, we translate this idea to each $\mathcal{O}_{\ell}$ level and use the same to achieve lifting results at the $\mathcal{O}$ level. For $m \geq 1$, $\Psi_{m+1}(s, u, v) = \Psi_{m+1}(s_m, v)$ (see Section~\ref{mult-lift}); therefore the existence of a solution $(s_0, u_0, v_0)\in \R^3$ of the equation $\Psi_{m+1}(s, u, v) =\lambda$ guarantees that $(s_m(s_0, u_0, v_0), v_0)$ is a solution of the equation $\Psi_{m+1}(s_m, v) = \lambda$. The following diagram illustrates the process of this lifting (will be helpful in Proposition~ \ref{regsem1}).
\begin{figure}[htbp]
\scriptsize
\[\begin{tikzcd}
\begin{array}{c} \Psi_{m+1}(s,u,v)=\lambda\\(\widehat s_0,\widehat u_0,\widehat v_0) \end{array} && \begin{array}{c} \psi_{m+1}(s,u,v)=\bar \lambda\\(s_0,u_0,v_0) \end{array} \\
\begin{array}{c} (\widehat{\mathbf{g}}_1,\widehat{\mathbf{g}}_2)\in\SL_2(\R)\\ e_{m+1}^{\R}(\widehat{\mathbf{g}}_1,\widehat{\mathbf{g}}_2)\sim_{\R} A \end{array} && \begin{array}{c} (A_0,B_0)\in \SL_2(k)\\ \Bar{e}_{m+1}(A_0,B_0)=\Bar{A} \end{array}
	\arrow["\theta", curve={height=-12pt}, from=1-1, to=1-3]
	\arrow[from=1-1, to=2-1]
	\arrow["lift"', curve={height=-12pt}, from=1-3, to=1-1]
	\arrow["\theta", from=2-1, to=2-3]
	\arrow[from=2-3, to=1-3]
\end{tikzcd}\]
\caption{Lifting of solutions for the Engel word $e_{m+1}$}
\label{fig: lifting com}
\end{figure}

For a given $m\geq 1$, by Proposition~\ref{adelic 2},~\ref{fixed vector 2}, and Lemma~\ref{fixed vector ortho compl}, it is evident that any lift of a regular semisimple matrix $\bar e_{m+1}(h_1,h_2)$ where $h_1, h_2\in \SL_2(k)$, $h_2$ is regular semisimple, and $2\tr(\bar e_{i}(h_1, h_2)) \neq tr(h_2)^2$ for all $i=0, 1, \ldots, m$; must be an element of $\im(e_{m+1}^{\mathcal{O}})$ in $\SL_2(\mathcal{O})$. However, there are examples of regular semisimple elements $A = \bar e_{m+1}(h_1, h_2)$ in $\SL_2(k)$ such that $2\tr(\bar e_{m}(h_1, h_2)) = tr(h_2)^2$ and $h_2$ is regular semisimple.
\begin{example}
In $\SL_2(\mathbb{F}_7)$, let us take $A= \left(\begin{array}{cc} 1 & 6 \\ 1 & 0 \end{array}\right)$. Since $\tr(A)\neq \pm 2$, the element $A$ is regular semisimple in $\SL_2(\mathbb{F}_7)$. There exist elements, for example, $h_1= \left(\begin{array}{cc} 6 & 2 \\ 4 & 5 \end{array} \right)$ and $h_2 = \left(\begin{array}{cc} 2 & 3 \\ 1 & 2    \end{array}\right)$ in $\SL_2(\mathbb{F}_7)$ such that $\bar e_3(h_1, h_2) = A$. It is easy to observe $\bar e_2(h_1, h_2) = \left(\begin{array}{cc} 5 & 2 \\ 0 & 3 \end{array}\right)$. Therefore, $\tr(\bar e_2(h_1, h_2)) = 1$ and $\tr(h_2) = 4$. This implies $2\tr(\bar e_{2}(h_1,h_2)) = tr(h_2)^2$.
\end{example}
\noindent In this section, we show that for this kind of elements $A$, every lift of $A$ in $\SL_2(\mathcal{O})$ must belong to $\im(e_{m+1}^{\mathcal{O}})$ provided $A\in \im(\bar e_{m+1})$ in $\SL_2(k)$.

\begin{lemma}{\label{matsl}}
Let $A\in \GL_2(\mathcal{O}_{\ell + 1})$ be such that $\theta_{\ell}(A) \in \SL_2(\mathcal{O}_{\ell})$ for some $\ell\geq 1$. Then, there exists $Q_1 \in \SL_2( \mathcal{O}_{ \ell + 1})$ and $Q_2 \in \M_2(\mathcal{O}_{\ell +1})$ such that $A = Q_1 + \pi_{\ell+1}^{\ell} Q_2$ where $\pi_{\ell + 1} = \pi +(\pi^{\ell+1})$ is the generator of the maximal ideal $\mathfrak{m}_{\ell+1}$ of $\mathcal{O}_{\ell+1}$. 
\end{lemma}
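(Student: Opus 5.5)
The plan is to correct the determinant of $A$ by adding a small diagonal perturbation. The only tools needed are that $\mathcal{O}_{\ell+1}$ is local and that $\pi_{\ell+1}^{\ell}\mathfrak{m}_{\ell+1}=0$.

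\textbf{Step 1: the determinant defect.} Write $A=\begin{pmatrix} a & b\\ c & d\end{pmatrix}$. Since $\theta_\ell(A)\in \SL_2(\mathcal{O}_\ell)$ and $\ker(\theta_\ell)=\mathfrak{m}^\ell/\mathfrak{m}^{\ell+1}=(\pi_{\ell+1}^{\ell})$, we have $\det(A)=1+\pi_{\ell+1}^{\ell}t$ for some $t\in\mathcal{O}_{\ell+1}$.

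\textbf{Step 2: choose a unit entry.} Because $\theta(A)\in\SL_2(k)$, the reduction of $A$ is invertible, so some entry of $A$ is a unit in $\mathcal{O}_{\ell+1}$.

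\textbf{Step 3: build the correction.} We look for $Q_2$ supported on a single entry, chosen to cancel the defect at first order. For instance, if $d$ is a unit, set $Q_2=\begin{pmatrix} -t d^{-1} & 0\\ 0 & 0\end{pmatrix}$ and $Q_1=A-\pi_{\ell+1}^{\ell}Q_2$. Since $\det$ is linear in the $(1,1)$ entry,
\[
\det(Q_1)=\det(A)+\pi_{\ell+1}^{\ell}t d^{-1}\cdot d=1+2\pi_{\ell+1}^{\ell}t\ ?
\]
The sign needs care here. Take $Q_2=\begin{pmatrix} t d^{-1} & 0\\ 0 & 0\end{pmatrix}$, so that $Q_1=A-\pi_{\ell+1}^{\ell}Q_2$ has $(1,1)$ entry $a-\pi_{\ell+1}^{\ell}td^{-1}$. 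Then
\[
\det(Q_1)=\det(A)-\pi_{\ell+1}^{\ell}t=1 .
\]
The other cases are symmetric. If $a$ is a unit, perturb the $(2,2)$ entry. If $b$ is a unit, perturb the $(2,1)$ entry: $\det$ contains the term $-bc$, so adding $\pi_{\ell+1}^{\ell}tb^{-1}$ to $c$ works. If $c$ is a unit, perturb the $(1,2)$ entry in the same way. In each case $Q_1\in\SL_2(\mathcal{O}_{\ell+1})$ and $A=Q_1+\pi_{\ell+1}^{\ell}Q_2$, as required.

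The only delicate point is that the correction enters the determinant exactly linearly. This holds because only one entry is changed, so $\det$ is affine in it, and no quadratic term appears. Even for a general perturbation, any quadratic terms would carry a factor $\pi_{\ell+1}^{2\ell}=0$. Hence there is no genuine obstacle, only a short case split according to which entry is a unit.
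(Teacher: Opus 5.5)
Your proof is correct, but it follows a different route from the paper's. The paper does no case split. It writes $\det(A)=1+\pi_{\ell+1}^{\ell}v$ and uses $\det(A+\pi_{\ell+1}^{\ell}B)=\det(A)+\pi_{\ell+1}^{\ell}\tr(\mathrm{adj}(A)B)$. It then makes the uniform choice $Q_2=2^{-1}v\,(\mathrm{adj}(A))^{-1}$, so that $\tr(\mathrm{adj}(A)Q_2)=v$. Since $(\mathrm{adj}(A))^{-1}=\det(A)^{-1}A$, this just rescales $A$ by the scalar $1-2^{-1}\pi_{\ell+1}^{\ell}v$, which is a square root of $\det(A)^{-1}$.

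What each approach buys:
\begin{itemize}
\item The paper's version is coordinate-free. However, it genuinely needs $2\in\mathcal{O}_{\ell+1}^{\times}$, which comes from the paper's standing hypothesis rather than the lemma's statement. Its $n\times n$ analogue likewise needs $n$ invertible, as the paper's remark after the lemma records.
\item Your single-entry correction uses only that $\det$ is affine in one entry, with slope the complementary cofactor, which is a unit. The cost is a four-way case split. In exchange you never divide by $2$, so you prove the lemma exactly as stated, with no residue-characteristic assumption.
\item Your method also extends to $\GL_n$ for every $n$ via cofactor expansion. In a local ring, if $\det(A)$ is a unit then some cofactor in any row is a unit, and perturbing the corresponding entry by $-\pi_{\ell+1}^{\ell}t$ times that cofactor's inverse does the job. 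So the coprimality condition in the paper's remark is unnecessary.
\end{itemize}

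Nothing downstream depends on the particular shape of $Q_2$. Lemma~\ref{lem: trace GL to trace SL} and the discussion preceding Lemma~\ref{lem-adelic 2} only use the existence of the decomposition, so your argument can replace the paper's without loss. In a final write-up, remove the false start with the wrong sign (the displayed line ending in ``?'') from Step~3.
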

\begin{proof}
Since $\theta_{\ell}(A) \in \SL_2(\mathcal{O}_{\ell})$, we get $det(A) = 1 + \pi_{\ell+1}^{\ell} v$ for some $v \in \mathcal{O}_{\ell + 1}$ because $\pi_{\ell+1}^{\ell}$ is the generator of the $ker(\theta_{\ell})$. We want to find $B$ such that $C = A + \pi_{\ell+1}^{\ell} B$ with $det(C) = 1$. If such $B$ exists then $det(A) + \pi_{\ell+1}^{\ell} \tr(adj(A)B)=1$. This is because $det(C)=1$ and $\det(C) =\det(A) \det(I + \pi_{\ell+1}^{\ell}A^{-1}B)$. Now note that $A^{-1} =(\det A)^{-1} adj(A)$ and $(\pi_{\ell+1}^{\ell})^2=0$ for $\ell\geq 1$, therefore the result follows. This implies $\pi_{\ell+1}^{\ell}(v + \tr(adj(A)B))=0$. Thus we can choose $B = -2^{-1}v (adj(A))^{-1}$ as $adj(A)$ is invertible. Consider $Q_1 = C = A-2^{-1}\pi_{\ell + 1}^{\ell} v(adj(A))^{-1}$ and $Q_2 = -B = 2^{-1}v(adj(A))^{-1}$.
\end{proof}
\noindent The Lemma~\ref{matsl} can be generalised for $A\in \GL_n(\mathcal{O}_{\ell+1})$ where $n$ is coprime to the characteristic exponent of $k$.

\begin{lemma}{\label{lem: trace GL to trace SL}}
For $\ell\geq 1$, let $\widehat{A}_0$ and $\widehat{B}_0$ be two matrices in $\GL_2(\mathcal{O}_{\ell+1})$, such that $\theta_{\ell}(\widehat{A}_0), \theta_{\ell}(\widehat{B}_0)\in \SL_2(\mathcal{O}_{\ell})$ and $\tr(e_{m+1}^{\mathcal{O}_{\ell+1}}(\widehat{A}_0, \widehat{B}_0)_{\GL}) = \lambda$, for some $\lambda \in \mathcal{O}_{\ell + 1}$. Then, there exist two matrices $\widehat{\mathbf{g}}_1, \widehat{\mathbf{g}}_2$ in $\SL_2(\mathcal{O}_{\ell + 1})$ such that $\tr(e_{m+1}^{\mathcal{O}_{\ell + 1}}(\widehat{\mathbf{g}}_1, \widehat{\mathbf{g}}_2)) = \lambda$, where $\theta_{\ell}(\widehat{\mathbf{g}}_1) = \theta_{\ell}(\widehat{A}_0)$ and $\theta_{\ell}(\widehat{\mathbf{g}}_2) = \theta_{\ell}(\widehat{B}_0)$.
\end{lemma}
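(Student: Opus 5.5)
The plan is to use Lemma~\ref{matsl} to split off the non-special part of each matrix, and then adjust by scalars, which the trace of an Engel word cannot detect. By Lemma~\ref{matsl} we write $\widehat{A}_0 = Q_1 + \pi_{\ell+1}^{\ell} Q_2$ and $\widehat{B}_0 = Q_1' + \pi_{\ell+1}^{\ell} Q_2'$ with $Q_1, Q_1' \in \SL_2(\mathcal{O}_{\ell+1})$. Then $\theta_\ell(Q_1) = \theta_\ell(\widehat{A}_0)$ and $\theta_\ell(Q_1') = \theta_\ell(\widehat{B}_0)$. Lemma~\ref{sum1}, applied with $A_0 = Q_1$ and $B_0 = Q_1'$, gives
\[
\lambda = \tr(e_{m+1}(\widehat{A}_0,\widehat{B}_0)_{\GL}) = \tr(e_{m+1}(Q_1,Q_1')) + \pi_{\ell+1}^{\ell} L_{m+1}(Q_2,Q_2').
\]
The naive choice $(Q_1,Q_1')$ is therefore off by the error term $\pi_{\ell+1}^{\ell}L_{m+1}(Q_2,Q_2')$.

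The first approach to removing this error uses the fact that an Engel word is invariant under scaling: for any units $c_1, c_2$ we have $e_{m+1}(c_1 A, c_2 B) = e_{m+1}(A,B)$, because scalars cancel in commutators. So we try to write $\widehat{A}_0 = c_1 \mathbf{g}_1$ and $\widehat{B}_0 = c_2 \mathbf{g}_2$ with $\mathbf{g}_i \in \SL_2$.

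Concretely, $\det \widehat{A}_0 = 1 + \pi_{\ell+1}^{\ell} v$. Put $c_1 = 1 + 2^{-1}\pi_{\ell+1}^{\ell} v$; then $c_1^2 = \det\widehat{A}_0$ because $(\pi_{\ell+1}^{\ell})^2 = 0$, and $2$ is a unit. Set $\widehat{\mathbf{g}}_1 = c_1^{-1}\widehat{A}_0$, which has determinant $c_1^{-2}\det\widehat{A}_0 = 1$. Since $c_1 \equiv 1$ modulo $\ker\theta_\ell$, it reduces to $\theta_\ell(\widehat{A}_0)$. Construct $\widehat{\mathbf{g}}_2 = c_2^{-1}\widehat{B}_0$ in the same way.

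Then $e_{m+1}(\widehat{\mathbf{g}}_1,\widehat{\mathbf{g}}_2) = e_{m+1}(\widehat{A}_0,\widehat{B}_0)_{\GL}$ as matrices, so the traces agree and equal $\lambda$. This is consistent with Corollary~\ref{scl}: the scalar perturbations $\mu_1 A_0$, $\mu_2 B_0$ give $L_{m+1} = 0$. We only need that scalars are central and cancel, which holds since $e_1(cA,B) = cABc^{-1}A^{-1}B^{-1} = e_1(A,B)$, and induction handles the second slot and the higher Engel words.

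The only point needing care is whether the Engel word, computed in $\GL_2$, is literally unchanged by scaling. This holds by centrality of scalars and induction on $m$, using $e_{j+1}(cA,dB) = [e_j(cA,dB), dB] = [e_j(A,B), B]$. So the expected obstacle is minor, and the argument avoids any need to solve $L_{m+1}(Q_2,Q_2') = 0$ directly.
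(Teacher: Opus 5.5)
Your proof is correct, and the matrices it produces are exactly the paper's; the difference is only in how the trace is verified.

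The paper writes $\widehat{A}_0=\mathbf{g}_0+\pi_{\ell+1}^{\ell}A_1$ by Lemma~\ref{matsl} and puts $\alpha_0=\tr(\mathbf{g}_0^{-1}A_1)$. Then $\det\widehat{A}_0=1+\pi_{\ell+1}^{\ell}\alpha_0$, so $\pi_{\ell+1}^{\ell}\alpha_0=\pi_{\ell+1}^{\ell}v$. It takes $\widehat{\mathbf g}_1=\mathbf{g}_0+\pi_{\ell+1}^{\ell}(A_1-2^{-1}\alpha_0\mathbf{g}_0)$. Since $\pi_{\ell+1}^{2\ell}=0$, this equals $(1-2^{-1}\pi_{\ell+1}^{\ell}\alpha_0)\widehat{A}_0=c_1^{-1}\widehat{A}_0$, which is your matrix.

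The paper never uses that this correction is a global scalar. It expands both traces to first order around $(\mathbf{g}_0,\mathbf{h}_0)$ using Lemma~\ref{sum1}. It then uses additivity of $L_{m+1}$ together with Corollary~\ref{scl}: removing the scalar multiples $2^{-1}\alpha_0\mathbf{g}_0$ and $2^{-1}\beta_0\mathbf{h}_0$ does not change the first-order term.

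You instead use the exact fact of which Corollary~\ref{scl} is only the linearization. Scalars are central, so $e_{m+1}(c_1^{-1}\widehat{A}_0,c_2^{-1}\widehat{B}_0)=e_{m+1}(\widehat{A}_0,\widehat{B}_0)_{\GL}$. This route is shorter and removes the need for Lemmas~\ref{matsl} and~\ref{sum1}, so your opening paragraph can simply be deleted. It also gives a stronger conclusion: the two Engel word values coincide as matrices, not merely in trace.
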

\begin{proof}
From the previous Lemma~\ref{matsl}, we can write $\widehat{A}_0 = \mathbf{g}_0 + \pi_{\ell+1}^{\ell} A_1$ and $\widehat{B}_0 = \mathbf{h}_0 + \pi_{\ell+1}^{\ell} B_1$, for some $\mathbf{g}_0, \mathbf{h}_0 \in \SL_2(\mathcal{O}_{\ell+1})$ and $A_1, B_1\in \M_2(\mathcal{O}_{\ell+1})$. Let $\tr(\mathbf{g}_0^{-1}A_1) = \alpha_0$ and $\tr(\mathbf{h}_0^{-1}B_1) = \beta_0$. Consider $\Tilde{A}_1 = A_1 - 2^{-1}\alpha_0\mathbf{g}_0$ and $\Tilde{B}_1 = B_1 - 2^{-1}\beta_0\mathbf{h}_0$. Now consider $\widehat{\mathbf{g}}_1 = \mathbf{g}_0 + \pi_{\ell+1}^{\ell} \Tilde{A}_1$ and $\widehat{\mathbf{g}}_2 = \mathbf{h}_0 + \pi_{\ell+1}^{\ell} \Tilde{B}_1$. Then, $$det(\widehat{\mathbf{g}}_1) = 1+\pi_{{\ell}+1}^{\ell} \tr(\mathbf{g}_0^{-1}\Tilde{A}_1) = 1 + \pi_{\ell+1}^{\ell}(\alpha_0-\alpha_0) = 1.$$ Similarly, $det(\widehat{\mathbf{g}}_2) = 1$. Therefore, $\widehat{\mathbf{g}}_1, \widehat{\mathbf{g}}_2$ are the members of $\SL_2(\mathcal{O}_{\ell + 1})$. Let $P_0 = 2^{-1} \alpha_0 \mathbf{g}_0$ and $Q_0 = 2^{-1} \beta_0 \mathbf{h}_0$, then $\Tilde{A}_1 = A_1 - P_0$ and $\Tilde{B}_1 = B_1 - Q_0$. Now, invoking Lemma~\ref{sum1} by taking $A_0 = \mathbf{g}_0$ and $B_0=\mathbf{h}_0$, we get 
$$L_{m+1}(\Tilde{A}_1, \Tilde{B}_1) = L_{m+1}(A_1, B_1) - L_{m+1}(P_0, Q_0) = L_{m+1}(A_1, B_1)$$ 
as $L(P_0,Q_0)=0$ by Corollary~\ref{scl}. Now, using Lemma~\ref{sum1} we get, 
\begin{eqnarray*}
\tr(e_{m+1}^{\mathcal{O}_{\ell+1}}(\widehat{\mathbf{g}}_1, \widehat{\mathbf{g}}_2)) &= & \tr(e_{m+1}^{\mathcal{O}_{\ell+1}}(\mathbf{g}_0, \mathbf{h}_0)) + \pi_{\ell+1}^{\ell} L_{m+1}(\Tilde{A}_1, \Tilde{B}_1) \\
&=& \tr(e_{m+1}^{\mathcal{O}_{\ell + 1}}(\mathbf{g}_0, \mathbf{h}_0)) + \pi_{\ell + 1}^{\ell} L_{m+1}(A_1, B_1) \\
&=& \tr(e_{m+1}(\widehat{A}_0, \widehat{B}_0)_{\GL})=\lambda.
\end{eqnarray*}
Since the kernel of the natural surjection $\mathcal{O}_{\ell+1} \rightarrow \mathcal{O}_{\ell}$ is generated by $\pi_{\ell+1}^{\ell}$, therefore $\theta_{\ell}(\widehat{\mathbf{g}}_1) = \theta_{\ell}(\mathbf{g}_0) = \theta_{\ell}(\widehat{A}_0)$. A similar argument shows that $\theta_{\ell}(\widehat{\mathbf{g}}_2) = \theta_{\ell}(\mathbf{h}_0) = \theta_{\ell}(\widehat{B}_0)$.
\end{proof}

\begin{lemma}{\label{conjugacycyclic}}
Let $\mathcal{O}$ be a local principal ideal ring, complete with respect to its unique maximal ideal $\mathfrak{m} = (\pi)$, with residue field $k$ of characteristic $\neq 2$. Let $M$ and $P$ be two cyclic matrices in $\GL_2(\mathcal{O})$ with the same characteristic polynomial. Then, $M \sim_{\mathcal{O}} P$.
\end{lemma}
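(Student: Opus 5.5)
The plan is to show that a cyclic $2\times 2$ matrix over $\mathcal{O}$ is conjugate in $\GL_2(\mathcal{O})$ to the companion matrix of its characteristic polynomial. Since $M$ and $P$ have the same characteristic polynomial $\mathfrak f(t)=t^2+c_1t+c_0\in\mathcal{O}[t]$, both will then be conjugate to $C_{\mathfrak f}$, and hence to each other.

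The key step is to produce a cyclic vector over $\mathcal{O}$. Since $\bar M$ is cyclic in $\GL_2(k)$, there is a vector $\bar w\in k^2$ such that $\{\bar w, \bar M\bar w\}$ is a basis of $k^2$. Lift $\bar w$ arbitrarily to $w\in\mathcal{O}^2$, and form the matrix $T=[\,w \mid Mw\,]\in \M_2(\mathcal{O})$.

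\begin{itemize}
\item The reduction $\bar T=[\,\bar w\mid \bar M\bar w\,]$ has nonzero determinant.
\item Because $\mathcal{O}$ is local, $\det T$ is therefore a unit, so $T\in\GL_2(\mathcal{O})$.
\end{itemize}

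By Cayley--Hamilton over the commutative ring $\mathcal{O}$, we have $M^2w=-c_0w-c_1Mw$. Hence $MT=T\,C_{\mathfrak f}$, that is, $T^{-1}MT=C_{\mathfrak f}$.

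The same construction applied to $P$ gives $S\in\GL_2(\mathcal{O})$ with $S^{-1}PS=C_{\mathfrak f}$. Then $M=(TS^{-1})P(TS^{-1})^{-1}$, so $M\sim_{\mathcal{O}}P$.

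I do not expect a real obstacle. The only point needing care is that invertibility of $T$ is detected modulo $\mathfrak{m}$, which holds in any local ring. Neither completeness of $\mathcal{O}$ nor the hypothesis on the characteristic of $k$ is needed. As an alternative, one could argue level by level via the isomorphism $\mathcal{T}^*$ and compatible conjugators in each $\mathcal{O}_j$, in the spirit of Lemma~\ref{lifting complete level}. The direct cyclic-vector argument avoids the compatibility issue entirely, so it is the route I will take.
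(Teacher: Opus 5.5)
Your argument is correct, and it takes a different, more direct route than the paper. The paper argues module-theoretically in two stages:
\begin{itemize}
\item It first shows that the null ideal of $M$ (and of $P$) is $\langle \chi_M(t)\rangle$. It divides by the monic $\chi_M$ in $\mathcal{O}[t]$, passes to the levels $\mathcal{O}_j$ via $\mathcal{O}\cong\varprojlim \mathcal{O}_j$, and uses that a cyclic matrix over $\mathcal{O}_j$ is killed by no nonzero polynomial of degree $<2$.
\item It then asserts that for cyclic $Q$ the $\mathcal{O}[t]$-module $\mathcal{O}^2$ (with $t$ acting by $Q$) is isomorphic to $\mathcal{O}[t]/\langle\chi_Q(t)\rangle$. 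Invoking a uniqueness-of-primary-decomposition result, it concludes that the modules for $M$ and $P$ are isomorphic, hence $M\sim_{\mathcal{O}}P$.
\end{itemize}
Your lifted vector $w$ with $\det[\,w\mid Mw\,]\in\mathcal{O}^{\times}$ is exactly the Nakayama-type step the paper takes for granted when it calls that module cyclic over $\mathcal{O}$. Cayley--Hamilton then realizes the isomorphism with $\mathcal{O}[t]/\langle\chi\rangle$ concretely as $T^{-1}MT=C_{\mathfrak f}$.

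What your route buys:
\begin{itemize}
\item It is self-contained, with no inverse limits and no external results on null ideals or primary decomposition.
\item It gives an explicit conjugator $TS^{-1}$.
\item As you note, it uses neither completeness nor the characteristic assumption on $k$, nor invertibility of $M$ and $P$.
\item It carries over verbatim to $n\times n$ cyclic matrices over any commutative local ring, using $T=[\,w\mid Mw\mid\cdots\mid M^{n-1}w\,]$.
\end{itemize}
The one extra thing the paper records is that the null ideal of a cyclic matrix over $\mathcal{O}$ is generated by its characteristic polynomial. This also follows at once from your conjugacy to $C_{\mathfrak f}$: if $a+bt$ annihilates $C_{\mathfrak f}$, comparing entries forces $a=b=0$.
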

\begin{proof}
First, we show that in this case the null ideals of both matrices $M$ and $P$ are the same, that is, $\langle\chi_M(t) \rangle$ (note $\chi_M(t) = \chi_P(t)$). First, we show that the minimal polynomials are the same too. 
     
Let $J(t)\in N_M$, the null ideal of $M$. Then by \cite[Theorem 2.14]{JacobsonAlgebraBook85} there exist $U(t)$ and $R(t)$ in $\mathcal{O}[t]$ such that $J(t) = U(t) \chi_M(t) + R(t)$ where $R(t) = 0$ or $\deg(R(t)) < \deg(\chi_M(t))$. Under the isomorphism $\mathcal{O}[t] \rightarrow \varprojlim\limits_{j\geq 1} \frac{\mathcal{O}}{\mathfrak{m}^j\mathcal{O}}[t]$ induced by the isomorphism $\mathcal{O}\rightarrow \varprojlim\limits_{j\geq 1} \mathcal{O}/\mathfrak{m}^j\mathcal{O}$, say, the image of $R(t)$ is $(R_1(t), R_2(t),\ldots)$ and the image of $\chi_M(t)$ is $(\chi_{\Bar{M}}(t),\chi_{M_2}(t),\ldots)$. Now for each $i$, $R_i(t)\in N_{M_i}$. Since $\chi_M(t)$ is monic polynomial, $\deg(\chi_M(t)) = \deg(\chi_{M_i}(t))$ for all $i$. By \cite[Corollary 2.3]{PRS2026}, we obtain $R_i(t)$ must be zero polynomial for each $i$. Hence, $R(t)$ must be the zero polynomial in $\mathcal{O}[t]$. This also ensures that the minimal polynomial of both matrices is their characteristic polynomial.

For a matrix $Q\in \M_2(\mathcal{O})$, let us denote by $\M^Q$ the $\mathcal{O}[t]$ module structure on $\mathcal{O}^2$ via the action of $t$ by $Q$. For cyclic $Q$, this module $\M^Q \cong \frac{\mathcal{O}[t]}{\langle \chi_Q(t)\rangle}$ as $\mathcal{O}[t]$ module. As both $M$ and $P$ are cyclic by \cite[Theorem 5.1 (2)]{Snapper1950},  the factorization of $\chi_{M}(t)$ and $P$ in terms of primary components is unique up to associates. Moreover $\chi_M(t)=\chi_{P}(t)$. Therefore $\M^M$ and $\M^{P}$ are isomorphic as $\mathcal{O}[t]$ modules and hence $M\sim _{\mathcal{O}} P$. 
 \end{proof}

At this stage, let us recall the expression of $\tr(e_{m+1}(X, Y)_{\mathfrak{H}})$ from Section~\ref{trace in GL}. By definition of $\mathfrak{H}$, it is obvious that $\theta(d_P) = 1$ for all $P \in \mathfrak{H}$. Therefore, the existence of a solution to the equation 
$$\widetilde F_{m+1}^{\mathcal{A}}(x_1, x_2, x_3, x_4, y_1, y_2, y_3, y_4) = 0$$ 
in $\mathcal{A}^8$ will ensure the existence of a solution of $\Phi_{m+1}(s_{m_{\mathfrak{H}}}, v_{\mathfrak{H}}, d_{Y})=0$ in $\mathcal{A}^3$, and these facts further imply the existence of matrices $X_0, Y_0\in \mathfrak{H}$ which satisfies $\tr(e_{m+1}(X_0, Y_0)_{\mathfrak{H}}) = \lambda$. 
That is, if $(a_1, a_2, a_3, a_4, b_1, b_2, b_3, b_4)\in \mathcal{A}^8$ is a solution of  $\widetilde F_{m+1}^{\mathcal{A}}(x_1, x_2, x_3, x_4, y_1, y_2, y_3, y_4) = 0$ with $\theta(A_0), \theta(B_0)\in \SL_2(k)$ where $A_0=\left(\begin{array}{cc}  a_1 & a_2 \\  a_3 & a_4
\end{array}\right),\ B_0=\left(\begin{array}{cc}  b_1 & b_2 \\
b_3 & b_4 \end{array}\right)$ then it satisfies 
$$\tr(e_{m+1}(A_0, B_0)_{\mathfrak{H}}) = \lambda,$$ 
which is implied by $\Phi_{m+1}(s_{m_{\mathfrak{H}}}, v_{\mathfrak{H}}, d_{Y})=0$.

Keeping this fact in mind, now we write down the main result of this section for regular semisimple elements.
\begin{proposition}{\label{regsem1}}
Let $\mathcal{O}$ be a local principal ideal ring, complete with respect to its unique maximal ideal $\mathfrak{m}=(\pi)$, with residue field $k$ of characteristic $\neq 2$. Let $M$ be a regular semisimple element in $\SL_2(\mathcal{O})$ such that for a given $m\geq 1$, $\overline{M}\in \im(e_{m+1})$ in $\SL_2(k)$. Then, $M$ belongs to $\im(e_{m+1}^{\mathcal{O}})$ in $\SL_2(\mathcal{O})$.
\end{proposition}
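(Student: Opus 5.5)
Write $M=(M_1,M_2,\ldots)$ under $\mathcal{T}^*$ and $\lambda=\tr(M)$, with $\lambda_j$ its image in $\mathcal{O}_j$. Since $\overline{M}$ is regular semisimple, $\bar\lambda=\lambda_1\neq\pm2$. The plan is to realise the \emph{trace} $\lambda$ level by level by Engel words, rather than the matrix itself. Conjugacy then comes for free. Indeed, if $e^{\mathcal{O}}_{m+1}(\mathbf{g}_1,\mathbf{g}_2)$ has trace $\lambda$ and determinant $1$, it has the same characteristic polynomial as $M$. It is regular semisimple, hence cyclic, because its reduction has the separable characteristic polynomial of $\overline{M}$. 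Lemma~\ref{conjugacycyclic} then gives $e^{\mathcal{O}}_{m+1}(\mathbf{g}_1,\mathbf{g}_2)\sim_{\mathcal{O}}M$. By the $\GL$-invariance remark in Section~\ref{O_2conj} (conjugating both arguments by $P\in\GL_2(\mathcal{O})$), $M$ lies in $\im(e^{\mathcal{O}}_{m+1})$.

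\textbf{Step 1 (field level).} Choose $(A_0,B_0)\in\SL_2(k)^2$ with $\bar e_{m+1}(A_0,B_0)=\overline{M}$, so $\tr(\bar e_{m+1}(A_0,B_0))=\bar\lambda$. Consider the $8$-variable polynomial $f_{m+1}=\tr(\bar e_{m+1}(x,y))-\bar\lambda$. Lemma~\ref{jacobian1} shows that the Jacobian of $\tilde\psi_{m+1}$ in $(s_m,v)$ is nonzero at $(\gamma_m,v_0)$, since $\bar\lambda\neq\pm2$. By Lemma~\ref{tr 2} (contrapositive, with $r=m+1\geq2$ and $\lambda_0=\bar\lambda\neq2$), the Jacobian of $f_{m+1}$ is nonzero at $\alpha_0=(A_0,B_0)$. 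So $\alpha_0$ is a simple root.

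\textbf{Step 2 (lifting one level).} Suppose we have $(X_\ell,Y_\ell)\in\SL_2(\mathcal{O}_\ell)^2$ lifting $(A_0,B_0)$ with $\tr(e^{\mathcal{O}_\ell}_{m+1}(X_\ell,Y_\ell))=\lambda_\ell$. We would like to apply the Hensel step of Lemma~\ref{liftingsimple} and Remark~\ref{rmk lifting} in $\mathcal{O}_{\ell+1}$, using $(\ker\theta_\ell)^2=0$. The difficulty is that the determinant constraints $\det=1$ are not built into $f_{m+1}$. To handle this, work instead with matrices in $\mathfrak{H}$ and the polynomial $\widetilde F^{\mathcal{O}_{\ell+1}}_{m+1}=d_X^{\mu_1}d_Y^{\mu_2}\bigl(\tr(e_{m+1}(X,Y)_{\mathfrak H})-\lambda_{\ell+1}\bigr)$ from Section~\ref{trace in GL}. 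It reduces to a unit multiple of $f_{m+1}$ on $\SL_2(k)$-points, so its partial derivatives at a lift of $\alpha_0$ still have a unit among them.

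Take arbitrary lifts of $X_\ell,Y_\ell$ to $\M_2(\mathcal{O}_{\ell+1})$. Correct only by $\pi^\ell_{\ell+1}$-multiples, exactly as in the proof of Lemma~\ref{liftingsimple}, where the first-order Taylor expansion is exact. This gives $\widehat A_0,\widehat B_0\in\GL_2(\mathcal{O}_{\ell+1})$ with $\theta_\ell(\widehat A_0)=X_\ell$, $\theta_\ell(\widehat B_0)=Y_\ell$ and $\tr(e^{\mathcal{O}_{\ell+1}}_{m+1}(\widehat A_0,\widehat B_0)_{\GL})=\lambda_{\ell+1}$. Lemma~\ref{lem: trace GL to trace SL} then replaces these by $(X_{\ell+1},Y_{\ell+1})\in\SL_2(\mathcal{O}_{\ell+1})^2$ with the same reductions and the same trace $\lambda_{\ell+1}$.

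\textbf{Step 3 (passing to $\mathcal{O}$).} The pairs $(X_\ell,Y_\ell)$ are compatible, so they define $(\mathbf{g}_1,\mathbf{g}_2)\in\SL_2(\mathcal{O})^2$ as in Lemma~\ref{lifting complete level}. Here we argue on traces rather than on the matrices $A_j$: the trace of $e^{\mathcal{O}}_{m+1}(\mathbf{g}_1,\mathbf{g}_2)$ has image $\lambda_\ell$ at every level, hence equals $\lambda$. The conjugacy argument of the first paragraph finishes the proof.

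\textbf{Expected obstacle.} The main obstacle is Step 2: making sure the simple-root condition survives at every level once the determinant factors $d_X,d_Y$ are present. A related worry is that the unit partial derivative must be evaluated at the current lift rather than at $\alpha_0$. This is fine because units are detected mod $\mathfrak{m}$, but the chain-rule bookkeeping between $f_{m+1}$ and $\widetilde F$ is where I would be careful.
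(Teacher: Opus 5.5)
Your route is the paper's own: the trace equation on $\mathfrak{H}$, Hensel level by level, Lemma~\ref{lem: trace GL to trace SL}, the inverse limit, and Lemma~\ref{conjugacycyclic}. Steps 2--3 and the conjugacy reduction are fine once a unit partial derivative is available. Your reason for that availability is not valid, though. Agreement of values on $\SL_2(k)^2$ says nothing about derivatives transverse to $\SL_2\times\SL_2$. The right bookkeeping uses $e_{m+1}(cX,c'Y)=e_{m+1}(X,Y)$ for scalars $c,c'$:
\begin{itemize}
\item The gradient of $\theta(\widetilde F)$ at $\alpha_0$ kills $(A_0,0)$ and $(0,B_0)$.
\item These span a complement of $T_{\alpha_0}(\SL_2\times\SL_2)$, since $\tr(\mathrm{adj}(A_0)A_0)=2\neq0$.
\item Hence $\theta(\widetilde F)$ has a nonzero partial at $\alpha_0$ iff $\tr\bar e_{m+1}$, restricted to $\SL_2\times\SL_2$, has nonzero differential at $(A_0,B_0)$.
\item As $A_0,B_0$ do not commute (otherwise $\bar\lambda=2$), $(s,u,v)$ restricted to $\SL_2\times\SL_2$ is a submersion there.
\end{itemize}
So the condition you actually need is exactly $\nabla_{(s,u,v)}\psi_{m+1}(s_0,u_0,v_0)\neq0$.

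That condition is where the proof genuinely breaks. Lemma~\ref{jacobian1} only controls the $(s_m,v)$-gradient. The contrapositive of Lemma~\ref{tr 2}, which you use to descend to $(s,u,v)$, fails already for $m=2$. The chain rule gives $\partial_{s_i}\tilde\psi_{m+1}=(2s_i-v^2)\,\partial_{s_{i+1}}\tilde\psi_{m+1}$, so nonvanishing can be lost when $2s_i=v^2$ for some $1\le i\le m-1$.

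Concretely, in $\SL_2(\mathbb{F}_5)$ take $A_0=\begin{pmatrix}1&1\\0&1\end{pmatrix}$ and $B_0=\begin{pmatrix}0&1\\-1&1\end{pmatrix}$. Then:
\begin{itemize}
\item $(s_0,u_0,v_0)=(2,0,1)$, $s_1=3$, $s_2=1$, and $\tr\bar e_3(A_0,B_0)=0\neq\pm2$.
\item The $(s_2,v)$-gradient is $(1,2)\neq0$.
\item Yet $2s_1-v_0^2=0$ kills $\partial_s\psi_3$ and $\partial_u\psi_3$.
\item Also $\partial_v\psi_3=(2s_2-v_0^2)\bigl((2s_1-v_0^2)(2v_0-s_0u_0)+2v_0(2-s_1)\bigr)+2v_0(2-s_2)=1\cdot(0-2)+2=0$.
\end{itemize}
Since $\tr e_3^{\mathcal{O}_2}=\Psi_3(s,u,v)$ over $\mathcal{O}_2$, every $\SL_2(\mathcal{O}_2)$-lift of $(A_0,B_0)$ gives the same trace. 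From this pair, Step 2 reaches only one lift of $\bar\lambda$. The paper's proof relies on the same lemma, so following it does not supply the missing step.

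What you need is to choose $(A_0,B_0)$ in the fibre $\{\tr\bar e_{m+1}=\bar\lambda\}$ with $\nabla_{(s,u,v)}\psi_{m+1}\neq0$, and to prove such a pair exists. One sufficient choice is a pair with $2\tr\bar e_i(A_0,B_0)\neq\tr(B_0)^2$ for $1\le i\le m-1$, which makes the descent from Lemma~\ref{jacobian1} valid. The hypothesis $\overline M\in\im(\bar e_{m+1})$ alone does not give such a pair. For $m=1$ no special choice is needed, and your argument, with the corrected bookkeeping, is complete.
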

\begin{proof}
\textbf{Step I :} It is enough to show the proposition for the length two case. Let $A\in \SL_2(\R)$ be regular semisimple. 
Therefore, $\tr(\Bar{A})$ cannot take the values $\pm 2$. Hence, at this stage, we will not consider those $A\in \SL_2(\R)$ whose trace values are $\pm 2 + m_1$ type for some $m_1\in \mathfrak{m}_2$. 

Now, let $\lambda = tr(A)$ and assume that $X =\left (\begin{array}{cc} x_1& x_2 \\x_3&x_4 \end{array}\right )$ and $Y = \left (\begin{array}{cc} y_1&y_2  \\ y_3&y_4 \end{array} \right )$ be two matrix variables from $\mathfrak{H}$ (consider $\mathcal{A} = \mathcal{O}_2$ here) each with $4$ indeterminate entries from $\R$. Let $\tr(X)=s, \tr(Y)=v, \tr(XY)=u$. Consider the equation $\widetilde F_{m+1}^{\R}(x_1, x_2, x_3, x_4, y_1, y_2, y_3, y_4) = 0$ over $\mathcal{O}_2$. Under the reduction mod $\mathfrak{m}$ let $\tilde \Phi_{m+1}$ goes to $\tilde \psi_{m+1}$ where $\tilde \psi_{m+1}(s_m, v) := \psi_{m+1}(s_m, v) - \bar \lambda$ (see Section~\ref{tr} for the expression of $\psi_{m+1}(s_m,v)$). In Section~\ref{trace in GL}, we have seen 
$$\widetilde \Phi(s_{m_{\mathfrak{H}}}, v_{\mathfrak{H}}, d_{Y}, d_X) = \widetilde F_{m+1}^{\R}(x_1, x_2, x_3, x_4, y_1, y_2, y_3, y_4).$$
Our aim is to show that there exists $\widehat{\mathbf{g}}_1, \widehat{\mathbf{g}}_2 \in \SL_2(\R)$ such that $\tr(e_{m+1}^{\R}(\widehat{\mathbf{g}}_1, \widehat{\mathbf{g}}_2)) = \lambda$. 

It is easy to observe that $\tilde \psi_{m+1}(s_m, v)$ can be written as $\widetilde\psi_{m+1}(s, u, v)$, which can further be written as a polynomial in the $8$ variables (entries of matrix variables in $\SL_2(k)$) and which we rename as $\widetilde f_{m+1}(x_1, x_2, x_3, x_4, y_1, y_2, y_3, y_4)$ for convenience. There exist matrices $A_0 =\left (\begin{array}{cc} a&  b \\ c & d \end{array}\right )$ and $B_0 = \left (\begin{array}{cc} e& u \\ v & w \end{array}\right )$ in $\SL_2(k)$ such that $\Tilde{f}_{m+1}(a, b, c, d, e, u, v, w) = 0$ where $\Tilde{f}_{m+1} = \theta(\Tilde{F}_{m+1}^{\R})$. This implies that there exists $A_0, B_0 \in SL_2(k)$ such that we have a solution $(s_0, u_0, v_0)$ of $\Tilde{\psi}_{m+1}(s, u, v) = 0$ in $k^3$ where $s_0 = tr(A_0), u_0 = tr(A_0B_0)$ and $v_0 = tr(B_0)$. As by assumption $tr(\Bar{A})\notin \{\pm2\}$, thus at $(s_m(s_0, u_0, v_0), v_0)$, the Jacobian corresponding to $\Tilde{\psi}_{m+1}(s_m,v)$ is non-zero by Lemma~\ref{jacobian1}. In other words, $(s_0, u_0, v_0)$ is a simple root of $\Tilde{\psi}_{m+1}(s, u, v) = 0$ in $k^3$ by Lemma~\ref{tr 2}. Therefore, by Lemma~\ref{tr 2}, $(a, b, c, d, e, u, v, w)$ must be a simple root of $\Tilde{f}_{m+1}(x_1, x_2, x_3, x_4, y_1, y_2, y_3, y_4) = 0$ in $k^8$. Then, by using Lemma~\ref{liftingsimple} we get 
$$\Tilde{F}_{m+1}^{\R}(x_1, x_2, x_3, x_4, y_1, y_2, y_3, y_4) = 0$$ 
has a solution in $\R^8$. Therefore, there exists matrices $\widehat{A}_0$ and $\widehat{B}_0$ in $\GL_2(\R)$ (in $\mathfrak{H}$, hence in $\GL_2(\mathcal{O}_2)$, see Section~\ref{trace in GL}) such that $\theta(\widehat{A}_0) = A_0$ and $\theta(\widehat{B}_0) = B_0$ in $\SL_2(k)$ and $\tr(e_{m+1}(\widehat{A}_0, \widehat{B}_0)_{\GL}) = \lambda$. But, at this stage, it is not guaranteed that $\widehat{A}_0$ and $\widehat{B}_0$ are members of $\SL_2(\R)$ or not. Therefore, invoking Lemma~\ref{lem: trace GL to trace SL}, we get two matrices $\widehat{\mathbf{g}}_1, \widehat{\mathbf{g}}_2$ in $\SL_2(\R)$ such that they are the lifts of $A_0$ and $B_0$ respectively and $\tr(e_{m+1}^{\R}(\widehat{\mathbf{g}}_1, \widehat{\mathbf{g}}_2)) = \lambda$.

Hence there exists $\widehat{\mathbf{g}}_1, \widehat{\mathbf{g}}_2$ in $\SL_2(\R)$ such that  $\tr(e_{m+1}^{\R}(\widehat{\mathbf{g}}_1, \widehat{\mathbf{g}}_2)) = \tr(A)$. This implies that $e_{m+1}(\widehat{\mathbf{g}}_1, \widehat{\mathbf{g}}_2)$ and $A$ have the same characteristic polynomial. Moreover,  $\Bar{\lambda} \neq \pm2$ ensures $e_{m+1}(\widehat{\mathbf{g}}_1, \widehat{\mathbf{g}}_2)$ is regular semisimple; see Figure~\ref{fig: lifting com}. 

Now consider the given matrix $M\in \SL_2(\mathcal{O})$.   
Using the description of the canonical isomorphism $\mathcal{T}^*$ in Lemma~\ref{lift regular} we have matrices $\mathbf{g}_1^{(2)}, \mathbf{g}_2^{(2)}\in \SL_2(\mathcal{O}_2)$ such that they are the corresponding lifts of $A_0$ and $B_0$ respectively and they satisfies $\tr(e_{m+1}^{\R}(\mathbf{g}_1^{(2)}, \mathbf{g}_2^{(2)})) = \tr(M_2)$. 

\textbf{Step II :} Take $\mathcal{A} = \mathcal{O}_3$ in the description of $\mathfrak{H}$ and consider the equation $\tr(e_{m+1}(X, Y)_{\mathfrak{H}}) - \tr(M_3) = 0$ where $X,Y$ are matrix variables in $\mathfrak{H}$; see Section~\ref{trace in GL}. Again this means we are considering the equation $$\widetilde F_{m+1}^{\mathcal{O}_3}(x_1, x_2, x_3, x_4, y_1, y_2, y_3, y_4) = 0$$ 
over $\mathcal{O}_3$. Under reduction mod $\theta_2$, the equation boils down to the equation $\widetilde F_{m+1}^{\R}(x_1, x_2, x_3, x_4, y_1, y_2, y_3, y_4) = 0$ over $\mathcal{O}_2$. We have seen earlier that this equation has a solution $\alpha_0=(a_1, a_2, a_3, a_4, b_1, b_2, b_3, b_4)$ in $\mathcal{O}^8_2$ where $\widehat{g}_1=\left(\begin{array}{cc} a_1 & a_2 \\  a_3 & a_4 \end{array}\right),\ \widehat{g}_2=\left(\begin{array}{cc} b_1 & b_2 \\ b_3 & b_4
\end{array}\right)$ in $\SL_2(\R)$. Moreover, as $\alpha_0$ is a lift of a simple root therefore at least one one of the partial derivatives $\{\partial \widetilde F_{m+1}^{\R}/\partial x_{i},\ \partial \widetilde F_{m+1}^{\R}/\partial y_{j}\ \mid \ i,j=1,2,3,4\}$ must take value from $\R^{\times}$ when evaluated at $\alpha_0$. Since $(ker(\theta_2))^2=0$, hence by the similar argument used for lifting simple roots from $k$ to $\mathcal{O}_2$ level and using the Remark~\ref{rmk lifting} (lifting simple roots from $\mathcal{O}_2$ to $\mathcal{O}_3$ level), we obtain, the particular solution of  $\tr(e_{m+1}^{\R}(X, Y)) = \tr(M_2)$. We have obtained this over $\mathcal{O}_2$ via the lifting of simple roots, which can also be lifted to a solution of  $\tr(e_{m+1}^{\mathcal{O}_3}(X, Y))=\tr(M_3)$ over $\mathcal{O}_3$. Therefore, using Lemma~\ref{lem: trace GL to trace SL} and the argument similar to that in the length two case in the previous step, we obtain that there exist matrices $\mathbf{g}_1^{(3)}, \mathbf{g}_2^{(3)}\in \SL_2(\mathcal{O}_3)$ such that they are the corresponding lifts of $\mathbf{g}_1^{(2)}$ and $\mathbf{g}_2^{(2)}$ respectively and they satisfies $\tr(e_{m+1}^{\mathcal{O}_3}(\mathbf{g}_1^{(3)}, \mathbf{g}_2^{(3)})) = \tr(M_3)$. Thus, by repeating use of this argument, we obtain, for any $\ell\geq 2$, there exist $\mathbf{g}_1^{(r+1)}, \mathbf{g}_2^{(r+1)}\in \SL_2(\mathcal{O}_{r+1})$ such that they are the corresponding lifts of $\mathbf{g}_1^{(r)}$ and $\mathbf{g}_2^{(r)}$ respectively (satisfying $\tr(e_{m+1}^{\mathcal{O}_r}(\mathbf{g}_1^{(r)}, \mathbf{g}_2^{(r)}))=\tr(M_{r})$) and it satisfies $\tr(e_{m+1}^{\mathcal{O}_{r+1}}(\mathbf{g}_1^{(r+1)}, \mathbf{g}_2^{(r+1)}))=\tr(M_{r+1})$ for $r=1,2,\ldots,\ell-1$ where $\mathbf{g}_1^{(1)}=A_0,\ \mathbf{g}_2^{(1)}=B_0,\  M_1=\overline{M}$. 

\textbf{Step III :} Now using the isomorphism $\mathcal{O} \rightarrow \varprojlim\limits_{j\geq 1} \mathcal{O}/ \mathfrak{m}^j\mathcal{O}$ we obtain $(\tr(\overline{M}), \tr(M_2), \tr(M_3),\ldots)$ is the isomorphic image of $\tr(M)$. Through the canonical isomorphism $\mathcal{T^*} \colon \M_n(\mathcal{O}) \rightarrow \varprojlim\limits _{j\geq 1}\M_n(\mathcal{O}_j)$ let $\mathbf{g}_1$ and $\mathbf{g}_2$ map to $(A_0, \mathbf{g}_1^{(2)}, \mathbf{g}_1^{(3)},\ldots)$ and $(B_0, \mathbf{g}_2^{(2)}, \mathbf{g}_2^{(3)},\ldots)$, respectively. Note that $\det(\mathbf{g}_1) = 1 =\det(\mathbf{g}_2)$. Therefore, $\mathbf{g}_1, \mathbf{g}_2 \in \SL_2(\mathcal{O})$. Through the isomorphism $\mathcal{O} \rightarrow \varprojlim\limits_{j\geq 1} \mathcal{O}/ \mathfrak{m}^j\mathcal{O}$ we get the isomorphic image of  $\tr(e_{m+1}^{\mathcal{O}}(\mathbf{g}_1, \mathbf{g}_2))$ is exactly equal to the isomorphic image of $\tr(M)$ which is nothing but $(\tr(\overline{M}), \tr(M_1), \tr(M_2),\ldots)$. Therefore, $\tr(M) = \tr(e_{m+1}^{\mathcal{O}}(\mathbf{g}_1, \mathbf{g}_2))$. Consequently, $M$ and $e_{m+1}^{\mathcal{O}}(\mathbf{g}_1, \mathbf{g}_2)$ have the same characteristic polynomial over $\mathcal{O}$. Note that $e_{m+1}^{\mathcal{O}}(\mathbf{g}_1, \mathbf{g}_2)$ and $M$, both are regular semisimple, hence cyclic. Invoking Lemma~\ref{conjugacycyclic} we get $M\sim _{\mathcal{O}}e_{m+1}^{\mathcal{O}}(\mathbf{g}_1, \mathbf{g}_2)$. This proves the required result.
\end{proof}

\subsection{Lifts of cyclic elements which are not regular semisimples}{\label{lift regular}}

Next, we look into the lifting result regarding the $m+1$-th Engel word for cyclic elements of $\SL_2(k)$ with trace $\pm 2$.

\begin{lemma}{\label{lifts of unipotents}}
Let $\mathcal{O}$ be a complete local principal ideal ring, and its residue field $k$, $|k|\geq 5$, is of characteristic $\neq 2$. Then, every lift of $\mathbf{\bar g} = \left(\begin{array}{cc} 1 &  0\\ 1 & 1
\end{array}\right)$ in $\SL_2(\mathcal{O})$ is in $\im (e_{m+1}^{\mathcal{O}})$, for all $m\geq 1$.
\end{lemma}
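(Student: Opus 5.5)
Our approach is to apply Proposition~\ref{adelic 2}. We need an explicit pair $(\mathbf{\bar g}_1,\mathbf{\bar g}_2)\in\SL_2(k)^2$ with $\bar e_{m+1}(\mathbf{\bar g}_1,\mathbf{\bar g}_2)=\left(\begin{smallmatrix}1&0\\1&1\end{smallmatrix}\right)$, and we must check its hypotheses: for every $0\le i\le m$, the group $H_i=\langle \bar e_i(\mathbf{\bar g}_1,\mathbf{\bar g}_2),\mathbf{\bar g}_2\rangle$ has no nontrivial coadjoint fixed vector, and $Z(\bar e_i\mathbf{\bar g}_2\bar e_i^{-1})\cap Z(\mathbf{\bar g}_2)^{\perp}=\{\mathbf 0\}$. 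Once these are verified, every lift of $\mathbf{\bar g}$ to $\SL_2(\mathcal{O})$ is of the form $e_{m+1}^{\mathcal{O}}(\mathbf h_1,\mathbf h_2)$, and we are done.

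\textbf{Construction.} We use the Magnus-type pair from Lemmas~\ref{uni2} and \ref{triang unipotent}, reduced to $k$. Take $Z_i=\left(\begin{smallmatrix}t_i&0\\1&t_i^{-1}\end{smallmatrix}\right)$ with $t_1,t_2\in k^{\times}$ chosen so that $\mathcal{L}_{e_{m+1}}(t_1,t_2)=d\neq 0$; such a choice exists by the proof of Lemma~\ref{kernel}, since $|k|\ge5$. Then conjugate by $\mathrm{diag}(1,d)$, as in Lemma~\ref{triang unipotent}. Because this conjugation is uniform, it suffices to verify the hypotheses for the pair $(Z_1,Z_2)$ itself and the target $\left(\begin{smallmatrix}1&0\\d&1\end{smallmatrix}\right)$. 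By the computation in Lemma~\ref{kernel}, for $i\ge1$ we have $\bar e_i(Z_1,Z_2)=\left(\begin{smallmatrix}1&0\\ \mathcal L_{e_i}&1\end{smallmatrix}\right)$ with $\mathcal L_{e_i}=\mathcal L_{e_1}(1-t_2^{-2})^{i-1}$. Since $\mathcal L_{e_{m+1}}\ne0$, every $\mathcal L_{e_i}$ with $1\le i\le m+1$ is nonzero and $t_2^2\neq 1$. In particular $\mathbf{\bar g}_2=Z_2$ is regular semisimple with eigenvalues $t_2^{\pm1}$, and each $\bar e_i$ with $i\ge1$ is a nontrivial lower unipotent.

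\textbf{Fixed vectors.} For $i\ge1$ we have $\bar e_{i+1}=[\bar e_i,Z_2]$, and this is non-scalar. Proposition~\ref{fixed vector 2} then shows that $H_i$ has no nontrivial fixed vector. For $i=0$, use $[Z_1,Z_2]=\bar e_1$, which is also non-scalar, so $H_0$ has no fixed vector either.

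\textbf{Orthogonality conditions.} Here we apply Lemma~\ref{fixed vector ortho compl} with $P=\bar e_i$, $Q=Z_2$ and $A=\bar e_{i+1}$. The hypothesis $Z(Q)\cap Z(A)=0$ holds because $Z(Z_2)$ is spanned by the trace-zero part of the diagonalizable $Z_2$, whereas $Z(A)$ is spanned by the nilpotent $E_{21}$. Since $Q$ is regular semisimple, we use case~(2) of that lemma, and the needed condition is $\tr(A)=2$. This holds because $A=\bar e_{i+1}$ is unipotent for every $i\ge0$. So all the intersections are trivial.

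The main obstacle we expect is bookkeeping rather than substance. First, the indexing in Proposition~\ref{adelic 2} uses $\bar e_0=\mathbf{\bar g}_1$. Second, the conjugation by $\mathrm{diag}(1,d)$ lies in $\GL_2$ rather than $\SL_2$; it preserves all the centralizer and orthogonality conditions, since $\mathrm{Ad}$ by a $\GL_2$ element is an isometry of the trace form, and one may instead pass to $\GL$-conjugacy as in Section~\ref{O_2conj}. Third, the argument must not use case~(1) of Lemma~\ref{fixed vector ortho compl}, whose hypothesis fails here. Finally, one should double check that choosing $t_1,t_2\in k^{\times}$ is possible for all $|k|\ge5$, including characteristic $3$, exactly as in Lemma~\ref{kernel}.
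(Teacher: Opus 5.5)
Your proposal is correct and is essentially the paper's proof: take the Magnus-type pair from Lemma~\ref{triang unipotent} (up to the diagonal $\GL_2$-conjugation), use Proposition~\ref{fixed vector 2} for the fixed-vector hypotheses and Lemma~\ref{fixed vector ortho compl} for the orthogonality hypotheses, and then invoke Proposition~\ref{adelic 2}. You also make explicit several points the paper leaves implicit: that $Z_2$ is regular semisimple, that $Z(Z_2)\cap Z(\bar e_{i+1})=\{\mathbf 0\}$, and that case~(2) of Lemma~\ref{fixed vector ortho compl} is the applicable one because $\tr(\bar e_{i+1})=2$.
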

\begin{proof}
It follows from Lemma~\ref{triang unipotent}, that for $|k|\geq 5$, there exists $\Tilde{Z}_1, \Tilde{Z}_2\in \SL_2(k)$ such that $\mathbf{\bar g} = \bar e_{m+1}(\Tilde{Z}_1, \Tilde{Z}_2)$ for any $m\geq 1$. Moreover, it is clear from the proof that we can take $\Tilde{Z}_i$ to be cyclic. Now $\bar e_{i+1}(\Tilde{Z}_1, \Tilde{Z}_2) = \bar e_1(\bar e_i(\Tilde{Z}_1, \Tilde{Z}_2), \Tilde{Z}_2)$. Further, for for all $i$, $\tr(\bar e_i(\Tilde{Z}_1, \Tilde{Z}_2))=\tr (\bar e_i(Z_1, Z_2))$ as $\bar e_i(\Tilde{Z}_1, \Tilde{Z}_2) \sim_k \bar e_i(Z_1, Z_2)$; see Lemma~\ref{triang unipotent}. Now from the proof of Lemma~\ref{uni2} it is clear that $\tr(\bar e_i(Z_1, Z_2)) = 2$ for each $i=1, \ldots, m$.  Note that, as $\mathbf{\bar g}$ is non-scalar, therefore $\Tilde{Z}_1, \Tilde{Z}_2, \bar e_i(\Tilde{Z}_1,\Tilde{Z}_2)$ must be non-scalar and cyclic for all $i$. Therefore, $H_0= \langle \Tilde{Z}_1, \Tilde{Z}_2\rangle$ and $H_i=\langle \bar e_i(\Tilde{Z}_1, \Tilde{Z}_2), \Tilde{Z}_2\rangle$ have no non-trivial fixed vector in $\mathfrak{sl}_2^*(k)$ for all $i$ by Lemma~\ref{fixed vector 2}. Moreover, Lemma~\ref{fixed vector ortho compl} implies $Z(\bar e_{m-1}(\Tilde{Z}_1, \Tilde{Z}_2)\Tilde{Z}_2 \bar e_{m-1}( \Tilde{Z}_1, \Tilde{Z}_2)^{-1}) \cap Z(\Tilde{Z}_2)^{\perp}$, $\ldots$ , 
$Z(\bar e_{1}(\Tilde{Z}_1, \Tilde{Z}_2)\Tilde{Z}_2 \bar e_{1}( \Tilde{Z}_1, \Tilde{Z}_2)^{-1}) \cap Z(\Tilde{Z}_2)^{\perp}$,
$Z(\Tilde{Z}_1\Tilde{Z}_2\Tilde{Z}_1^{-1})\cap Z(\Tilde{Z}_2)^{\perp}$ are trivial in $\mathfrak{sl}_2(k)$. 
Now, invoking Proposition~\ref{adelic 2}, we get the required result.
\end{proof}

\begin{lemma}{\label{lifts of minus unipotents}}
Let $\mathcal{O}$ be a complete local principal ideal ring with its residue field $k\cong \mathbb{F}_q$ of characteristic $\neq 2$. Let $\mathbf{\bar g}=\left(\begin{array}{cc} -1 &  0 \\ 1 & -1 \end{array}\right)\in \SL_2(k)$ and $m\geq 1$. Suppose $\mathbf{\bar g} \in \im(\bar e_{m+1})$ in $\SL_2(k)$. Then, there exists an integer $q_0(m)$ such that for all $q\geq q_0(m)$ (can be taken as $q_0(m)=2\cdot3^{2^{m+2}}$), any lift of $\mathbf{\bar g}$ in $\SL_2(\mathcal{O})$ is in $\im (e_{m+1}^{\mathcal{O}})$.
\end{lemma}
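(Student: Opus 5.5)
The plan is to reduce to Proposition~\ref{adelic 2}, exactly as in Lemma~\ref{lifts of unipotents}. The difference is that a witness pair at the residue field level is no longer provided by the Magnus embedding, since the Magnus images always have trace $2$. Instead I will take it from the theorem of Bandman, Garion and Grunewald \cite[Theorem 5.1]{BandmanGarionGrunewald2012}. For $q\geq q_0(m)=2\cdot 3^{2^{m+2}}$ it gives $h_1,h_2\in\SL_2(k)$ with $\bar e_{m+1}(h_1,h_2)=\mathbf{\bar g}$; the hypothesis $\mathbf{\bar g}\in\im(\bar e_{m+1})$ is what we use if $q$ is smaller. In the Bandman--Garion--Grunewald argument the pair is produced by studying the trace map on the fibre $\{\psi_{m+1}=-2\}$, and this fibre has many $\mathbb{F}_q$-points by Lang--Weil type estimates. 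I would use the largeness of $q$ a second time to choose the pair $(h_1,h_2)$ outside a proper closed subvariety of that fibre. The subvariety is cut out by the degeneracy conditions listed below, and it has strictly smaller dimension. Hence for $q\geq q_0(m)$ a good pair still exists.

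First, the fixed-vector hypotheses. Since $\mathbf{\bar g}$ is non-scalar, the elements $h_2$ and $\bar e_i(h_1,h_2)$ for $0\le i\le m$ are all non-scalar. If $\bar e_i$ were scalar, then $\bar e_{i+1}$ would be $I$, and all higher Engel values would be $I$. For each $i\le m$, Proposition~\ref{fixed vector 2} applied to $\bar e_{i+1}=[\bar e_i,h_2]$ shows that $H_i=\langle \bar e_i(h_1,h_2),h_2\rangle$ has no nontrivial fixed vector in $\mathfrak{sl}_2^*(k)$. By the Remark after Proposition~\ref{lifts of minus I}, this means $Z(\bar e_i)\cap Z(h_2)=\{\mathbf 0\}$. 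For the same reason $Z(\bar e_{i+1})\cap Z(h_2)=\{\mathbf 0\}$: both elements are cyclic, and a common nonzero traceless polynomial in each would force them to commute, which would make $\bar e_{i+2}=I$. So the standing hypothesis of Lemma~\ref{fixed vector ortho compl} holds for each triple $A=\bar e_{i+1}$, $P=\bar e_i$, $Q=h_2$.

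Second, the orthogonality conditions $Z(\bar e_i h_2\bar e_i^{-1})\cap Z(h_2)^{\perp}=\{\mathbf 0\}$ for $0\le i\le m$. If $h_2$ is not regular semisimple, part (1) of Lemma~\ref{fixed vector ortho compl} settles all of them at once. If $h_2$ is regular semisimple, part (2) requires, for each $i$, either $\tr(\bar e_{i+1})=2$ or $2\tr(\bar e_{i+1})\neq \tr(h_2)^2$. This is the main obstacle, since the Example in the paper shows that the condition can genuinely fail. I would first try to pick the witness with $h_2$ not regular semisimple, i.e.\ $\tr(h_2)=\pm 2$. Then $\psi_{m+1}$ restricted to $v=\pm2$ becomes $s_m^2-4s_m+6$ iterated, so $s_{j+1}-2=(s_j-2)^2$ along $v=\pm 2$. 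Hence $\tr(\bar e_{m+1})=2$, which is never $-2$, and this attempt fails.

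So $h_2$ must be regular semisimple, and I instead impose the open conditions $2s_{i+1}(s,u,v)\neq v^2$ for $0\le i\le m-1$. The case $i=m$ is automatic, because $s_{m+1}=-2$ and $v^2=-4$ would give $v_0^2=-4$, which is excluded by one further open condition. Each condition removes a hypersurface from the fibre $\{\psi_{m+1}(s,u,v)=-2\}$. Taking $(s,u,v)$ as parameters is justified because the trace map $\SL_2^2\to\mathbb{A}^3$ is surjective with irreducible fibres. I must check that no such hypersurface contains a component of the fibre. The fibre is defined by $s_m^2-s_mv^2+2v^2=0$. Substituting $2s_{i+1}=v^2$ into the recursion gives $s_{i+2}=-v^4/4+2v^2-2$, which is a polynomial in $v$ alone. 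Iterating, $s_{m+1}$ becomes a nonconstant polynomial in $v$, so intersecting with $s_{m+1}=-2$ forces finitely many $v$. That has smaller dimension than the surface fibre. I also need the fibre to be geometrically irreducible, or at least that every absolutely irreducible component of degree bounded in terms of $m$ contributes $\gg q$ points, to get the required count. Once this count is in hand for $q\ge q_0(m)$, the chosen pair satisfies every hypothesis of Proposition~\ref{adelic 2}, and every lift of $\mathbf{\bar g}$ lies in $\im(e_{m+1}^{\mathcal O})$.
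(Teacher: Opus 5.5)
There is a genuine gap, and it sits where you yourself place the main obstacle. To apply Proposition~\ref{adelic 2} with $h_2$ regular semisimple, you need a witness satisfying $2\tr(\bar e_j(h_1,h_2))\neq \tr(h_2)^2$ for $1\le j\le m$, as well as $v\notin\{0,\pm 2\}$. The existence of such a witness is asserted but not proved.

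The Bandman--Garion--Grunewald input, as quoted, only lets you choose the pair with $v\notin\{0,\pm2\}$. Your own computation shows that the extra bad locus lies over finitely many values of $v$. So what you actually need is that $\psi_{m+1}=-2$ has $\mathbb F_q$-solutions for more than a bounded number of distinct $v$. That requires a real point count: an absolutely irreducible component of the fibre (or of the relevant curves) defined over $\mathbb F_q$, plus a Weil-type estimate. You list this as something you ``also need'' but do not supply it. Consequently the claim that $q_0(m)=2\cdot 3^{2^{m+2}}$ still suffices is unsupported.

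Separately, your dismissal of non-regular-semisimple witnesses is wrong. Along $v=\pm2$ one has $s_1-2=(s\mp u)^2$, hence $s_{m+1}-2=(s\mp u)^{2^{m+1}}$. This equals $-4$ whenever $-4$ is a $2^{m+1}$-th power in $\mathbb F_q$ (e.g.\ $q=5^r$). It is harmless for your logic, but such witnesses do occur. The paper treats them (its Case I) exactly as you would, via part (1) of Lemma~\ref{fixed vector ortho compl} and Proposition~\ref{adelic 2}.

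The missing idea is that when $h_2$ is regular semisimple you do not need surjectivity of the derivative of the Engel map at all. Since $\mathbf{\bar g}$ is cyclic, every lift $A$ is cyclic. By Lemma~\ref{conjugacycyclic} it therefore suffices to find $(h_1^\circ,h_2^\circ)\in\SL_2(\mathcal O)^2$ with $\tr(e_{m+1}^{\mathcal O}(h_1^\circ,h_2^\circ))=\tr(A)$: the two matrices then have the same characteristic polynomial and are conjugate.

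Matching the trace is a single scalar equation. Because $\tr\mathbf{\bar g}=-2$ forces $s_m\neq 2$, the derivative $\partial\tilde\psi_{m+1}/\partial v=2v(2-s_m)$ is nonzero as soon as $v\neq 0$. By Lemma~\ref{tr 2}, the residue-field solution is then a simple root of the eight-variable trace equation. It lifts level by level through Lemma~\ref{liftingsimple} and Lemma~\ref{lem: trace GL to trace SL}, exactly as in Proposition~\ref{regsem1}.

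This route needs only $v\notin\{0,\pm2\}$, which is precisely what the Bandman--Garion--Grunewald choice provides. The obstruction $2s_j=v^2$ never has to be confronted.
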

\begin{proof}
As $\mathbf{\bar g}\in \im(\bar e_{m+1}) $ in $\SL_2(k)$ therefore there exists $h_1, h_2 \in \SL_2(k)$ such that $[\bar e_m(h_1, h_2), h_2] = \mathbf{\bar g}$. Moreover, $\psi_{m+1}(s_m, v) = \tr(\mathbf{\bar g}) = s_m^2-s_mv^2 + 2v^2-2$ where $s_m=\tr(\bar e_m(h_1, h_2))$ and $v=\tr(h_2)$ (see Section~\ref{tr}). Note that $s_m$ can not be $2$ as $\tr(\mathbf{\bar g}) = -2$. Depending upon the properties of $h_2$, we make the following cases.

\textbf{Case I\  (If $h_2$ is not regular semisimple):}  
It is evident that $\bar e_i(h_1, h_2)$ and $h_2$ must be non-scalar for $i=1, 2, \ldots, m$. Moreover, Lemma~\ref{fixed vector ortho compl} shows that 
$$Z(\bar e_{m-1}(h_1, h_2)h_2 \bar e_{m-1}(h_1, h_2))\cap Z(h_2)^{\perp},\ldots, Z(\bar e_{1}(h_1,h_2)h_2 \bar e_{1}(h_1,h_2)) \cap Z(\mathbf{\bar g}_2)^{\perp}, Z(h_1 h_2 h_1^{-1}) \cap Z(h_2)^{\perp}$$ 
are trivial in $\mathfrak{sl}_2(k)$. Invoking Proposition~\ref{adelic 2}, we obtain the result in this case. 
 
\textbf{Case II\  (If $h_2$ is regular semisimple):} The ideas used in the proof of Proposition~\ref{regsem1} will be useful here. In this case, we need to use the lifting strategy we have used in the regular semisimple case. Let $A$ be any lift of $\mathbf{\bar g}$ in $\SL_2(\mathcal{O})$. Then the image of $A$ under the reduction map, which takes it to the length $2$ level, is, say, $A_2\in \SL_2(\mathcal{O}_2)$. Let $\Upsilon = \{(s, v)\in k^2 \mid \text{either}\  v=0\ \text{or}\  v^2=4\}$.
Then, we can choose $h_1$ and $h_2$ with $(\tr(h_1), \tr(h_2)) \notin \Upsilon$ such that $\bar e_{m+1}(h_1, h_2) = \mathbf{\bar g}$; see \cite[Corollary 3.9 and Theorem 5.1]{BandmanGarionGrunewald2012}. This also brings in the necessity of the assumption $q\geq q_0(m) = 2\cdot3^{2^{m+2}}$. (Indeed, in the proof of \cite[theorem 5.1]{BandmanGarionGrunewald2012} one can observe $q\geq  max\{2d_n^4,2j_n^4\}$ where $d_n,j_n\leq 3^{2^n}$.)
     
Now $[\frac{\partial \Tilde{\psi}_{m+1}}{\partial s_m}\ \frac{\partial \Tilde{\psi}_{m+1}}{\partial v}]=\mathbf{0}$ gives
\begin{equation}
2s_m-v^2=0, \ \ \ \ 2v(2-s_m)=0.
\end{equation}
In this case, as $s_m$ cannot take the value $2$, the only possibility is $v=0$. As for regular semisimple $h_2$ that we have chosen, must have $\tr(h_2)\neq 0$ hence the roots of $\tilde{\psi}_{m+1}(s_m, v) = 0$ must be simple. Therefore, using Lemma~\ref{tr 2} and Hensel's lemma we obtain $\widetilde F_{m+1}^{\R}(x_1, x_2, x_3, x_4, y_1, y_2, y_3, y_4) = 0$ (see Section~\ref{trace in GL}, by taking $\lambda = \tr(A_2)$) has a solution in $\mathcal{O}_2^8$. We may consider this for $X = \left (\begin{array}{cc} x_1& x_2 \\ x_3 & x_4 \end{array}\right )$ and $Y = \left (\begin{array}{cc} y_1 & y_2  \\ y_3 & y_4 \end{array}\right )$, matrix variables in $\mathfrak{H}$ (see, Section~\ref{trace in GL}), each with $4$ indeterminate entries from $\R$. Now, using the same process described in the proof of Proposition~\ref{regsem1}, we obtain that there exists $h_1^{(2)}, h_2^{(2)}\in \SL_2(\mathcal{O}_2)$ which are the corresponding lifts of $h_1$ and $h_2$ respectively, satisfies $\tr(e_{m+1}^{\R}(h_1^{(2)}, h_2^{(2)})) = \tr(A_2)$. As $(ker (\theta_2))^2 = 0$ therefore by using the same technique we achieve $\tr(e_{m+1}^{\mathcal{O}_3}(h_1^{(3)}, h_2^{(3)})) = \tr(A_3)$ where  $h_1^{(3)}, h_2^{(3)} \in \SL_2(\mathcal{O}_3)$ which are the corresponding lifts of $h_1^{(2)}$ and $h_2^{(2)}$ respectively. By repeating this argument, we obtain, for any $\ell \geq 2$, there exist $h_1^{(r+1)}, h_2^{(r+1)}\in \SL_2(\mathcal{O}_{r+1})$ such that they are the corresponding lifts of $h_1^{(r)}$ and $h_2^{(r)}$ respectively (satisfying $\tr(e_{m+1}^{\mathcal{O}_r}(h_1^{(r)}, h_2^{(r)})) = \tr(A_{r})$) and these satisfy $\tr(e_{m+1}^{\mathcal{O}_{r+1}}(h_1^{(r+1)}, h_2^{(r+1)})) = \tr(A_{r+1})$ for $r=1, 2, \ldots, \ell-1$ where $h_1^{(1)} = h_1, h_2^{(1)} = h_2, A_1 = \mathbf{\bar g}$. 

Using the isomorphism $\mathcal{O} \rightarrow \varprojlim\limits_{j\geq 1} \mathcal{O}/\mathfrak{m}^j\mathcal{O}$, the isomorphic image of $\tr(A)$ is $(\tr(\mathbf{\bar g}), \tr(A_2), \tr(A_3), \ldots)$. Now, using the canonical isomorphism $\mathcal{T^*} \colon  \M_n(\mathcal{O}) \rightarrow \varprojlim\limits _{j\geq 1}\M_n(\mathcal{O}_j)$, let  $(h_1, h_1^{(2)}, h_1^{(3)}, \ldots)$ and $(h_2, h_2^{(2)}, h_2^{(3)}, \ldots)$ be the images of ${h}^{\circ}_1$ and $h^{\circ}_2$ respectively. Note that $\det(h^{\circ}_1) = 1 =\det(h^{\circ}_2)$. Therefore $h^{\circ}_1, h^{\circ}_2\in \SL_2(\mathcal{O})$. Via the isomorphism $\mathcal{O} \rightarrow \varprojlim\limits_{j\geq 1} \mathcal{O}/ \mathfrak{m}^j\mathcal{O}$ we get the isomorphic image of  $\tr(e_{m+1}^{\mathcal{O}}(h^{\circ}_1, h^{\circ}_2))$ is exactly equal to the isomorphic image of $\tr(A)$ which is nothing but $(\tr(\mathbf{\bar g}), \tr(A_2), \tr(A_3), \ldots)$. Therefore $\tr(A) = \tr(e_{m+1}^{\mathcal{O}}(h^{\circ}_1, h^{\circ}_2))$. Therefore, $A$ and $e_{m+1}^{\mathcal{O}}(h^{\circ}_1, h^{\circ}_2)$ have the same characteristic polynomial over $\mathcal{O}$. Note that $e_{m+1}^{\mathcal{O}}(h^{\circ}_1, h^{\circ}_2)$ and $A$ both are cyclic by definition. 
Invoking Lemma~\ref{conjugacycyclic} we get
 $A\sim _{\mathcal{O}}e_{m+1}^{\mathcal{O}}(h^{\circ}_1, h^{\circ}_2)$. This proves the required result.
\end{proof}

\section{Engel maps on $\SL_2(\mathcal O)$ and $\text{PSL}_2(\mathcal O_2)$: Main results}{\label{mainth}}

In this section, we prove our main results.
\begin{theorem}{\label{th: Engel second}}
Let $\mathcal{O}$ be a local principal ideal ring, complete with respect to its unique maximal ideal $\mathfrak{m} = (\pi)$ and its residue field $k\cong \mathbb{F}_q$ of characteristic $\neq 2$. Let $e_{m+1}(x, y)$ be the $(m+1)$-th Engel word in the free group $\mathcal F_2=\langle x, y\rangle$ for $m\geq 1$. Consider the induced word maps $e_{m+1}^{\mathcal{O}}$ on $\SL_2(\mathcal{O})$ and $\bar e_{m+1}$ on $\SL_2(k)$.
Let $A\in \SL_2(k)$ be a non-scalar element. Then, there exists a constant $q_0(m)$ such that for all $q \geq q_0(m)$, all lifts of $A$ in $\SL_2(\mathcal{O})$ are in the image $\im (e_{m+1}^{\mathcal{O}})$. The constant $q_0(m)$ can be taken as $2\cdot3^{2^{m+2}}$.
\end{theorem}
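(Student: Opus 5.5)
The plan is to split the non-scalar elements $A\in\SL_2(k)$ by their $\GL_2(k)$-conjugacy type and, in each case, quote the lifting result proved for that type. Being in $\im(e_{m+1}^{\mathcal O})$ is invariant under $\GL_2(\mathcal O)$-conjugation, as noted in Section~\ref{O_2conj}. So we may replace $A$ by any $\GL_2(k)$-conjugate, lifting the conjugating matrix to $\GL_2(\mathcal O)$ as needed. A non-scalar $A\in\SL_2(k)$ is cyclic. Either $\tr(A)\neq\pm2$, in which case $A$ is regular semisimple, or $\tr(A)=\pm 2$, in which case $A$ is $\GL_2(k)$-conjugate to $\begin{pmatrix}1&0\\1&1\end{pmatrix}$ or to $\begin{pmatrix}-1&0\\1&-1\end{pmatrix}$.

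The first input is the field-level result. For $q\ge q_0(m)=2\cdot 3^{2^{m+2}}$, \cite[Theorem 5.1]{BandmanGarionGrunewald2012} gives $\SL_2(k)\setminus\{-I\}\subseteq \im(\bar e_{m+1})$, so every non-scalar $A$ lies in $\im(\bar e_{m+1})$. The three cases are then handled as follows.

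\begin{enumerate}
\item If $\tr(A)\neq\pm2$, any lift $M\in\SL_2(\mathcal O)$ is regular semisimple with $\overline M=A\in\im(\bar e_{m+1})$. Proposition~\ref{regsem1} then gives $M\in\im(e_{m+1}^{\mathcal O})$.
\item If $A\sim\begin{pmatrix}1&0\\1&1\end{pmatrix}$, Lemma~\ref{lifts of unipotents} applies, since $q\ge q_0(m)\ge 5$. Every lift of this matrix lies in the image, and hence, by conjugation invariance, every lift of $A$ does.
\item If $A\sim\begin{pmatrix}-1&0\\1&-1\end{pmatrix}$, this matrix is in $\im(\bar e_{m+1})$ by the first input. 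Lemma~\ref{lifts of minus unipotents} then shows that all its lifts are in $\im(e_{m+1}^{\mathcal O})$ for $q\ge q_0(m)$.
\end{enumerate}

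One point needs care: every lift of $A$ must be a $\GL_2(\mathcal O)$-conjugate of some lift of the normal form. Let $P\in\GL_2(k)$ satisfy $PAP^{-1}=N$, and let $\tilde P\in\GL_2(\mathcal O)$ be any lift of $P$. If $M$ lifts $A$, then $\tilde P M\tilde P^{-1}$ lifts $N$, and $\det(\tilde PM\tilde P^{-1})=1$, so it lies in $\SL_2(\mathcal O)$. This is exactly what is needed.

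The main obstacle is bookkeeping rather than anything new. The lemmas must be checked to apply for all $q\ge q_0(m)$: the condition $|k|\ge5$ in Lemma~\ref{lifts of unipotents}, and the choice of $(h_1,h_2)$ with $(\tr h_1,\tr h_2)\notin\Upsilon$ inside Lemma~\ref{lifts of minus unipotents}. Both hold since $q_0(m)\ge 2\cdot 3^{8}$. One should also confirm that the reduction of $M$ is exactly $A$, so that the hypothesis $\overline M\in\im(\bar e_{m+1})$ of Proposition~\ref{regsem1} is met. Putting the three cases together proves the theorem with $q_0(m)=2\cdot3^{2^{m+2}}$.
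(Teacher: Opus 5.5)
Your proposal is correct and follows the paper's proof. Like the paper, you use Bandman--Garion--Grunewald to get $A\in\im(\bar e_{m+1})$ for $q\ge q_0(m)$, reduce to $\GL_2$-conjugacy representatives, and handle the regular semisimple, unipotent and negative-unipotent cases by Proposition~\ref{regsem1}, Lemma~\ref{lifts of unipotents} and Lemma~\ref{lifts of minus unipotents}. Your explicit check that every lift of $A$ is a $\GL_2(\mathcal O)$-conjugate of a lift of the normal form, obtained by lifting the conjugating matrix, fills in a step the paper leaves implicit.
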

\begin{proof}
It follows from~\cite[Corollary 3.9 and Corollary 5.4]{BandmanGarionGrunewald2012} that there exists $q_0(m)$ such that for all $q\geq q_0(m)$, the element $A \in \im(\bar e_{m+1})$ in $\SL_2(k) $ for all $m\geq 1$. Thus, there exist $B_1, B_2\in \SL_2(k)$ such that $A = \bar e_{m+1}(B_1, B_2)$. Note that this property remains invariant under conjugation, i.e, for $P\in \GL_2(k)$ we have $P^{-1}AP = \bar e_{m+1}(P^{-1}B_1P, P^{-1}B_2P)$ where both of $P^{-1}B_iP\in \SL_2(k)$. Therefore, to study what elements are in the image for the lifts of non-scalar elements of $\SL_2(k)$, it is enough to study them for the representatives of the $\GL$-conjugates. All the non scalar matrices in $\SL_2(k)$ are either regular semisimple or $\left(\begin{array}{cc}
1 & 0 \\ 1 & 1 \end{array}\right)$ or $\left(\begin{array}{cc}
-1 & 0 \\ 1 & -1 \end{array}\right)$ up to $\GL_2(k)$ conjugate. The question that lifts of these elements are in the image of the Engel word map $e_{m+1}^{\mathcal{O}}$ is proved in Proposition~\ref{regsem1}, Lemma~\ref{lifts of unipotents}, and Lemma~\ref{lifts of minus unipotents}. This proves the result. 
\end{proof}

\begin{proposition}{\label{prop: lift of I Engel}}
Let $\mathcal{O}_2$ be a local principal ideal ring of length $2$ with its unique maximal ideal $\mathfrak{m}=(\pi_2)$. Suppose its residue field $k\cong \mathbb{F}_q$ is of characteristic $\neq 2$ and $|k| \geq 7$. Then, for any $m\geq 1$, any lift of $I$ is in $\im(e_{m+1}^{\mathcal{O}_2})$. 
\end{proposition}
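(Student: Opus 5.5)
Up to $\GL_2(\R)$-conjugacy, the lifts of $I$ are read off from the $i=1$ row of Table~\ref{tab:GL-conjugacy-SL}. They are $I$ itself, the unipotent $\begin{pmatrix}1&0\\ \pi_2&1\end{pmatrix}$, the split element $\begin{pmatrix}1+\pi_2 a&0\\0&1-\pi_2 a\end{pmatrix}$ with $a\in\R^\times$, and the non-split element $\begin{pmatrix}1&\pi_2\epsilon\beta\\ \pi_2\beta&1\end{pmatrix}$ with $\beta\in\R^\times$. Membership in $\im(e_{m+1}^{\R})$ is invariant under $\GL_2(\R)$-conjugation, so it suffices to treat these four representatives. $I$ is trivial: $e_{m+1}(I,I)=I$. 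The unipotent $\begin{pmatrix}1&0\\ \pi_2&1\end{pmatrix}$ is handled by Lemma~\ref{triang unipotent}, which is exactly where $|k|\geq 7$ is used. The remaining work is the two semisimple-type families of the form $I+\pi_2 N$ with $N$ non-nilpotent mod $\pi_2$.

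For these I would mimic the proofs of Lemma~\ref{lift I} and Proposition~\ref{prop:lift of I}. The idea is to realize $B:=I+\pi_2N$ as $e_1(e_m(X,Y),Y)=[e_m(X,Y),Y]$ with $Y$ regular semisimple.

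\emph{Step one.} Choose $Y=D$, with $D$ diagonal $\mathrm{diag}(\Delta_0,\Delta_0^{-1})$ (plus a $\pi_2$ off-diagonal perturbation if needed), as in Lemma~\ref{lift I}. Then $D$ and $B D$ are conjugate regular semisimple elements, and Lemma~\ref{sim1} gives $S\in \SL_2(\R)$ with $B=[S^{-1},D]$.

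\emph{Step two.} Show that $S^{-1}$ itself, or any element of the coset $S^{-1}C_{\SL_2(\R)}(D)$, can be written as $e_m(X,D)$ for some $X$. This step is the main obstacle.

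To attack it, I would instead aim for $e_m(X,D)$ lying in $\mathfrak{H}$-type congruence data. Concretely, take $X=S^{-1}$ modified by a scalar-type correction and iterate. The key observation is that $\bar S^{-1}$ centralizes $\bar D$ up to the swap: in the split diagonal case $\bar B=I$ forces $\bar S$ to normalize the torus. So the natural try is to pick $X$ with $\bar X$ the Weyl element $w$, or a torus element. Then $\bar e_j(\bar X,\bar D)$ stays in the normalizer of the torus and has an explicit form. For example, with $\bar X=w$ one gets $[w,\bar D]=\bar D^{-2}$, which is diagonal, and the later commutators with $\bar D$ are $I$. This suggests a better route: take $e_m(X,D)=\bar D^{-2}$-type lifts, i.e.\ make $e_m(X,D)$ a lift of a torus element. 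Its commutator with $D$ then lies in the kernel of reduction, and its first-order part is $\pi_2$ times an element of $(\mathrm{Ad}_D-1)$ applied to something.

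Following this route, write $e_m(X,D)=T(I+\pi_2 Z)$ with $T$ in the torus. Then $[e_m(X,D),D]=I+\pi_2 (Z-\mathrm{Ad}_D Z)$ up to conjugation by $T$. Its image therefore consists of off-diagonal matrices of the form $\pi_2\begin{pmatrix}0&(1-\Delta^2)z_{12}\\(1-\Delta^{-2})z_{21}&0\end{pmatrix}$. This covers the non-split family, and the split family after a $\GL_2(\R)$-conjugation that diagonalizes off-diagonal traceless elements with unit square discriminant, for suitable $z_{12},z_{21}$. It remains to show that $Z$'s off-diagonal entries can be prescribed. For this I would use Proposition~\ref{adelic 2} at level $m$, i.e.\ the surjectivity of $De_m$ on $\mathfrak{sl}_2$. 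Its hypotheses require checking no-fixed-vector conditions for $\langle \bar e_j,\bar D\rangle$, which fail when $\bar e_j$ is central. I therefore expect to instead verify directly, via the Magnus-type formulas of Section~\ref{Mag-Uni}, that $e_m(X,D)$ can be chosen to be a torus element times an arbitrary $I+\pi_2 Z$. Here $|k|\geq 7$ guarantees $\Delta_0$ with $\Delta_0^2,\Delta_0^4\neq 1$, so that $1-\Delta^{\pm2}$ are units.
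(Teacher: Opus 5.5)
\paragraph{The gap.} Your reduction is sound as far as the identity $[T(I+\pi_2Z),D]=I+\pi_2\,\mathrm{Ad}_T(1-\mathrm{Ad}_D)Z$. The step that carries all the content, however, is only announced: you need to realize $e_m(X,D)$ as $T(I+\pi_2Z)$ with the off-diagonal part of $\bar Z$ prescribed. The tool you name cannot deliver this. The Magnus-type evaluations of Section~\ref{Mag-Uni} take lower-triangular arguments (so the second one is not your diagonal $D$) and return lower unitriangular values. At best they control $z_{21}$, never $z_{12}$. In that case $(1-\mathrm{Ad}_D)Z$ is strictly lower triangular, hence nilpotent mod $\pi_2$, so neither the split nor the non-split class is reached. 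The Lemma~\ref{sim1} route of your Steps one--two is also dropped without any mechanism for writing $S^{-1}$ (or its coset) as an $m$-th Engel value. And, as you note yourself, Proposition~\ref{adelic 2} is unavailable because the relevant $\bar e_j$ are central. As written, therefore, the split and non-split families are not proved.

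\paragraph{The fix.} The missing idea is simply to take $T=I$ and perturb around $(\bar D,\bar D)$. Set $X=D(I+\pi_2X')$ and $Y=D(I+\pi_2Y')$ with $\tr\bar X'=\tr\bar Y'=0$. Since $e_1(D,D)=I$ and $\pi_2^2=0$, a direct computation gives $e_1(X,Y)=I+\pi_2\,\mathrm{Ad}_D(1-\mathrm{Ad}_D)(X'-Y')$. Each further commutator with $Y$ applies $1-\mathrm{Ad}_D$, so $e_{m+1}(X,Y)=I+\pi_2\,\mathrm{Ad}_D(1-\mathrm{Ad}_D)^{m+1}(X'-Y')$. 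The map $1-\mathrm{Ad}_D$ scales $E_{1,2}$ and $E_{2,1}$ by the units $1-\Delta^{\pm2}$; only $\bar\Delta^2\neq 1$ is needed. Hence every $I+\pi_2N$ with $\bar N$ off-diagonal is attained. This is exactly the paper's base point $(\bar g_1,\bar g_2)$, where it shows $\im(\overline{De}_{m+1})=Z(\bar g_2)^{\perp}=\mathrm{span}\{E_{1,2},E_{2,1}\}$ and then lifts via Nakayama.

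Your own conjugation remark then finishes all cases at once:
\begin{itemize}
\item $\bar N=\begin{pmatrix}0&\epsilon\beta\\ \beta&0\end{pmatrix}$ gives the non-split class;
\item $\begin{pmatrix}0&a\\ a&0\end{pmatrix}\sim\mathrm{diag}(a,-a)$ gives the split class;
\item $\bar N=E_{2,1}$ gives even the unipotent class.
\end{itemize}

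\paragraph{Comparison with the paper.} This is actually more robust than the paper's treatment of the split class, which uses the unipotent base point $(\bar h_1,\bar h_2)$. The same computation there gives the derivative $\mathrm{Ad}_{\bar h_2}(1-\mathrm{Ad}_{\bar h_2})^{m+1}(X-Y)$. Since $1-\mathrm{Ad}_{\bar h_2}$ is nilpotent on $\mathfrak{sl}_2(k)$, its image for $m\geq 1$ lies in $kE_{1,2}$. So that base point cannot produce $\mathrm{diag}(1+\pi_2a,1-\pi_2a)$, whereas your conjugation does.
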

\begin{proof}
From Table~\ref{tab:simplified}, the lifts of $I$ in $\SL_2(\mathcal{O}_2)$ are of the form $I, \ \left(\begin{array}{cc} 1 & 0 \\ \pi_2 & 1 \end{array} \right), \left(\begin{array}{cc} 1 + \pi_2 & 0 \\ 0 & 1 - \pi_2
\end{array} \right), \left(\begin{array}{cc} 1 & \pi_2 \epsilon b \\ \pi_2 b & 1 \end{array}\right)$ up to $\GL_2(\mathcal{O}_2)$-conjugates. We deal with each case below.

We begin with choosing certain obvious lifts of $I$. 
\begin{enumerate}
\item For any  $m\geq 1$, we have $I = \bar e_{m+1}(\bar g_1, \bar g_2)$ where $\bar g_1 = diag(\lambda, \lambda^{-1}) = \bar g_2$ in $\SL_2(k)$ for some $\lambda\neq \lambda^{-1}$. Take any lift of $\lambda$ in $\mathcal{O}_2$ and denote it by $\lambda^{(2)}$. Consider $g_1^{(2)} = diag(\lambda^{(2)},(\lambda^{(2)})^{-1}) = g_2^{(2)}$ in $\SL_2(\mathcal{O}_2)$. These $g_1^{(2)}$ and $g_2^{(2)}$ are the lifts of $\bar g_1$ and $\bar g_2$, respectively. It is easy to see $e_{m+1}^{\mathcal{O}_2}(g_1^{(2)},g_2^{(2)})=I$ in $\SL_2(\mathcal{O}_2)$, for any  $m\geq 1$.
\item Now, we can write $I = e_{m+1}(\bar h_1, \bar h_2)$ in $\SL_2(k)$ for any $m\geq 1$, where $\bar h_1 = \left(\begin{array}{cc} 1 & 1 \\ 0 & 1 \end{array}\right) = \bar h_2$. Consider $h_1^{(2)} = \left(\begin{array}{cc} 1 & 1 \\ 0 & 1 \end{array}\right) = h_2^{(2)}$ in $\SL_2(\mathcal{O}_2)$. These are the lifts of $\bar {h}_{1} $ and $\bar {h}_{2}$, respectively and $e_{m+1}^{\mathcal{O}_2}(h_1^{(2)}, h_2^{(2)}) = I$. 
\end{enumerate}    
    
Now, for $X, Y\in \mathfrak{sl}_2(\mathcal{O}_2)$ we have 
\begin{eqnarray*}
e_{m+1}^{\mathcal{O}_2}(g_1^{(2)}(I + \pi_2X), g_2^{(2)}(I + \pi_2Y)) &=& e_{m+1}^{\mathcal{O}_2}(g_1^{(2)}, g_2^{(2)})\left(I + \pi_2De_{m+1}^{\mathcal{O}_2}(X, Y)\right)= I + \pi_2De_{m+1}^{\mathcal{O}_2}(X, Y)\\ 
e_{m+1}^{\mathcal{O}_2}(h_1^{(2)}(I+\pi_2X),h_2^{(2)}(I+\pi_2Y)) &=& e_{m+1}^{\mathcal{O}_2}(h_1^{(2)},h_2^{(2)}) \left(I+\pi_2De_{m+1}^{\mathcal{O}_2}(X,Y)\right) = I+\pi_2De_{m+1}^{\mathcal{O}_2}(X,Y)
\end{eqnarray*}
for any matrix variables $X, Y \in \mathfrak{sl}_2(\mathcal{O}_2)$. Note that these two expressions for $De_{m+1}^{\mathcal{O}_2}(X, Y)$ are different because their expressions involves $g_1^{(2)}, g_2^{(2)}$ and $h_1^{(2)}, h_2^{(2)}$ respectively. We have set up a framework to apply Proposition~\ref{adelic 2}. Now, let us analyze the following case.

\textbf{Step I:} Claim: The elementary matrices $E_{1,2}$ and $E_{2,1}$ lie in the image of $\overline{De}_{m+1}$, the derivative map corresponding to $\bar e_{m+1}$ at $(\bar g_1,\bar g_2)$.

As $\bar g_2$ and $\bar h_2$ both are cyclic in $\SL_2(k)$ therefore $Z(\bar g_2)$ and $Z(\bar h_2)$ both are one dimensional subspaces of $\mathfrak{sl}_2(k)$. Moreover, $Z(\bar g_2)= span \left(\begin{array}{cc}  1 & 0 \\
0 & -1 \end{array}\right)$ and $Z(\bar h_2) = span \left(\begin{array}{cc}  0 & 1 \\ 0 & 0 \end{array}\right)$. As $dim(Z(\bar g_2)^{\perp}) = 2 = dim (Z(\bar h_2)^{\perp})$ therefore $Z(\bar g_2)^{\perp} = span\left\{ \left(\begin{array}{cc}0 & 1 \\  0 & 0 \end{array}\right),\ \left(\begin{array}{cc} 0 & 0 \\ 1 & 0 \end{array}\right)\right\}$ and $Z(\bar h_2)^{\perp} = span\left\{ \left(\begin{array}{cc} 1 & 0 \\ 0 & -1 \end{array}\right),\ \left(\begin{array}{cc} 0 & 0 \\
1 & 0 \end{array}\right)\right\}$ with respect to the bilinear form $\langle, \rangle$ defined earlier. 
    
It is obvious to see that $\bar e_j(\bar g_1,\bar g_2) = 1 = \bar e_j(\bar h_1, \bar h_2)$ for $1\leq j\leq m$ and $Z(\bar g_2) \bigcap Z(\bar g_2)^{\perp} = \{\mathbf{0}\} = Z(\bar h_2) \bigcap Z(\bar h_2)^{\perp}$. Therefore, 
$$Z({\bar g}_1{\bar g}_2{\bar g}_1^{-1}) \cap Z({\bar g}_2)^{\perp}, Z(\bar e_1({\bar g}_1, {\bar g}_2){\bar g}_2\bar e_1({\bar g}_1, {\bar g}_2)^{-1}) \cap Z({\bar g}_2)^{\perp}, \ldots, Z(\bar e_{m-2}({\bar g}_1,{\bar g}_2){\bar g}_2\bar e_{m-2}({\bar g}_1,{\bar g}_2)^{-1}) \cap Z({\bar g}_2)^{\perp}$$ and $Z(\bar e_{m-1}({\bar g}_1,{\bar g}_2){\bar g}_2\bar e_{m-1}({\bar g}_1, {\bar g}_2)^{-1}) \cap Z({\bar g}_2)^{\perp}$  are trivial in $\mathfrak{sl}_2(k)$. 
Similarly, we can show that 
$$Z({\bar h}_1{\bar h}_2{\bar h}_1^{-1})\cap Z({\bar h}_2)^{\perp},  Z(\bar e_1({\bar h}_1,{\bar h}_2){\bar h}_2\bar e_1({\bar h}_1,{\bar h}_2)^{-1})\cap Z({\bar h}_2)^{\perp}, \ldots, Z(\bar e_{m-2}({\bar h}_1,{\bar h}_2){\bar h}_2\bar e_{m-2}({\bar h}_1,{\bar h}_2)^{-1})\cap Z({\bar h}_2)^{\perp}$$ and the subset $Z(\bar e_{m-1}({\bar h}_1,{\bar h}_2){\bar h}_2\bar e_{m-1}({\bar h}_1,{\bar h}_2)^{-1})\cap Z({\bar h}_2)^{\perp}$  are trivial in $\mathfrak{sl}_2(k)$.

Let $T_1\in \im (\overline{De}_{m+1})^{\perp}$ at $(\bar g_1,\bar g_2)$. The condition $Z(\bar e_{m-1}({\bar g}_1, {\bar g}_2){\bar g}_2\bar e_{m-1}({\bar g}_1, {\bar g}_2)^{-1}) \cap Z({\bar g}_2)^{\perp} = \{\mathbf{0}\}$ gives $U = (I - \mathrm{Ad}_{\mathbf{\bar g}_2^{-1}})(V) = \mathbf{0}$ as in Proposition~\ref{adelic 2} (also in Lemma~\ref{lem-adelic 2}). Therefore, $V = \mathrm{Ad}_{\bar e_m^{-1}\mathbf{\bar g}_2^{-1}}(T_1)\in Z(\bar g_2)\bigcap Z(\bar e_m)$ (by Equation~(\ref{eqmain2}) in the proof of Proposition~\ref{adelic 2}). This further implies $T_1\in Z(\bar g_2)$ as $\bar e_m=\bar e_m(\bar g_1,\bar g_2) = I$. Hence $\im (\overline{De}_{m+1})^{\perp} \subset Z(\bar g_2)$. Moreover, from the recurrence formula $\overline{De}_{m+1}\colon  \mathfrak{sl}_2(k) \times \mathfrak{sl}_2(k) \rightarrow \mathfrak{sl}_2(k)$ defined by $$(X,Y)\mapsto \left(((\overline{De}_m(X,Y)^{\bar{g}_2} -\overline{De}_m(X,Y))^{\bar e_m^{-1}}+(Y^{\bar e_m^{-1}}-Y)\right)^{\bar{g}_2^{-1}}$$ 
and from the conditions $\bar e_m({\bar g}_1, {\bar g}_2)=I,\ {\bar g}_1={\bar g}_2$, it is clear that $Z(\bar g_2)\subset \im (\overline{De}_{m+1})^{\perp}$ at $(\bar g_1, \bar g_2)$. Therefore, $\im (\overline{De}_{m+1})^{\perp} = Z(\bar g_2)$ for the choice $(\bar g_1, \bar g_2)$. Now, $dim (Z(\bar g_2))+dim (Z(\bar g_2)^{\perp})=dim (\mathfrak{sl}_2(k))$ and $Z(\bar g_2)\bigcap Z(\bar g_2)^{\perp} = \{\mathbf{0}\}$ together implies $\mathfrak{sl}_2(k) = Z(\bar g_2)\oplus Z(\bar g_2)^{\perp}$. As $dim (\im (\overline{De}_{m+1})) = 2$ therefore  $\im (\overline{De}_{m+1})=Z(\bar g_2)^{\perp}$ in this case. Hence, the elementary matrices $E_{1,2}$ and $E_{2,1}$ lie in the image of $\overline{De}_{m+1}$, the derivative map corresponding to $\bar e_{m+1}$ at $(\bar g_1,\bar g_2)$. This proves the claim. 
    
\textbf{Step II:} Now, we exactly determine the $\im(De_{m+1}^{\mathcal{O}_2})$. From the previous claim, there exists $\alpha_1, \alpha_2, \beta_1, \beta_2 \in \mathfrak{sl}_2(k)$ such that $\overline {De}_{m+1}(\alpha_1, \alpha_2) = E_{1,2}$ and $\overline{De}_{m+1}(\beta_1, \beta_2) = E_{2,1}$. Now consider any two lifts $\alpha_1^{(2)}$ of $\alpha_1$ and $\alpha_2^{(2)}$ of $\alpha_2$ in $\mathfrak{sl}_2(\mathcal{O}_2)$. Similarly, consider lifts $\beta_1^{(2)}$ of $\beta_1$ and $\beta_2^{(2)}$ of $\beta_2$ in $\mathfrak{sl}_2(\mathcal{O}_2)$. Note that this kind of lifts always exist because, using the surjective map $\M_2(\mathcal{O}_2) \rightarrow \M_2(k)$, we can see that any lift of a traceless matrix achieves non-unit trace in $\mathcal{O}_2$. Therefore, $De_{m+1}^{\mathcal{O}_2}(\alpha_1^{(2)}, \alpha_2^{(2)}) = E^{\mathcal{O}_2}_{1,2} + \pi_2C_1$ and $De_{m+1}^{\mathcal{O}_2}(\beta_1^{(2)}, \beta_2^{(2)}) = E^{\mathcal{O}_2}_{2,1} + \pi_2 C_2$ for some $C_1, C_2 \in \M_2(\mathcal{O}_2)$ (here $E^{\mathcal{O}_2}_{i,j}$ is the elementary matrix in $\M_2(\mathcal{O}_2)$ of which $(i,j)$-th entry is $1$ and zero elsewhere). This implies $\pi_2De_{m+1}^{\mathcal{O}_2}(\alpha_1^{(2)}, \alpha_2^{(2)}) = \pi_2E^{\mathcal{O}_2}_{1, 2}$ and $\pi_2 De_{m+1}^{\mathcal{O}_2}(\beta_1^{(2)}, \beta_2^{(2)}) = \pi_2 E^{\mathcal{O}_2}_{1,2}$. Now, $De_{m+1}^{\mathcal{O}_2}$ is a $\mathcal{O}_2$ linear map and $\mathfrak{sl}_2(\mathcal{O}_2)$ is a finitely generated $\mathcal{O}_2$-module. As $\mathcal{O}_2$ is Noetherian,  the $\mathcal{O}_2$-submodule $\im(De_{m+1}^{\mathcal{O}_2})$ of $\mathfrak{sl}_2(\mathcal{O}_2)$ is finitely generated. Moreover, $E^{\mathcal{O}_2}_{1,2} + \pi_2C_1,\  E^{\mathcal{O}_2}_{2,1} + \pi_2 C_2 \in \im(De_{m+1}^{\mathcal{O}_2})$, therefore $$\im(De_{m+1}^{\mathcal{O}_2}) = span_{\mathcal{O}_2}\{ E^{\mathcal{O}_2}_{1,2} + \pi_2C_1,\ E^{\mathcal{O}_2}_{2,1} + \pi_2C_2\}$$ 
by Nakayama's lemma (\cite[proposition 2.8]{Atiyahmacdonald}). 
    
\textbf{Step III:} Now we are ready to prove the result. The identity $I$ is always in the $\im(e_{m+1}^{\mathcal{O}_2})$. Now, we have $|k|\geq 7$,  therefore, for $A_2=\left(\begin{array}{cc} 1 & 0 \\ \pi_2 & 1 \end{array}\right)$ we use Lemma~\ref{triang unipotent} to get 
$A_2\in \im(e_{m+1}^{\mathcal{O}_2})$.

Next, let us consider the element $A_2 = \left(\begin{array}{cc} 1 & \pi_2\epsilon b \\ \pi_2 b & 1 \end{array}\right)$, where $\epsilon, b \in \mathcal{O}_2^{\times}$ and $\epsilon$ is non-square. Then
\begin{eqnarray*}
A_2 &=& I+\epsilon b\pi_2E_{1,2}^{\mathcal{O}_2}+ b\pi_2E_{2,1}^{\mathcal{O}_2} \\ &=& I+\pi_2\left(\epsilon b(E_{1,2}^{\mathcal{O}_2}+\pi_2C_1)+b(E_{2,1}^{\mathcal{O}_2}+\pi_2C_2)\right); \ \text{as}\ \pi_2^2=0.
\end{eqnarray*}
As $\epsilon b(E_{1,2}^{\mathcal{O}_2} + \pi_2C_1) + b(E_{2,1}^{\mathcal{O}_2} + \pi_2C_2) \in span_{\mathcal{O}_2} \left\{E^{\mathcal{O}_2}_{1,2} + \pi_2 C_1,\ E^{\mathcal{O}_2}_{2,1} + \pi_2C_2\}\right\}$ therefore there exist $P_{\mathcal{O}_2}, P_{\mathcal{O}_2}' \in \mathfrak{sl}_2(\mathcal{O}_2)$ such that $\epsilon b(E_{1,2}^{\mathcal{O}_2} + \pi_2C_1)+b(E_{2,1}^{\mathcal{O}_2}+\pi_2C_2) = De_{m+1}^{\mathcal{O}_2}(P_{\mathcal{O}_2}, P_{\mathcal{O}_2}')$. Hence, 
\begin{eqnarray*}
e_{m+1}^{\mathcal{O}_2}(g_1^{(2)}(I + \pi_2P_{\mathcal{O}_2}), g_2^{(2)}(I + \pi_2P_{\mathcal{O}_2}'))  = e_{m+1}^{\mathcal{O}_2}(g_1^{(2)},g_2^{(2)})\left(I+\pi_2De_{m+1}^{\mathcal{O}_2}(P_{\mathcal{O}_2}, P_{\mathcal{O}_2}')\right) = A_2.
\end{eqnarray*}
Therefore, $A_2\in \im(e_{m+1}^{\mathcal{O}_2})$.

Similarly, for the derivative map corresponding to $e_{m+1}$ at $(h_1^{(2)}, h_2^{(2)})$, we obtain $\im (\overline{De}_{m+1})^{\perp} = Z(\bar h_2)$, consequently $\im (\overline{De}_{m+1}) = Z(\bar h_2)^{\perp}$. This further implies $\im(De_{m+1}^{\mathcal{O}_2}) = span_{\mathcal{O}_2} \{E^{\mathcal{O}_2}_{1,1} -E_{2,2}^{\mathcal{O}_2} + \pi_2H_1, \ E^{\mathcal{O}_2}_{2,1} + \pi_2H_2\}$ for some $H_1, H_2\in \M_2(\mathcal{O}_2)$. The argument is similar to that we have used for derivative map corresponding to $e_{m+1}$ at $(g_1^{(2)}, g_2^{(2)})$ in this case also, and we obtain that there exist $Q_{\mathcal{O}_2}, Q_{\mathcal{O}_2}', U_{\mathcal{O}_2}, U_{\mathcal{O}_2}' \in \mathfrak{sl}_2(\mathcal{O}_2)$ such that $E_{1,1}^{\mathcal{O}_2}-E_{2,2}^{\mathcal{O}_2} + \pi_2H_1 = De_{m+1}^{\mathcal{O}_2}(Q_{\mathcal{O}_2}, Q_{\mathcal{O}_2}')$ and  $E_{2,1}^{\mathcal{O}_2} + \pi_2H_2 = De_{m+1}^{\mathcal{O}_2}(U_{\mathcal{O}_2}, U_{\mathcal{O}_2}')$. Take, $A_2=\left(\begin{array}{cc} 1 + \pi_2 & 0 \\ 0 & 1-\pi_2 \end{array}\right)$. One can write it as $I + \pi_2(E_{1,1}^{\mathcal{O}_2} -E_{2,2}^{\mathcal{O}_2})$ which can further be written as $I + \pi_2(E_{1,1}^{\mathcal{O}_2} -E_{2,2}^{\mathcal{O}_2} + \pi_2H_1)$. Therefore, $e_{m+1}^{\mathcal{O}_2}(h_1^{(2)}(I + \pi_2Q_{\mathcal{O}_2}), h_2^{(2)}(I + \pi_2Q_{\mathcal{O}_2}')) = e_{m+1}^{\mathcal{O}_2}(h_1^{(2)}, h_2^{(2)})\left(I + \pi_2De_{m+1}^{\mathcal{O}_2}(Q_{\mathcal{O}_2}, Q_{\mathcal{O}_2}')\right) = A_2$. Therefore $A_2\in \im(e_{m+1}^{\mathcal{O}_2})$ in this case also. Hence for any $m\geq 1$, all the lifts of $I$ in $\SL_2(\mathcal{O}_2)$, must lie inside $\im(e_{m+1}^{\mathcal{O}_2})$.
\end{proof}

\subsection{Projective Special Linear group over Local Rings }

Let $\mathcal{A}$ be a local (commutative) ring with unity and $\mathcal{M}$ be an $\mathcal{A}$ module. The general linear group $\GL(\mathcal{M})$ is the group of invertible $\mathcal{A}$-linear transformations on $\mathcal{M}$. For any $a\in \mathcal{A}^{\times}$ consider the linear transformation $aI_{\mathcal{M}}\in \GL(\mathcal{M})$ defined by $x\mapsto xr$ for all $x\in \mathcal{M}$. Let $\mathrm{RL}(\mathcal{M}) = \{aI_{\mathcal{M}} \mid a\in \mathcal{A}^{\times}\}$. Note that $\mathrm{RL}(\mathcal{M})$ forms a normal subgroup of $\GL(\mathcal{M})$. The natural surjection $\mathrm{P}\colon \GL(\mathcal{M}) \rightarrow \GL(\mathcal{M})/\mathrm{RL}(\mathcal{M})$ is called the projection map of $\GL(\mathcal{M})$ and its image is called the \emph{projective general linear group of $\mathcal{M}$} and is denoted by $\mathrm{PGL}(\mathcal{M})$. For any subgroup $G$ of $\GL(\mathcal{M})$, $PG\cong G/(G\cap \mathrm{RL}(\mathcal{M}))$. Now assume that $\mathcal{M}$ is a free $\mathcal{A}$-module of finite rank $n$. Then there is an isomorphism (through the choice of a basis) between $\GL(\mathcal{M})$ and $\GL_n(\mathcal{A})$ which takes $\mathrm{RL}(\mathcal{M})$ onto $\mathrm{RL}_n(\mathcal{A}) = \{aI_n \mid a\in \mathcal{A}^{\times}\}$; see \cite[section 1.2]{HahnOMeara}. The matrix analogue of the projection map is $\mathrm{P}\colon \GL_n(\mathcal{A}) \rightarrow \GL_n(\mathcal{A})/ \mathrm{RL}_n(\mathcal{A})$. The image of $\SL_n(\mathcal{A})$ under this map is called the \emph{ projective special linear group over $\mathcal{A}$} and is denoted by $\mathrm{PSL}_n(\mathcal{A}) \cong \SL_n(\mathcal{A})/(\SL_n(\mathcal{A})\cap \mathrm{RL}_n(\mathcal{A}))$. 

\begin{lemma}{\label{psl}}
Let $\mathcal{O}$ be a local principal ideal ring, complete with respect to its unique maximal ideal $\mathfrak{m}=(\pi)$ and has residue field $k$ with characteristic $\neq 2$. Then,  $\mathrm{PSL}_2(\mathcal{O}) \cong \SL_2(\mathcal{O})/\{\pm I_2\}$.
\end{lemma}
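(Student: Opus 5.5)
By the definition recalled just above, $\mathrm{PSL}_2(\mathcal{O}) \cong \SL_2(\mathcal{O})/(\SL_2(\mathcal{O})\cap \mathrm{RL}_2(\mathcal{O}))$. The whole proof therefore reduces to identifying the subgroup $\SL_2(\mathcal{O})\cap \mathrm{RL}_2(\mathcal{O})$. A scalar matrix $aI_2$ with $a\in\mathcal{O}^{\times}$ lies in $\SL_2(\mathcal{O})$ exactly when $a^2=1$. So the claim is that the only solutions of $a^2=1$ in $\mathcal{O}$ are $a=\pm 1$; conversely $\pm I_2$ clearly lie in the intersection.

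To solve $a^2=1$, first reduce modulo $\mathfrak{m}$. This gives $\bar a^2=1$ in the field $k$, hence $\bar a=\pm 1$. Replacing $a$ by $-a$ if necessary, we may assume $a=1+x$ with $x\in\mathfrak{m}$. The equation becomes $x(2+x)=0$. Since $\mathrm{char}(k)\neq 2$, the element $2$ is a unit, and $x\in\mathfrak{m}$ forces $2+x\in\mathcal{O}^{\times}$. Hence $x=0$ and $a=1$.

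This argument uses only that $\mathcal{O}$ is local with residue characteristic $\neq 2$. It does not need completeness or the principal ideal hypothesis, and in particular it does not need the inverse limit description; it also gives the same conclusion for every $\mathcal{O}_\ell$. It is the same computation already carried out for Family (1) in Section~\ref{O_2conj}, where $\alpha=\pm1+m_i$ forced $m_i=0$, now done over $\mathcal{O}$ instead of $\R$.

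With $\SL_2(\mathcal{O})\cap \mathrm{RL}_2(\mathcal{O})=\{\pm I_2\}$, the isomorphism $\mathrm{PSL}_2(\mathcal{O})\cong \SL_2(\mathcal{O})/\{\pm I_2\}$ follows from the formula $PG\cong G/(G\cap \mathrm{RL})$. There is no real obstacle. The only point needing care is to use that $2+x$ is a unit, rather than argue with nilpotence of $x$, since $\mathfrak{m}$ need not be nilpotent in $\mathcal{O}$ itself.
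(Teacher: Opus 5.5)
Your proof is correct, and it takes a different and more elementary route than the paper. You factor $a^2-1=x(2+x)$ directly in $\mathcal{O}$ and use that $2+x$ is a unit because $2\in\mathcal{O}^{\times}$ and $x\in\mathfrak{m}$. That settles the question in one step, and it works for any local ring whose residue characteristic is not $2$.

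The paper argues level by level instead:
\begin{itemize}
\item It first solves $x^2=1$ in $\mathcal{O}_2$ using the Family (1) computation.
\item It then shows, using $(\ker\theta_j)^2=0$, that the only lifts of $\pm1$ to $\mathcal{O}_{j+1}$ satisfying $x^2=1$ are $\pm1$ themselves.
\item Finally it passes to $\mathcal{O}$ via Hensel's lemma and the isomorphism $\mathcal{O}\cong\varprojlim\mathcal{O}_j$.
\end{itemize}

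The paper's route fits its general lifting framework. It uses completeness, or at least separatedness $\bigcap_j\mathfrak{m}^j=0$, which is needed to recover $\lambda$ from its images in the $\mathcal{O}_j$. Your route avoids the inverse limit, completeness and the principal ideal hypothesis entirely. Your observation that one should use the unit $2+x$ rather than nilpotence of $x$ is exactly what lets you work in $\mathcal{O}$ directly, instead of retreating to the finite levels where $\ker\theta_j$ squares to zero.
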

\begin{proof}
It is enough to show that $\SL_n(\mathcal{O}) \cap \mathrm{RL}_n(\mathcal{O}) = \{\pm I_2\}$ in this case. Let $\lambda I_2 \in \SL_n(\mathcal{O}) \cap \mathrm{RL}_n(\mathcal{O})$. Therefore, $\bar \lambda$ is a solution of $x^2-1 = 0$ in $k$. Hence $\bar \lambda = \pm 1$. Denote $\mathcal{O}/ \mathfrak{m}^j\mathcal{O} =\mathcal{O}_j$ for $j\geq 1$. It is easy to see (check the calculations in Family (1) case in Section~\ref{O_2conj}) that the solutions of $x^2 - 1 = 0$ in $\mathcal{O}_2$ are $\pm 1$. Similarly, using the surjection $\theta_2 \colon \mathcal{O}_3 \rightarrow \mathcal{O}_2$, if $1 + m$ is a lift of $1$ in $\mathcal{O}_3$ that satisfies $x^2=1$ in $\mathcal{O}_3$ where $m\in ker(\theta_2)$, then $m=0$ as $(ker (\theta_2))^2=0$. The other solution of $x^2-1=0$ in $\mathcal{O}_3$, which is a lift of $-1$, is again $-1$. Since $(ker (\theta_j))^2 = 0$ for any $j\geq 1$, the only solutions of $x^2=1$ over $\mathcal{O}_{j+1}$, which are the lifts of $\pm 1$, are $\pm 1$ only. Hence, by Hensel's lemma (\cite[theorem 7.18]{EisenbudCommAlgebra}) and the canonical isomorphism $\mathcal{A}\rightarrow \varprojlim\limits_{j\geq 1} \mathcal{A}/\mathfrak{m}^j\mathcal{A}$, we obtain that the only solutions of $x^2 = 1$ in $\mathcal{O}$, that are the lifts of $1$ and $-1$ are $1$ and $-1$, respectively. Hence, $\lambda = \pm 1$ and $\SL_n(\mathcal{O}) \cap \mathrm{RL}_n(\mathcal{O}) = \{\pm I_2\}$. 
\end{proof}

\begin{theorem}{\label{th: Engel PSL2}}
Let $\mathcal{O}_2$ be a local principal ideal ring of length $2$, with its residue field $k\cong \mathbb{F}_q$ of characteristic $\neq 2$. Let $e_{m+1}(x,y)$ be the $(m+1)$-th Engel word in the free group $\mathcal F_2=\langle x,y\rangle$ and $m\geq 1$. Then, there exists a constant $q_0(m)$ such that for all $q\geq q_0(m)$, the $(m+1)$-th Engel word map $e_{m+1}^{\mathcal{O}_2}$ is surjective on  $\mathrm{P}\SL_2(\mathcal{O}_2)$. One can take $q_0(m) = 2\cdot3^{2^{m+2}}$.   
\end{theorem}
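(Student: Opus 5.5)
By Lemma~\ref{psl} (whose argument applies verbatim to $\mathcal{O}_2$, since the only square roots of $1$ in $\mathcal{O}_2$ are $\pm1$), we have $\mathrm{PSL}_2(\mathcal{O}_2)\cong \SL_2(\mathcal{O}_2)/\{\pm I\}$. Since word maps commute with the quotient homomorphism $\SL_2(\mathcal{O}_2)\to\mathrm{PSL}_2(\mathcal{O}_2)$, it suffices to show the following: for every $A\in \SL_2(\mathcal{O}_2)$, at least one of $A$ and $-A$ lies in $\im(e_{m+1}^{\mathcal{O}_2})$. We split according to the reduction $\bar A\in\SL_2(k)$.

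\textbf{Case 1: $\bar A$ is non-scalar.} Theorem~\ref{th: Engel second} is stated for complete rings $\mathcal{O}$, but $\mathcal{O}_2$ is itself complete; indeed $\mathfrak{m}_2^2=0$, so its $\mathfrak m_2$-adic topology is discrete. Hence, for $q\ge q_0(m)$, every lift of $\bar A$ is in the image, and in particular $A$ is. Alternatively, one can run the proofs of Proposition~\ref{regsem1}, Lemma~\ref{lifts of unipotents} and Lemma~\ref{lifts of minus unipotents} only at level $2$, which avoids any inverse-limit argument. In either form, $A\in\im(e_{m+1}^{\mathcal{O}_2})$.

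\textbf{Case 2: $\bar A=\pm I$.} Replacing $A$ by $-A$ if necessary (allowed, since we work in $\mathrm{PSL}_2$), we may assume $\bar A=I$. Proposition~\ref{prop: lift of I Engel} states that every lift of $I$ lies in $\im(e_{m+1}^{\mathcal{O}_2})$ once $|k|\ge 7$, and $q_0(m)=2\cdot 3^{2^{m+2}}$ is far larger than $7$. Consequently the lifts of $-I$ are handled without ever needing to know whether $-I$ itself is an Engel value in $\SL_2$; this matters because, by Table~\ref{tab:engel_survey1}, $-I$ is generally not in the image over $\SL_2(q)$.

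The main point to verify is that the two cited results really cover all of $\SL_2(\mathcal{O}_2)$. First, Table~\ref{tab:simplified} together with the remark that the image is closed under $\GL_2(\mathcal{O}_2)$-conjugation reduces everything to class representatives. Second, the assumption in Lemma~\ref{lifts of minus unipotents} that $\bar{\mathbf g}\in\im(\bar e_{m+1})$ is supplied by \cite{BandmanGarionGrunewald2012} for $q\ge q_0(m)$, exactly as in the proof of Theorem~\ref{th: Engel second}. Once these are checked, combining the two cases gives surjectivity of $e_{m+1}$ on $\mathrm{PSL}_2(\mathcal{O}_2)$ for all $q\ge q_0(m)$.
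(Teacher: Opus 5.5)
Your proposal is correct and follows the paper's proof essentially step for step: you identify $\mathrm{PSL}_2(\mathcal{O}_2)$ with $\SL_2(\mathcal{O}_2)/\{\pm I\}$ via Lemma~\ref{psl}, cover all lifts of non-scalars by Proposition~\ref{regsem1}, Lemma~\ref{lifts of unipotents} and Lemma~\ref{lifts of minus unipotents} (equivalently Theorem~\ref{th: Engel second}, since $\mathcal{O}_2$ is complete), and absorb the lifts of $-I$ into the lifts of $I$, which Proposition~\ref{prop: lift of I Engel} handles. One small quibble concerns your aside that Table~\ref{tab:engel_survey1} shows $-I$ is generally not an Engel value in $\SL_2(q)$: ``almost surjective'' only says that everything except possibly $-I$ is hit, but this remark plays no role in your argument.
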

\begin{proof}
For any positive integer $m\geq 1$, let $(e_{m+1})_{\mathrm{PSL}_2(\mathcal{O}_2)}$ denotes the $(m+1)$-th Engel word on $\mathrm{PSL}_2(\mathcal{O}_2)$. By Lemma~\ref{psl} it follows that $\mathrm{PSL}_2(\mathcal{O}_2) \cong \SL_2(\mathcal{O}_2)/ \{\pm I_2\}$. Under the assumption $q\geq q_0(m)$, Proposition~\ref{regsem1}, Lemma~\ref{lifts of unipotents}, and Lemma~\ref{lifts of minus unipotents} together ensure that all the lifts of non-scalars must be in $\im ((e_{m+1})_{\mathrm{PSL}_2(\mathcal{O}_2)})$ for any $m\geq 1$. This only leaves the case of lift of $I$. The lifts of $I$ are in $\im ((e_{m+1})_{\mathrm{PSL}_2(\mathcal{O}_2)})$ by Proposition~\ref{prop: lift of I Engel}. Consequently, $(e_{m+1})_{\mathrm{PSL}_2(\mathcal{O}_2)}$ is surjective.
\end{proof}

We conclude this section with a comment regarding 
higher Engel words and the assumption $|k| \geq 5$ in Section~\ref{commutator-local ring-length two}. For higher Engel words, it may not always be possible that any lift of $-I$ lies in $\im(e_{m+1}^{\mathcal{O}})$ for every $m\geq 1$. This follows from \cite[Proposition 4.7]{BandmanGarionGrunewald2012}.
\begin{example}[When the residue field is $\mathbb{F}_3$]{\label{F_3 case}}

Consider $A=\left(\begin{array}{cc} 1  &  0\\ 1  & 1
\end{array}\right)$ and $B = \left(\begin{array}{cc} 2  &0  \\
1 & 2 \end{array}\right)$ be the elements in $SL_2(\mathbb{F}_3)$ with $o(A)=3$ and $o(B)=12$, respectively. The commutator subgroup $[\SL_2(\mathbb{F}_3), \SL_2(\mathbb{F}_3)]$ has order $8$. Therefore, $A$ and $B$ can never be written as a single commutator or as a finite product of commutators in $\SL_2(\mathbb{F}_3)$. Hence, $A$ and $B$ can never be written as a product of a finite number of commutators in $\SL_2(\R)$ when the residue field of $\R$ is $\mathbb{F}_3$. 
\end{example}

\printbibliography
\vspace{2em}
\end{document}